
\documentclass[10pt, a4paper]{amsart} 
\usepackage{color}
\usepackage{amscd,amssymb,graphics}
\usepackage[hidelinks]{hyperref}
\usepackage{amsfonts}
\usepackage{amsmath}
\usepackage{tikz-cd}
\usepackage{multirow}

\input xy
\xyoption{all}
\usepackage{epsfig}

\oddsidemargin 0.1875 in \evensidemargin 0.1875in
\textwidth 6.0 in 
\textheight 230mm \voffset=-4mm

\newtheorem{thm}{Theorem}[section]
\newtheorem{f}[thm]{Fact}
\newtheorem{cor}[thm]{Corollary}
\newtheorem{lem}[thm]{Lemma}
\newtheorem{lemma}[thm]{Lemma}
\newtheorem{prop}[thm]{Proposition}

\newtheorem{conj}[thm]{Conjecture}

\newtheorem{q}[thm]{Question}

\theoremstyle{definition}
\newtheorem{defin}[thm]{Definition}

\theoremstyle{remark}
\newtheorem{remark}[thm]{Remark}
\newtheorem{remarks}[thm]{Remarks}

\newtheorem{ex}[thm]{Example}

\newtheorem{exs}[thm]{Examples}

\numberwithin{equation}{section}



\newcommand{\delete}[1]{} 
\newcommand{\nt}{\noindent}

\def\eps{{\varepsilon}}
\newcommand{\sk}{\vskip 0.1cm}

\newcommand{\nl}{\newline}

\newcommand{\ben}{\begin{enumerate}}

\newcommand{\een}{\end{enumerate}}
\newcommand{\bit}{\begin{itemize}}

\newcommand{\eit}{\end{itemize}}

\newcommand{\bsmall}[2][\mathfrak{B}]{\mathrm{small}\left({#1}, {#2}\right)}

\newcommand{\cobsmall}[2][\mathfrak{B}]{\mathrm{small}^{*}\left({#1}, {#2}\right)}
\newcommand{\localcobsmall}[2]{\mathrm{small}^{*}\left(\mathfrak{B}, {#1}, {#2}\right)}

\def\R {{\mathbb R}}
\def\N {{\mathbb N}}



\newcommand{\Tame}{\mathrm{Tame}}

\def\Iso{{\mathrm{Iso}}\,}

\def\im{{\mathrm Im}\,}

\newcommand{\id}{{\rm{id}}}
\def\diam{{\mathrm{diam}}}

\def\F{{\mathcal F}}

\def\QED{\nobreak\quad\ifmmode\roman{Q.E.D.}\else{\rm Q.E.D.}\fi}

\def\Ros{\mathrm{\mathbf{(Ros)}}}
\def\NP{\mathrm{\mathbf{(NP)}}} 
\def\Rl{\mathrm{\mathbf{(R_1)}}}
\def\coRl{\mathrm{\mathbf{(R_{1}^{*})}}}
\def\ORl{\mathrm{\mathbf{(\overline{R_1})}}}
\def\Tame{\mathrm{\mathbf{(T)}}}
\def\sTame{\mathrm{\mathbf{(mT)}}}
\def\DLP{\mathrm{\mathbf{(DLP)}}}

\def\BTame{\mathrm{\mathbf{[T]}}}
\def\BNP{\mathrm{\mathbf{[NP]}}}
\def\BDLP{\mathrm{\mathbf{[DLP]}}}

\def\a{\alpha}

\def\im{\operatorname{Im}}
\def\spn{\operatorname{Span}}
\def\support{\operatorname{Supp}}


\newcommand{\cls}{\rm{cl\,}}

\newcommand{\al}{\alpha}

\newcommand{\lan}{\langle}
\newcommand{\ran}{\rangle}




\newcommand{\co}{{\rm{co\,}}}
\newcommand{\ext}{{\rm{ext\,}}}
\newcommand{\acx}{{\rm{acx\,}}}
\newcommand{\bo}{{\rm{bal\,}}}
\newcommand{\eqc}{{\rm{eqc\,}}}

\newtheorem{prob}[thm]{Problem}



\graphicspath{ {media/} }


\begin{document}

\title[]
{Tameness and Rosenthal type locally convex spaces} 

\sk

\author[]{Matan Komisarchik}
\address{Department of Mathematics,
	Bar-Ilan University, 52900 Ramat-Gan, Israel}
\email{matan.komisarchik@biu.ac.il}

\author[]{Michael Megrelishvili}
\address{Department of Mathematics,
	Bar-Ilan University, 52900 Ramat-Gan, Israel}
\email{megereli@math.biu.ac.il}
\urladdr{http://www.math.biu.ac.il/$^\sim$megereli}

\date{2022, April 13}

\begin{abstract}
	Motivated by Rosenthal's famous $l^1$-dichotomy in Banach spaces, 
	Haydon's theorem, and additionally by recent works on tame dynamical systems, 
	we introduce the class of \textit{tame} locally convex spaces.  
	This is a natural locally convex 
	analogue of \textit{Rosenthal} Banach spaces  
	(for which any bounded sequence contains a  weak Cauchy subsequence). 
	Our approach is based on a bornology of \textit{tame} subsets which in turn is closely related to eventual fragmentability. This leads, among others, to the following results: 
	\begin{itemize}
		\item extending Haydon's characterization of Rosenthal Banach spaces, by showing that a lcs $E$ is tame iff every weak-star compact, equicontinuous convex subset of $E^{*}$ is the strong closed convex hull of its extreme points  iff $\overline{\co}^{w^{*}}(K) = \overline{\co}(K)$ for every weak-star compact equicontinuous subset $K$ of $E^{*}$; 
		\item $E$ is tame iff there is no bounded sequence 
		equivalent to the generalized $l^{1}$-sequence; 
		\item 
		strengthening some results of W.M. Ruess about Rosenthal's dichotomy; 
		\item applying the Davis--Figiel--Johnson--Pelczy\'nski (DFJP) technique one may show that every \textit{tame operator} $T \colon E \to F$ between a lcs $E$ and a Banach space $F$ 
		can be factored through a tame (i.e., Rosenthal) Banach space.   
	\end{itemize} 
\end{abstract}



\subjclass[2020]{46A03, 46A17, 46B22, 37Bxx, 54Hxx}  

\keywords{Asplund space, Bornologies, Double Limit Property, Haydon theorem,  reflexive space, Rosenthal dichotomy, Rosenthal space, tame locally convex, tame system} 

\thanks{This research was supported by a grant of the Israel Science Foundation (ISF 1194/19) and also by the Gelbart Research Institute at the Department of Mathematics, Bar-Ilan  University} 
 
\maketitle
\setcounter{tocdepth}{1}
\tableofcontents

\section{Introduction} 
In the present work, we introduce and study a locally convex analogue of Rosenthal Banach spaces. 
 As in \cite{GM-rose,GM-fpt,GM-survey}, we say that a Banach space $V$ is \textit{Rosenthal} if any bounded sequence contains a weak Cauchy subsequence.
 Equivalently, if $V$ does not contain an isomorphic copy of $l^1$. 
 Such Banach spaces appear in many publications (especially, after Rosenthal's classical work \cite{Ros0}). Mostly without any special name. 

  In order to better understand our approach and related classes, we present our definition in the 
  framework of the smallness hieararchy for bounded subsets in lcs. In this way, we also provide natural locally convex 
  analogues of Asplund and reflexive Banach spaces.  

\subsection*{Smallness hierarchy of bounded subsets}

The relationship between a space $E$ and its 
topological 
dual $E^*$,  
via various classical bornologies on $E$, is one of the central themes 
in the theory of locally convex spaces. 
For every bounded subset 
$B$ of $E$ and an equicontinuous weak-star compact subset $K$ of $E^*$ (notation: $K \in \eqc(E^*)$), we can think of $B$ as a bounded family of real valued functions over $K$ (via the canonical bilinear map $E \times E^* \to \R$). 
This ``tango" between $B$ and $K$ is a source of many interesting properties of the entire space. 
Namely, we want to study whether the family $\tilde{B}:=\{\tilde{b} \colon K \to \R\}_{b \in B}$ is small (in some sense), and then study the locally convex spaces whose 
all 
bounded subsets are small in the same way.

This is related to the general topological question: what might be a hierarchy of \textit{smallness} for a bounded family $B \subset {\R}^K$ of real functions on an abstract compact space $K$? 
We present a framework for this kind of comparisons using the concept of bornological classes (Section \ref{s:Born}).

We suggest three cases which seem to be very natural. They are 
very important in the theory of dynamical systems and their representations 
on mainstream classes of Banach spaces. 
See Section \ref{s:repr} and joint works of the second author with Eli Glasner \cite{GM-survey,GM-rose}. 
Consider the following three conditions on $B$: 
\begin{enumerate}
	\item $B$ is \textit{tame}  on $K$ (does not contain any sequence which is combinatorially independent in the sense of Rosenthal, 
	Definition \ref{d:TameFamily}); 
		\item $B$ is a fragmented family (Definition \ref{d:fr-family}) of functions on $K$;  
	\item $B$ has the Grothendieck's double limit property (DLP) on $K$ 
	(Definition \ref{d:DLP}). 
\end{enumerate}



\begin{remark} \label{r:repres} 
	These three conditions 
		do not seem immediately comparable. 
		However, (3) $\Rightarrow$ (2) $\Rightarrow$ (1). 
	As it follows from results of \cite{GM-tame},
	every tame (fragmented, DLP) bounded family $B$ of continuous functions on a compact space $X$ can be \textit{represented} 
	on a Rosenthal (Asplund, reflexive) Banach space. 
	These results are based on
	the Davis--Figiel--Johnson--Pelczy\'nski factorization technique \cite{DFJP}. See also Lemma \ref{l:DFJP} and Theorem \ref{t:Factoriz}. 

\end{remark}

	Recall that a \textit{representation} of 
	a bounded map $B \times K \to \R$ on 
a Banach space $V$ is 
a pair $(\nu,\a)$ of bounded maps $\nu\colon B \to V, \ \alpha\colon K \to V^*$, where $\a$ is weak-star continuous and 
$f(x)= \langle \nu(f), \a(x) \rangle$ for all $f \in B,  x \in K$. 
$$\xymatrix{ B \ar@<-2ex>[d]_{\nu} \times K 
	\ar@<2ex>[d]^{\a} \ar[r]  & \R \ar[d]^{id } \\
	V \times V^* \ar[r]  &  \R }
$$
For the converse direction (justifying these representations above), note that a Banach space $V$ is:

\begin{enumerate}
	\item  Rosenthal (not containing a copy of $l^1$)
	iff the closed unit ball $B_V$ of $V$ is a tame family of functions on the weak-star compact unit ball $B_{V^*}$ of $V^*$;  
	
	\item Asplund iff $B_V$ is a a fragmented family of functions on $B_{V^*}$; 
	\item reflexive iff $B_V$ has DLP on $B_{V^*}$. 
\end{enumerate}

These three characterizations and Remark \ref{r:repres} suggest corresponding locally convex 
analogues 
via three bornologies of tame, Asplund and DLP subsets, as defined here. 

\begin{defin} \label{d:MainDef} Let $E$ be a lcs. 
	\begin{enumerate}
		\item We say that a bounded subset $B \subset E$ is:
		\begin{enumerate}
			\item \textit{tame} if $B$ is a tame family on every weak-star compact equicontinuous subset ${K \in \eqc(E^*)}$; 
			\item \textit{Asplund} if $B$ is a fragmented family on every $K \in \eqc(E^*)$; 
			\item DLP if $B$ is DLP on every $K \in \eqc(E^*)$. 
		\end{enumerate}
		  
		\item We say that a lcs $E$ is: 
		\begin{enumerate}
			\item \textit{tame} ($E \in \Tame$) if every bounded subset in $E$ is tame, Definition \ref{def:tame_lcs};
			\item Namioka-Phelps ($E \in \NP$) if every bounded subset in $E$ is Asplund, Definition \ref{d:NP}; 
			\item DLP  ($E \in \DLP$) if every bounded subset in $E$ is DLP,  Definition \ref{d:DLPlcs}. 
		\end{enumerate}
	\end{enumerate}
\end{defin}

Asplund subsets play a major role in Banach space theory (sometimes under different names); see \cite{Bourgin,Fabian}. 
 The class $\NP$ first was defined in \cite{Me-fr} using a different but equivalent approach. 


\subsection*{Properties and examples}

The class $\Tame$ is quite large. First of all, note that 
$$
  \DLP \subset \NP \subset \Tame. 
$$
This can be derived from Remark \ref{r:repres}.  
Using results of Diestel--Morris--Saxon \cite{DMS}, we show in Proposition \ref{p:variety} that $\Tame$ is \textit{properly} larger than the variety generated by all Banach Rosenthal spaces. Furthermore, 
$\Tame$  has nice stability properties (Theorem \ref{thm:properties_of_tame_class}). Among others we show that $\Tame$ is closed under: subspaces, arbitrary products, locally convex direct sums and bound covering continuous linear images.  

These properties are verified using the concept of fragmentability (which originally comes from Namioka--Phelps \cite{NP}, Jayne--Rogers \cite{JR}) and its natural generalization for families (borrowed from recent study of tame dynamical systems).

\textit{Fragmented families} (Definition \ref{d:fr-family}) are closely related to 
tameness, providing an important sufficient condition. 
Beyond representation theory (Remark \ref{r:repres}), a more direct reason is that $B$ is tame on $K$ iff $B$ is \textit{eventually fragmented} in the sense of \cite{GM-rose} (i.e., every sequence in $B$ contains a subsequence which is fragmented on $K$).
We apply here some useful results of Rosenthal \cite{Ros0} and Talagrand \cite{Tal}, synthesized in Lemma \ref{f:sub-fr}.

One of the challenges is to find 
when standard constructions lead to NP or tame lcs.  
For lcs of the type $C_k(X)$ we have a concrete (and somewhat expected) criterion, Proposition \ref{p:C(X)}, which (up to some reformulations) is quite close to a known result  
by Gabriyelyan--Kakol--Kubi\'{s}--Marciszewski \cite[Lemma 6.3]{Networks}.

\sk 
\subsection*{Free locally convex spaces}
Another important construction producing  lcs is the classical free locally convex space $L(X)$, 
defined for every 
Tychonoff 
space $X$. 
For every compact space $K$, its free lcs  
$L(K)$ is \textit{multi-reflexive} 
(i.e., embedded into a product of reflexive Banach spaces), 
as it was proved in a very recent paper by Leiderman and Uspenskij \cite{LU}. 
Since multi-reflexive lcs (by Theorem \ref{thm:dlp_iff_semireflexive}) is $\DLP$, we obtain that $L(K)$ is $\DLP$.

More generally, in Theorem \ref{t:L(X)isNP} we show that $L(X)$ is $\DLP$ (hence, $\NP$ and $\Tame$) for every 
Tychonoff space $X$. 
In particular, we get that $L(\N^{\N})$ is $\DLP$ for the Polish space $\N^{\N}$ of all irrationals. In contrast, 
another result from \cite{LU} shows that $L(\N^{\N})$ is not multi-reflexive. 
Moreover, while every semi-reflexive lcs is $\DLP$, the spaces $L(X)$ (which are $\DLP$,   
very rarely are 
semi-reflexive (Theorem \ref{t:LX_is_almost_never_semireflexive}). 


\sk 
\subsection*{Rosenthal type properties}

Recall that a sequence $\{x_n\}_{n \in \N}$ in a lcs 
$E$ is \textit{weak Cauchy} if the scalar sequence $u(x_n)$ is convergent for every $u \in E^*$. Rosenthal's celebrated dichotomy theorem (see \cite{Ros0}) asserts that every bounded
sequence in a Banach space either has a 
weak Cauchy subsequence or a subsequence equivalent to the unit vector basis of $l_1$ (an
$l_1$-\emph{sequence}).

\begin{defin} \label{d:RosBan} 
We say (as in \cite{GM-rose,GM-fpt,GM-survey}) that a Banach space $V$ is {\em Rosenthal} if every bounded sequence in $A$ has a weak Cauchy subsequence. 
We use the same definition for lcs 
(Definition \ref{d:AllDef1} $\Ros$). 
\end{defin}

\begin{defin} \label{d:AllDef1}
	Let $E$ be a lcs. 
	Define the following properties of $E$: 
	\ben
	\item[$\Ros$] \label{def:R:cauchy_subsequences} Every bounded sequence in $E$ has a subsequence which is weak Cauchy.  
	
	\item[$\Rl$] There is no bounded sequence in $E$ which is equivalent to the $l^{1}$-basis    
	(in the sense of Definition \ref{defin:equivalent_to_usual_l1}). 
	
	\item[$\ORl$] The 
	Banach space $l^{1}$ cannot be embedded into $E$.
	\een
\end{defin}

All these three properties 
are equivalent in Banach spaces by Rosenthal's classical results, \cite{Ros0}. 

Note that \cite{TPFLCS} uses some similar notation ($(R_{1})$ and $(R_{2})$) to represent similar concepts ($\Ros$ and $\Rl$, respectively). 
Some authors (e.g., \cite{TPFLCS} and \cite{GSchur}) say that a lcs $E$ has the \emph{Rosenthal property}
if it satisfies the Rosenthal dichotomy (every bounded sequence has a subsequence that is either weak Cauchy or equivalent to the $l^{1}$-basis).
In this paper, we always refer to Definition \ref{d:AllDef1}. 

\sk 
In  Section \ref{s:RosType}  
we 
prove the following theorems:

\sk 

\begin{thm}[\ref{t:coinc}] \label{t:intro1}
	For any lcs we have 
	$
	\Ros \Longrightarrow \Tame = \Rl \Longrightarrow \ORl.
	$	
\end{thm}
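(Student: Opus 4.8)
The plan is to split the chain into four separate implications: $\Ros \Rightarrow \Tame$, then the two inclusions $\Tame \Rightarrow \Rl$ and $\Rl \Rightarrow \Tame$, and finally $\Rl \Rightarrow \ORl$. The device I would use throughout is a ``duality dictionary'' that translates between two descriptions of an $l^{1}$-sequence: the \emph{internal} one, living in $E$ and witnessed by a single continuous seminorm (Definition \ref{defin:equivalent_to_usual_l1}), and the \emph{external} one, living on some $K \in \eqc(E^{*})$ as a uniformly bounded sequence of continuous functions which is combinatorially independent in Rosenthal's sense (Definition \ref{d:TameFamily}). The bridge is the bipolar theorem: if $U$ is a closed absolutely convex $0$-neighborhood with continuous gauge $p$, then $K := U^{\circ} \in \eqc(E^{*})$ and $\sup_{u \in K}\lvert\langle y, u\rangle\rvert = p(y)$ for every $y \in E$; conversely every $K \in \eqc(E^{*})$ is contained in such a $U^{\circ}$. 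On the combinatorial side I would lean on Lemma \ref{f:sub-fr} (the Rosenthal--Talagrand synthesis): a uniformly bounded family on $K$ is tame iff it is eventually fragmented iff every sequence in it has a subsequence converging pointwise on $K$.

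For $\Ros \Rightarrow \Tame$ I would argue directly. Fix a bounded $B \subseteq E$ and $K \in \eqc(E^{*})$ and check that $B$ is a tame family on $K$. Given a sequence $(b_{n})$ in $B$, property $\Ros$ produces a weak Cauchy subsequence $(b_{n_{k}})$, so $\langle b_{n_{k}}, u\rangle$ converges for every $u \in E^{*}$, in particular for every $u \in K$; hence the continuous functions $\tilde b_{n_{k}}$ converge pointwise on $K$. By Lemma \ref{f:sub-fr} this forbids an independent subsequence on $K$, so $B$ is tame on $K$; as $K$ is arbitrary, $B$ is a tame subset and $E \in \Tame$ (Definition \ref{def:tame_lcs}).

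For $\Tame = \Rl$ I would prove both inclusions contrapositively. If $E \notin \Tame$, some bounded $B$ contains a sequence $(b_{n})$ independent on some $K \in \eqc(E^{*})$ with constants $a < b$; a standard averaging over two extremal points of the independence condition gives $\lVert \sum_{i} c_{i}\tilde b_{i}\rVert_{C(K)} \ge \tfrac{b-a}{2}\sum_{i}\lvert c_{i}\rvert$ for all finite scalars. Choosing $U$ closed absolutely convex with $K \subseteq U^{\circ}$ and letting $p$ be its gauge, the dictionary yields $p(\sum_{i} c_{i}b_{i}) \ge \tfrac{b-a}{2}\sum_{i}\lvert c_{i}\rvert$; since $(b_{n})$ is bounded it is then a bounded sequence in $E$ equivalent to the $l^{1}$-basis, so $E \notin \Rl$. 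Conversely, if $(x_{n})$ is a bounded sequence in $E$ equivalent to the $l^{1}$-basis, witnessed by a continuous seminorm $p$, set $U = \{p \le 1\}$ and $K = U^{\circ} \in \eqc(E^{*})$; by the bipolar identity $\lVert \tilde y\rVert_{C(K)} = p(y)$, so $(\tilde x_{n})$ spans an isomorphic copy of $l^{1}$ inside $C(K)$. Such a sequence has no weak Cauchy subsequence in $C(K)$, equivalently (dominated convergence, $C(K)^{*} = M(K)$) no subsequence converging pointwise on $K$; by Rosenthal's dichotomy (Lemma \ref{f:sub-fr}) it then has a subsequence independent on $K$, so $\{x_{n}\}$ is a bounded non-tame subset and $E \notin \Tame$.

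Finally, for $\Rl \Rightarrow \ORl$ I would again argue contrapositively. If $l^{1}$ embeds into $E$, pick a linear homeomorphism $\Phi$ of $l^{1}$ onto a subspace $L \subseteq E$ carrying the induced topology, and put $x_{n} = \Phi(e_{n})$; then $\{x_{n}\}$ is bounded in $E$, and since $\Phi(B_{l^{1}})$ is a $0$-neighborhood of $L$ it contains $\{p \le 1\} \cap L$ for some continuous seminorm $p$ on $E$, giving $p(\sum_{i} c_{i}x_{i}) \ge \lVert \sum_{i} c_{i}e_{i}\rVert_{1} = \sum_{i}\lvert c_{i}\rvert$, so $(x_{n})$ is equivalent to the $l^{1}$-basis and $E \notin \Rl$. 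I expect the genuine obstacle to be the equivalence $\Tame = \Rl$: one must pass cleanly between the internal seminorm formulation and the external formulation on $\eqc(E^{*})$ through polars and the bipolar theorem, and for $\Tame \Rightarrow \Rl$ one must combine Rosenthal's combinatorial dichotomy with the identification of pointwise convergence and weak Cauchyness in $C(K)$ to upgrade an abstract $l^{1}$-copy in $C(K)$ to an honest independent sequence. A minor secondary point is the extraction of a \emph{single} continuous seminorm dominating the $l^{1}$-norm in $\Rl \Rightarrow \ORl$, which rests on the local boundedness of $l^{1}$.
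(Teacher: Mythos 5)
Your proposal is correct, and while the outer decomposition ($\Ros \Rightarrow \Tame$, $\Tame = \Rl$, $\Rl \Rightarrow \ORl$) matches the paper's, your proof of the central equivalence $\Tame = \Rl$ takes a genuinely different route. The paper's Theorem \ref{thm:tame_iff_R} proves ``tame $\Rightarrow \Rl$'' by passing to the completion and invoking the Ruess-type Lemma \ref{lemma:equivalen_to_the_usual_l1_basis} (which needs local completeness) to turn a bounded $l^{1}$-sequence into an actual embedding of $l^{1}$, then pulling non-tameness back via the hereditarity in Theorem \ref{thm:properties_of_tame_class}; and it proves ``$\Rl \Rightarrow$ tame'' by factoring the pair $(E,M)$ through a Banach space with the Equicontinuous Factor Lemma \ref{lemma:equicontinuous_factor}, combined with Lemmas \ref{lemma:preimage_of_l1_sequence}, \ref{lemma:specific_l1_when_not_tame} and \ref{lemma:b_small_and_adjoint_maps}. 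You instead stay entirely inside $C(K)$ for $K=U^{\circ}$ the polar of a seminorm ball, using the Hahn--Banach/bipolar identity $\sup_{u\in U^{\circ}}\lvert u(y)\rvert = p(y)$ (equivalently $\rho_{K}\le p$ when $K\subseteq U^{\circ}$), Rosenthal's averaging fact that a bounded independent sequence with bounds $a<b$ is a sup-norm $l^{1}$-sequence with constant $\tfrac{b-a}{2}$, and the synthesis Lemma \ref{f:sub-fr}. This is more elementary and self-contained: no completion, no local completeness, no Banach-space factorization, and it works directly at the level of a single bounded set and a single $K\in\eqc(E^{*})$. What the paper's heavier machinery buys is reusability and strength elsewhere: the factor lemma and the $l^{1}$-embedding lemma are used again (e.g.\ for $\coRl$, the Haydon section, and Fact \ref{fact:Ruess}), and Proposition \ref{prop:rosenthal_is_tame} actually yields the stronger Mackey-tame link $\Ros \Rightarrow \sTame$ appearing in Theorem \ref{t:coinc}, which your weak-Cauchy argument also gives if you let $K$ range over all weak-star compact sets rather than only equicontinuous ones. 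Your treatments of $\Ros \Rightarrow \Tame$ and of $\Rl \Rightarrow \ORl$ (via the image of $B_{l^{1}}$ being a relative neighborhood, i.e.\ the content of Lemma \ref{lemma:open_linear_map_radious}) are essentially the paper's arguments.
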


\sk 
Note that $\Ros \neq \Tame$ (Theorem \ref{p:1And3AreNotReversibleINTRO}) and $\Rl \neq \ORl$ (Example \ref{ex:RR}). 
For every locally complete lcs we have $\Rl= \ORl$ (Lemma \ref{lemma:overline_R2_equivalence}). 

\begin{thm} 
	\rm{\textbf{[Tame dichotomy in lcs]}} 
	Let $E$ be a locally convex space.
	Then every bounded 
	subset in $E$ is either tame, or has a subsequence equivalent to the $l^1$-sequence. 
	
\end{thm}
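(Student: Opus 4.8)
The plan is to deduce the dichotomy from the already-stated identification $\Tame = \Rl$ (Theorem \ref{t:intro1}) together with the characterization of tameness via eventual fragmentability, applied \emph{one bounded set at a time} rather than to the whole space. First I would fix a bounded subset $B \subset E$ and consider the two alternatives: either $B$ is tame (in the sense of Definition \ref{d:MainDef}(1a), i.e. $\tilde B$ is a tame family on every $K \in \eqc(E^*)$), or it is not. If $B$ is tame we are in the first case and there is nothing to prove, so assume $B$ is not tame. Then by definition there is some $K \in \eqc(E^*)$ on which $\tilde B = \{\tilde b : K \to \R\}_{b \in B}$ fails to be a tame family of functions, meaning (Definition \ref{d:TameFamily}) that $\tilde B$ contains a sequence $\{\tilde{b_n}\}$ which is combinatorially independent on $K$ in the sense of Rosenthal.

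Next I would invoke the Rosenthal/Talagrand machinery packaged in Lemma \ref{f:sub-fr}: from a non-tame (equivalently, not eventually fragmented) family one extracts a subsequence that is, in a uniform quantitative sense, independent — there exist reals $a < b$ and, after passing to a subsequence (still denoted $\{b_n\}$), a point $x_\sigma \in K$ for every finite sign pattern $\sigma$ separating the sets $\{n : \sigma(n)=1\}$ and $\{n: \sigma(n)=0\}$ at levels $b$ and $a$ respectively. Standard biorthogonality-type estimates then show that for every finitely supported scalar sequence $\{c_n\}$,
\[
\sup_{u \in K} \Bigl| \sum_n c_n \langle b_n, u\rangle \Bigr| \;\geq\; \frac{b-a}{2}\sum_n |c_n|,
\]
while the upper bound $\bigl|\sum_n c_n \langle b_n, u\rangle\bigr| \le (\sup_n |\langle b_n,u\rangle|)\sum_n|c_n|$ holds on \emph{every} equicontinuous $K'$ because $B$ is bounded, hence uniformly bounded on equicontinuous sets. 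Together these two inequalities say exactly that $\{b_n\}$ is equivalent to the $l^1$-basis in the sense of Definition \ref{defin:equivalent_to_usual_l1}: the map $c \mapsto \sum c_n b_n$ is a linear isomorphism of $l^1$ onto its image that is a topological embedding for the locally convex topology of $E$ (the lower bound gives injectivity and continuity of the inverse through the seminorm $u$-valued on $K$; the upper bounds give continuity).

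The main obstacle I expect is the bookkeeping in that last equivalence: one must be careful that "equivalent to the $l^1$-sequence" in the lcs setting (Definition \ref{defin:equivalent_to_usual_l1}) is exactly what the two-sided estimate delivers, including that the embedding is into $E$ with its given topology and not merely into some completion — this is where local completeness subtleties (the $\Rl$ versus $\ORl$ distinction of Theorem \ref{t:intro1}) could intrude, and one should check that the construction only produces an $l^1$-\emph{sequence} inside $B$, which is all the dichotomy claims, without asserting a copy of the Banach space $l^1$. A cleaner route, avoiding re-deriving estimates, is to quote Theorem \ref{t:intro1} directly: if $B \subset E$ is not tame, then the closed absolutely convex bounded hull of $B$, or $E$ localized at $B$, violates $\Tame$, hence violates $\Rl$, so a bounded sequence equivalent to the $l^1$-sequence exists; a short argument then places such a sequence inside $B$ itself (using that combinatorial independence is witnessed within $B$). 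I would present the proof in this second, more economical form, citing Lemma \ref{f:sub-fr} for the extraction and Theorem \ref{t:coinc}/\ref{t:intro1} for the identification $\Tame = \Rl$, and spend the bulk of the written proof only on the localization step that upgrades the space-level statement to the announced set-level dichotomy.
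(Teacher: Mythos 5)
Your proposal is correct in substance, but its working route differs from the paper's. The paper obtains the dichotomy as an immediate corollary of Theorem \ref{thm:tame_iff_R}, which is already stated at the level of individual bounded sets (``a bounded subset $B$ is tame iff $B$ satisfies $\Rl$''); the nontrivial direction there is proved by factoring through a Banach space via the Equicontinuous Factor Lemma \ref{lemma:equicontinuous_factor}, then invoking Lemma \ref{lemma:specific_l1_when_not_tame} and Lemma \ref{lemma:b_small_and_adjoint_maps}. Your first, ``direct'' route instead proves the contrapositive by hand: non-tameness of $B$ yields a sequence $\{b_n\}\subset B$ independent on some $K \in \eqc(E^*)$ with bounds $a<b$; Rosenthal's Proposition 4 (quoted in the paper before Lemma \ref{f:sub-fr}) gives $\sup_{u\in K}\lvert\sum c_n\langle b_n,u\rangle\rvert \geq \tfrac{b-a}{2}\sum\lvert c_n\rvert$, and since $K$ is equicontinuous the seminorm $\rho_K$ is continuous on $E$, so $\{b_n\}$ is a bounded $l^1$-sequence in the sense of Definition \ref{defin:equivalent_to_usual_l1}. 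That is a perfectly valid and arguably more elementary argument than the paper's (no factorization machinery), and you correctly retract, in your final paragraph, the overstated claim that the two-sided estimate yields a topological copy of $l^1$ in $E$ — that indeed requires local completeness (Lemma \ref{lemma:equivalen_to_the_usual_l1_basis}) and is exactly the $\Rl$ versus $\ORl$ distinction, which the dichotomy does not assert.

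Two cautions about the route you say you would actually write up. First, the ``localization step'' is unnecessary: you only cite the space-level Theorem \ref{t:intro1}, but the paper's Theorem \ref{thm:tame_iff_R} already carries the set-level statement, so the dichotomy needs no upgrade. Second, as sketched, the localization is the weakest link: passing to the closed absolutely convex hull $D$ of $B$ and the normed space $(E_D,q_D)$ produces an $l^1$-estimate for the gauge $q_D$, which is in general \emph{not} a continuous seminorm on $E$ (Fact \ref{f:local_disc_norm_is_finer} only says the $q_D$-topology is finer), so this does not by itself exhibit an $l^1$-sequence of $E$ in the sense of Definition \ref{defin:equivalent_to_usual_l1}; and the space-level statement applied to $E$ gives an $l^1$-sequence somewhere in $E$, not inside $B$. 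Your parenthetical fix — that independence is witnessed within $B$ on an equicontinuous $K$ — is exactly the direct route again, so the direct argument is the one to present.
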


\sk 
\begin{thm}	[\ref{t:BoundMetr}]\label{t:intro2} 
	If all bounded sets of a lcs $E$ are metrizable, then 
	$
	\Ros =  \Tame  = \Rl,
	$
	and the following \emph{generalized Rosenthal's dichotomy} holds: 
	any bounded sequence in $E$ either has a weak Cauchy subsequence or an $l^1$-subsequence.  
\end{thm}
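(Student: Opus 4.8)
The plan is to deduce both assertions from the Tame dichotomy (the theorem stated above) together with Theorem~\ref{t:coinc}, the one genuinely new ingredient being a reduction, valid when bounded sets are metrizable, of ``weak Cauchy'' to convergence tested against a \emph{countable} family of equicontinuous sets.

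First note that the equalities follow once the generalized dichotomy is known. Indeed, Theorem~\ref{t:coinc} gives $\Ros\Rightarrow\Tame=\Rl$ in any lcs, so it suffices to prove $\Rl\Rightarrow\Ros$; but if $E\in\Rl$ then $E$ has no bounded $\ell^{1}$-sequence, so by the dichotomy every bounded sequence has a weak Cauchy subsequence, i.e.\ $E\in\Ros$.

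To prove the dichotomy, fix a bounded sequence $\{x_{n}\}$ and set $B:=\overline{\acx\{x_{n}\}}$, which is bounded, absolutely convex, and metrizable by hypothesis. By the Tame dichotomy applied to the bounded set $\{x_{n}\}$, either some subsequence is equivalent to the $\ell^{1}$-basis (and we are done) or $\{x_{n}\}$ is a tame subset, in which case so is $B$ (closed absolutely convex hulls of bounded tame sets remain tame; in the situation $E\in\Rl$ of the previous paragraph this is automatic, since then $E\in\Tame$ by Theorem~\ref{t:coinc}). Now use metrizability of $(B,\tau|_{B})$ to pick a decreasing sequence $(U_{m})$ of closed absolutely convex $\tau$-neighborhoods of $0$ whose traces $\{U_{m}\cap B\}$ form a base of neighborhoods of $0$ in $B$, and put $K_{m}:=U_{m}^{\circ}\in\eqc(E^{*})$. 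For each fixed $m$ the sequence $\{\tilde x_{n}|_{K_{m}}\}$ is uniformly bounded on the compact set $K_{m}$ (equicontinuity of $K_{m}$ plus boundedness of $B$) and, since $B$ is tame on $K_{m}$, contains no independent subsequence; hence by Lemma~\ref{f:sub-fr} it has a pointwise convergent subsequence. Diagonalizing over $m$ produces a subsequence $\{x_{n_{j}}\}$ with $\langle x_{n_{j}},v\rangle$ convergent for every $v\in\bigcup_{m}K_{m}$.

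It remains to upgrade this to weak Cauchyness, and here metrizability is genuinely used; I expect this to be the main obstacle. The key claim is that $\bigcup_{m}K_{m}$ is dense in $E^{*}$ for the seminorm $w\mapsto\sup_{x\in B}|\langle x,w\rangle|$ --- equivalently, that every $u\in E^{*}$ restricts on $B$ to a uniform limit of functionals each bounded on some $U_{m}$. Granting it, given $u\in E^{*}$ and $\eps>0$ choose $v\in\bigcup_{m}K_{m}$ with $\sup_{B}|u-v|\le\eps$; then for large $j,k$ one has $|\langle x_{n_{j}}-x_{n_{k}},u\rangle|\le 2\eps+|\langle x_{n_{j}}-x_{n_{k}},v\rangle|\le 3\eps$, so $\langle x_{n_{j}},u\rangle$ is Cauchy and $\{x_{n_{j}}\}$ is weak Cauchy, which finishes the dichotomy and the theorem. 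To prove the density claim itself I would pass to the normed space $(E_{B},\|\cdot\|_{B})$ spanned by $B$ and to the local Banach spaces attached to the $U_{m}$: since $\{U_{m}\cap B\}$ is a base at $0$ in $B$ while $B$ is $\tau$-bounded (so $B\subseteq\lambda_{m}U_{m}$ for suitable $\lambda_{m}$), the two scales match up on $B$, and a $\tau$-continuous $u$, being bounded on some $\tau$-neighborhood, can be approximated on $B$ by functionals adapted to the $U_{m}$; this is in the spirit of, and sharpens, Ruess's results on Rosenthal's dichotomy in lcs. Everything else --- Rosenthal/Talagrand through Lemma~\ref{f:sub-fr}, the diagonal argument, and Theorem~\ref{t:coinc} --- is soft.
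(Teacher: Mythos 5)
Your route is, up to the last step, the paper's own: the equalities are reduced to the dichotomy via Theorem \ref{t:coinc}, and the dichotomy is obtained exactly as in Theorem \ref{thm:bounded_metrizable_tameness} --- either an $l^1$-subsequence appears (Theorem \ref{t:tame_dichotomy}), or one passes to $D:=\overline{\acx}\,\{x_n\}$, uses metrizability to choose closed disk neighborhoods $U_m$ whose traces on $D$ form a base at $0$, puts $K_m:=U_m^{\circ}\in\eqc(E^{*})$, applies Lemma \ref{f:sub-fr} on each $K_m$ and diagonalizes. All of that is correct and coincides with the paper.

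The one genuine gap is your ``density claim'' (every $u\in E^{*}$ is, uniformly on $D$, approximable by functionals bounded on some $U_m$): you flag it as the main obstacle but then only gesture at it (``the two scales match up on $B$'' is not an argument). The paper does this step differently, via exact agreement: it finds $m_0$ with $U_{m_0}\cap D\subseteq\{|\varphi|\le 1\}$ and invokes the Hahn--Banach Lemma \ref{lemma:disk_hahn_banach} to get $\widehat{\varphi}\in U_{m_0}^{\circ}$ with $\widehat{\varphi}=\varphi$ on $D$. Your approximate claim is true, and here is a complete proof in the spirit you indicate: given $u\in E^{*}$ and $\eps>0$, pick $m_0$ with $U_{m_0}\cap D\subseteq\{x: |u(x)|\le\eps\}$; scaling $x$ into $U_{m_0}\cap D$ (both are disks, and $q_D(x)=0$ forces $x=0$ because $D$ is bounded) gives $|u(x)|\le\eps\max\bigl(q_{U_{m_0}}(x),q_D(x)\bigr)\le\eps\,q_{U_{m_0}}(x)+\eps\,q_D(x)$ for all $x\in\spn D$. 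Now split: extend the functional $(x,x)\mapsto u(x)$ from the diagonal of $\spn D\times\spn D$, dominated by $\eps q_{U_{m_0}}(x_1)+\eps q_D(x_2)$, to get $u=v_0+w$ on $\spn D$ with $|v_0|\le\eps q_{U_{m_0}}$ and $|w|\le\eps q_D$; extend $v_0$ by Hahn--Banach to $v\in E^{*}$ with $|v|\le\eps q_{U_{m_0}}$ on $E$. Then $v\in\eps\,U_{m_0}^{\circ}$, so $v(x_{n_j})$ converges, while $\sup_{D}|u-v|\le\eps$, and your $3\eps$-argument finishes. I would add that your approximate formulation is in fact the safer one: the exact-agreement Lemma \ref{lemma:disk_hahn_banach} needs care, since the domination $|\varphi|\le q_{\delta}$ on $\spn D$ used in its proof can fail when $q_{\delta}$ vanishes on part of $\spn D$ (already in $E=\R^{\N}$ with $\delta=\{|x_1|\le 1\}$ and $\varphi=x_2$), whereas the two-seminorm decomposition above avoids this entirely. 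So: right strategy, essentially the paper's, but the approximation lemma must be proved rather than asserted.
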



The latter result gives, as a corollary, a 
well-known result of Ruess which extends 
Rosenthal's non-containtment of $l^1$-criteria   
to a quite large class of lcs.



\begin{f} \label{fact:Ruess} \emph{(Ruess \cite[Thm~2.1 and Prop.~3.3]{Ruess})}   
	Let $E$ be a locally complete lcs with metrizable 
	bounded sets. Then 
	$\Ros = \ORl$ 
	and the following dichotomy holds: 
	any bounded sequence in $X$ either has a weak Cauchy subsequence or a subsequence which spans an isomorphic copy of $l^1$. 
\end{f}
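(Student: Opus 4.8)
The plan is to obtain Fact~\ref{fact:Ruess} as an immediate corollary of Theorem~\ref{t:intro2} (i.e. Theorem~\ref{t:BoundMetr}) together with Lemma~\ref{lemma:overline_R2_equivalence}, with no genuinely new argument required. First I would note that the metrizability hypothesis of Fact~\ref{fact:Ruess} is exactly the hypothesis of Theorem~\ref{t:BoundMetr}: since every bounded subset of $E$ is metrizable, that theorem applies and yields $\Ros = \Tame = \Rl$; in particular $\Ros = \Rl$. Then, since $E$ is in addition locally complete, Lemma~\ref{lemma:overline_R2_equivalence} gives $\Rl = \ORl$. Concatenating the two equalities produces $\Ros = \Rl = \ORl$, which in particular gives the asserted identity $\Ros = \ORl$.

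For the dichotomy I would take an arbitrary bounded sequence $\{x_n\}$ in $E$ and apply the generalized Rosenthal dichotomy of Theorem~\ref{t:BoundMetr}: either $\{x_n\}$ has a weak Cauchy subsequence, or it has a subsequence $\{x_{n_k}\}$ equivalent to the $l^1$-basis in the sense of Definition~\ref{defin:equivalent_to_usual_l1}. It then only remains to upgrade ``equivalent to the $l^1$-basis'' to ``spans an isomorphic copy of $l^1$'', and this is precisely where local completeness enters: passing to the Banach disk generated by the bounded set $\{x_{n_k}\}$ (its closed absolutely convex hull, which is a Banach disk because $E$ is locally complete), the equivalence with the $l^1$-basis identifies the closed linear span of $\{x_{n_k}\}$, computed inside the normed space associated with that disk, with the Banach space $l^1$ up to isomorphism; being complete, this span is closed in $E$ and is therefore a Banach subspace of $E$ isomorphic to $l^1$. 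Since this is exactly the implication that underlies Lemma~\ref{lemma:overline_R2_equivalence} (under local completeness, a bounded sequence equivalent to the $l^1$-basis yields an isomorphically embedded copy of the Banach space $l^1$, so $\ORl$ fails), I would simply invoke that lemma rather than rerun the Banach-disk computation.

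The only point that requires any care — and the step I would flag as the ``hard part'', modest though it is — is the bookkeeping between the two notions of an $l^1$-sequence: Ruess's formulation speaks of a subsequence \emph{spanning} an isomorphic copy of $l^1$, whereas Theorem~\ref{t:BoundMetr} delivers a subsequence merely \emph{equivalent to the $l^1$-basis} in the sense of Definition~\ref{defin:equivalent_to_usual_l1}. Reconciling these is exactly the content of Lemma~\ref{lemma:overline_R2_equivalence}, and the local completeness hypothesis is what makes the reconciliation possible — without it one only obtains $\Rl$ and not $\ORl$ (cf. Example~\ref{ex:RR}). Everything else in Fact~\ref{fact:Ruess} is a formal consequence of results already established, so the proof is genuinely short.
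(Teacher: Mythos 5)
Your proposal is correct and follows essentially the same route as the paper, which disposes of Fact~\ref{fact:Ruess} in one line by combining Theorem~\ref{t:BoundMetr} with Lemma~\ref{lemma:overline_R2_equivalence} (the latter resting on Lemma~\ref{lemma:equivalen_to_the_usual_l1_basis}, exactly the Banach-disk upgrade from ``equivalent to the $l^1$-basis'' to an embedded copy of $l^1$ that you describe). Your flagged ``hard part'' --- reconciling the two notions of $l^1$-sequence via local completeness --- is precisely where the paper locates the content as well, so nothing is missing.
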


The following result shows the limitations in general lcs for the existence of a Rosenthal type dichotomy. 

\begin{thm} [\ref{p:1And3AreNotReversible}] \label{p:1And3AreNotReversibleINTRO} There exists a 
	tame complete (even reflexive) lcs which: 
	\begin{enumerate}
		\item [(i)] is not a Rosenthal lcs;  
		%
		\item [(ii)] does not contain any $l^1$-subsequence;  
		
		\item [(iii)] contains a dense, Rosenthal subspace.
		
	\end{enumerate}  
	As a corollary: 
	Rosenthal's dichotomy 
	does not hold for such locally convex spaces. 
\end{thm}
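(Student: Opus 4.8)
The plan is to take for $E$ the product $\R^{I}$ (equivalently, $C_p(D)$ for a discrete space $D$ with $|D|=|I|$) where $I$ has cardinality $\mathfrak{c}=2^{\aleph_0}$, concretely $I=\{0,1\}^{\N}$, and to check it has all the stated properties. First I would record the structural facts. As a product of copies of $\R$ the space $E$ is complete; and since $\R$ is reflexive, while arbitrary products of semireflexive (resp. barrelled) locally convex spaces are again semireflexive (resp. barrelled), $E$ is reflexive. Being semireflexive, $E$ is $\DLP$ by Theorem~\ref{thm:dlp_iff_semireflexive}, hence $E\in\Tame$ by the inclusions $\DLP\subset\NP\subset\Tame$. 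So $E$ is already a tame, complete, even reflexive lcs; it remains to verify (i)--(iii) and the corollary.

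For (i) I would exhibit a bounded sequence with no weak Cauchy subsequence. Identify $I$ with $\{0,1\}^{\N}$ and set $x_n(t):=t(n)$ for $t\in I$; then $x_n\in\{0,1\}^{I}\subset[-1,1]^{I}$, so $\{x_n\}$ is bounded in $E$. Since $(\R^{I})^{*}=\bigoplus_{I}\R$ consists of finite combinations of coordinate functionals, a sequence in $E$ is weak Cauchy precisely when it is coordinatewise Cauchy. Given any subsequence $\{x_{n_k}\}$, pick $t^{*}\in\{0,1\}^{\N}$ with $t^{*}(n_k)=k\bmod 2$ (defined arbitrarily off the $n_k$); then $x_{n_k}(t^{*})=t^{*}(n_k)$ alternates, so $\{x_{n_k}\}$ is not coordinatewise Cauchy. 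Hence $\{x_n\}$ has no weak Cauchy subsequence and $E\notin\Ros$. For (ii): by Theorem~\ref{t:intro1} we have $\Tame=\Rl$, so no bounded sequence in $E$ is equivalent to the $l^{1}$-basis; since a subsequence of a bounded sequence is bounded, $E$ contains no $l^{1}$-subsequence.

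For (iii) I would take $F:=\bigoplus_{I}\R\subset E$, the subspace of finitely supported families, with the subspace topology. It is dense in $E$, because every basic neighbourhood in the product topology constrains only finitely many coordinates. To see that $F$ is Rosenthal, let $\{y_n\}\subset F$ be bounded; each support $\mathrm{supp}(y_n)$ is finite, so $J:=\bigcup_n\mathrm{supp}(y_n)$ is countable. The restricted sequence $\{y_n|_{J}\}$ is bounded in the metrizable space $\R^{J}$, hence has a coordinatewise convergent subsequence $\{y_{n_k}|_{J}\}$; as each $y_{n_k}$ vanishes off $J$, the sequence $\{y_{n_k}\}$ converges coordinatewise on all of $I$, that is, it is weak Cauchy in $E$ and therefore in $F$ (whose dual consists of restrictions of coordinate functionals). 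Finally, the corollary is immediate: the sequence $\{x_n\}$ has neither a weak Cauchy subsequence (by (i)) nor an $l^{1}$-subsequence (by (ii)), so the Rosenthal dichotomy fails in $E$.

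The proof introduces no deep new idea; the one conceptual point is that, unlike in Banach spaces (where Eberlein--\v{S}mulian applies), a reflexive lcs can fail to be weakly sequentially compact, and I expect the genuine content — the step I would treat as the main obstacle — to be the combinatorial fact that $\{0,1\}^{\mathfrak{c}}$ is not sequentially compact: this is what produces the bad sequence, what forces the threshold $|I|\ge\mathfrak{c}$, and what no Banach space example could supply. The remaining items (full reflexivity of the product, the identification $(\R^{I})^{*}=\bigoplus_{I}\R$, and density together with the Rosenthal property of the finitely supported subspace) are routine bookkeeping, using only the structural results already established.
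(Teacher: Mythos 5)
Your proposal is correct and is essentially the paper's own proof: the same space $\R^{\{0,1\}^{\N}}$ (your $x_n$ are exactly the coordinate projections $\pi_n$ used in the paper), the same alternating-point argument killing weak Cauchy subsequences, and the same dense subspace of finitely supported elements with the countable-support diagonal argument for (iii). The only cosmetic difference is that you justify tameness via semi-reflexivity and $\DLP\subset\Tame$, while the paper invokes the stability properties of $\Tame$ directly; both routes are already available in the paper.
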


This also shows that 
$\Ros$ is not closed under the completion. The same is unclear for $\Tame$. 

\sk 
For every lcs $E$, there exists the strongest 
topology between all locally convex tame topologies which are weaker than 
the original topology. 
This is proved in 
Theorem \ref{t:str-t} using the bipolar theorem. 
In fact, it is proven for every polarly compatible bornological class (Definition \ref{defin:polarly_compatible}).

In Theorem \ref{t:Factoriz} we apply the DFJP technique \cite{DFJP} and show that 
	every tame (NP, DLP) operator $T \colon E \to F$ between 
a lcs $E$ and a Banach space $F$ can be factored through a tame (Asplund,  reflexive) Banach space.

\sk 
\subsection*{Haydon's theorem for tame locally convex spaces} 
Recall that, according to Mazur's theorem, weak and norm closures are the same for convex subsets in Banach (or, even in locally convex) spaces. This property for weak-star closure in the dual is not true in general.  Haydon's theorem comes as an important compromise.
It generalizes an earlier result for separable 
Banach 
spaces which was proved by Odell and Rosenthal in \cite{Odell}. 

In Section \ref{s:haydon}, we prove a generalized version of Haydon's theorem for locally convex spaces.  
Theorem \ref{thm:Haydon} states that for a lcs $E$ the following conditions are equivalent:

	\ben
	\item $E$ is tame.
	\item Every weak-star compact, equicontinuous convex subset of $E^{*}$ is the strong closed convex hull of its extreme points.
	\item For every weak-star compact, equicontinuous subset $K$ of $E^{*}$ we have:
	$$
	\overline{\co}^{w^{*}}(K) = \overline{\co}(K).
	$$
	\een 

 
\subsection*{Open questions} 
See \ref{q:completion},   \ref{p:TameInj}, \ref{conj:strong_tame_topology}, \ref{q:multi}. 

\sk 
\nt \textbf{Acknowledgment:} We are 
very 
 grateful to Saak Gabriyelyan for 
 many 
 important suggestions and corrections. 

\sk 
\section{Definitions: fragmentability, independence and tameness}

\subsection*{Topological concepts}  
All topological spaces below are assumed to be 
completely regular and Hausdorff 
(that is, Tychonoff). 
Recall that a function $f \colon X \to Y$  between topological spaces  is said to be a \textit{Baire class 1 function} 
\cite{Kech} 
if the inverse image of every open set is $F_\sigma$.  $f$ has the \textit{point of continuity property} (in short: PCP) if for every closed
nonempty $A \subset X$ the restriction $f_{|A}\colon A \to Y$ has a continuity point.  


\sk 
\subsection*{Locally convex spaces}  
We include the following standard definitions. 
In this work $E$ will usually denote a real locally convex space. 
A subset $B \subset E$ 
is said to be \textit{bounded} if for every neighborhood $O$ of the zero in $E$ there exists $c \in \R$ such that $B \subset cO$. 
For every linear continuous operator $u\colon E_1 \to E_2$ and every bounded subset $B \subset E_1$ its image $u(B)$ is also bounded in $E_2$.   
Also, $B$ is bounded if and only if it is weakly bounded (see, \cite[Thm. 8.3.4]{Jarchow}). 
The boundedness is \textit{countably determined}. That is, $B$ is bounded iff all its countable subsets are bounded.  


\begin{defin} \label{d:gauge}  \
	\ben
	\item 
	A subset 
	$S \subset E$ 
	is said to be
	\ben 
	\item 
	\emph{convex} if when $x, y \in S$ and $0 \leq \alpha \leq 1$ then $\alpha x + (1 - \alpha)y \in S$.
	\emph{The convex hull $\co S$ of} $S$ is defined as the smallest convex set containing $S$. 
	Explicitly:
	$$
	\co (S) := \left\{ \sum_{n=1}^{N} \alpha_{n}x_{n}\mid \forall 1 \leq n \leq N: x_{n} \in S, \alpha_{n} \in [0, 1], \sum_{n=1}^{N} \alpha_{n} = 1 \right\}.
	$$
	\item \emph{balanced} if $\alpha S \subseteq S$ for every $\alpha \in \R$ satisfying $\lvert \alpha \rvert \leq 1$.
	\emph{The balanced hull $\bo S$ of} 
	$A$ is defined as the smallest balanced set containing $A$. Explicitly:
	$$
	\bo S := \{\alpha x \mid x \in S, \alpha \in [-1, 1] \}.
	$$
	\item \emph{a disk} (or \emph{absolutely convex}) if it is both balanced and convex.
	\emph{The absolutely convex hull $\acx S$ of} $S$ is defined as the smallest disked set containing $S$. Explicitly: 
	$${\acx S = \co(\bo S)}.$$
	
	\een

	\item A \textit{barrel} is a closed disk $S$ which is 
	absorbing, meaning that
	$E = \bigcup\limits_{n \in \N} nS$.
	\item 
	The \emph{gauge} $q_{S} \colon E \to \R$ of $S$ is defined as:
	$
	q_{S}(x) := \inf \{r >0 \mid x \in rS\}.
	$
	\item 
	$E$ is said to be \emph{locally complete} if for every closed bounded disk $S \subseteq E$, the linear span 
	$Span (S) \subseteq E$ 
	of $S$ is complete with respect to $q_{S}$.
	\item 
	As in \cite{Jarchow}, we 
	denote the \textit{polar} of a subset $S$ of a locally convex space $E$ as:
	$$
	S^{\circ} := 
	\{ \varphi \in E^{*} \mid \forall x \in S: \lvert \varphi(x) \rvert \leq 1 \}.
	$$
	Similarly, for every $S \subset E^*$ its polar is:   
	$$
	S^{\circ} := 
	\{ x \in E \mid \forall \varphi \in S: \lvert \varphi(x) \rvert \leq 1 \}.
	$$
	Now, for every $S \subset E$ its \textit{bipolar} $S^{\circ\circ}$ is defined as $(S^{\circ})^{\circ}$. 
	\een
\end{defin}

\begin{f}[Bipolar Theorem] \label{fact:bipolar_theorem}  \cite[p.~149]{Jarchow} 
For every $S \subseteq E$ 
	the bipolar $S^{\circ\circ} \subset E$ is equal to the weak
	closed absolutely convex hull $\overline{\acx}^{w} A$.
\end{f}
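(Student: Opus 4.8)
The plan is to prove $S^{\circ\circ} = \overline{\acx}^{w} S$ by establishing the two inclusions separately: one is formal, the other rests on the Hahn--Banach separation theorem applied in the weak topology $\sigma(E, E^{*})$.

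For the inclusion $\overline{\acx}^{w} S \subseteq S^{\circ\circ}$ I would first record the elementary stability properties of polars. For any $T \subseteq E^{*}$, the polar $T^{\circ}$ is the intersection over $\varphi \in T$ of the sets $\{x \in E : |\varphi(x)| \le 1\}$, each of which is weakly closed (being the preimage of $[-1,1]$ under the $\sigma(E,E^{*})$-continuous map $x \mapsto \varphi(x)$) and absolutely convex; hence $T^{\circ}$ is a weakly closed disk. In particular $S^{\circ\circ} = (S^{\circ})^{\circ}$ is a weakly closed disk, and $S \subseteq S^{\circ\circ}$ directly from the definitions. Since $\acx S = \co(\bo S)$ is the smallest disk containing $S$ and weak closure preserves the property of being a disk, $\overline{\acx}^{w} S$ is the smallest weakly closed disk containing $S$; therefore $\overline{\acx}^{w} S \subseteq S^{\circ\circ}$.

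For the reverse inclusion, put $C := \overline{\acx}^{w} S$ and take $x_{0} \in E \setminus C$; the goal is to produce $\varphi \in S^{\circ}$ with $|\varphi(x_{0})| > 1$, which forces $x_{0} \notin S^{\circ\circ}$. Now $C$ is a nonempty (it contains $0$), weakly closed, convex subset of the locally convex space $(E, \sigma(E, E^{*}))$, so Hahn--Banach separation of $C$ from the compact convex set $\{x_{0}\}$ yields a $\sigma(E, E^{*})$-continuous linear functional --- hence an element $\varphi \in E^{*}$ --- together with $\alpha \in \R$ satisfying $\varphi(x_{0}) > \alpha \ge \sup_{y \in C}\varphi(y)$. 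Since $C$ is balanced, $\varphi(C)$ is symmetric about the origin, so $\sup_{y \in C}|\varphi(y)| = \sup_{y \in C}\varphi(y) \le \alpha$; and since $0 \in C$ we get $\alpha \ge 0$, whence $\varphi(x_{0}) > \alpha \ge 0$. Rescaling $\varphi$ by a suitable positive constant I may arrange $\sup_{y \in C}|\varphi(y)| \le 1 < \varphi(x_{0})$, i.e.\ $\varphi \in C^{\circ}$ and $|\varphi(x_{0})| > 1$. Finally $S \subseteq C$ gives $C^{\circ} \subseteq S^{\circ}$, so $\varphi \in S^{\circ}$ and $x_{0} \notin S^{\circ\circ}$; contrapositively $S^{\circ\circ} \subseteq C$, which completes the proof.

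The hard part is the second inclusion, and its entire weight sits in the separation step: the decisive fact is that the topological dual of $(E, \sigma(E, E^{*}))$ is again $E^{*}$, so the separating functional is automatically an admissible test functional for membership in the bipolar --- this is precisely why the hull that appears is the \emph{weak} closed absolutely convex hull and not something larger. A minor but genuine technicality is the balancing/rescaling maneuver that turns the one-sided bound $\varphi \le \alpha$ on $C$ into the two-sided bound $|\varphi| \le 1$ needed to land inside a polar; this uses that $C$ is absolutely convex rather than merely convex, and one has to treat separately the degenerate case $\sup_{y \in C}\varphi(y) = 0$, where $\varphi$ already vanishes on $C$ and only needs to be scaled up so that $\varphi(x_{0}) > 1$.
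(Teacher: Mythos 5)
Your proof is correct: both inclusions are argued properly, and the rescaling step that upgrades the one-sided separation bound into $\varphi\in C^{\circ}$ is handled carefully, including the degenerate case $\sup_{y\in C}\varphi(y)=0$ (the only implicit convention is that $S$ is nonempty, so that $0\in\overline{\acx}^{w}S$; for $S=\emptyset$ the bipolar is $\{0\}$ while the hull is empty). The paper gives no proof of this statement—it is quoted as a Fact from Jarchow—and your argument is exactly the standard Hahn--Banach separation proof of the bipolar theorem, whose decisive point, as you correctly isolate, is that the topological dual of $(E,\sigma(E,E^{*}))$ is again $E^{*}$.
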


\begin{f} \emph{(Mazur's Theorem \cite[p.~65, Cor.~2]{Schaefer})}  \label{f:Mazur}   
For every convex subset of a lcs $E$, its closure is identical with its weak closure. Hence, $\overline{co (S)}=\overline{co (S)}^w$ 
for every $S \subset E$.   
\end{f}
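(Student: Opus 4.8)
The plan is to prove the first assertion for an arbitrary convex set $C \subseteq E$ --- namely that its closure $\overline{C}$ coincides with its closure $\overline{C}^{w}$ in the weak topology $\sigma(E, E^{*})$ --- and then to specialize to $C = \co(S)$ for the displayed ``Hence'' clause. One inclusion is free: the original locally convex topology on $E$ is finer than $\sigma(E, E^{*})$, so every weakly closed set is closed, whence $\overline{C} \subseteq \overline{C}^{w}$.

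For the reverse inclusion I would argue by contraposition. Fix $x_{0} \notin \overline{C}$. The set $\overline{C}$ is still convex (the closure of a convex set in a topological vector space is convex, since for each fixed $\alpha \in [0,1]$ the map $(x,y)\mapsto \alpha x + (1-\alpha)y$ is continuous) and of course closed, while $\{x_{0}\}$ is a compact convex set disjoint from it. The Hahn--Banach separation theorem in a locally convex space then furnishes $f \in E^{*}$ and reals $\alpha < \beta$ with $f(y) \le \alpha$ for all $y \in \overline{C}$ and $f(x_{0}) \ge \beta$. Hence $U := \{x \in E : f(x) > \alpha\}$ is a $\sigma(E,E^{*})$-open neighborhood of $x_{0}$ disjoint from $C$, so $x_{0} \notin \overline{C}^{w}$. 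This gives $\overline{C}^{w} \subseteq \overline{C}$, hence equality. Applying the identity just proved to the convex set $C = \co(S)$ yields $\overline{\co(S)} = \overline{\co(S)}^{w}$ for every $S \subseteq E$, which is the ``Hence'' assertion.

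The main --- indeed essentially the only --- obstacle is the appeal to strict Hahn--Banach separation of a point from a disjoint closed convex set; this is precisely where \emph{local convexity} enters, and it is the substantive content of the theorem. The two inclusions are otherwise purely formal, as is the passage from general convex sets to convex hulls of arbitrary subsets. (The Bipolar Theorem, Fact~\ref{fact:bipolar_theorem}, is a closely related packaging of the same separation principle and could be quoted instead after a translation-and-balancing reduction, but direct separation is shorter.)
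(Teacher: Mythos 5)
Your argument is correct: the trivial inclusion from the weak topology being coarser, plus strict Hahn--Banach separation of a point from the (convex, closed, possibly empty---trivial case) set $\overline{C}$ to get a weakly open neighborhood missing $C$, is exactly the standard proof of Mazur's theorem, and the specialization to $C=\co(S)$ is immediate. The paper offers no proof of its own---it quotes the result as a Fact from Schaefer---and the separation argument you give is precisely what underlies the cited corollary, so there is nothing to compare beyond noting that your write-up fills in the omitted standard proof correctly.
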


\sk 
Denote by $\eqc(E^*)$ the system of all equicontinuous  weak-star compact subsets in $E^*$ (\textit{equicontinuous compactology} in terms of \cite{Jarchow}). 
It is a basis of the system of all equicontinuous subsets in $E^*$ as it follows from 
Alaouglu-Bourbaki's theorem.

\begin{f} \emph{(Alaouglu-Bourbaki)}  \label{l:Al-Bo} 
	For every equicontinuous subset $A \subset E^*$, its weak star closure is equicontinuous and weak-star compact. 
\end{f}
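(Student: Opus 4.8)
The plan is to reduce the statement to the weak-star compactness of a single polar set and then to establish that compactness by Tychonoff's theorem, realizing the weak-star topology as the topology of pointwise convergence. First I would unwind the definition of equicontinuity: since $A$ is equicontinuous there is a neighborhood $U$ of zero in $E$ with $\abs{\varphi(x)} \leq 1$ for all $\varphi \in A$ and $x \in U$, and by passing to a base element we may take $U$ to be absolutely convex (and hence absorbing, being a neighborhood). This says exactly that $A \subseteq U^{\circ}$. The polar $U^{\circ}$ is equicontinuous by definition, and it is weak-star closed, being the intersection $\bigcap_{x \in U} \{\varphi \in E^{*} \mid \abs{\varphi(x)} \leq 1\}$ of weak-star closed sets (each evaluation $\varphi \mapsto \varphi(x)$ is weak-star continuous). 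Consequently the weak-star closure of $A$ is contained in $U^{\circ}$, so it is automatically equicontinuous; and once $U^{\circ}$ is shown to be weak-star compact, the weak-star closure of $A$, being a closed subset of a compact set, is weak-star compact as well. Thus everything reduces to the compactness of $U^{\circ}$.

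The main step is this compactness. For each $x \in E$ and $\varphi \in U^{\circ}$ the gauge estimate $\abs{\varphi(x)} \leq q_{U}(x)$ holds: if $x \in rU$ with $r > 0$ then $\abs{\varphi(x)} = r\,\abs{\varphi(x/r)} \leq r$, and taking the infimum over such $r$ gives the bound (finite since $U$ is absorbing). Hence the evaluation map
$$
j \colon U^{\circ} \to P := \prod_{x \in E} [-q_{U}(x), q_{U}(x)], \qquad j(\varphi) = (\varphi(x))_{x \in E},
$$
is well defined, and $P$ is compact by Tychonoff. By the definition of the weak-star topology, $j$ is a homeomorphism onto its image when $P$ carries the product (that is, pointwise) topology. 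It then remains to check that $j(U^{\circ})$ is closed in $P$. I claim $j(U^{\circ})$ is precisely the set of tuples $(t_{x})_{x \in E} \in P$ satisfying the linearity relations $t_{x+y} = t_{x} + t_{y}$ and $t_{\lambda x} = \lambda t_{x}$ for all $x, y \in E$ and $\lambda \in \R$. Each such relation is a closed condition, being an equality between continuous coordinate-projections on $P$, so the set they cut out is an intersection of closed sets, hence closed; therefore $U^{\circ} \cong j(U^{\circ})$ is compact.

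The one point requiring care, which I regard as the crux rather than a genuine obstacle, is the verification that the set of linear tuples in $P$ really equals $j(U^{\circ})$. One inclusion is immediate. For the converse, a linear tuple $(t_{x}) \in P$ defines a linear functional $\varphi(x) := t_{x}$ with $\abs{\varphi(x)} \leq q_{U}(x)$ for all $x$; since $U$ is an absolutely convex neighborhood, $q_{U}$ is a continuous seminorm, so $\varphi$ is dominated by a continuous seminorm and is therefore continuous, i.e. $\varphi \in E^{*}$. Finally $\varphi \in U^{\circ}$, because $q_{U}(x) \leq 1$ for $x \in U$ forces $\abs{\varphi(x)} \leq 1$ there. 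Assembling these observations yields that $U^{\circ}$ is weak-star compact, and by the reduction in the first paragraph the weak-star closure of $A$ is equicontinuous and weak-star compact, as claimed.
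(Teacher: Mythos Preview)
Your proof is correct and follows the standard textbook argument for Alaouglu--Bourbaki: reduce to the polar $U^{\circ}$ of a neighborhood, embed it in a Tychonoff product of bounded intervals via pointwise evaluation, and identify the image as the closed set cut out by the linearity relations. The only remark is that the paper itself does not prove this statement at all --- it is recorded as a \emph{Fact}, i.e.\ a classical result quoted from the literature without argument --- so there is no ``paper's own proof'' to compare against; your writeup simply supplies what the paper takes for granted.
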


Let $f \colon E_1 \to E_2$ be a continuous linear operator between lcs. 
Write $$f^*\colon E_2 \to E_1, \lan v, f^*(\varphi) \ran=\lan f(v), \varphi \ran$$
for its adjoint, where $$E \times E^* \to \R, (u,\varphi) \mapsto \lan v, \varphi \ran=\varphi(v)$$ is the canonical bilinear form. 

\sk 
\begin{defin} \label{d:strongTOP} 
	Recall that the \textit{strong topology on the dual} $E^*$ of a lcs $E$ is the topology of bounded convergence. The standard uniformity of $E^*$ has the uniform basis $\{U[B,\eps]\}$, where $\eps >0$ and $B$ runs over all bounded subsets of $E$. Here 
	$$
	U[B,\eps]:=\{(\varphi_1,\varphi_2) \in E^* \times E^*: \ |\varphi_1(b)-\varphi_2(b)|< \eps \ \forall b \in B\}, 
	$$ 
	where $E \times E^* \to \R, \  (b,\varphi) \mapsto  \lan b, \varphi \ran=\varphi (b)$ is the usual bilinear map. 
\end{defin}

\begin{f} \cite[Thm. 8.11.3]{TVS} \label{f:adjoint_is_continuous}
	Suppose that $T \colon E \to F$ is a linear continuous operator between lcs. 
	Then it is also weakly continuous.
	Moreover, $T^{*}$ is both weak-star and strongly continuous.
\end{f}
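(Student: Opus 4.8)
The plan is to reduce all three assertions to the single elementary fact that, since $T$ is linear and continuous, $\varphi\circ T$ is a continuous linear functional on $E$ for every $\varphi\in F^{*}$; equivalently, the formula $T^{*}\varphi:=\varphi\circ T$ really does define a map $T^{*}\colon F^{*}\to E^{*}$. For the weak continuity of $T$, I would take a net $x_{\alpha}\to x$ in $\sigma(E,E^{*})$; then for every $\varphi\in F^{*}$ we have $\varphi(Tx_{\alpha})=(T^{*}\varphi)(x_{\alpha})\to(T^{*}\varphi)(x)=\varphi(Tx)$ since $T^{*}\varphi\in E^{*}$, so $Tx_{\alpha}\to Tx$ in $\sigma(F,F^{*})$. (Alternatively, one invokes the standard criterion that a linear map is continuous from the weak topology of $E$ to the weak topology of $F$ exactly when its transpose carries $F^{*}$ into $E^{*}$.)

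For the weak-star continuity of $T^{*}$, I would argue dually: if $\varphi_{\alpha}\to\varphi$ in $\sigma(F^{*},F)$, then for each fixed $x\in E$ we get $(T^{*}\varphi_{\alpha})(x)=\varphi_{\alpha}(Tx)\to\varphi(Tx)=(T^{*}\varphi)(x)$, because $Tx\in F$; hence $T^{*}\varphi_{\alpha}\to T^{*}\varphi$ in $\sigma(E^{*},E)$.

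For the strong continuity of $T^{*}$, I would describe the strong topology on $E^{*}$ via the seminorms $p_{B}(\psi):=\sup_{b\in B}\lvert\psi(b)\rvert$, where $B$ runs over the bounded subsets of $E$ (cf.\ Definition \ref{d:strongTOP}), and likewise the strong topology on $F^{*}$ via $p_{C}$ for $C$ bounded in $F$. For a fixed bounded $B\subset E$ one computes $p_{B}(T^{*}\varphi)=\sup_{b\in B}\lvert\varphi(Tb)\rvert=p_{T(B)}(\varphi)$; since a continuous linear operator sends bounded sets to bounded sets (as recalled earlier), $T(B)$ is bounded in $F$, so $p_{T(B)}$ is a continuous seminorm on the strong dual of $F$. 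This exhibits $T^{*}$ as (indeed uniformly) continuous for the strong topologies.

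The only point requiring any care is bookkeeping the directions of the maps and the defining families of the four topologies at play; there is no genuine obstacle here, which is precisely why the statement is quoted as a Fact with a textbook reference.
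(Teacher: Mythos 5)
Your argument is correct and complete: all three assertions follow from the observation that $T^{*}\varphi=\varphi\circ T$ lands in $E^{*}$, together with the seminorm computation $p_{B}(T^{*}\varphi)=p_{T(B)}(\varphi)$ and the fact (recorded earlier in the paper) that continuous linear operators map bounded sets to bounded sets. The paper gives no proof of its own, quoting the statement as a Fact from \cite[Thm.~8.11.3]{TVS}, and your proof is precisely the standard textbook argument being referenced.
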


A (dense) subspace $F$ of a lcs $E$ is said to be \textit{large} in $E$ (see \cite[p.~254]{Perez}) if every bounded set  
in $E$ is contained in the closure of a bounded set in $F$. Every dense subspace in a normed space $V$ is large. Also, the same is true for every separable metrizable lcs $V$. 

\begin{lemma} \label{lemma:large_subspace_strong_isomorphism}
	Let $F$ be a large dense subspace of $E$ and $i\colon F \hookrightarrow E$ be the inclusion map.
	Then $i^{*}\colon E^{*} \to F^{*}$ is a topological isomorphism with respect to the strong topology.
\end{lemma}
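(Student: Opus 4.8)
The plan is to show that $i^{*}$ is a strong-strong homeomorphism by identifying its action on the uniform bases described in Definition \ref{d:strongTOP} and using the largeness hypothesis to match bounded sets on both sides. First I would recall that since $F$ is dense in $E$, the restriction map $i^{*}\colon E^{*}\to F^{*}$, $\varphi\mapsto\varphi|_{F}$, is a \emph{linear bijection}: it is injective because a continuous functional on $E$ vanishing on the dense subspace $F$ is zero, and it is surjective because every continuous linear functional on $F$ extends (uniquely) to a continuous linear functional on $E$ — this is where density is used, via continuity of the extension on the closure. By Fact \ref{f:adjoint_is_continuous}, $i^{*}$ is strongly continuous, so the only remaining point is that its inverse is strongly continuous as well, i.e. that $i^{*}$ is strongly \emph{open} onto $F^{*}$.

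Next I would compare the standard uniform bases. On $E^{*}$ the basic entourages are $U_{E}[B,\eps]$ for $B$ bounded in $E$ and $\eps>0$, and on $F^{*}$ they are $U_{F}[A,\eps]$ for $A$ bounded in $F$. Since $i^{*}$ is just restriction, one checks directly from the defining formula that $(i^{*}\times i^{*})\big(U_{E}[A,\eps]\big)=U_{F}[A,\eps]$ whenever $A\subset F$ is bounded (it is also bounded in $E$, as $i$ is continuous), because the value of $\varphi$ and of $\varphi|_F$ on a point of $A\subset F$ agree. This immediately gives that $i^{*}$ is strongly continuous (recovering Fact \ref{f:adjoint_is_continuous} in this case) and, more importantly, that the images of basic entourages of the form $U_{E}[A,\eps]$ with $A$ bounded in $F$ are basic entourages of $F^{*}$.

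To conclude that $i^{*}$ is open — equivalently that every basic entourage $U_{E}[B,\eps]$ of $E^{*}$ contains one of the special entourages $U_{E}[A,\eps]$ with $A$ bounded in $F$, up to shrinking — I would invoke largeness: given a bounded $B\subset E$, there is a bounded $A\subset F$ with $B\subset\overline{A}$ (closure in $E$). Then for any $\varphi_{1},\varphi_{2}\in E^{*}$, if $|\varphi_{1}(a)-\varphi_{2}(a)|<\eps$ for all $a\in A$, then by continuity of $\varphi_{1}-\varphi_{2}$ the same inequality (non-strict, $\le\eps$) holds on $\overline{A}\supset B$; so $U_{E}[A,\eps]\subset U_{E}[B,2\eps]$, say, which is what is needed. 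Hence the entourages $\{U_{E}[A,\eps]: A \text{ bounded in } F,\ \eps>0\}$ already form a basis for the strong uniformity of $E^{*}$, and since $i^{*}$ carries this basis bijectively onto the standard basis of $F^{*}$, it is a uniform isomorphism, in particular a topological isomorphism for the strong topologies.

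The main obstacle is the bookkeeping at the two ends of the argument: verifying cleanly that restriction really does give a bijection $E^{*}\to F^{*}$ (surjectivity uses uniform continuity of an element of $F^{*}$ on $F$ together with density to extend to $E$), and handling the closure step carefully — one only gets a non-strict inequality on $\overline{A}$ from a strict one on $A$, so the entourage inclusions must be stated with a harmless loss in $\eps$ (or phrased with closed entourages), which does not affect the induced uniformity. Everything else is a direct unwinding of the definitions in Definition \ref{d:strongTOP}.
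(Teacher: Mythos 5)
Your proof is correct: injectivity/surjectivity of $i^{*}$ from density, the identity $(i^{*}\times i^{*})(U_{E}[A,\eps])=U_{F}[A,\eps]$ for bounded $A\subset F$, and the largeness-plus-closure argument (with the harmless loss in $\eps$) showing these entourages already generate the strong uniformity of $E^{*}$ together give a uniform, hence topological, isomorphism. The paper states Lemma \ref{lemma:large_subspace_strong_isomorphism} without proof, and your argument is exactly the canonical one that fills this gap.
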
 

\begin{lem} \label{f:equicontinuous_extension}
	Let 
	$F \subseteq E$ be a subspace and $i\colon F \hookrightarrow E$ 
	is 
	the inclusion map.
	If $M \subseteq F^{*}$ is a weak-star compact equicontinuous subset, then there exists a 
	weak-star compact, equicontinuous subset $N \subseteq E^{*}$ such that $M = i^{*}(N)$.
\end{lem}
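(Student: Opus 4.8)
The goal is to extend an equicontinuous weak-star compact subset $M \subseteq F^{*}$ to such a subset $N \subseteq E^{*}$ with $i^{*}(N) = M$. The natural strategy is the following: since $M$ is equicontinuous in $F^{*}$, there is a neighborhood $U$ of $0$ in $F$ with $M \subseteq U^{\circ}$ (polar inside $F^{*}$). Because $i\colon F \hookrightarrow E$ is continuous, $U = i^{-1}(V) = V \cap F$ for some neighborhood $V$ of $0$ in $E$; we may take $V$ to be a closed absolutely convex neighborhood. Then $V^{\circ} \subseteq E^{*}$ is, by Alaoglu--Bourbaki (Fact \ref{l:Al-Bo}), weak-star compact and equicontinuous. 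The idea is to produce $N$ as a suitable weak-star closed subset of $V^{\circ}$ that maps onto $M$ under $i^{*}$.

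First I would observe that $i^{*}\colon E^{*} \to F^{*}$ restricted to $V^{\circ}$ surjects onto $U^{\circ}$: given $\psi \in U^{\circ} \subseteq F^{*}$, the functional $\psi$ is defined and bounded by $1$ on $U = V \cap F$, and by Hahn--Banach (applied to the sublinear functional $q_{V}$, the gauge of $V$) it extends to a linear functional $\varphi$ on $E$ with $|\varphi| \le q_{V}$, hence $\varphi \in V^{\circ} \subseteq E^{*}$ and $i^{*}(\varphi) = \varphi|_{F} = \psi$. Thus $i^{*}(V^{\circ}) = U^{\circ} \supseteq M$. Now set
$$
N := (i^{*})^{-1}(M) \cap V^{\circ}.
$$
Then $N \subseteq V^{\circ}$ is equicontinuous. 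It is weak-star compact: $V^{\circ}$ is weak-star compact by Alaoglu--Bourbaki, and $(i^{*})^{-1}(M)$ is weak-star closed in $E^{*}$ because $i^{*}$ is weak-star continuous (Fact \ref{f:adjoint_is_continuous}) and $M$ is weak-star compact, hence weak-star closed in $F^{*}$; so $N$ is a weak-star closed subset of a weak-star compact set. Finally, $i^{*}(N) = M$: the inclusion $i^{*}(N) \subseteq M$ is immediate from the definition of $N$, and the reverse inclusion $M \subseteq i^{*}(N)$ follows from the surjectivity statement $i^{*}(V^{\circ}) \supseteq M$ together with the fact that any $\varphi \in V^{\circ}$ with $i^{*}(\varphi) \in M$ lies in $N$ by definition.

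The only genuinely delicate point is the extension step, i.e. checking that the Hahn--Banach extension $\varphi$ of $\psi$ from $F$ to $E$ can be taken to lie in $V^{\circ}$ rather than merely in $E^{*}$. This is where one must be careful to phrase the domination correctly: $\psi$ satisfies $\psi(x) \le q_{V}(x)$ for all $x \in F$ (since $|\psi| \le 1$ on $U = V \cap F$ and $q_{V}$ is the gauge of the absolutely convex $V$), so Hahn--Banach yields $\varphi$ on $E$ with $\varphi(x) \le q_{V}(x)$ for all $x \in E$; using that $V$ is balanced one upgrades this to $|\varphi(x)| \le q_{V}(x)$, which gives $\varphi \in V^{\circ}$ and in particular $\varphi$ continuous. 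Everything else (weak-star compactness of $N$, the two inclusions for $i^{*}(N) = M$) is a routine diagram chase using Alaoglu--Bourbaki and the weak-star continuity of the adjoint, and I would not expect any obstacle there.
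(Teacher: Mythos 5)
Your argument is correct and is essentially the paper's proof with the citation unpacked: the paper simply invokes \cite[Cor.~8.7.2]{Jarchow} (whose content is exactly your Hahn--Banach/gauge extension of functionals from $V\cap F$ to elements of $V^{\circ}$) together with Alaoglu--Bourbaki, and your $N=(i^{*})^{-1}(M)\cap V^{\circ}$ is the standard way to get $i^{*}(N)=M$ on the nose. The only cosmetic point is the phrase $U=i^{-1}(V)$: a neighborhood of $0$ in $F$ merely contains some $V\cap F$, but replacing $U$ by $V\cap F$ (which only shrinks $U$ and enlarges its polar) makes your argument go through verbatim.
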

\begin{proof}
	A consequence of \cite[Cor. 8.7.2]{Jarchow} and the Alaouglu-Bourbaki's Theorem (Fact \ref{l:Al-Bo}).
\end{proof}

It is well-known that if $B$ is a bounded disk, then 
its gauge 
$q_{B}$ is a norm. 

\begin{f} \label{f:local_disc_norm_is_finer} \cite[Proposition 3.2.2]{Perez}
	Let $B \subseteq E$ be a bounded disc in 
	$E$. Then $(E_{B}, q_{B})$ is a normed space and its topology is finer than that induced by $E$.
\end{f}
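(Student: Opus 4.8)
The plan is to just unwind the definitions of the gauge functional and of boundedness; no deep input is needed, since this is the standard ``gauge of a bounded disk'' lemma. Write $E_B := \mathrm{Span}(B)$ and observe first that $E_B = \bigcup_{n \in \N} nB$: any $x \in E_B$ is a finite linear combination $\sum_{i} \lambda_i b_i$ with $b_i \in B$, and since $B$ is absolutely convex this lies in $nB$ as soon as $n \ge \sum_i |\lambda_i|$. In particular $B$ is absorbing in $E_B$.

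Next I would check that $q_B$ is a norm on $E_B$. Absolute homogeneity $q_B(\lambda x) = |\lambda|\, q_B(x)$ is immediate from $B$ being balanced, and the triangle inequality is the usual Minkowski-functional argument using convexity of $B$ (if $x \in rB$ and $y \in sB$ then $x + y \in (r+s)B$); so $q_B$ is a seminorm. To see it is non-degenerate I would use Hausdorffness of $E$ together with boundedness of $B$: if $q_B(x) = 0$ then $x \in rB$ for arbitrarily small $r > 0$, so for any continuous seminorm $p$ on $E$, writing $M := \sup_{b \in B} p(b)$ (finite since $B$ is bounded), we get $p(x) \le rM$ for arbitrarily small $r$, forcing $p(x) = 0$; as this holds for all continuous seminorms, $x = 0$.

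Finally, to compare the two topologies on $E_B$ it suffices to show the inclusion $(E_B, q_B) \hookrightarrow E$ is continuous at the origin, i.e. that every $0$-neighborhood $U$ in $E$ contains some $q_B$-ball. Given $U$, boundedness of $B$ provides $c > 0$ with $B \subseteq cU$, that is $\frac{1}{c}B \subseteq U$; and if $q_B(x) < \frac{1}{c}$ we may pick $r$ with $q_B(x) < r < \frac{1}{c}$ and $x \in rB$, so $x \in rB \subseteq \frac{1}{c}B \subseteq U$ (using that $\frac{1}{c}B$ is balanced and $rc < 1$). Thus the $q_B$-ball of radius $\frac{1}{c}$ lies inside $U \cap E_B$, which is exactly the claim. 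There is no genuine obstacle here; the only points needing a little care are the elementary scalar bookkeeping ($E_B = \bigcup_n nB$, and $rB \subseteq \frac{1}{c}B$ when $rc < 1$) and recalling that it is Hausdorffness of $E$ that makes $q_B$ non-degenerate.
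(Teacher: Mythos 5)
Your proof is correct: the decomposition into (a) $B$ absorbing in $E_B=\mathrm{Span}(B)$, (b) $q_B$ a seminorm by balancedness and convexity, (c) non-degeneracy via boundedness of $B$ plus Hausdorffness of $E$, and (d) continuity of the inclusion from $B\subseteq cU$, is exactly the standard gauge argument, and each step is sound (the only bookkeeping left implicit is that $c$ may be taken positive, which is harmless since $B$ is balanced). The paper does not prove this statement at all — it is quoted as a Fact from Perez-Carreras and Bonet \cite[Prop.~3.2.2]{Perez} — so your write-up simply supplies the expected textbook proof behind the citation.
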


The following is a consequence of  
Fact \ref{f:local_disc_norm_is_finer} 
and \cite[p.~105 Prop.~1]{Jarchow}.
\begin{lem} \label{lemma:ball_of_gauge}
	Let $E$ be a locally convex space.
	If $A \subseteq E$ is bounded, closed and absolutely convex, then $A = B_{E_{A}}$ (the unit ball of 
	the semi-normed space 
	$(E_{A}, q_{A})$).
\end{lem}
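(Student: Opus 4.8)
The plan is to establish the two inclusions $A \subseteq B_{E_{A}}$ and $B_{E_{A}} \subseteq A$ directly from the definition of the gauge, where $E_{A} := \mathrm{Span}(A)$ is equipped with $q_{A}$ and $B_{E_{A}} = \{x \in E_{A} : q_{A}(x) \le 1\}$. First I would record that $q_{A}$ is finite on all of $E_{A}$: an arbitrary $x \in E_{A}$ has the form $x = \sum_{i=1}^{n}\lambda_{i}a_{i}$ with $a_{i} \in A$, and since $A$ is absolutely convex the vector $\big(\sum_{i}|\lambda_{i}|\big)^{-1}x$ is an absolutely convex combination of the $a_{i}$, hence lies in $A$; therefore $q_{A}(x) \le \sum_{i}|\lambda_{i}| < \infty$. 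In particular $A$ is absorbing in $E_{A}$, so $q_{A}$ is a seminorm on $E_{A}$ (in fact a norm, since $A$ bounded forces $\bigcap_{r>0}rA = \{0\}$, as is also already noted in Fact \ref{f:local_disc_norm_is_finer}).

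Next, the inclusion $A \subseteq B_{E_{A}}$ is immediate, since $x \in A$ gives $x \in E_{A}$ and $x \in 1\cdot A$, whence $q_{A}(x) \le 1$. For the reverse inclusion $B_{E_{A}} \subseteq A$, I would take $x \in E_{A}$ with $q_{A}(x) \le 1$ and examine the net $r^{-1}x$ for real $r > 1$. For each such $r$ we have $q_{A}(r^{-1}x) = r^{-1}q_{A}(x) < 1$, so $r^{-1}x$ lies in $\{\,y : q_{A}(y) < 1\,\} \subseteq A$. Moreover $q_{A}(r^{-1}x - x) = |r^{-1}-1|\,q_{A}(x) \to 0$ as $r \downarrow 1$, so $r^{-1}x \to x$ in $(E_{A}, q_{A})$; by Fact \ref{f:local_disc_norm_is_finer} the $q_{A}$-topology on $E_{A}$ is finer than the topology induced from $E$, hence $r^{-1}x \to x$ in $E$ as well. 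Since $A$ is closed in $E$ and each $r^{-1}x \in A$, we obtain $x \in A$, completing the proof.

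The only point requiring attention — and it is a bookkeeping matter rather than a genuine difficulty — is the last step: the hypothesis ``closed'' refers to the ambient space $E$, not a priori to $(E_{A}, q_{A})$, so one must transport the approximation $r^{-1}x \to x$ from the seminorm $q_{A}$ to the topology of $E$, which is precisely what Fact \ref{f:local_disc_norm_is_finer} supplies.
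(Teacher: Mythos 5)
Your proof is correct: the paper gives no argument of its own for this lemma, merely citing Fact \ref{f:local_disc_norm_is_finer} and \cite[p.~105 Prop.~1]{Jarchow}, and your two-inclusion argument is precisely the standard proof underlying that citation (finiteness of $q_{A}$ on $\mathrm{Span}(A)$, the trivial inclusion $A \subseteq B_{E_{A}}$, and the approximation $r^{-1}x \to x$ combined with closedness of $A$ in $E$). One small remark: in the last step you do not actually need Fact \ref{f:local_disc_norm_is_finer}, since $r^{-1}x \to x$ in $E$ already follows from continuity of scalar multiplication in the topological vector space $E$; your detour through the $q_{A}$-topology is harmless but slightly heavier than necessary.
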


\sk \sk 
\subsection*{Fragmentability} 
\label{s:fragment} 

The following definition is a generalized version of the concept of {\it fragmentability}. 


\begin{defin} \label{def:fr} \cite{JOPV,Me-fr} 
	Let $(X,\tau)$ be a topological space and
	$(Y,\mu)$ a uniform space. 
	$X$ is {\em $(\tau,
		\mu)$-fragmented\/} by a 
	(not necessarily continuous)
	function $f\colon X \to Y$ if for every nonempty subset $A$ of $X$ and every $\eps
	\in \mu$ there exists an open subset $O$ of $X$ such that $O \cap
	A$ is nonempty and the set $f(O \cap A)$ is $\eps$-small in $Y$.
	We also say in that case that the function $f$ is {\em
		fragmented\/} 
	and write $f \in {\mathcal F}(X,Y)$, whenever the
	uniformity $\mu$ is understood.
	If $Y=\R$ with its natural uniformity, then we write simply ${\mathcal F}(X)$.  
\end{defin}

When $Y=X, f={id}_X$ and $\mu$ is a
metric uniformity, we retrieve the usual definition of
fragmentability (more precisely, $(\tau,\mu)$-fragmentability) in the sense of Jayne and Rogers \cite{JR}.
Implicitly, it already appears in a paper of Namioka and Phelps \cite{NP}. 

If $f\colon (X,\tau) \to (Y,\mu)$ has PCP 
then it is fragmented. If $(X,\tau)$ is hereditarily Baire (e.g., compact, or Polish) and $(Y,\mu)$ is a pseudometrizable uniform space, then $f$ is fragmented iff $f$ has PCP. If $X$ is Polish and $Y$ is a separable metric space, then
$f\colon X \to Y$ is fragmented iff $f$ is a Baire class 1 function. See \cite{GM1, GM-rose}. 




\begin{defin} \label{d:fr-family} \ 
	\ben
	\item \cite{GM1}
	We say that a {\it family of functions} $\mathcal{F}=\{f\colon (X,\tau)
	\to (Y,\mu) \}$ is {\it fragmented} if the condition of Definition
	\ref{def:fr}.1 holds simultaneously for all $f \in \mathcal F$.
	That is, $f(O \cap A)$ is $\eps$-small for every $f \in \mathcal
	F$. 
	
	\item \cite{GM-rose}
	$F$ is an \emph{eventually fragmented family} 
	if every 
	sequence in $F$ has a subsequence which is a fragmented family on $X$.
	\een
\end{defin}

Definition \ref{d:fr-family}.1 was introduced 
in arxiv preprints of \cite{GM1} and also  
independently 
(under the name: \textit{equi-fragmented}) 
in the Ph.D. Thesis of M.M. Guillermo \cite{Guil}. 

\begin{lem} \label{lemma:fragmented_family_as_simple_fragmentation}
	Let  ${F}=\{f \colon (X,\tau) \to (Y,\mu) \}$ be a family of functions. 
	Then $F$ is a fragmented family 
	iff the mapping
	$
	\pi_{F} \colon X \to Y^{F}, \ 
	\pi_{F}(x)(f)=f(x)
	$
	is $(\tau, \mu_U)$-fragmented, where $\mu_U$ is the uniform structure of
	uniform convergence on the set $Y^{F}$ of all mappings
	from ${F}$ into $(Y, \mu)$. 
\end{lem}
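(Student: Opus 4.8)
The plan is to unwind both sides directly from the definitions and observe they say the same thing. Recall that $\pi_F\colon X \to Y^F$ being $(\tau,\mu_U)$-fragmented means: for every nonempty $A \subseteq X$ and every entourage $\eta \in \mu_U$ there is a $\tau$-open $O$ with $O \cap A \neq \emptyset$ and $\pi_F(O\cap A)$ $\eta$-small in $Y^F$. The key point is that a basic entourage of the uniformity $\mu_U$ of uniform convergence on $Y^F$ has the form $\eta_\eps = \{(g,h) \in Y^F \times Y^F : (g(f),h(f)) \in \eps \text{ for all } f \in F\}$ for $\eps \in \mu$, and a subset $S \subseteq Y^F$ is $\eta_\eps$-small exactly when $\{g(f) : g \in S\}$ is $\eps$-small in $Y$ for every $f \in F$.

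First I would fix the direction ($\Rightarrow$): assume $F$ is a fragmented family. Given nonempty $A \subseteq X$ and a basic entourage $\eta_\eps$ of $\mu_U$, Definition \ref{d:fr-family}.1 applied with this $\eps$ yields a $\tau$-open $O$ with $O \cap A \neq \emptyset$ such that $f(O\cap A)$ is $\eps$-small for \emph{every} $f \in F$ simultaneously. Now observe that $\pi_F(O\cap A) = \{\pi_F(x) : x \in O\cap A\}$, and for each $f \in F$ we have $\{\pi_F(x)(f) : x \in O\cap A\} = f(O\cap A)$, which is $\eps$-small; by the characterization of $\eta_\eps$-smallness above, $\pi_F(O\cap A)$ is $\eta_\eps$-small. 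Since the $\eta_\eps$ form a base of $\mu_U$, this proves $\pi_F$ is $(\tau,\mu_U)$-fragmented.

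For the converse ($\Leftarrow$), assume $\pi_F$ is $(\tau,\mu_U)$-fragmented. Given nonempty $A \subseteq X$ and $\eps \in \mu$, apply fragmentability of $\pi_F$ to $A$ and the entourage $\eta_\eps$: we get a $\tau$-open $O$ with $O\cap A \neq \emptyset$ and $\pi_F(O\cap A)$ $\eta_\eps$-small. Unravelling smallness as before, this says precisely that $f(O\cap A) = \{\pi_F(x)(f) : x \in O\cap A\}$ is $\eps$-small for every $f \in F$ — which is exactly the conclusion of Definition \ref{d:fr-family}.1. Hence $F$ is a fragmented family.

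There is no real obstacle here; the only thing requiring a moment of care is the bookkeeping identifying basic entourages of the uniformity of uniform convergence on $Y^F$ with the family $\{\eta_\eps\}_{\eps \in \mu}$, and correspondingly translating "$\eta_\eps$-small subset of $Y^F$" into "$\eps$-small image under each coordinate evaluation." Once that correspondence is pinned down, both implications are immediate rewrites of the definitions. One should also note that nothing is assumed about continuity of the $f \in F$ or of $\pi_F$, consistent with Definition \ref{def:fr} allowing non-continuous fragmenting maps, so no extra hypotheses are needed.
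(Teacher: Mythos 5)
Your proof is correct, and it is exactly the argument the paper intends: the paper's own proof is just the word ``Straightforward,'' and your unwinding of the basic entourages $\eta_\eps$ of the uniformity of uniform convergence, together with the observation that $\eta_\eps$-smallness of $\pi_F(O\cap A)$ means $\eps$-smallness of $f(O\cap A)$ for all $f\in F$ simultaneously, is the natural (and essentially only) way to make that explicit. Nothing is missing; the remark that passing to a base of $\mu_U$ suffices is the only point of care, and you handle it.
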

\begin{proof}
	Straighforward. 
\end{proof} 

\begin{lem} \label{l:surjection_of_fragmented_family}
	Let $\alpha \colon X \to X'$ be a continuous map between compact spaces, $(Y, \mu)$ be a uniform space and $F \subseteq Y^{X'}$ be a 
	family of functions.
	If $F$ is fragmented, then so is $F \circ \alpha \subseteq Y^{X}$.
	If $\alpha$ is surjective, then the converse is also true.
\end{lem}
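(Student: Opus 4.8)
The plan is to reduce the statement to the transport of fragmentability under a continuous (resp. continuous surjective) map via the reformulation in Lemma \ref{lemma:fragmented_family_as_simple_fragmentation}. First I would observe that the composition map $\pi_{F \circ \alpha} \colon X \to Y^{F \circ \alpha}$ is essentially $\pi_F \circ \alpha$ up to the canonical identification of the index sets $F$ and $F \circ \alpha$ (the assignment $f \mapsto f \circ \alpha$ is a surjection $F \to F\circ\alpha$, so the product $Y^{F\circ\alpha}$ sits inside $Y^F$ by pulling back coordinates, and $\mu_U$ on $Y^{F\circ\alpha}$ is induced from $\mu_U$ on $Y^F$). Concretely: for $x \in X$ and $g = f\circ\alpha \in F\circ\alpha$ we have $\pi_{F\circ\alpha}(x)(g) = g(x) = f(\alpha(x)) = \pi_F(\alpha(x))(f)$. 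So $\pi_{F\circ\alpha}$ factors as $X \xrightarrow{\alpha} X' \xrightarrow{\pi_F} Y^F$ followed by the coordinate restriction $Y^F \to Y^{F\circ\alpha}$, and the latter is uniformly continuous for the uniformities of uniform convergence.

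The forward direction then follows from two easy facts about fragmentability: (a) if $\pi_F$ is $(\tau,\mu_U)$-fragmented on $X'$, then $\pi_F \circ \alpha$ is fragmented on $X$ for any continuous $\alpha$ — given $A \subseteq X$ nonempty and $\eps \in \mu_U$, apply fragmentability of $\pi_F$ to the set $\alpha(A) \subseteq X'$ to get an open $O' \subseteq X'$ with $O' \cap \alpha(A) \neq \emptyset$ and $\pi_F(O' \cap \alpha(A))$ $\eps$-small; then $O := \alpha^{-1}(O')$ is open, $O \cap A \neq \emptyset$, and $\pi_F(\alpha(O\cap A)) \subseteq \pi_F(O' \cap \alpha(A))$ is $\eps$-small; (b) composing further with the uniformly continuous coordinate-restriction map $Y^F \to Y^{F\circ\alpha}$ preserves fragmentedness (an $\eps$-small set maps to an $\eps'$-small set where $\eps'$ is the image entourage). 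Combining (a) and (b) with Lemma \ref{lemma:fragmented_family_as_simple_fragmentation} gives that $F \circ \alpha$ is a fragmented family.

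For the converse, assuming $\alpha$ surjective: here I would use that a continuous surjection between compact spaces is a closed quotient map, so every nonempty $A' \subseteq X'$ is of the form $\alpha(A)$ for some nonempty $A \subseteq X$ (take $A = \alpha^{-1}(A')$, or any set surjecting onto $A'$), and moreover preimages of open sets under $\alpha$ exhaust enough open sets of $X$ to run the argument in reverse — but one must be careful, since a general open $O \subseteq X$ need not be $\alpha^{-1}$ of an open set of $X'$. The cleaner route is to note that when $\alpha$ is a surjection the coordinate-restriction identification $F \to F\circ\alpha$, $f \mapsto f\circ\alpha$, is actually a \emph{bijection} (if $f_1 \circ \alpha = f_2 \circ \alpha$ and $\alpha$ is onto, then $f_1 = f_2$), and it is a uniform isomorphism $(Y^{F\circ\alpha},\mu_U) \to (\pi_F(X'),\mu_U)$ carrying $\pi_{F\circ\alpha}$ onto $\pi_F$ restricted to $\alpha(X) = X'$. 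Then $(X',\tau)$ is $(\tau,\mu_U)$-fragmented by $\pi_F$ because $X'$ is the continuous image of the compact fragmented space $X$ under $\alpha$: given $A' \subseteq X'$ nonempty and $\eps \in \mu_U$, by compactness $A'$ is the image of the compact set $\alpha^{-1}(A')$; using that $X$ is compact (hence hereditarily Baire) and applying fragmentability of $\pi_{F\circ\alpha}$ on $\alpha^{-1}(A')$, together with an Baire-category / minimal-closed-set argument to push an open witness in $X$ down to an open witness in $X'$ via the closed map $\alpha$. The main obstacle I anticipate is exactly this last point — transporting the open set $O \subseteq X$ on which $\pi_{F\circ\alpha}$ is $\eps$-small down to an open set $O' \subseteq X'$ with $O' \cap A' \neq \emptyset$ and $\pi_F(O'\cap A')$ $\eps$-small, since $\alpha(O)$ need not be open; the standard fix is to use the closedness of $\alpha$ to take $O' := X' \setminus \alpha(X \setminus O)$, check $O' \cap A' \neq \emptyset$ (this uses that $O \cap \alpha^{-1}(A')$ meets a fiber fully, which can be arranged by intersecting with a suitable minimal relatively closed subset), and verify $\alpha^{-1}(O') \subseteq O$ so that $\pi_F(O' \cap A') \subseteq \pi_{F\circ\alpha}(O \cap \alpha^{-1}(A'))$ is $\eps$-small modulo the coordinate identification.
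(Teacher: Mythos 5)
Your proposal is correct and takes essentially the same route as the paper: the paper reduces the family statement to a single map via Lemma \ref{lemma:fragmented_family_as_simple_fragmentation} and then cites \cite[Lemma 2.3.5]{GM-rose} for transporting fragmentability along a continuous surjection of compact spaces, which is precisely the statement your sketch proves (the easy pull-back for the forward direction, and the minimal-closed-set/irreducibility trick with the closed map $\alpha$ for the converse). Two small execution notes: in the converse one should first pass to closed $A'$ (fragmentability may be tested on closed sets, and compact fibers are needed for Zorn to yield a minimal closed $B\subseteq\alpha^{-1}(A')$ with $\alpha(B)=A'$) and then remove $\alpha(B\setminus O)$ rather than $\alpha(X\setminus O)$; the Baire-category remark is not needed.
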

\begin{proof}
	Combination of Lemma \ref{lemma:fragmented_family_as_simple_fragmentation} and \cite[Lemma 2.3.5]{GM-rose}.
\end{proof}

\begin{lem} \label{l:convFr} \ 
	\begin{enumerate}
		\item 	Let $F$ be a fragmented family of real valued functions on $X$. Then 
		$\acx (F)$ is also fragmented.
		\item \cite[Prop.~4.15]{GM-rose}	Let $F$ be an eventually fragmented family of real valued functions on a compact space $X$. Then $co (F)$ is also eventually fragmented. 
	\end{enumerate}
\end{lem}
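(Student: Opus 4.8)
The plan is to prove part (1) directly from the definitions; part (2) is quoted verbatim from \cite[Prop.~4.15]{GM-rose} and needs no separate argument here (it is genuinely more delicate, since a sequence in $\co(F)$ may involve infinitely many distinct members of $F$ and so does not reduce to part (1); the compactness of $X$ and the subsequence extraction of \cite{GM-rose} are really used there). So the content to be written is the statement about $\acx(F)$.

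For part (1), the first step is to spell out the hypothesis correctly. By Definition~\ref{d:fr-family}.1, ``$F$ is a fragmented family'' means: for every nonempty $A\subseteq X$ and every $\eps>0$ there is \emph{one} open set $O$ with $O\cap A\neq\emptyset$ such that $|f(x)-f(y)|<\eps$ for all $x,y\in O\cap A$ and all $f\in F$ simultaneously. The essential feature is that $O$ is chosen uniformly over $F$. The second step is to describe a generic element of $\acx(F)=\co(\bo(F))$ in the sense of Definition~\ref{d:gauge}: absorbing the balancing scalars $\lambda_i\in[-1,1]$ into the convex weights $\alpha_i\in[0,1]$ with $\sum\alpha_i=1$, such an element has the form $g=\sum_{i=1}^{n}\gamma_i f_i$ with $f_i\in F$ and $\sum_{i=1}^{n}|\gamma_i|\le 1$.

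The estimate is then immediate. Given $A$ and $\eps$, choose the common open set $O$ for $F$ and $\eps$. For any $g=\sum_{i=1}^n\gamma_i f_i\in\acx(F)$ as above and any $x,y\in O\cap A$,
$$
|g(x)-g(y)|=\Bigl|\sum_{i=1}^n\gamma_i\bigl(f_i(x)-f_i(y)\bigr)\Bigr|\le\sum_{i=1}^n|\gamma_i|\,|f_i(x)-f_i(y)|\le\eps ,
$$
so $g(O\cap A)$ is $\eps$-small. Since the same $O$ works for every $g\in\acx(F)$, Definition~\ref{d:fr-family}.1 holds for $\acx(F)$. The argument is a single application of the triangle inequality, so there is no real obstacle: the only things to get right are the two bookkeeping points above — reading the fragmented-family condition with the open set uniform over $F$ (which is exactly what legitimizes the convex-combination step), and noting that the balanced-then-convex hull produces coefficient vectors of $\ell^1$-norm at most $1$. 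No boundedness of $F$ is needed, since only the values of the $f_i$ on $O\cap A$ enter the estimate; if one wishes to be fussy about strict versus non-strict inequality in ``$\eps$-small'', one simply applies fragmentability of $F$ with $\eps/2$.
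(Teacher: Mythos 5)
Your argument is correct and is essentially the paper's own proof: the paper also observes that if each $f_i(O\cap A)$ is $\eps$-small and $\sum_{i=1}^n |c_i|\le 1$, then $\sum_{i=1}^n c_i f_i(O\cap A)$ is $\eps$-small, with part (2) quoted from \cite[Prop.~4.15]{GM-rose}. Your extra bookkeeping (uniform choice of $O$ over $F$, $\ell^1$-norm of the coefficients in $\acx(F)$) just makes explicit what the paper leaves implicit.
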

\begin{proof} (1) If $f_i(D)$ is $\eps$-small for every $i = 1, \dotsc, n$ and 
$\sum_{i=1}^n \lvert c_i \rvert \leq 1$, $c_i \in \R$, then
$\sum_{i=1}^n c_i f_i(D)$ is $\eps$-small. 
\end{proof}

\begin{lem} \label{l:p-clFr} 
	Let $F \subset X$ be a fragmented family of functions from a topological space $X$ into a uniform space $Y$. Then the pointwise closure $\overline{F}^p$ is also a fragmented family. 
\end{lem}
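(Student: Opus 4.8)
The plan is to verify the defining condition of fragmentability for families (Definition \ref{d:fr-family}.1, built on Definition \ref{def:fr}) directly for $\overline{F}^p$ inside $Y^X$, by a routine ``triangle inequality'' argument with entourages. Fix a nonempty subset $A\subseteq X$ and an entourage $\eps\in\mu$. Using that $(Y,\mu)$ is a uniform space, first choose a \emph{symmetric} entourage $\delta\in\mu$ containing the diagonal with $\delta\circ\delta\circ\delta\subseteq\eps$. Since $F$ itself is a fragmented family, there is an open $O\subseteq X$ with $O\cap A\neq\emptyset$ such that $f(O\cap A)$ is $\delta$-small for \emph{every} $f\in F$; equivalently, $(f(x),f(x'))\in\delta$ for all $x,x'\in O\cap A$ and all $f\in F$. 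I claim this same $O$ witnesses $\eps$-smallness for the whole pointwise closure.

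To prove the claim, fix $g\in\overline{F}^p$ and $x,x'\in O\cap A$. By definition of the topology of pointwise convergence on $Y^X$, the set $\{h\in Y^X:\ (h(x),g(x))\in\delta,\ (h(x'),g(x'))\in\delta\}$ is a neighborhood of $g$, hence meets $F$; pick $f\in F$ in this intersection. Then, using symmetry of $\delta$ and the fragmentation property of $F$ on $O\cap A$, we have the chain $(g(x),f(x))\in\delta$, $(f(x),f(x'))\in\delta$, $(f(x'),g(x'))\in\delta$, whence $(g(x),g(x'))\in\delta\circ\delta\circ\delta\subseteq\eps$. Since $x,x'\in O\cap A$ and $g\in\overline{F}^p$ were arbitrary, $g(O\cap A)$ is $\eps$-small for every $g\in\overline{F}^p$, which is exactly what is required.

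I do not expect any real obstacle here; the only points needing a little care are working with a symmetric $\delta$ whose triple composition lies in $\eps$ (legitimate in an arbitrary, not necessarily pseudometrizable, uniform space), and keeping straight that ``$\eps$-small'' for a set $S$ means $S\times S\subseteq\eps$. One could alternatively route the argument through Lemma \ref{lemma:fragmented_family_as_simple_fragmentation}, comparing $\pi_{\overline{F}^p}$ with $\pi_F$ via the restriction map $Y^{\overline{F}^p}\to Y^F$ (which is injective on $\pi_{\overline{F}^p}(X)$, since $f(x)=f(x')$ for all $f\in F$ forces $g(x)=g(x')$ for all pointwise limits $g$), but the direct computation above is shorter and self-contained.
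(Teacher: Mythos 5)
Your proof is correct and follows essentially the same route as the paper: choose $\delta$ with triple composition inside $\eps$, take the open set $O$ witnessing $\delta$-smallness for all of $F$, and transfer it to any pointwise limit by a three-step entourage chain through a nearby $f\in F$. The only cosmetic difference is that you insist on a symmetric $\delta$, while the paper avoids this by ordering the pairs $(h(x),f_0(x))$ and $(f_0(y),h(y))$ appropriately; both are legitimate.
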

\begin{proof}
	Let $A \subset X$ be a nonempty subset and $\eps \in \mu$. Choose $\delta \in \mu$ such that $\delta^3 \subset \eps$. There exist an open subset $O \subset X$ such that $O \cap A \neq \emptyset$ and $f(O \cap A)$ is $\delta$-small for every $f \in F$. Let $h \in \overline{F}^p$. For this $h$ and a given pair $x,y \in O \cap A$ (by definition of the pointwise topology), there exists $f_0 \in F$ such that 
	$$(h(x),f_0(x))  \in \delta, \  (f_0(y),h(y)) \in \delta.$$ 
	Since $f_0(O \cap A)$ is $\delta$-small, we have $(f_0(x),f_0(y)) \in \delta$. So, we obtain
	$(h(x),h(y)) \in \eps$. Therefore, $h(O \cap A)$ is $\eps$-small, as desired. 
	\end{proof}


The following lemma is inspired by results of Namioka and it can be deduced after some reformulations from \cite[Theorems 3.4 and 3.6]{N}. 

\begin{lem} \label{t:countDetermined} \cite[Theorem 2.6]{GM-MTame}
	Let $F$ be a
	bounded
	family of real valued \textbf{continuous} functions on a compact 
	space $X$.
	The following conditions are equivalent:
	\ben
	\item
	$F$ is a fragmented family of functions on $X$.
	\item
	Every \emph{countable} subfamily $C$ of $F$ is fragmented on $X$.
	\item
	For every countable subfamily $C$ of $F$
	the pseudometric space $(X,\rho_{C})$ is separable,
	where
	$$
	\rho_{C}(x_1,x_2):=\sup_{f \in C} |f(x_1)-f(x_2)|.
	$$
	\een
\end{lem}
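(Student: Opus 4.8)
The plan is to prove Lemma \ref{t:countDetermined} by the cycle $(1)\Rightarrow(2)\Rightarrow(3)\Rightarrow(1)$, where the only nontrivial implication is $(3)\Rightarrow(1)$ and, within it, the reduction to a single Baire-category / Namioka-type argument on a compact space equipped with a countable family of continuous functions.

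First, $(1)\Rightarrow(2)$ is immediate from Definition \ref{d:fr-family}.1: if the fragmentation condition holds simultaneously for all $f\in F$, it certainly holds simultaneously for all $f$ in any subfamily $C\subseteq F$. Next, $(2)\Rightarrow(3)$: fix a countable $C\subseteq F$ and consider the map $\pi_C\colon X\to \R^{C}$, $\pi_C(x)(f)=f(x)$. By Lemma \ref{lemma:fragmented_family_as_simple_fragmentation}, $C$ being a fragmented family is equivalent to $\pi_C$ being $(\tau,\mu_U)$-fragmented, where $\mu_U$ is the uniformity of uniform convergence on $\R^{C}$; but this uniformity restricted to the bounded set $\pi_C(X)$ is exactly the one induced by the pseudometric $\rho_C$. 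Since $X$ is compact (hence hereditarily Baire) and $(\R^{C},\rho_C)$-pseudometrizable, $\pi_C$ fragmented means $\pi_C$ has PCP, and a standard fact gives that the image of a PCP map from a hereditarily Baire space into a pseudometric space is separable; alternatively one argues directly that fragmentability of $X$ by $\rho_C$ forces $(X,\rho_C)$ to be separable, by exhausting $X$ with countably many $\rho_C$-small relatively open pieces via a transfinite (or Zorn) argument and using the Baire property. Either way $(X,\rho_C)$ is separable.

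The substantive implication is $(3)\Rightarrow(1)$. I would proceed contrapositively through $(2)$: by Definition \ref{d:fr-family}.1 and the standard fact that a family of \emph{continuous} functions on a compact space is fragmented iff every countable subfamily is (which is precisely the content the lemma attributes to \cite{N}), it suffices to show that each countable $C\subseteq F$ is a fragmented family, i.e.\ that $\pi_C\colon X\to(\R^{C},\rho_C)$ is fragmented. Here $\pi_C$ is continuous from the compact space $X$ into the pseudometric space $(\R^{C},\rho_C)$ — indeed each coordinate $f\in C$ is continuous and $\rho_C$ is the sup over $C$ of a uniformly bounded family, so $\pi_C$ is continuous — and a continuous map into a pseudometric space is trivially fragmented (for any nonempty $A\subseteq X$ and $\eps>0$ pick any $x\in A$ and an open $O\ni x$ with $\rho_C$-oscillation $<\eps$ on $O$, which exists by continuity; then $O\cap A\neq\emptyset$ and $\pi_C(O\cap A)$ is $\eps$-small). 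Thus separability of $(X,\rho_C)$ is not even needed for this direction, and the equivalence $(2)\Leftrightarrow(3)$ serves only to make condition (3) a usable reformulation.

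The main obstacle is really the countable-determination step hidden in $(1)\Leftrightarrow(2)$, i.e.\ that a bounded family of continuous functions on compact $X$ is fragmented as soon as each of its countable subfamilies is. Since the excerpt explicitly cites \cite[Theorem 2.6]{GM-MTame} for this lemma and indicates that it follows from Namioka's \cite[Theorems 3.4 and 3.6]{N}, I would invoke that: given a putative failure of fragmentability of $F$, one extracts a countable witness — a nonempty closed $A\subseteq X$ and $\eps>0$ such that no relatively open piece of $A$ is $\eps$-small for $F$ — and, using continuity of the members of $F$ together with compactness of $X$ and a Namioka-style joint-continuity / Baire-category argument, produces a countable subfamily $C\subseteq F$ for which $(A,\rho_C)$ is already non-separable (equivalently, $C$ is not fragmented on $A$, hence not on $X$), contradicting (3). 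The delicate point in that extraction is ensuring countability of the witnessing subfamily while retaining the non-separability obstruction, which is exactly where Namioka's theorems do the work and why the lemma is credited to them.
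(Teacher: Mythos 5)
Your cycle founders at both of its nontrivial steps. In $(3)\Rightarrow(1)$ you assert that $\pi_C\colon X\to(\R^{C},\rho_C)$ is continuous "since each coordinate $f\in C$ is continuous": continuity of each member of $C$ only gives continuity of $\pi_C$ into the \emph{pointwise} topology of $\R^{C}$; continuity into the sup-pseudometric $\rho_C$ is equicontinuity of $C$, which is not assumed. The conclusion you draw from it — that every countable family of continuous functions is automatically fragmented, so "separability of $(X,\rho_C)$ is not even needed" — is false: the sequence of coordinate projections on $\{0,1\}^{\N}$ (an independent, hence non-tame and a fortiori non-fragmented family, as the paper itself notes) is a countable bounded family of continuous functions for which $(X,\rho_C)$ is an uncountable $\{0,1\}$-valued pseudometric space, so condition (3) fails exactly as it must. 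The genuinely hard content of the lemma is the countable-determination step (that fragmentability of $F$ follows from the countable conditions), and your plan there is to "invoke" \cite[Thm.~2.6]{GM-MTame} together with Namioka's theorems — but the statement you are proving \emph{is} that cited theorem (the paper gives no proof, only this citation), so this is circular; the Namioka-style extraction (a binary tree of relatively open subsets of a closed witness set $A$ together with countably many functions of $F$ separating sibling nodes by $\eps/2$, whose branches produce continuum many points pairwise $\rho_C$-far apart for a countable $C$) is precisely what would have to be written out.

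Your $(2)\Rightarrow(3)$ step also rests on a false "standard fact": a fragmented (or PCP) map from a compact space into a pseudometric space need not have separable image, and fragmentability of $X$ by a pseudometric does not force separability. The identity map on the one-point compactification of an uncountable discrete set, taken into the $\{0,1\}$-valued metric, is fragmented (every nonempty closed subset has a relatively isolated point) yet has non-separable image — this is the Radon--Nikod\'ym-compacta phenomenon — and for the same reason your transfinite exhaustion need not be countable without hereditary Lindel\"ofness. Note that your argument nowhere uses the countability of $C$, and if it were correct it would show $(X,\rho_F)$ separable for the \emph{whole} fragmented family $F$, which is false (take $F$ the characteristic functions of singletons in the example above). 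The correct route is: since $C$ is countable and consists of continuous functions, $K:=\pi_C(X)\subseteq\R^{C}$ is pointwise compact and metrizable, hence second countable; fragmentability of $C$ transfers to fragmentability of $(K,\tau_p)$ by the sup-metric, and on a hereditarily Lindel\"of space the exhaustion argument does yield, for each $\eps>0$, a countable cover by $\eps$-small sets, whence $(X,\rho_C)$ is separable. (Conversely, for a single countable $C$, separability of $(X,\rho_C)$ gives fragmentability by a Baire-category argument using that $x\mapsto\rho_C(x,y)$ is lower semicontinuous — this, not continuity of $\pi_C$, is the correct easy direction.)
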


\begin{lemma} \label{lemma:closure_of_eventually_fragmented}
	Let $X$ be a topological space. 
	If $F \subseteq C(X)$ is a 
	(eventually) fragmented family, then so is its closure $\overline{F}$ in the uniform topology of $C(X)$.
\end{lemma}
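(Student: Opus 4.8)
The plan is to prove the two assertions—the one for fragmented families and the one for eventually fragmented families—separately, in each case reducing the uniform closure to a closure that the excerpt already controls. For the plain fragmented case I would note that the topology of uniform convergence is finer than the topology of pointwise convergence, so the uniform closure $\overline{F}$ is contained in the pointwise closure $\overline{F}^{\,p}$. By Lemma~\ref{l:p-clFr} the family $\overline{F}^{\,p}$ is fragmented, and since the defining condition of a fragmented family (Definition~\ref{d:fr-family}.1) is merely required to hold for every member, any subfamily of a fragmented family is again fragmented; hence $\overline{F}$ is fragmented. I also record, for use below, that $\overline{F}\subseteq C(X)$ because a uniform limit of continuous functions is continuous.

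For the eventually fragmented case, given a sequence $(h_m)_{m\in\N}$ in $\overline{F}$ I would choose $f_m\in F$ with $\sup_{x\in X}|h_m(x)-f_m(x)|<1/m$. Eventual fragmentability of $F$ yields a subsequence $(f_{m_j})_{j\in\N}$ that is a fragmented family on $X$, and I claim the corresponding subsequence $(h_{m_j})_j$ is a fragmented family. To see this, fix a nonempty $A\subseteq X$ and $\eps>0$ and choose $J$ with $1/J<\eps/3$. The finite set $\{h_{m_1},\dots,h_{m_{J-1}}\}$ consists of continuous functions, and adjoining finitely many continuous functions to a fragmented family keeps it fragmented (a single continuous function is fragmented by continuity, and a finite union of fragmented families is fragmented by iterating the fragmentation step inside successively smaller relatively open pieces of $A$, exactly as in the proof of Lemma~\ref{l:p-clFr}); hence $\{f_{m_j}:j\in\N\}\cup\{h_{m_1},\dots,h_{m_{J-1}}\}$ is fragmented. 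Applying its fragmentation to $A$ and $\eps/3$ produces an open set $O$ with $O\cap A\neq\emptyset$ on which each listed function has oscillation less than $\eps/3$. For $j<J$ this already bounds $\diam\, h_{m_j}(O\cap A)$ by $\eps/3$; for $j\geq J$ and $x,y\in O\cap A$ the triangle inequality gives
$$|h_{m_j}(x)-h_{m_j}(y)|\leq |h_{m_j}(x)-f_{m_j}(x)|+|f_{m_j}(x)-f_{m_j}(y)|+|f_{m_j}(y)-h_{m_j}(y)|<\tfrac{1}{m_j}+\tfrac{\eps}{3}+\tfrac{1}{m_j}\leq\eps,$$
using $1/m_j\leq 1/j\leq 1/J<\eps/3$. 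Thus $h_{m_j}(O\cap A)$ is $\eps$-small for every $j$, so $(h_{m_j})_j$ is a fragmented family and $\overline{F}$ is eventually fragmented.

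The argument is essentially routine; the one point that requires a little care—and the reason I build the auxiliary fragmented family instead of quoting a closure lemma verbatim—is that the approximants $f_m$ are only \emph{eventually} within a prescribed tolerance of the $h_m$, so the finitely many indices $j<J$ must be absorbed by adjoining $h_{m_1},\dots,h_{m_{J-1}}$ to the family before fragmenting. Everything else (the inclusion of the uniform closure in the pointwise closure, stability of fragmented families under finite unions and under passing to subfamilies, and the three-term estimate) is standard.
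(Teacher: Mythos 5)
Your proof is correct and follows essentially the same route as the paper: the fragmented case is reduced to the pointwise-closure statement (Lemma \ref{l:p-clFr}), and the eventually fragmented case uses $1/m$-uniform approximants from $F$, a fragmented subsequence of these, and a three-term estimate. The only difference is cosmetic: where the paper absorbs the finitely many initial indices by shrinking the open set via continuity of the limit functions at a chosen point, you adjoin those finitely many continuous functions to the fragmented family before fragmenting --- both devices rest on the same continuity fact and yield the same estimate.
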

\begin{proof}
	The case of fragmented families is a consequence of Lemma \ref{l:p-clFr}.
	We are left with the case of eventually fragmented families.
	Suppose that $\{f_{n}\}_{n \in \N} \subseteq \overline{F} \subseteq C(X)$.
	By definition, we can find $\{g_{n}\}_{n \in \N} \subseteq F$ such that
	$$
	  \forall x \in X: \lvert f_{n}(x) - g_{n}(x) \rvert < \frac{1}{n}.
	$$
	Since $F$ is eventually fragmented, we can find a subsequence $\{n_{k}\}_{k \in \N} \subseteq \N$ such that $\{g_{n_{k}}\}_{k \in \N}$ is fragmented.
	We claim that $\{f_{n_{k}}\}_{k \in \N}$ is also fragmented.
	
	Let $A \subseteq X$ be non-empty and $\eps > 0$.
	By definition, there exists some open $O \subseteq X$ such that $A \cap O \neq \emptyset$ and $g_{n_{k}}(A \cap O)$ is $\frac{1}{3}\eps$-small for every $k \in \N$.
	Choose $x \in A \cap O$ and $n_{0} \in \N$ such that $\frac{1}{n_{0}} \leq \frac{1}{3}\eps$.
	Since $\{f_{n}\}_{n \in \N} \subseteq C(X)$, we can find a neighborhood $x \subseteq U \subseteq O$ such that for every $1 \leq m \leq n_{0}$, $f_{m}(U)$ is $\eps$-small.
	\ben
		\item If $n_{k} \leq n_{0}$, then $f_{n_{k}}(U \cap A) \subseteq f_{n_{k}}(U)$ is $\eps$-small by our construction.
		\item Otherwise, $\lVert f_{n_{k}} - g_{n_{k}} \rVert \leq \frac{1}{3}\eps$.
		Moreover, $g_{n_{k}}(U \cap A) \subseteq g_{n_{k}}(O \cap A)$ is $\frac{1}{3}\eps$-small.
		Therefore, we conclude that $f_{n_{k}}(U \cap A)$ is $\eps$-small.
	\een
	In either case, $f_{n_{k}}(U \cap A)$ is $\eps$-small, as required.
	Also, note that $x \in U \cap A \neq \emptyset$.
\end{proof}

\begin{cor} \label{cor:fragmented_bipolarity}
	Let $X$ be a topological space. 
	If $F \subseteq C(X)$ is a 
	(eventually) fragmented family, then so is $\overline{\acx}^{w} F$,
	where the closure is taken with respect to the weak topology induced by the 
	 supremum norm.
\end{cor}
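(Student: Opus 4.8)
The plan is to pass from the weak closure to a uniform closure by Mazur's theorem, and then feed the resulting family to the closure-stability result Lemma~\ref{lemma:closure_of_eventually_fragmented}. Since $F$ is a bounded family we may regard it as a subset of the Banach space $C_b(X)$ of bounded continuous functions, and $\acx F$ is a convex subset there; hence by Mazur's theorem (Fact~\ref{f:Mazur})
$$
\overline{\acx}^{w} F \;=\; \overline{\acx F},
$$
the closure on the right being taken in the supremum norm, i.e.\ in the uniform topology of $C(X)$. So it suffices to prove that $\acx F$ is a (eventually) fragmented family and then invoke Lemma~\ref{lemma:closure_of_eventually_fragmented}, which applies since $\acx F\subseteq C(X)$. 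In the purely fragmented case this is immediate: $\acx F$ is fragmented by Lemma~\ref{l:convFr}(1). (Here one can even bypass Mazur: the weak topology of $C_b(X)$ is finer than the pointwise one, so $\overline{\acx}^{w}F\subseteq\overline{\acx F}^{\,p}$, and the latter is fragmented by Lemma~\ref{l:convFr}(1) together with Lemma~\ref{l:p-clFr}, hence so is its subfamily $\overline{\acx}^{w}F$.)

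For an eventually fragmented $F$ I would first check that the balanced hull $\bo F=\{\alpha f:\ f\in F,\ |\alpha|\le 1\}$ is eventually fragmented. Given a sequence $\{\alpha_n f_n\}$ in $\bo F$, use eventual fragmentation of $F$ to pass to a subsequence for which $\{f_{n_k}\}$ is a fragmented family; then every $\alpha_{n_k}f_{n_k}$ lies in $\acx\{f_{n_k}\}$, which is fragmented by Lemma~\ref{l:convFr}(1), so the subfamily $\{\alpha_{n_k}f_{n_k}\}_k$ is fragmented. Consequently $\bo F$ is eventually fragmented, and then $\acx F=\co(\bo F)$ is eventually fragmented by Lemma~\ref{l:convFr}(2); one concludes as before with Lemma~\ref{lemma:closure_of_eventually_fragmented}.

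The main obstacle is that Lemma~\ref{l:convFr}(2) is stated for a compact domain, whereas $X$ here is arbitrary. To bridge this I would replace $X$ by its Stone--\v{C}ech compactification $\beta X$: identifying $C_b(X)$ isometrically with $C(\beta X)$, the set $\overline{\acx}^{w} F$ corresponds to the analogous set built over the compact space $\beta X$ from the canonical extensions of the members of $F$, so it is enough to know that (eventual) fragmentation of the bounded family $F$ on $X$ is equivalent to that of its extended family on $\beta X$. One direction is a trivial restriction to the dense subspace $X$; the other — the delicate one — rests on the fact that (eventual) fragmentation of a bounded family of continuous functions is detected by combinatorial, countably determined data that is insensitive to the ambient space: namely tameness, which is an $l^1$-independence condition on the values and is read off the same way over $X$ and over $\beta X$ by density of $X$, equivalently the separability of the pseudometrics $\rho_{C}$ of Lemma~\ref{t:countDetermined} for countable subfamilies $C$. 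Granting this transfer, the compact case treated above yields the general statement.
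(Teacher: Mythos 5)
Your handling of the purely fragmented case is correct and is exactly the paper's route: Mazur's theorem (Fact~\ref{f:Mazur}) turns the weak closure into the norm closure, Lemma~\ref{l:convFr}(1) gives fragmentability of $\acx F$, and Lemma~\ref{lemma:closure_of_eventually_fragmented} (or, as in your shortcut, Lemma~\ref{l:p-clFr}) finishes. Your balanced-hull reduction in the eventual case is also fine, and you are right to flag that Lemma~\ref{l:convFr}(2) is stated only for a compact domain --- a point the paper's two-line proof passes over silently; note that in every later use of the corollary (e.g.\ bipolarity in Proposition~\ref{prop:examples_of_bornological_categories}) the underlying space is a compact $K$, and there your argument coincides with the paper's and is complete.

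The proposed repair via $\beta X$, however, contains a genuine gap: the transfer you need --- that (eventual) fragmentation of a bounded family of continuous functions on $X$ passes to the family of canonical extensions on $\beta X$ --- is false. Take $X=\N$ discrete and let $F=\{f_n\}$ be an independent sequence of $\{0,1\}$-valued functions on $\N$ (for instance, restrictions of the coordinate projections of $\{0,1\}^{\N}$ to a countable set of witnesses of independence). On a discrete space \emph{every} family is fragmented, so $F$ is (eventually) fragmented on $X$; but the extensions $\tilde f_n$ to $\beta\N$ form an independent sequence of continuous functions on a compact space, hence by Lemma~\ref{f:sub-fr} they are not tame, therefore not eventually fragmented, and a fortiori not a fragmented family on $\beta\N$. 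The mechanism you invoke is precisely where this breaks: tameness and eventual fragmentation are equivalent only over compact spaces (Lemma~\ref{f:sub-fr} requires compactness), and density of $X$ in $\beta X$ is density in the compact topology, not $\rho_{C}$-density, so separability of $(X,\rho_{C})$ (automatic for countable $X$) does not yield separability of $(\beta X,\rho_{\tilde C})$ as Lemma~\ref{t:countDetermined} would require --- in the example $(\N,\rho_{C})$ is separable while $(\beta\N,\rho_{\tilde C})$ is not. Consequently your argument establishes the corollary only for compact $X$ (which is all the paper uses, and in effect all that its citation of Lemma~\ref{l:convFr}(2) covers); the eventual-fragmentation statement for a general topological space is not proved by the $\beta X$ bridge.
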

\begin{proof}
	By Mazur's Theorem (Fact \ref{f:Mazur}), $\overline{\acx}^{w} F = \overline{\acx} F$.
	Now, we can apply Lemma \ref{l:convFr} and Lemma \ref{lemma:closure_of_eventually_fragmented} to get the desired result.
\end{proof}

\sk 

\begin{remark} \label{r:reform}  \
	An important example for the use of fragmented families (Definition \ref{d:fr-family}) is in the case of bounded sets in Banach spaces.
	If $B \subseteq V$ is a bounded subset of a Banach space and $K \subseteq V^{*}$ is a weak-star compact subset, then we can view $B$ as a family of functions over $K$.
	In this case, $B$ is fragmented iff for every non-empty subset $A \subseteq K$ and $\eps > 0$ there exists a weak-star open subset $O \subseteq V^{*}$ such that $O \cap A$ is not empty and 
	$
	  \diam \{\langle v,x \rangle: x \in O \cap A, v \in B\} < \eps.
	$
\end{remark}

\sk 
For some other properties of fragmented maps and fragmented families,
we refer to \cite{N, JOPV, Me-fr, Me-nz, GM1, GM-rose, GM-tame}.  
Basic properties and more applications of fragmentability in topological dynamics can be found in \cite{GM-rose, GM-survey, GM-tame, Me-fr,Me-nz}.

\sk   
\subsection*{Independent and tame families of functions} 
A sequence of real functions  ${\{f_n\colon X \to \R\}_{n \in \N}}$  on a set
$X$ is said to be 
(combinatorially)  
\emph{independent} (see \cite{Ros0,Tal}) if
there exist real numbers $a < b$ (\emph{bounds of independence}) such that
$$
\bigcap_{n \in P} f_n^{-1}(-\infty,a) \cap  \bigcap_{n \in M} f_n^{-1}(b,\infty) \neq \emptyset
$$
for all finite disjoint subsets $P, M$ of $\N$. 

\begin{defin} \label{d:TameFamily} \cite{GM-tame,GM-MTame}
	A bounded family $F$ of real valued (not necessarily, continuous)  functions on a set $X$ is a {\it tame family} if $F$ does not contain an independent sequence. 
\end{defin}

\begin{lem} \cite[Lemma 6.4]{GM-MTame} \label{lem:surjective_image_of_independent_sequence}
	Suppose that $\pi \colon X \to Y$ is a map and $F \subseteq \R^{Y}$ is a family of bounded functions.
	If $F$ is tame then $F \circ \pi$ is tame.
	Moreover, if $\pi$ is onto, the converse is also true.
\end{lem}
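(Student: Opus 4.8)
The plan is to prove both implications by contraposition, moving a witnessing independent sequence back and forth along $\pi$. The one elementary fact I would isolate first is the set-theoretic identity: for any sequence $\{f_n\}$ in $\R^{Y}$, writing $g_n := f_n \circ \pi$, one has for every $a < b$ and all finite disjoint $P, M \subseteq \N$
$$\bigcap_{n \in P} g_n^{-1}(-\infty, a) \cap \bigcap_{n \in M} g_n^{-1}(b, \infty) = \pi^{-1}\Bigl( \bigcap_{n \in P} f_n^{-1}(-\infty, a) \cap \bigcap_{n \in M} f_n^{-1}(b, \infty) \Bigr),$$
since $g_n(x) = f_n(\pi(x))$. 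From this, nonemptiness of the right-hand set implies nonemptiness of the left-hand set; and if $\pi$ is onto, the reverse implication holds as well, because then the $\pi$-image of the left-hand set equals the right-hand set.

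For the first assertion I would suppose $F \circ \pi$ is not tame, take an independent sequence $\{g_n\}$ in it with bounds $a < b$, and write $g_n = f_n \circ \pi$ with $f_n \in F$. A small preliminary remark is in order: an independent sequence automatically consists of pairwise distinct functions — if $f_n = f_m$ with $n \neq m$, then applying independence to $P = \{n\}$, $M = \{m\}$ would produce a point with value simultaneously $< a$ and $> b$ — so the $g_n$, hence the $f_n$, are distinct, and $\{f_n\}$ is a genuine sequence in $F$. The displayed identity then shows that each of the corresponding $f$-indexed intersections is nonempty, so $\{f_n\}$ is independent with the same bounds and $F$ is not tame. For the second assertion I would run the same argument in the other direction: assuming $\pi$ onto and $F$ not tame, an independent sequence $\{f_n\}$ in $F$ with bounds $a<b$ pushes forward to $\{f_n \circ \pi\}$ in $F \circ \pi$, which is independent with the same bounds by the reverse implication in the identity; uniform boundedness of $F \circ \pi$ is inherited from that of $F$ with the same bound, so there is nothing further to check.

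I do not expect a genuine obstacle here. The only two points worth a sentence of care are the remark that independence forces distinctness — this is what legitimizes calling the transported family a \emph{sequence} — and the observation that the reverse set-theoretic implication genuinely uses surjectivity of $\pi$, which is precisely why the converse half of the lemma requires that hypothesis.
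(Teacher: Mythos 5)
Your proof is correct, and it is the natural argument: the identity $\bigcap_{n \in P} (f_n\circ\pi)^{-1}(-\infty,a) \cap \bigcap_{n \in M} (f_n\circ\pi)^{-1}(b,\infty) = \pi^{-1}\bigl(\bigcap_{n \in P} f_n^{-1}(-\infty,a) \cap \bigcap_{n \in M} f_n^{-1}(b,\infty)\bigr)$ transports independence in one direction for free and in the other direction exactly when $\pi$ is onto, which is where the surjectivity hypothesis is used. Note that the paper itself offers no proof of this lemma, only the citation to \cite{GM-MTame}, so there is no internal argument to compare against; your version is the standard one, and your side remarks (independence forces distinctness, and only the converse needs surjectivity) are accurate and harmless, the first being not strictly necessary since a sequence witnessing non-tameness need only consist of members of $F$.
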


The following fact
 from \cite{Ko} can easily be derived using the finite intersection property characterization of the compactness. 
\begin{f} \label{lemma:independence_over_compact_sets} 
	Suppose that $\{f_{n}\}_{n \in \N}$ is 
	an independent 
	family of continuous functions over a compact $X$.
	Then there are $a < b \in \R$ such that for every disjoint, possibly infinite $P, M \subseteq \N$:
	$$
	  \bigcap_{n \in P} f_n^{-1}(-\infty,a) \cap  \bigcap_{n \in M} f_n^{-1}(b,\infty) \neq \emptyset.
	$$
\end{f}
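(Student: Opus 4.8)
The plan is to bootstrap the finite-case definition of independence to the infinite-case statement via the finite intersection property (FIP) characterization of compactness. Let $a_{0} < b_{0}$ be bounds of independence for $\{f_{n}\}_{n \in \N}$, so that for all \emph{finite} disjoint $P, M \subseteq \N$ one has
$\bigcap_{n \in P} f_{n}^{-1}((-\infty,a_{0})) \cap \bigcap_{n \in M} f_{n}^{-1}((b_{0},\infty)) \neq \emptyset$.
I would then fix reals $a, b$ with $a_{0} < a < b < b_{0}$ (possible since $(a_{0}, b_{0})$ is a nonempty open interval, hence infinite); these will serve as the bounds asserted in the statement.

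Next, for each $n$ I would pass to the \emph{closed} sets $C_{n} := f_{n}^{-1}((-\infty, a_{0}])$ and $D_{n} := f_{n}^{-1}([b_{0}, \infty))$, which are closed in the compact space $X$ because $f_{n}$ is continuous. Two chains of inclusions are then immediate. On one hand, $f_{n}^{-1}((-\infty,a_{0})) \subseteq C_{n}$ and $f_{n}^{-1}((b_{0},\infty)) \subseteq D_{n}$, so the assumption on $a_{0}, b_{0}$ gives $\bigcap_{n \in P} C_{n} \cap \bigcap_{n \in M} D_{n} \neq \emptyset$ for all finite disjoint $P, M$. On the other hand, $C_{n} \subseteq f_{n}^{-1}((-\infty,a))$ and $D_{n} \subseteq f_{n}^{-1}((b,\infty))$, since $a_{0} < a$ and $b < b_{0}$.

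Now fix disjoint, possibly infinite $P, M \subseteq \N$ and consider the family $\mathcal{K} := \{C_{n} : n \in P\} \cup \{D_{n} : n \in M\}$ of closed subsets of $X$. Every finite subfamily of $\mathcal{K}$ has the form $\{C_{n} : n \in P'\} \cup \{D_{n} : n \in M'\}$ with $P' \subseteq P$, $M' \subseteq M$ finite; since $P \cap M = \emptyset$, also $P' \cap M' = \emptyset$, so by the previous paragraph its intersection is nonempty. Hence $\mathcal{K}$ has the FIP, and by compactness of $X$ we obtain $\bigcap_{n \in P} C_{n} \cap \bigcap_{n \in M} D_{n} \neq \emptyset$. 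Enlarging each set via the inclusions $C_{n} \subseteq f_{n}^{-1}((-\infty,a))$ and $D_{n} \subseteq f_{n}^{-1}((b,\infty))$ yields $\bigcap_{n \in P} f_{n}^{-1}((-\infty,a)) \cap \bigcap_{n \in M} f_{n}^{-1}((b,\infty)) \neq \emptyset$, which is exactly the claim.

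I expect the one delicate point to be that compactness cannot be applied to the single family $\{C_{n}\}_{n} \cup \{D_{n}\}_{n}$ all at once: this family fails the FIP, since $C_{n} \cap D_{n} = f_{n}^{-1}((-\infty,a_{0}] \cap [b_{0},\infty)) = \emptyset$ for every $n$. Fixing a disjoint pair $(P, M)$ \emph{before} invoking compactness is precisely what circumvents this. The only other, purely cosmetic, subtlety is the passage from the open strict-inequality level sets in the definition of independence to the closed non-strict ones needed for a compactness argument; this forces the final bounds $a, b$ to lie strictly inside $(a_{0}, b_{0})$, which costs nothing since only the existence of some $a < b$ is asserted.
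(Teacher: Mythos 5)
Your proof is correct and is exactly the argument the paper has in mind: the statement appears there as a Fact (cited from K\"ohler) accompanied only by the remark that it ``can easily be derived using the finite intersection property characterization of the compactness,'' which is precisely your FIP argument. Your handling of the two details --- passing to the closed sets $f_n^{-1}((-\infty,a_0])$, $f_n^{-1}([b_0,\infty))$ with the strict shrinking $a_0<a<b<b_0$, and fixing the disjoint pair $(P,M)$ \emph{before} invoking compactness --- is sound.
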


\sk 
By \cite{Me-Helly}, 
every bounded family of (not necessarily continuous) functions 
$[0,1] \to \R$ with total bounded variation (e.g., \textit{Haar systems}) is tame.  This remains true replacing the set  $[0,1]$ by any circularly (e.g., linearly) ordered set.  

As to the negative examples. 
The sequence of projections on the Cantor cube 
and the sequence of Rademacher functions 
on the unit interval 
both are independent (hence, nontame).  

A critically important example of a nontame sequence is the standard basis sequence ${\{e_n: n \in \N\}}$ in $l^1$ as a family of functions on the unit ball
 of $(l^1)^*=l^{\infty}$. 

\sk 
The following useful lemma synthesizes some known results.
It mainly is based on results of Rosenthal and Talagrand. 
The equivalence of (1), (3) and (4) is a part of \cite[Thm.~14.1.7]{Tal}. 
For the case (1) $\Leftrightarrow$ (2), note that every bounded independent sequence $\{f_n\colon X \to \R\}_{n \in \N}$ is 
an $l^1$-sequence (in the $\sup$-norm), \cite[Prop.~4]{Ros0}. On the other hand, as the proof of \cite[Thm.~1]{Ros0} shows, if $\{f_n\}_{n \in \N}$ has no independent subsequence then it has a pointwise convergent subsequence. Bounded pointwise-Cauchy sequences in $C(X)$ (for compact $X$) are weak-Cauchy as it follows by Lebesgue's theorem. Now Rosenthal's dichotomy theorem \cite{Ros0} asserts that $\{f_n\}$ has no $l^1$-sequence.  
In \cite[Sect.~4]{GM-rose} we show why eventual fragmentability of $F$ can be included in this list (item (5)).  

\begin{lem} \label{f:sub-fr} 
	Let $X$ be a compact space and $F$ is a bounded subset 
	in the Banach space  $C(X)$. 
	The following conditions are equivalent:
	\begin{enumerate}
		\item
		$F$ does not contain an $l^1$-sequence. 
		\item $F$ is a tame family on $X$. 
		\item
		Each sequence in $F$ has a pointwise convergent subsequence in $\R^X$.
		\item 
		 The pointwise closure ${\cls}(F)$ of $F$ in $\R^X$ consists of fragmented maps. 
		\item $F$ is an eventually fragmented family on $X$.
	\end{enumerate}
\end{lem}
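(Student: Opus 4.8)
The plan is to assemble the equivalences from Rosenthal's $l^1$-theorem \cite{Ros0}, Talagrand's analysis of pointwise relatively compact families of functions \cite[Thm.~14.1.7]{Tal}, the fragmentability machinery developed above (Lemmas \ref{l:p-clFr} and \ref{t:countDetermined}), and the eventual fragmentability results of \cite[Sect.~4]{GM-rose}. I would organize the proof around the cycle (1)$\Rightarrow$(2)$\Rightarrow$(3)$\Rightarrow$(1), and then attach (4) and (5) separately. Recall that ``$F$ bounded in $C(X)$'' means exactly that $F$ is uniformly bounded as a family of functions, which is the form of the hypothesis needed by all cited results and by the dominated convergence argument below.

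For (1)$\Rightarrow$(2) I would argue contrapositively: if $F$ is not tame it contains an independent sequence $\{f_n\}$, which by \cite[Prop.~4]{Ros0} is equivalent in the supremum norm to the unit vector basis of $l^1$, so $F$ contains an $l^1$-sequence, contradicting (1). For (2)$\Rightarrow$(3): if $F$ is tame then no sequence $\{f_n\}\subseteq F$ has an independent subsequence --- such a subsequence would itself be an independent sequence lying in $F$ --- so, by the proof of \cite[Thm.~1]{Ros0}, every such $\{f_n\}$ has a pointwise convergent subsequence in $\R^X$. For (3)$\Rightarrow$(1): if some $\{f_n\}\subseteq F$ were an $l^1$-sequence, then by (3) it would have a pointwise convergent subsequence $\{f_{n_k}\}$, which --- $X$ being compact --- is weak Cauchy in $C(X)$ by Lebesgue's dominated convergence theorem (testing against the regular Borel measures forming $C(X)^*$), hence not equivalent to the $l^1$-basis, the latter being not weak Cauchy; but a subsequence of an $l^1$-sequence is again an $l^1$-sequence, a contradiction. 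This closes the cycle, giving (1)$\Leftrightarrow$(2)$\Leftrightarrow$(3).

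To bring in (4) I would invoke \cite[Thm.~14.1.7]{Tal}: it further equates the absence of an $l^1$-sequence (equivalently (3)) with the pointwise closure ${\cls}(F)\subseteq\R^X$ being angelic and consisting of pointwise limits of sequences of continuous functions, i.e.\ of Baire class $1$ functions on $X$; since on a compact space a Baire class $1$ function has the point of continuity property and is hence fragmented in the sense of Definition \ref{def:fr}, this gives (4). For the converse (4)$\Rightarrow$(1) I would argue contrapositively: if (1) fails then (2) fails, so $F$ contains an independent sequence, and then Fact \ref{lemma:independence_over_compact_sets} (in its form valid for infinite index sets) forces a non-fragmented function into ${\cls}(F)$, contradicting (4).

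For (5) I would rely on \cite[Sect.~4]{GM-rose}. The direction (5)$\Rightarrow$(3) is short: given $\{f_n\}\subseteq F$, by (5) some subsequence $\{f_{n_k}\}$ is a fragmented family of continuous functions on the compact $X$, so Lemma \ref{t:countDetermined} makes the pseudometric space $(X,\rho_{\{f_{n_k}\}})$ separable, and a diagonal argument over a countable dense subset of it produces a further subsequence converging pointwise on $X$. The converse (3)$\Rightarrow$(5) is the more delicate half, which I would take from \cite[Sect.~4]{GM-rose}: each sequence in $F$ has, by (3)/(4), a pointwise convergent subsequence with fragmented limit, and the Bourgain--Fremlin--Talagrand type analysis there upgrades it to a fragmented subsequence. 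The step I expect to be the main obstacle is this last alignment of notions: confirming that \cite[Thm.~14.1.7]{Tal}, in the possibly non-metrizable compact setting, really delivers statement (4) as phrased (which rests on the chain Baire class $1$ $\Rightarrow$ PCP $\Rightarrow$ fragmented on compact $X$, and on pointwise closures being formed inside all of $\R^X$), together with supplying the (5)-equivalence without re-proving the angelicity and eventual-fragmentation results of \cite{GM-rose}; everything else is routine bookkeeping with Rosenthal's two lemmas, Fact \ref{lemma:independence_over_compact_sets}, dominated convergence, and Lemma \ref{t:countDetermined}.
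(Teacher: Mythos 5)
Your proposal is correct and follows essentially the same route as the paper: the paper likewise assembles the lemma from Rosenthal's \cite[Prop.~4]{Ros0} and the proof of \cite[Thm.~1]{Ros0} (independence vs.\ pointwise convergent subsequences), Lebesgue's theorem for weak Cauchyness, Talagrand's \cite[Thm.~14.1.7]{Tal} for the equivalence of (1), (3), (4), and \cite[Sect.~4]{GM-rose} for adjoining (5). The only difference is organizational (your explicit cycle (1)$\Rightarrow$(2)$\Rightarrow$(3)$\Rightarrow$(1)), not mathematical.
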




\sk 
\subsection*{Rosenthal's dichotomy and Rosenthal's Banach spaces} 
\label{s:RosDich}  

For every topological space $X$ denote by 
$C(X)$ the vector space of all continuous real functions. 
When $X$ is compact, as usual, we suppose that $C(X)$ is endowed with the supremum norm. So, it will be a Banach subspace of $l^{\infty}(X)$. 

Let $\{f_n \colon X \to \R\}_{n \in \N}$ be a 
bounded sequence 
of functions on a \emph{set} $X$. Following Rosenthal \cite{Ros0}, we say that
this sequence is an \emph{$l^1$-sequence} 
if there exists a 
constant $\delta >0$
such that for all $n \in \N$ and choices of real scalars $c_1, \dots, c_n$, we have
$$
\delta \cdot \sum_{i=1}^n |c_i| \leq \left\lVert\sum_{i=1}^n c_i f_i\right\rVert_{\infty}.
$$
Then the closed linear span of $\{f_{n}\}_{n \in \N}$ in $l_{\infty}(X)$ is linearly homeomorphic to the Banach space $l^1$.
In fact, in this case the map $l^1 \to l_{\infty}(X), \ (c_n)_{n \in \N} \to \sum_{n \in \N} c_nf_n$ is a linear homeomorphic embedding.

A sequence of vectors in a Banach space can be defined  equivalent to an $l^1$-sequence analogously.  According to Rosenthal's 
dichotomy, every bounded sequence in a Banach space either has a weak Cauchy subsequence or 
admits
an $l^1$-\emph{sequence}. 
Thus, a Banach space $V$ does not contain an  $l^1$-sequence
(equivalently, does not contain an isomorphic copy of $l^1$) iff every bounded sequence in $V$ has a weak Cauchy
subsequence, \cite{Ros0}. 
As in \cite{GM-rose,GM-survey}, we call a Banach space satisfying these
equivalent conditions a {\em Rosenthal Banach space}.


%

\begin{defin}  \label{d:tameF} 
	Let $V$ be a 
	normed space and $M \subset V^*$ be a subset in the dual space $V^*$. A  bounded  subset $F$ of $V$ is said to be \emph{tame for} $M$ if $F$, as a family of functions on $M$, is a tame family. If $F$ is tame for the unit ball $B_{V^*}$ of $V^*$ (equivalently, for every bounded subset), then we simply say that $F$ is a \textit{tame subset in} $V$. 
\end{defin}

\begin{lemma} \label{lemma:specific_l1_when_not_tame}
	Let $V$ be a normed space, $A \subseteq V$ and $M \subseteq V^{*}$ be bounded subsets.
	If $A$ is not tame over $M$, then $A$ contains 
	an $l^1$-sequence. 
\end{lemma}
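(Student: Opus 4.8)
The plan is to reduce the statement about an arbitrary bounded subset $M \subseteq V^{*}$ to the case $M = B_{V^{*}}$, for which the result is essentially contained in Lemma \ref{f:sub-fr}. First I would observe that $A$ not being tame over $M$ means that there is a sequence $\{a_n\}_{n\in\N}$ in $A$ which, viewed as a family of functions on $M$, is combinatorially independent; say with bounds of independence $p < q$. Since $M$ is bounded in $V^{*}$, there is some $r > 0$ with $M \subseteq r B_{V^{*}}$, and scaling functionals does not affect independence except by rescaling the bounds: the same sequence $\{a_n\}$ is independent as a family of functions on $\frac{1}{r}M \subseteq B_{V^{*}}$ (with bounds $p/r < q/r$, after the obvious rescaling of the witnessing functionals). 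Hence $A$ — or at least the sequence $\{a_n\}$ — is not tame over $B_{V^{*}}$ either.

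Next I would pass to the compact setting so that Lemma \ref{f:sub-fr} applies. Let $X := \overline{M}^{w^{*}}$ (or $\overline{rB_{V^{*}}}^{w^{*}} = rB_{V^{*}}$, which is weak-star compact by Alaoglu), a weak-star compact subset of $V^{*}$. Each $a_n$, regarded via the canonical bilinear map $V \times V^{*} \to \R$, restricts to a continuous function on $X$, and the family $\{a_n\}_{n\in\N}$ is bounded in $C(X)$ because $A$ is bounded in $V$ and $X$ is bounded in $V^{*}$. The sequence $\{a_n\}$ remains independent on $X$: independence over $M$ forces the relevant finite intersections to be nonempty over $M \subseteq X$, so a fortiori over $X$. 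Thus $\{a_n\}$ is a non-tame sequence in the bounded subset $\{a_n\}_{n\in\N} \subseteq C(X)$.

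Now the equivalence (1) $\Leftrightarrow$ (2) of Lemma \ref{f:sub-fr}, applied to the compact space $X$ and the bounded subset $\{a_n\}_{n\in\N}$ of $C(X)$, gives that $\{a_n\}_{n\in\N}$ contains an $l^{1}$-sequence $\{a_{n_k}\}_{k\in\N}$ in the sup-norm of $C(X)$: there is $\delta > 0$ with $\delta \sum_{i=1}^{k} |c_i| \le \big\| \sum_{i=1}^{k} c_i a_{n_i} \big\|_{C(X)}$ for all scalars. Finally I would transfer this back to an $l^{1}$-sequence in the normed space $V$. The point is that $\big\| \sum c_i a_{n_i} \big\|_{C(X)} = \sup_{\varphi \in X} |\varphi(\sum c_i a_{n_i})| \le \sup_{\varphi \in r B_{V^{*}}} |\varphi(\sum c_i a_{n_i})| = r \, \big\| \sum c_i a_{n_i} \big\|_{V}$, using $X \subseteq r B_{V^{*}}$ and the fact that the norm on $V$ is recovered by testing against $B_{V^{*}}$. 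Hence $\frac{\delta}{r} \sum_{i=1}^{k} |c_i| \le \big\| \sum_{i=1}^{k} c_i a_{n_i} \big\|_{V}$, so $\{a_{n_k}\}_{k\in\N}$ is an $l^{1}$-sequence in $V$ in the sense of Definition \ref{d:RosBan}'s underlying notion, as required.

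The only step that requires genuine care — and the one I expect to be the main obstacle — is the interface between "combinatorially independent family of functions on $M$" and "$l^{1}$-sequence", because Lemma \ref{f:sub-fr} is phrased for bounded subsets of $C(X)$ with $X$ compact, whereas $M$ is merely a bounded (not closed, not compact, and a priori not even weak-star compact) subset of $V^{*}$. Replacing $M$ by its weak-star closure inside the weak-star compact ball $r B_{V^{*}}$, and checking that independence is preserved under this enlargement and under the weak-star-to-sup-norm comparison, is routine but must be done explicitly; it is exactly where the boundedness of $M$ (to control the norm comparison constant $r$) and the continuity of each $a_n$ on the weak-star compact set (to invoke Lemma \ref{f:sub-fr}, and optionally Fact \ref{lemma:independence_over_compact_sets}) are used. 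Everything else is bookkeeping with scalars.
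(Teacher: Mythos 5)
Your proof is correct and follows essentially the same route as the paper, which simply invokes the isometric embedding $V \hookrightarrow C(B_{V^{*}})$ (Hahn--Banach) together with Lemma \ref{f:sub-fr}; you are just making explicit the reduction from the bounded set $M$ to a weak-star compact set and the scaling constant $r$ in the norm comparison. No gaps.
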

\begin{proof}
	It is a known consequence of the Hahn-Banach theorem that $V$ is isometrically embedded into $C(B_{V^{*}})$.
	Applying Lemma \ref{f:sub-fr}, we get the desired result.
\end{proof}

The following characterization of Rosenthal Banach spaces is a reformulation of some known results (see, in particular, \cite{SS} and Lemma  \ref{f:sub-fr}).

\begin{lem} \label{l:RosBanSpCharact} Let $V$ be a Banach space. 
	The following conditions are equivalent:  
	\ben
	\item $V$ is a Rosenthal Banach space; 
	\item 
	each $x^{**} \in V^{**}$ is a fragmented map when restricted to the
	weak${}^*$ compact ball $B_{V^{*}}$ of $V^*$. Equivalently, $B_{V^{**}} \subset \F(B_{V^{*}})$; 
	\item the unit ball $B_{V}$ is a tame subset of $V$;
	\item any bounded subset of $V$ is tame for any bounded subset of $V^*$. 
	\een
\end{lem}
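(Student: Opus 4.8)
The plan is to establish the chain of equivalences $(1)\Leftrightarrow(3)\Leftrightarrow(4)$ and $(2)\Leftrightarrow(3)$, using the isometric embedding $V \hookrightarrow C(B_{V^*})$ and Lemma \ref{f:sub-fr} as the central engine. First I would recall the standard Hahn--Banach fact that the canonical map $j\colon V \to C(B_{V^*})$, $j(v)(\varphi)=\varphi(v)$, is an isometric linear embedding onto a closed subspace, and that under this identification $B_V$ becomes a bounded family of continuous functions on the compact space $B_{V^*}$. With this in place, $(1)\Leftrightarrow(3)$ is immediate: $V$ is Rosenthal iff $B_V$ contains no $l^1$-sequence (by Rosenthal's dichotomy, exactly as recalled in the paragraph preceding Definition \ref{d:RosBan}), iff $j(B_V)$ contains no $l^1$-sequence in $C(B_{V^*})$ (since $j$ is isometric), iff $j(B_V)$ is a tame family on $B_{V^*}$ by the equivalence $(1)\Leftrightarrow(2)$ of Lemma \ref{f:sub-fr}, which is precisely the statement that $B_V$ is a tame subset of $V$ in the sense of Definition \ref{d:tameF}.

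Next, $(3)\Leftrightarrow(4)$: one direction is trivial since $B_{V^*}$ is itself a bounded subset of $V^*$ (after rescaling), so $(4)\Rightarrow(3)$. For $(3)\Rightarrow(4)$, let $A\subseteq V$ and $M\subseteq V^*$ be bounded; after rescaling we may assume $A\subseteq B_V$ and $M\subseteq B_{V^*}$. Then $A$, viewed as functions on $M$, is obtained from $B_V$ viewed as functions on $B_{V^*}$ by restricting the domain and passing to a subfamily; since a subfamily of a tame family is tame, and restriction of the domain cannot create an independent sequence (if $\{f_n|_M\}$ were independent on $M$ with bounds $a<b$, the same $\{f_n\}$ would be independent on $B_{V^*}$ — the required nonempty intersections over $M$ are a fortiori nonempty over $B_{V^*}$), tameness of $B_V$ for $B_{V^*}$ passes down to $A$ for $M$. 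Alternatively I could cite Lemma \ref{lemma:specific_l1_when_not_tame}: if $A$ were not tame for $M$ then $A$, hence $B_V$, would contain an $l^1$-sequence, contradicting $(3)$ via the already-established $(1)\Leftrightarrow(3)$.

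For $(2)\Leftrightarrow(3)$, I would again work inside $C(B_{V^*})$. The point-evaluation identification extends: elements $x^{**}\in V^{**}$, restricted to the weak-star compact ball $B_{V^*}$, are exactly the functions lying in the pointwise closure in $\R^{B_{V^*}}$ of $j(B_V)$ — this is Goldstine's theorem, which gives $B_{V^{**}} = \overline{j(B_V)}^{\,p}$ (pointwise closure on $B_{V^*}$). Then $B_{V^{**}}\subset\F(B_{V^*})$ says precisely that the pointwise closure $\cls(j(B_V))$ consists of fragmented maps, which is condition $(4)$ of Lemma \ref{f:sub-fr} for the family $F = j(B_V)$ on $X = B_{V^*}$; by that lemma this is equivalent to $j(B_V)$ being a tame family on $B_{V^*}$, i.e. to $(3)$. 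Conversely any tame $B_V$ has this pointwise-closure-fragmented property by the same lemma, giving $B_{V^{**}}\subset\F(B_{V^*})$.

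The main obstacle I anticipate is not any single hard estimate but making sure the "dictionary" steps are airtight: the identification of $x^{**}|_{B_{V^*}}$ with pointwise limits of $j(B_V)$ requires Goldstine's theorem (density of $B_V$ in $B_{V^{**}}$ in the weak-star topology of $V^{**}$, which restricted to $B_{V^*}$ is exactly the pointwise topology), and one must check that no continuity or boundedness is lost when transporting between $V$ and its image in $C(B_{V^*})$. Everything genuinely nontrivial — the passage between "no $l^1$-sequence", "tame family", "pointwise-convergent subsequences", and "pointwise closure consists of fragmented maps" — is already packaged in Lemma \ref{f:sub-fr}, so the proof is essentially a matter of correctly invoking it for the family $F=B_V$ on the compact space $X=B_{V^*}$ and citing Goldstine's theorem for the $(2)$-reformulation.
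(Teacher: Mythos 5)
Your proposal is correct, and it differs from the paper's proof in one substantive way: how the implication from condition (2) back to the rest is closed. The paper proves the cycle $(1)\Rightarrow(4)\Rightarrow(3)\Rightarrow(2)\Rightarrow(1)$, with $(1)\Rightarrow(4)$ via Lemma \ref{lemma:specific_l1_when_not_tame}, $(3)\Rightarrow(2)$ via Goldstine plus Lemma \ref{f:sub-fr}, and the final step $(2)\Rightarrow(1)$ delegated to an external result, \cite[Thm.~3]{SS}. You instead prove $(1)\Leftrightarrow(3)$ directly from the isometric embedding $V\hookrightarrow C(B_{V^*})$, Rosenthal's dichotomy (as recalled in the paper) and the equivalence $(1)\Leftrightarrow(2)$ of Lemma \ref{f:sub-fr}, and you obtain $(2)\Leftrightarrow(3)$ by using \emph{both} directions of the equivalence ``tame $\Leftrightarrow$ pointwise closure consists of fragmented maps'' in Lemma \ref{f:sub-fr}, together with the Goldstine identification $\cls_p(B_V)=B_{V^{**}}|_{B_{V^*}}$ (your compactness/closedness remark is exactly what makes this identification an equality, at the same level of detail as the paper's one-line appeal to Goldstine). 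The net effect is that your argument dispenses with the Saab--Saab citation and is self-contained modulo Lemma \ref{f:sub-fr}, at the cost of leaning on the full strength of that lemma (whose condition (4)$\Rightarrow$(2) direction is precisely what replaces \cite[Thm.~3]{SS}); the paper's route keeps the use of Lemma \ref{f:sub-fr} one-directional and outsources the geometric converse. Your handling of $(3)\Rightarrow(4)$ by rescaling and the observation that independence over $M\subseteq B_{V^*}$ forces independence over $B_{V^*}$ is also fine, and your fallback via Lemma \ref{lemma:specific_l1_when_not_tame} reproduces the paper's own $(1)\Rightarrow(4)$ step.
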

\begin{proof}
	$(1) \Rightarrow (4)$ A consequence of Lemma \ref{lemma:specific_l1_when_not_tame}.
	
	$(4) \Rightarrow (3)$ Trivial.
	
	$(3) \Rightarrow (2)$ Suppose that $B_{V}$ is a tame family over 
	$B_{V^{*}}$.
	Using Lemma \ref{f:sub-fr}, we can conclude that $\cls_{p}(B_{V}) \subseteq \mathcal{F}(B_{V^{*}})$.
	On the other hand, $B_{V^{**}} = \cls_{p}(B_{V})$ by Goldstein's theorem. Hence, $B_{V^{**}} \subset \F(B_{V^{*}})$. 
	
	$(2) \Rightarrow (1)$
	Use \cite[Thm.~3]{SS}. 
\end{proof}



\sk \sk 
\subsection*{The Double Limit Property (DLP)}
Recall 
Grothendieck's double limit property. 

\begin{defin} \label{d:DLP} 
	Let $F \subset \R^{K}$ be a family of real functions on a set $K$. 
	Then $F$ is said to have the \textit{double limit property} (DLP) 
	if for every sequence $\{f_n\}_{n \in \N}$ in $F$ 
	and every sequence $\{x_n\}_{n \in \N}$ in $K$, the limits 
	$$
	\lim_n \lim_m f_n(x_m) \ \ \ \text{and} \ \ \ \lim_m \lim_n f_n(x_m)
	$$
	are equal whenever they both exist. 
\end{defin}

We will often write that a subset \emph{is} DLP rather than the more correct \emph{``has the DLP"}. 
The following properties are easy to verify.

\begin{lemma} \label{lemma:double_subsequence_of_bounded_functions_is_DLPable} \ 
	\begin{enumerate}
		\item 
		If $\{f_{m}\}_{m \in \N}$ 
		is a 
		bounded 
		sequence of functions on $K$ and $\{x_{n}\}_{n \in \N} \subseteq K$, then there exist subsequences $\{n_{k}\}_{k \in \N}, \{m_{t}\}_{t \in \N} \subseteq \N$ such that 
		$$
		\lim_{k \in \N} \lim_{t \in \N} f_{n_{k}}(x_{m_{t}}) \text{ and }
		\lim_{t \in \N} \lim_{k \in \N} f_{n_{k}}(x_{m_{t}})
		$$
		exist.
		\item  If $A \subset l^{\infty}(K)$ is a bounded family of functions over $K$ satisfying the DLP, then so does the 
		balanced hull $\bo A$ (see Definition \ref{d:gauge}). 
		\item 	If $A_{1}, A_{2}$ are bounded sets of functions over $K$ satisfying the DLP, then so does $A + B$.
		\item Suppose that $\varphi \colon K_{1} \to K_{2}$ is a continuous map and 
		$F \subseteq C(K_{2})$ is DLP.
		Then $F \circ \varphi \subseteq C(K_{1})$ is DLP.
		
		Moreover, if $\varphi$ is surjective, then the converse is also true. 
		Namely, if $F \circ \varphi \subseteq C(K_{1})$ is DLP then so is $F$.
	\end{enumerate} 
\end{lemma}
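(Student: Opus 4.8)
The plan is to establish item (1) first, since it is the tool that drives (2)--(4), and then to route everything else through a single elementary remark about subsequences.

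\emph{Item (1).} Let $M$ bound $\{f_n\}_{n \in \N}$ and set $a_{n,m} := f_n(x_m) \in [-M,M]$. A first Cantor diagonalization over the countably many indices $n$ produces a subsequence of $(x_m)$, which I keep denoting $(x_m)$, along which $b_n := \lim_m a_{n,m}$ exists for every $n$. Since $(b_n)_n$ is bounded, diagonalizing now over the countably many current $m$-indices and then extracting once more, I obtain a subsequence $\{n_k\}_{k \in \N}$ for which $\lim_k b_{n_k}$ exists and $c_m := \lim_k a_{n_k,m}$ exists for every current $m$. Finally, as $(c_m)_m$ is bounded, I pass to a subsequence $\{m_t\}_{t \in \N}$ along which $\lim_t c_{m_t}$ exists. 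For this pair $(\{n_k\},\{m_t\})$ one has $\lim_t a_{n_k,m_t} = b_{n_k}$ for each $k$ (a subsequence of a convergent sequence), so $\lim_k \lim_t a_{n_k,m_t} = \lim_k b_{n_k}$; dually $\lim_k a_{n_k,m_t} = c_{m_t}$ for each $t$, so $\lim_t \lim_k a_{n_k,m_t} = \lim_t c_{m_t}$; both iterated limits exist.

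\emph{Items (2)--(4).} I would first record the routine observation that an iterated limit is stable under passage to subsequences: if $\lim_n \lim_m a_{n,m} = L$ exists, then for any subsequences $\{n_j\}, \{m_i\}$ we have $\lim_j \lim_i a_{n_j,m_i} = L$ (for fixed $j$ the inner limit is unchanged, being a subsequence of a convergent sequence, and the outer one is then again a subsequence of a convergent sequence), and symmetrically for the other iterated limit. Hence, whenever $\{g_n\}$ is in the relevant family and $\{x_m\}$ in $K$ are such that both iterated limits of $g_n(x_m)$ exist, I may freely refine $\{n\}$ and $\{m\}$ by means of (1) without altering those two values. For (2): write $g_n = \alpha_n f_n$ with $f_n \in A$ and $|\alpha_n| \le 1$; refine so that $\alpha_n \to \alpha$ and, by (1), so that both iterated limits of $f_n(x_m)$ exist. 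DLP of $A$ then gives a single value $L$ with $p_n := \lim_m f_n(x_m)$ satisfying $\lim_n p_n = L$ and $q_m := \lim_n f_n(x_m)$ satisfying $\lim_m q_m = L$; since $\lim_m g_n(x_m) = \alpha_n p_n$ and $\lim_n g_n(x_m) = \alpha q_m$, both iterated limits of $g_n(x_m)$ equal $\alpha L$ (no case split on $\alpha$ is needed, as $q_m$ already exists after the refinement). For (3): write $g_n = f_n + f_n'$ with $f_n \in A_1$ and $f_n' \in A_2$; refine by applying (1) twice so that all four iterated limits of $f_n(x_m)$ and of $f_n'(x_m)$ exist; DLP of $A_1$ and of $A_2$ yields values $L$ and $L'$, and since $\lim_m g_n(x_m) = \lim_m f_n(x_m) + \lim_m f_n'(x_m)$, and likewise with $\lim_n$, both iterated limits of $g_n(x_m)$ equal $L + L'$. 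For (4): $(f_n \circ \varphi)(x_m) = f_n(\varphi(x_m))$, so with $y_m := \varphi(x_m) \in K_2$ the two iterated limits for $F \circ \varphi$ evaluated along $(x_m)$ coincide verbatim with the two iterated limits for $F$ along $(y_m)$, and DLP of $F$ concludes; for the converse, when $\varphi$ is onto, lift each $y_m \in K_2$ to some $x_m \in \varphi^{-1}(y_m)$ and read the same identity in reverse.

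I do not expect a real obstacle here; the substance is the diagonal extraction in (1). The only points demanding care are keeping the two families of subsequences in (1) properly nested, so that each inner limit is genuinely taken along a subsequence of a convergent sequence, and verifying that refining subsequences does not change the values of iterated limits. Continuity of $\varphi$ (and membership in $C(K_i)$) enters only to guarantee that $F \circ \varphi \subseteq C(K_1)$, so item (4) is entirely formal.
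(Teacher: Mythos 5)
Your proposal is correct and follows essentially the same route as the paper: a diagonal extraction argument for item (1) (the paper packages it as a one-sided extraction lemma applied twice, you do it as one nested extraction), followed by refining via (1), invoking DLP, and using continuity of the arithmetic operations for (2)--(3), and the composition identity for (4). The only difference is cosmetic: you make explicit the stability of iterated limits under passage to subsequences, which the paper uses implicitly, and you write out (3) and (4), which the paper dismisses as easy to check.
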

\begin{proof} ~
	\ben
		\item We will show a more general fact.
		Suppose that $A$ and $B$ are sets and $\langle \cdot, \cdot \rangle \colon A \times B \to \R$ is a map.
		Also, let $\{a_{n}\}_{n \in \N} \subseteq A$ and $\{b_{m}\}_{m \in \N} \subseteq B$ be sequences such that $\langle \cdot, \cdot \rangle$ is bounded over them. 
		Then there exist subsequences $\{n_{k}\}_{k \in \N}, \{m_{t}\}_{t \in \N} \subseteq \N$ such that the limit 
		$$
		\lim_{k \in \N} \lim_{t \in \N} \langle a_{n_{k}}, b_{m_{t}} \rangle
		$$
		exist.
		By applying this fact twice we will get the desired result.
		
		First, note that the sequence $\{\langle a_{n}, b_{m} \rangle\}_{m \in \N} \subseteq \R$ is bounded for every $n \in \N$.
		We can thus use induction and a diagonal argument to construct a subsequence $\{m_{t}\}_{t \in \N}$ such that $\{\langle a_{n}, b_{m_{t}} \rangle\}_{t \in \N}$ converges for every $n \in \N$. 
		Write $\alpha_{n} := \lim\limits_{t \in \N}\langle a_{n}, b_{m_{t}} \rangle$.
		Note that $\{\alpha_{n}\}_{n \in \N} \subseteq \R$ is a bounded sequence so there exist a subsequence $\{n_{k}\}_{k \in \N} \subseteq \N$ such that $\{\alpha_{n_{k}}\}_{k \in \N}$ converges.
		However:
		$$
		\lim_{k \in \N} \alpha_{n_{k}} = 
		\lim_{k \in \N} \lim\limits_{t \in \N}\langle a_{n_{k}}, b_{m_{t}} \rangle,
		$$
		as required.
		\item Suppose that $\{\alpha_{n}f_{n}\}_{n \in \N} \subseteq \bo A$ and $\{x_{m}\}_{m \in \N} \subseteq K$ such that
		$$
		\lim_{n \in \N} \lim_{m \in \N} \alpha_{n} f_{n}(x_{m}) \text{ and }
		\lim_{m \in \N} \lim_{n \in \N} \alpha_{n} f_{n}(x_{m})
		$$
		exist.
		By definition, $\{\alpha_{n}\}_{n \in \N} \subseteq [-1, 1]$ and is therefore bounded.
		Thus, we can apply Lemma \ref{lemma:double_subsequence_of_bounded_functions_is_DLPable} and the Bolzano–Weierstrass theorem  to find subsequences $\{n_{k}\}_{k \in \N} \subseteq \N$ and $\{m_{t}\}_{t \in \N} \subseteq \N$ such that:
		$$
		\lim\limits_{k \in \N} \alpha_{n_{k}},
		\lim_{k \in \N} \lim_{t \in \N} f_{n_{k}}(x_{m_{t}}) \text{ and }
		\lim_{t \in \N} \lim_{k \in \N} f_{n_{k}}(x_{m_{t}})
		$$
		exist.
		Moreover, since $A$ is DLP, we know that
		$$
		\lim_{k \in \N} \lim_{t \in \N} f_{n_{k}}(x_{m_{t}}) =
		\lim_{t \in \N} \lim_{k \in \N} f_{n_{k}}(x_{m_{t}}).
		$$
		Together we get:
		\begin{align*}
			\lim_{n \in \N} \lim_{m \in \N} \alpha_{n} f_{n}(x_{m}) = &
			\lim_{k \in \N} \lim_{t \in \N} \alpha_{n_{k}} f_{n_{k}}(x_{m_{t}}) = 
			\left(\lim_{k \in \N} \alpha_{n_{k}} \right)\lim_{k \in \N} \lim_{t \in \N} f_{n_{k}}(x_{m_{t}}) = \\
			& \left(\lim_{k \in \N} \alpha_{n_{k}} \right)\lim_{t \in \N} \lim_{k \in \N} f_{n_{k}}(x_{m_{t}}) = 
			\lim_{t \in \N} \lim_{k \in \N} \alpha_{n_{k}} f_{n_{k}}(x_{m_{t}}) = \\
			& \lim_{m \in \N} \lim_{n \in \N} \alpha_{n} f_{n}(x_{m})
		\end{align*}
		as required. 
		
		\sk 
		(3) and (4) are easy to check. 
	\een
\end{proof}
\begin{f} \emph{(N.J. Young \cite[Thm.~2]{Young})} \label{f:young}
	Let $E$ and $F$ be topological vector spaces and ${A \subseteq E, B \subseteq F}$.
	Furthermore, let $\langle \cdot, \cdot \rangle\colon E\times F \to \R$ be a bilinear map
	bounded on $A \times B$.
	If $A$ has the DLP as a family of functions over $B$, then so does the bipolar  $A^{\circ\circ}$.
	
	\sk 
	If $E$ is locally convex, we can apply the Bipolar Theorem (Fact \ref{fact:bipolar_theorem}) to conclude that $\overline{\acx}^{w} A$ is DLP over $B$.
\end{f}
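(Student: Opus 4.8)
Since this statement is quoted from \cite{Young}, I only outline the plan I would follow. The engine is Grothendieck's classical double-limit criterion, which converts the DLP of a bounded family of continuous functions on a \emph{compact} space into \emph{relative weak compactness} of that family in the corresponding space $C(K)$; the point is that relative weak compactness, unlike the DLP over a bare index set, is stable under closed absolutely convex hulls by Krein's theorem.

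First I would normalise: since $\langle\cdot,\cdot\rangle$ is bounded on $A\times B$, rescale so that $|\langle a,b\rangle|\le 1$ for all $a\in A,\ b\in B$. Then $\iota\colon B\to[-1,1]^{A}$, $\iota(b)=(\langle a,b\rangle)_{a\in A}$, has compact closure $K:=\overline{\iota(B)}\subseteq[-1,1]^{A}$, and every $a\in A$ determines a continuous coordinate function $\tilde a\colon K\to[-1,1]$. The DLP of $A$ over $B$ says exactly that for all sequences $(a_{n})\subseteq A$ and $(b_{m})\subseteq B$ the two iterated limits of $\langle a_{n},b_{m}\rangle=\tilde a_{n}(\iota(b_{m}))$ coincide whenever they exist; a routine density/diagonal argument upgrades this to the same statement with $(b_m)$ replaced by an arbitrary sequence in $K$, so Grothendieck's double-limit criterion gives that $H:=\{\tilde a\colon a\in A\}$ is relatively weakly compact in $C(K)$. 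By Krein's theorem (the Krein--\v{S}mulian theorem in the Banach space $C(K)$), the closed absolutely convex hull $\overline{\acx}(H)$ is again weakly compact in $C(K)$; hence, by the easy half of the same criterion, $\overline{\acx}(H)$ has the DLP over $K$.

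It then remains to transport this back to $A^{\circ\circ}\subseteq E$. The assignment $x\mapsto(\langle x,b\rangle)_{b\in B}$ carries $\acx(A)$ onto $\acx(H)$ read along $\iota(B)$; moreover, for each $x\in A^{\circ\circ}$ --- which lies in the $\sigma(E,F)$-closed absolutely convex hull of $A$ --- a weak-compactness (cluster-point) argument produces a $g\in\overline{\acx}(H)$ with $g(\iota(b))=\langle x,b\rangle$ for all $b\in B$. Since $\overline{\acx}(H)$ has the DLP over $K\supseteq\iota(B)$, the same holds for $A^{\circ\circ}$ regarded, through $\iota$, as a family of functions over $B$. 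The final sentence is then immediate: when $E$ is locally convex, the Bipolar Theorem (Fact \ref{fact:bipolar_theorem}) identifies $A^{\circ\circ}$ with $\overline{\acx}^{w}A$.

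I expect the genuine obstacle to be the two-way dictionary between the ``bounded family over a bare set'' picture and the ``relatively weakly compact subset of $C(K)$'' picture: the DLP over an abstract $B$ carries no compactness a priori, so one must build a faithful compact model $K$ for $B$ and check that the hypothesis translates \emph{exactly} into relative weak compactness in $C(K)$, and then climb back down past the (non-sequential) $\sigma(E,F)$-closure hidden in $A^{\circ\circ}$. Once this bookkeeping is done, the one substantive step --- convexification --- is handled wholesale by Krein's theorem; this seems unavoidable, since, in contrast with fragmented families (Lemma \ref{l:convFr}, Corollary \ref{cor:fragmented_bipolarity}), the DLP is \emph{not} preserved under convex hulls of infinitely many functions by any elementary argument, the finite Minkowski-sum stability recorded in Lemma \ref{lemma:double_subsequence_of_bounded_functions_is_DLPable} being far too weak on its own.
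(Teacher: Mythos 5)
Since this item is a quoted Fact, the paper offers no proof at all --- it simply cites Young's Theorem~2 --- so there is no in-paper argument to match; what you give is an independent derivation, and its skeleton is sound. Your route is the classical one: build the compact model $K=\overline{\iota(B)}\subseteq[-1,1]^{A}$, translate the hypothesis into relative weak compactness of $H=\{\tilde a\}$ in $C(K)$ via Grothendieck's double-limit criterion, convexify with Krein--\v{S}mulian, and pull the conclusion back to $A^{\circ\circ}$ by a cluster-point argument (weak convergence in $C(K)$ gives pointwise convergence on $K$, so the limit function interpolates $\langle x,\cdot\rangle$ on $\iota(B)$); since you work with the $\sigma(E,F)$-bipolar, the locally convex addendum follows from the Bipolar Theorem exactly as in the paper. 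Two remarks. First, the step you call a ``routine density/diagonal argument'' is the one place to be careful: approximating $k_m\in K$ by $\iota(b_m)$ on finitely many coordinates controls $\lim_n\lim_m$ but not $\lim_m\lim_n$, so rather than upgrading the DLP from $\iota(B)$ to $K$ by hand, you should invoke the standard dense-subset form of Grothendieck's criterion (the double-limit condition need only be tested on a dense subset of $K$); with that citation the step is immediate. Second, this weak-compactness route is genuinely heavier machinery than Young's own treatment, which proves the bipolar statement directly by double-limit manipulations in the spirit of Pt\'ak's combinatorial lemmas and does not pass through Krein--\v{S}mulian; what your version buys in exchange is that it runs entirely on results already quoted in this paper (Lemma~\ref{f:sub-fr}-style translations, Mazur, Krein--\v{S}mulian) and mirrors the compact-model reduction the authors themselves use in Lemma~\ref{lemma:DLP_is_locally_convex} and Proposition~\ref{p:DLP in boundedly-complete}.
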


\begin{lemma} \label{lemma:DLP_is_locally_convex}
	Suppose that $K$ is compact and $A \subseteq C(K)$ is a bounded family of functions that satisfies the DLP.
	Then so does 
	$\overline{\acx}^{w} A$.
\end{lemma}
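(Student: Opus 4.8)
The plan is to reduce the statement to Young's theorem (Fact \ref{f:young}) by realizing the compact space $K$ inside the dual Banach space $C(K)^{*}$. Concretely, I would consider the map $K \to C(K)^{*}$, $x \mapsto \delta_{x}$, sending a point to its Dirac point-evaluation functional, and set $B := \{\delta_{x} : x \in K\} \subseteq C(K)^{*}$. Under this identification the canonical bilinear form $C(K) \times C(K)^{*} \to \R$ restricts on $A \times B$ to evaluation, $\langle f, \delta_{x}\rangle = f(x)$, so that $A$ is DLP as a family of functions on $K$ if and only if $A$ is DLP as a family of functions on $B$. By hypothesis the former holds, hence so does the latter.

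Next I would verify the boundedness required in Fact \ref{f:young}: since $A$ is bounded in the Banach space $C(K)$, the number $M := \sup_{f \in A}\|f\|_{\infty}$ is finite, while $\|\delta_{x}\|_{C(K)^{*}} = 1$ for every $x \in K$; therefore $|\langle f, \delta_{x}\rangle| \le M$ on $A \times B$, i.e. the canonical pairing is bounded on $A \times B$.

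Now I would apply Young's theorem with $E = C(K)$ (which is locally convex, being a Banach space) and $F = C(K)^{*}$. It yields that the bipolar $A^{\circ\circ}$ is DLP over $B$; equivalently, by the Bipolar Theorem (Fact \ref{fact:bipolar_theorem}), the weak closed absolutely convex hull $\overline{\acx}^{w} A$ is DLP over $B$, where the weak topology is $\sigma(C(K), C(K)^{*})$. Translating back through the identification $B \cong K$, this says precisely that $\overline{\acx}^{w} A$ is a DLP family of functions on $K$, which is the assertion of the lemma.

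This argument contains no genuinely hard step; the only points needing care are (a) that the Dirac embedding identifies ``DLP over $K$'' with ``DLP over $B$'' verbatim, which holds because the pairing restricts to evaluation and the DLP is formulated purely in terms of sequences and iterated limits, and (b) that the weak topology in $\overline{\acx}^{w}$ is the one governed by the duality $(C(K), C(K)^{*})$ to which the Bipolar Theorem applies, which it is, since that is exactly the weak topology of the Banach space $C(K)$. Thus the only real ``obstacle'' is choosing the correct bilinear pairing so that Fact \ref{f:young} becomes applicable.
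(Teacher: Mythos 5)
Your proof is correct and follows essentially the same route as the paper: both reduce the statement to Young's double limit theorem (Fact \ref{f:young}) applied to the evaluation pairing, with $K$ realized inside a vector space in duality with $C(K)$. The paper realizes $K$ inside the free locally convex space $L(K)$ and extends each $f \in A$ to a continuous linear functional, whereas you use the Dirac embedding $K \to C(K)^{*}$ and the canonical duality, which makes the identification $A^{\circ\circ} = \overline{\acx}^{w} A$ via the Bipolar Theorem (Fact \ref{fact:bipolar_theorem}) immediate --- if anything, a slightly tidier variant of the same argument.
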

\begin{proof}
	Suppose that $A \subseteq C(K)$ is a bounded family with the DLP over $K$.
	Now, consider $K$ as a subspace of the free topological vector space $L(K)$.
	By definition, every $f \in A$ can be extended uniquely 
	to a continuous linear 
	operator 
	on  $L(K)$.
	This gives a bilinear pairing $\langle \cdot, \cdot \rangle\colon A \times K$ defined by $\langle f, x\rangle := f(x)$.
	Also, since $A$ is bounded, so is the image of $A \times K$ under this bilinear map.
	By applying Fact \ref{f:young}, we conclude that
	$\overline{\acx}^{w} A$ is DLP over $K$.
\end{proof}
\subsection*{Additional Preliminaries}

Suppose that $E$ is a vector space and $A \subseteq E$ is a subset.
The open segment between two distinct points $x, y \in E$ is defined as
$$
  (x, y) := \{\alpha x + (1 - \alpha )y \mid 0 < \alpha < 1 \}.
$$
A point $x \in A$ is said to be an \emph{extreme point of $A$} if it does not belong to any open segment contained in $A$, \cite{Jarchow}.
If $A$ is convex, 
$x \in A$ is extreme in $A$ if and only if $x = \frac{1}{2}(a + b)$ for $a, b \in A$ implies $x = a = b$.
We write $\ext A$ for the set of all extreme points of $A$.

\begin{lem}\label{lemma:extreme_points_of_compact_map} 
	Let $T\colon E \to F$ be a continuous and linear map between locally convex spaces and $A \subseteq E$ be compact. 
	Then:
	\begin{enumerate}
		\item $
		\ext T(A) \subseteq T(\ext A);
		$
		\item if $T$, in addition, is injective then $\ext T(A) = T(\ext A)$;
		\item  	
		suppose that $M \subseteq F^*$.
		Then
		$$
		T(T^{*}(M)^{\circ}) = (\im T) \cap M^{\circ}.
		$$
	\end{enumerate}  
\end{lem}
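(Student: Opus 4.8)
The plan is to prove the three statements of Lemma~\ref{lemma:extreme_points_of_compact_map} in order, each following from standard convexity facts once we exploit continuity, linearity and compactness.

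\textbf{Part (1).} First I would note that since $A$ is compact and $T$ is continuous, $T(A)$ is compact; and since $T$ is linear and $A$ is assumed (implicitly, as in the applications) convex, $T(A)$ is convex. Let $y \in \ext T(A)$ and pick any $x \in A$ with $T(x) = y$; more carefully, consider the fiber $A_y := A \cap T^{-1}(y)$, which is a nonempty compact convex subset of $A$, hence has an extreme point $x_0$ by the Krein--Milman theorem. The claim is that $x_0 \in \ext A$. Indeed, if $x_0 = \tfrac12(a+b)$ with $a,b \in A$, then $y = T(x_0) = \tfrac12(T(a)+T(b))$ with $T(a),T(b) \in T(A)$; since $y$ is extreme in $T(A)$ we get $T(a) = T(b) = y$, so $a,b \in A_y$, and then extremality of $x_0$ in $A_y$ forces $a = b = x_0$. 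Hence $x_0 \in \ext A$ and $y = T(x_0) \in T(\ext A)$.

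\textbf{Part (2).} If $T$ is injective, then each fiber $A_y$ is a singleton, so the argument above shows $\ext T(A) \subseteq T(\ext A)$ with the unique preimage being extreme. For the reverse inclusion, take $x \in \ext A$ and suppose $T(x) = \tfrac12(a'+b')$ with $a',b' \in T(A)$. Write $a' = T(a)$, $b' = T(b)$ with $a,b \in A$. Then $T\bigl(\tfrac12(a+b)\bigr) = T(x)$, and injectivity gives $\tfrac12(a+b) = x$; extremality of $x$ in $A$ yields $a = b = x$, so $a' = b' = T(x)$. Thus $T(x) \in \ext T(A)$, giving $T(\ext A) \subseteq \ext T(A)$ and hence equality.

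\textbf{Part (3).} This is a direct polarity computation, independent of compactness. I would argue both inclusions. For "$\subseteq$": if $\varphi \in T^{*}(M)^{\circ} \subseteq E$, write $w = T(\varphi) \in \im T$; for any $\psi \in M$ we have $|\langle w, \psi\rangle| = |\langle T(\varphi), \psi\rangle| = |\langle \varphi, T^{*}(\psi)\rangle| \le 1$ since $\varphi$ is in the polar of $T^{*}(M)$, so $w \in M^{\circ}$, whence $T(T^{*}(M)^{\circ}) \subseteq (\im T)\cap M^{\circ}$. For "$\supseteq$": let $w \in (\im T)\cap M^{\circ}$ and pick $\varphi \in E$ with $T(\varphi) = w$. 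Then for every $\psi \in M$, $|\langle \varphi, T^{*}(\psi)\rangle| = |\langle T(\varphi),\psi\rangle| = |\langle w,\psi\rangle| \le 1$, so $\varphi \in T^{*}(M)^{\circ}$ and $w = T(\varphi) \in T(T^{*}(M)^{\circ})$.

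The only genuine subtlety — the "main obstacle" — is Part~(1): one cannot simply pick an arbitrary preimage of an extreme point and hope it is extreme, since the fiber may be a fat convex set. The fix is the Krein--Milman step selecting an extreme point of the compact convex fiber $A_y$; this is where compactness (and convexity) of $A$ is essential. Parts~(2) and~(3) are then routine. One should double-check that the ambient convexity hypothesis on $A$ is available in all invocations of this lemma — in the intended application $A = K$ is a weak-star compact equicontinuous \emph{convex} set — and, if the lemma is to be stated for arbitrary compact $A$, replace "convex hull of extreme points" bookkeeping accordingly; but for the convex case the argument above is complete.
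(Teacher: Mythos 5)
Your proof is correct and follows essentially the same route as the paper: the same fiber-plus-Krein--Milman argument for (1) (taking an extreme point of the compact fiber $A\cap T^{-1}(y)$ and checking it is extreme in $A$) and the same polar computation for (3), the only difference being that for (2) the paper cites a textbook theorem where you give the short direct verification. Your convexity caveat simply mirrors the paper's own implicit use of the midpoint characterization of extreme points, and in every application of the lemma in the paper the set $A$ is indeed convex, so nothing is lost.
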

\begin{proof} (1) 
	Write $K := \ker  T$.
	Let $y \in \ext T(A)$.
	Since $T$ is continuous, $K_{y} := A \cap T^{-1}(\{y\})$ is a closed, non-empty subset of $A$.
	It is also compact because $A$ is compact.
	We claim that $\ext K_{y} \subseteq \ext A$.
	
	Indeed, suppose that $x \in \ext K_{y}$  and $x = \frac{1}{2}(a + b)$ for $a, b \in A$.
	As a consequence, 
	$${y = T(x)=\frac{1}{2}(T(a) + T(b))}.$$
	However, since $y \in \ext T(A)$, we conclude that $y = T(a) = T(b)$.
	By definition, $a, b \in K_{y}$.
	Finally, since $x \in \ext K_{y}$, we conclude that $x = a = b$, proving that $x \in \ext A$.
	
	Applying the Krein-Milman Theorem, $\ext K_{y} \neq \emptyset$.
	Choose $x_{0} \in \ext K_{y} \subseteq \ext A$.
	By definition, $y = T(x_{0}) \in T(\ext A)$, as required.
	
	\sk 
	(2) \cite[Thm.~9.2.3]{TVS}.
	
	\sk 
	(3) \begin{align*}
		y \in T(T^{*}(M)^{\circ}) & \iff
		\exists x \in T^{*}(M)^{\circ}: y = T(x) \\
		& \iff \exists x \in E: y = T(x) \text{ and } \forall\ \varphi \in M: \lvert (T^{*}(\varphi))(x)\rvert \leq 1\\
		& \iff \exists x \in E: y = T(x) \text{ and } \forall\ \varphi \in M: \lvert \varphi(T(x))\rvert \leq 1\\
		& \iff y \in \im T \text{ and } \forall\ \varphi \in M: \lvert \varphi(y)\rvert \leq 1\\
		& \iff y \in (\im T) \cap M^{\circ}.
	\end{align*}
\end{proof}

\begin{defin} \label{d:rho_M} 
	Let 
	$M \in \eqc(E^{*})$ and 
	$\rho_{M}$ be the continuous seminorm on $E$ defined by
	$$
	\rho_{M}(x) := \sup_{\varphi \in M} \lvert \varphi(x) \rvert.
	$$
	We say that $M$ is $\coRl$ if there is no bounded $l^{1}$-sequence in $E$ with respect to $\rho_{M}$.
\end{defin}
\begin{lemma} \label{lemma:equicontinuous_factor_normed} 
	Let $M \subseteq E^{*}$ be an equicontinuous compact subset. There exist: a continuous, 
	onto and open 
	linear 
	map $\pi \colon E \to V$ to a normed space $V$ and a weak-star continuous linear operator ${\Delta \colon \spn M \to V^{*}}$ 
	with dense image, 
	such that
	$B_{V^{*}} = \overline{\acx}^{*}(\Delta(M))$ and 
	$
	\id_{M} = \pi^{*} \circ \Delta. 
	$
\end{lemma}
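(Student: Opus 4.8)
The plan is to realize $V$ as the normed space canonically attached to the continuous seminorm $\rho_M$ of Definition \ref{d:rho_M}. Put
$N:=\{x\in E:\varphi(x)=0\ \text{for all}\ \varphi\in M\}=\rho_M^{-1}(0)$,
a closed subspace of $E$, and let $V:=E/N$ carry the norm $\|x+N\|_V:=\rho_M(x)$; this is well defined and is a genuine norm precisely because $N$ is the kernel of the seminorm $\rho_M$. Let $\pi\colon E\to V$ be the canonical quotient map. Then $\pi$ is linear, onto and continuous (since $\rho_M$ is continuous), and it is open: it carries the $0$-neighbourhood $\{x:\rho_M(x)<1\}$ onto the open unit ball of $V$. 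Note also that $\{x:\rho_M(x)\le1\}=M^{\circ}$ (polar taken in $E$), so the closed unit ball of $V$ is $\pi(M^{\circ})$.

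Next I would identify $V^{*}$ with its image in $E^{*}$ under $\pi^{*}$. By Fact \ref{f:adjoint_is_continuous}, $\pi^{*}\colon V^{*}\to E^{*}$ is weak-star continuous, and it is injective because $\pi$ is onto; a routine net computation (a net $\psi_{\alpha}\to0$ in $\sigma(V^{*},V)$ iff $(\pi^{*}\psi_{\alpha})(x)=\psi_{\alpha}(x+N)\to0$ for every $x\in E$ iff $\pi^{*}\psi_{\alpha}\to0$ in $\sigma(E^{*},E)$) shows $\pi^{*}$ is a homeomorphism of $(V^{*},w^{*})$ onto its image. That image is exactly the set of $\varphi\in E^{*}$ with $|\varphi|\le c\,\rho_M$ for some $c>0$. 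Since every $\varphi\in M$ satisfies $|\varphi(x)|\le\rho_M(x)$ by the very definition of $\rho_M$, we get $M\subseteq\pi^{*}(V^{*})$, hence $\spn M\subseteq\pi^{*}(V^{*})$. I then define $\Delta\colon\spn M\to V^{*}$ to be the inverse of $\pi^{*}$ restricted to $\spn M$; thus $\pi^{*}\circ\Delta=\id$ on $\spn M$, which gives $\id_M=\pi^{*}\circ\Delta$, and $\Delta$ is weak-star continuous because, under the identification above, it is just the inclusion $\spn M\hookrightarrow V^{*}$.

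Finally I would invoke the Bipolar Theorem. Under $V^{*}\cong\pi^{*}(V^{*})\subseteq E^{*}$, since $\|x+N\|_V=\rho_M(x)$ and $\{x:\rho_M(x)\le1\}=M^{\circ}$,
\[
B_{V^{*}}=\{\varphi\in\pi^{*}(V^{*}):|\varphi(x)|\le1\ \text{for all}\ x\in M^{\circ}\}=\pi^{*}(V^{*})\cap M^{\circ\circ},
\]
the bipolar being taken in the dual pairing $(E,E^{*})$. By Fact \ref{fact:bipolar_theorem}, $M^{\circ\circ}=\overline{\acx}^{w^{*}}(M)$, and any $\varphi\in M^{\circ\circ}$ satisfies $|\varphi|\le\rho_M$, so $M^{\circ\circ}\subseteq\pi^{*}(V^{*})$; hence
$B_{V^{*}}=M^{\circ\circ}=\overline{\acx}^{w^{*}}(M)=\overline{\acx}^{w^{*}}(\Delta(M))$,
which is the asserted equality (here $\overline{\acx}^{w^{*}}$ is the weak-star closed absolutely convex hull). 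For the density of $\Delta(\spn M)=\spn M$ in $V^{*}$, note that $\overline{\spn M}^{\,w^{*}}$ is a weak-star closed linear subspace of $V^{*}$ containing $\overline{\acx}^{w^{*}}(M)=B_{V^{*}}$, hence containing $\spn(B_{V^{*}})=V^{*}$.

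The routine parts are the quotient algebra and the behaviour of the adjoint; the step that needs genuine care is the passage through the Bipolar Theorem — in particular verifying that $M^{\circ\circ}$ already lies inside $\pi^{*}(V^{*})$, so that the dual-norm ball of $V$ is computed by the very same polar operation as inside $E^{*}$ and the weak-star closures on the two sides agree.
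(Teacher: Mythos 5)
Your proof is correct, but it takes a different route from the paper's. You build $V$ as the quotient $E/\ker\rho_M$ normed by $\rho_M$, identify $V^*$ via $\pi^*$ with the subspace $\{\varphi\in E^*: |\varphi|\le c\,\rho_M \text{ for some } c>0\}$ of $E^*$ (noting $\pi^*$ is a weak-star homeomorphism onto this image), and then get $B_{V^*}=M^{\circ\circ}=\overline{\acx}^{w^*}(M)$ from the bipolar theorem in the pair $(E,E^*)$, after the key observation that $M^{\circ\circ}\subseteq\pi^*(V^*)$ so that hulls and weak-star closures computed in $V^*$ and in $E^*$ agree. The paper instead sets $N:=\overline{\acx}^{w^*}M$, norms $W:=\spn N$ by the gauge $q_N$, maps $E$ into $V_1:=W^*$ by evaluations (a $\rho_N$-isometry), takes $V:=\im\pi_1$, defines $\Delta$ through the canonical map $W\to W^{**}$, and obtains the dual-ball identity from Goldstine's theorem together with Hahn--Banach ($B_{V^*}=i^*(B_{V_1^*})$). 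The two constructions produce isometric spaces ($\pi_1$ is seminorm-preserving from $(E,\rho_N)$ to $W^*$, and $\rho_N=\rho_M$), but your argument is more self-contained: it avoids the auxiliary bidual $W^{**}$ and Goldstine, and it makes the weak-star continuity of $\Delta$ on all of $\spn M$ and the equality $B_{V^*}=\overline{\acx}^{w^*}(\Delta(M))$ transparent, at the price of having to check carefully (as you do) that the dual ball and the relevant closures can be computed inside $E^*$. The paper's version, on the other hand, sets up exactly the objects ($W$, the gauge $q_N$, the canonical map into $W^{**}$) reused in the Banach-space upgrade of the next lemma. One small reading note: ``dense image'' here must be understood in the weak-star sense, which is what both your absorbing-ball argument and the paper's argument actually deliver; norm density can genuinely fail (e.g.\ $M$ the set of Dirac measures in $C(K)^*$ for infinite compact $K$, where $\Delta(\spn M)$ consists of the finitely supported measures).
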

\begin{proof}
	Write $N := \overline{\acx}^{w^{*}} M$ and consider the space $W := \spn N$ with the gauge $q_{N}$ as norm.
	For every $x \in E$, consider the evaluation $e_{x}\colon N \to \R$ defined by $e_{x}(\varphi) := \varphi(x)$.
	By Lemma \ref{lemma:ball_of_gauge}, $B_{W} = N$.
	Also, $N$ is weak-star compact, so $e_{x}(N) = N(x)$ is a bounded subset of $\R$ for every $x \in E$.
	As a consequence, $e_{x} \in W^{*}$ is a bounded functional.
	
	Write $V_{1}:= W^{*}$ and define the 
	linear map 
	$\pi_{1}\colon E \to V_{1}$ by $\pi_{1}(x) := e_{x}$.
	Consider the continuous seminorm $\rho_{N}$ (Definition \ref{d:rho_M}).
	We claim that  $\pi_{1}$ is a seminorm preserving map from $(E, \rho_{N})$ to $(W^{*}, \lVert \rVert)$.
	Indeed, for every $x \in E$ we have:
	$$
	\lVert \pi_{1}(x) \rVert := 
	\sup_{\varphi \in N} \lvert (\pi_{1}(x))(\varphi) \rvert :=
	\sup_{\varphi \in N} \lvert \varphi(x) \rvert :=
	\rho_{N}(x).
	$$
	Thus, $\pi_{1}$ is necessarily continuous and open onto its image.
	
	Let $\Delta_{1}\colon W \to W^{**} = V_1^{*}$ be the canonical map. Note that it is not necessarily weak-star continuous. 
	We claim that ${\id_{M} = \pi_{1}^{*} \circ \Delta_{1}}$.
	Suppose that $\varphi \in M$ and $x \in E$:
	\begin{align*}
		((\pi_{1}^{*} \circ \Delta_{1})(\varphi))(x) := & 
		(\pi_{1}^{*}(\Delta_{1}(\varphi)))(x) \\
		= & (\Delta_{1}(\varphi))(\pi_{1}(x)) \\
		= & (\pi_{1}(x))(\varphi) \\
		= & \varphi(x) = (\id_{M}(\varphi))(x).
	\end{align*}
	
	Also, by Goldstine's theorem \cite[p.~IV.17 Proposition 5]{Bourb}:
	$$
	B_{V_{1}^{*}} = B_{W^{**}} = \overline{(\Delta_{1}(B_{W}))}^{w^{*}} = \overline{\Delta_{1}(N)}^{w^{*}}.
	$$
	
	Finally, define $V := \im \pi_{1}$ and the onto operator $\pi\colon E \to V$  induced by $\pi_1$.
	It is easy to see that $\pi$ remains continuous and open onto its image. 
	Also, let $i\colon V \to V_{1}$ be the inclusion map.
	Define $\Delta\colon \spn N \to V^{*}$ by $\Delta = i^{*} \circ \Delta_{1}$.
	
	We claim that $\Delta$ is weak-star continuous.
	For the purpose of this proof, write:
	$$
	U_{X}(a_{1}, \dotsc, a_{n}; \eps) := \{f \in X \mid \forall 1 \leq i \leq n: \lvert f(a_{i}) \rvert < \eps\}.
	$$
	Suppose that $y_{1}, \dotsc y_{n} \in V$ and $\eps > 0$.
	By definition, we can find $x_{1}, \dotsc, x_{n} \in E$ such that $\pi_{1}(x_{i}) = y_{i}$ for every $1 \leq i \leq n$.
	We will now show that:
	$$
	\Delta\left(U_{\spn N} \left(x_{1}, \dotsc, x_{n}; \eps\right)\right) \subseteq U_{V^{*}}(y_{1}, \dotsc, y_{n}; \eps).
	$$
	Indeed, for every $\varphi \in \Delta\left(U_{\spn N} \left(x_{1}, \dotsc, x_{n}; \eps\right)\right)$ and $1 \leq i \leq n$ we have:
	$$
	\lvert(\Delta(\varphi))(y_{i}) \rvert =
	\lvert(\Delta(\varphi))(\pi_{1}(x_{i})) \rvert =
	\lvert((\pi_{1}^{*} \circ\Delta)(\varphi))(x_{i}) \rvert =
	\lvert \varphi(x_{i}) \rvert < \eps.
	$$
	Now:
	$$
	\pi^{*} \circ \Delta = 
	\pi^{*} \circ i^{*} \circ \Delta_{1} = 
	(i \circ \pi )^{*} \circ \Delta_{1} = 
	\pi_{1}^{*} \circ \Delta_{1} = \id_{M}.\\
	$$
	In virtue of the Hahn-Banach theorem, $B_{V^{*}} = i^{*}(B_{V_{1}^{*}})$.
	Moreover, it is easy to see that 
	$$
	  \Delta(M) = i^{*} (\Delta_{1}(M)) \subseteq 
	  i^{*} (\Delta_{1}(B_{W})) \subseteq 
	  i^{*} (B_{V_{1}^{*}}) = B_{V^{*}},
	$$
	and therefore
	\begin{align*}
		B_{V^{*}} =\ & 
		i^{*}(B_{V_{1}^{*}}) = 
		i^{*}\left(\overline{\Delta_{1}(N)}^{w^{*}}\right) \subseteq
		\overline{(i^{*} \circ \Delta_{1})(N)}^{w^{*}} = 
		\overline{\Delta(N)}^{w^{*}} \\
		=\ & \overline{\Delta(\overline{\acx}^{w^{*}} M)}^{w^{*}} \subseteq
		\overline{\acx}^{w^{*}} \Delta(M) \subseteq B_{V^{*}}.
	\end{align*}
	Note that we used the continuity of $i^{*}$ and $\Delta$.
	In other words: $B_{V^{*}} = \overline{\acx}^{w^{*}} \Delta(M)$.
	Finally, $\acx M$ is dense in $N$ and therefore $\Delta(\acx M)$ is dense in $\Delta(N)$.
	Since $\Delta(N)$ is absorbing in 
	$V^{*}$, $\Delta(\spn M)$ is dense in 
	$V^{*}$.
\end{proof}

\begin{lemma} \label{lemma:equicontinuous_factor} 
	\rm{(Equicontinuous Factor Lemma)}  
	Let $M \subseteq E^{*}$ be an equicontinuous compact subset. There exist: a continuous, dense and open onto its image
	linear 
	 map $\pi \colon E \to V$ to a Banach space $V$ and a linear operator ${\Delta \colon \spn M \to V^{*}}$ 
	 with dense image, 
	 weak-star continuous over $M$, such that
	  $B_{V^{*}} = \overline{\acx}^{*}(\Delta(M))$ and 
	$
	\id_{M} = \pi^{*} \circ \Delta. 
	$

	
\end{lemma}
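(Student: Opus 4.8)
The plan is to deduce this from the normed-space version, Lemma \ref{lemma:equicontinuous_factor_normed}, simply by completing the target space. First I would apply that lemma to the equicontinuous compact set $M$, obtaining a continuous, onto and open linear map $\pi_{0} \colon E \to V_{0}$ onto a normed space $V_{0}$, together with a weak-star continuous linear operator $\Delta \colon \spn M \to V_{0}^{*}$ with dense image such that $B_{V_{0}^{*}} = \overline{\acx}^{*}(\Delta(M))$ and $\id_{M} = \pi_{0}^{*} \circ \Delta$. Let $V$ be the completion of $V_{0}$ and $j \colon V_{0} \hookrightarrow V$ the canonical dense isometric embedding; put $\pi := j \circ \pi_{0} \colon E \to V$. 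Since a normed space and its completion have canonically isometrically isomorphic duals (via restriction), I would identify $V^{*} = V_{0}^{*}$ through $j^{*}$ and keep the same operator $\Delta$, now regarded as a map into $V^{*}$.

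The topological properties of $\pi$ are immediate: it is continuous as a composite of continuous maps, it has dense image because $\pi_{0}$ is onto $V_{0}$ and $j(V_{0})$ is dense in $V$, and it is open onto its image since $\pi_{0}$ is open onto $V_{0}$ and $j$ is a homeomorphism onto $j(V_{0})$ (the norm topology of $V_{0}$ coincides with the subspace topology from $V$). The relation $\id_{M} = \pi^{*}\circ\Delta$ follows from $\pi^{*} = \pi_{0}^{*}\circ j^{*}$ together with the fact that $j^{*}$ is the identity under the identification $V^{*} = V_{0}^{*}$, combined with $\id_{M} = \pi_{0}^{*}\circ\Delta$. Density of $\Delta(\spn M)$ in $V^{*} = V_{0}^{*}$ is unchanged.

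I expect the only real subtlety to be the weak-star continuity clause. The operator $\Delta$ is $\sigma(V_{0}^{*},V_{0})$-continuous on all of $\spn M$, but it need not be $\sigma(V^{*},V)$-continuous there after passing to the completion --- which is exactly why the statement asserts weak-star continuity only \emph{over} $M$. To handle this I would use that $\Delta(M) \subseteq B_{V^{*}}$ (a fact established inside the proof of Lemma \ref{lemma:equicontinuous_factor_normed}), together with the standard observation that on any bounded --- equivalently, equicontinuous --- subset of $V^{*}$ the topologies $\sigma(V^{*},V)$ and $\sigma(V^{*},V_{0})$ coincide, because $V_{0}$ is dense in $V$. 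Hence $\Delta|_{M}$, being $\sigma(V_{0}^{*},V_{0})$-continuous, is also $\sigma(V^{*},V)$-continuous, and (since $M$ is weak-star compact) $\Delta(M)$ is weak-star compact in $V^{*}$. Finally, for $B_{V^{*}} = \overline{\acx}^{*}(\Delta(M))$: one has $\acx(\Delta(M)) \subseteq B_{V^{*}}$, and $B_{V^{*}}$ is weak-star compact, hence weak-star closed, by Alaoglu--Bourbaki (Fact \ref{l:Al-Bo}), so the $\sigma(V^{*},V)$-closure of $\acx(\Delta(M))$ is contained in $B_{V^{*}}$; since the two weak-star topologies agree on $B_{V^{*}}$, this closure equals the $\sigma(V_{0}^{*},V_{0})$-closure of $\acx(\Delta(M))$, which is $B_{V_{0}^{*}} = B_{V^{*}}$ by the normed-space lemma. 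This finishes the verification.
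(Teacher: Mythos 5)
Your proposal is correct and follows essentially the same route as the paper: apply the normed-space Lemma \ref{lemma:equicontinuous_factor_normed}, pass to the completion, and transfer everything through the restriction identification of the duals, where the only delicate point is that this identification is a weak-star homeomorphism only on bounded (equicontinuous) sets such as $\Delta(M)\subseteq B_{V^{*}}$. The paper packages this last point via Lemma \ref{f:equicontinuous_extension} and a compactness/closed-map argument, while you use the equivalent standard fact that $\sigma(V^{*},V)$ and $\sigma(V^{*},V_{0})$ coincide on norm-bounded sets by density of $V_{0}$; both are fine, and your explicit verification of $B_{V^{*}}=\overline{\acx}^{*}(\Delta(M))$ fills in a step the paper leaves implicit.
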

\begin{proof}
	Let $\Delta\colon \spn M \to V^{*}$ and $\pi\colon E \to V$ be the maps described in Lemma \ref{lemma:equicontinuous_factor_normed}.
	Consider the completion $\widehat{V}$ and the inclusion map $i\colon V \to \widehat{V}$.
	Note that $\Delta(M)$ is weak-star compact as a continuous image of a compact set.
	Also, $V$ is normed so we can use the Banach-Steinhaus Theorem to conclude that $\Delta(M)$ is equicontinuous.  
	Using Lemma \ref{f:equicontinuous_extension}, we can find a weak-star compact, equicontinuous subset $\widehat{M} \subseteq V^{*}$ such that $\Delta(M) = i^{*}(\widehat{M})$.
	As a consequence, $i^{*}$ is a closed map on $\widehat{M}$.
	Also, since $V$ is dense in $\widehat{V}$, $i^{*}$ is injective.
	Therefore, $i^{*}$ is a weak-star homeomorphism on $\widehat{M}$.
	We define $\widehat{\pi} := i \circ \pi$ and $\widehat{\Delta} := (i^{*})^{-1} \circ \Delta$.
	
	Forgetting the earlier notations of $V,\pi$ and $\Delta$, we can define $V:=\widehat{V}, \pi:=\widehat{\pi}$ and $\Delta:=\widehat{\Delta}$ to get the desire result.
\end{proof}
\begin{remark}
	Note that unlike in Lemma \ref{lemma:equicontinuous_factor_normed}, we can't guarantee that $\Delta$ is weak-star continuous over $\spn M$, but only over $M$.
\end{remark}
\begin{lemma} \label{lemma:disk_hahn_banach}
	Let 
	$D \subseteq E$ be a disk, 
	$\varphi \in E^{*}$ and ${\eps := \varphi^{-1}((-1, 1))}$.
	Suppose that for some 
	disk neighborhood $\delta \subseteq E$ we have 
	$
	\delta \cap D \subseteq \eps \cap D. 
	$
	Then there exists $\widehat{\varphi}\in \delta^{\circ}$ 
	such that $\varphi_{\mid D} = \widehat{\varphi}_{\mid D}$.
\end{lemma}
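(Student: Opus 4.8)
The plan is to recognize this as a Hahn--Banach extension problem. Put $M := \spn D$. Since $D$ is a disk, $M = \bigcup_{t>0} tD$, so any two linear functionals agreeing on $D$ agree on all of $M$; hence the desired conclusion ``$\varphi_{\mid D} = \widehat\varphi_{\mid D}$'' is the same as ``$\varphi_{\mid M} = \widehat\varphi_{\mid M}$''. On the other hand, denoting by $q_{\delta}$ the Minkowski gauge of $\delta$ (a \emph{continuous} seminorm, because $\delta$ is a disk neighborhood), one has the standard equivalence $\widehat\varphi \in \delta^{\circ} \iff |\widehat\varphi(x)| \le q_{\delta}(x)$ for all $x \in E$. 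Thus the statement amounts to: the functional $\psi := \varphi_{\mid M}$ extends to a linear functional on $E$ dominated by the seminorm $q_{\delta}$. By the seminorm form of the Hahn--Banach theorem such an extension exists provided $|\psi(x)| \le q_{\delta}(x)$ for every $x \in M$, and it is then automatically continuous (being bounded by the continuous seminorm $q_{\delta}$), i.e.\ lies in $E^{*}$.

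So the whole proof reduces to the domination estimate $|\varphi(x)| \le q_{\delta}(x)$ on $M$. By homogeneity of both sides it suffices to treat $x \in D$. Given such $x$ and any $\mu > q_{\delta}(x)$, the vector $x/\mu$ lies in $\delta$; the role of the hypothesis is that, after also arranging $x/\mu \in D$, we get $x/\mu \in \delta \cap D \subseteq \eps \cap D$, hence $|\varphi(x/\mu)| < 1$, i.e.\ $|\varphi(x)| < \mu$. Letting $\mu \downarrow q_{\delta}(x)$ yields the estimate; Hahn--Banach then produces $\widehat\varphi$, and translating $|\widehat\varphi| \le q_{\delta}$ back gives $\widehat\varphi \in \delta^{\circ}$.

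The step I expect to be the real obstacle is exactly the one singled out above: pushing the control of $\varphi$ from $\delta \cap D$ out to $\delta \cap M$, that is, guaranteeing that the rescaled vector can be placed inside $D$ and not merely inside $\spn D$. The natural way to handle it is to work instead with the finer gauge $q_{\delta\cap D} = \max(q_{\delta}, q_{D})$ on $M$ (note that $\delta \cap D$ is still a $0$-neighborhood of the normed space $E_{D} = (M, q_{D})$, so $q_{D}$ is itself controlled in the directions that matter), extend $\psi$ with that bound, and then invoke that the hypothesis already forces $|\varphi| < 1$ on $D$ to see the extension still lands in the required polar. Apart from this, the remaining ingredients are routine: the identity $\spn D = \bigcup_{t>0} tD$, the duality $\widehat\varphi \in \delta^{\circ} \Leftrightarrow |\widehat\varphi| \le q_{\delta}$, and the (standard) seminorm version of the Hahn--Banach theorem.
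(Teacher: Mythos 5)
Your overall route is the same as the paper's own proof: reduce the conclusion to the domination $\lvert\varphi(x)\rvert\le q_{\delta}(x)$ on $\spn D$, apply the seminorm form of Hahn--Banach, and read off $\widehat{\varphi}\in\delta^{\circ}$ from $\lvert\widehat{\varphi}\rvert\le q_{\delta}$. To your credit, you single out as the real issue exactly the step the paper dismisses as ``easy to see'': in the rescaling argument one needs $x/\mu$ to lie in $D$, not merely in $\spn D$, so what the hypothesis actually yields is only $\lvert\varphi\rvert\le\max(q_{\delta},q_{D})=q_{\delta\cap D}$ on $\spn D$.

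However, your proposed patch does not close this gap. First, the hypothesis bounds $\varphi$ only on $\delta\cap D$, not on all of $D$ (take $D$ much larger than $\delta$ in some direction). Second, and decisively, an extension dominated by $q_{\delta\cap D}$ is controlled only on $\delta\cap D$, and nothing can be ``invoked'' to push it into $\delta^{\circ}$, because the conclusion can genuinely fail under the stated hypothesis. Take $E=\R^{2}$, $\delta$ the closed Euclidean unit ball, $D=\{(t,0):\lvert t\rvert\le\tfrac{1}{2}\}$ and $\varphi(x,y)=\tfrac{3}{2}x$, so that $\eps=\{(x,y):\lvert x\rvert<\tfrac{2}{3}\}$ and $\delta\cap D=D=\eps\cap D$. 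Any linear $\widehat{\varphi}$ agreeing with $\varphi$ on $D$ satisfies $\widehat{\varphi}(1,0)=\tfrac{3}{2}$, hence $\widehat{\varphi}\notin\delta^{\circ}$; likewise $\lvert\varphi(\tfrac{1}{2},0)\rvert=\tfrac{3}{4}>\tfrac{1}{2}=q_{\delta}(\tfrac{1}{2},0)$, so the inequality $\lvert\varphi\rvert\le q_{\delta}$ on $\spn D$ (the opening claim of the paper's proof, and the estimate your plan aims at) is simply false here. So the step you flagged is not a routine detail but a genuine obstruction: both your sketch and the printed proof go through only under a stronger hypothesis, e.g.\ $\delta\cap\spn D\subseteq\varphi^{-1}([-1,1])$, in which case your rescaling argument works verbatim inside the span and Hahn--Banach finishes the proof; with only $\delta\cap D\subseteq\eps\cap D$ one can at best conclude that $\widehat{\varphi}$ is bounded by $1$ on $\delta\cap D$. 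Whichever repair is adopted must then be rechecked against the way the lemma is invoked in Theorem \ref{thm:bounded_metrizable_tameness}.
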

\begin{proof}
	It is easy to see that for every $x \in \spn(D)$ we have $\lvert \varphi(x) \rvert \leq q_{\delta}(x)$, 
	where $q_{\delta}(x) := \inf \{r >0 \mid x \in r \delta\}$ is the gauge of $\delta$. 
	By the Hahn-Banach Theorem, we can find a continuous functional $\widehat{\varphi}$ on $E$ which 
	agrees with $\varphi$ on $\spn(D)$ and for every $x \in E: \lvert \widehat{\varphi}(x) \rvert \leq q_{\delta}(x)$.
	
	Clearly, this also implies that $\widehat{\varphi}$ is continuous and therefore belongs to $E^{*}$.	
	Moreover, for every $x \in \delta$ we have 
	$
	\lvert \widehat{\varphi}(x) \rvert \leq 
	q_{\delta}(x) \leq
	1.
	$
	By definition of the polar, $\widehat{\varphi}\in \delta^{\circ}$.
\end{proof}

\begin{f} \cite[p.~131, Corollary 5]{Jarchow} \label{fact:absolutely_convex_separation}
	Suppose that  
	$A, B \subseteq E$ are non-empty subsets.
	If $A$ is closed and absolutely convex, $B$ is compact and $A \cap B = \emptyset$, then there exists $\varphi \in E^{*}$ such that:
	$$
	  \sup_{x \in A} \lvert \varphi(x) \rvert < \inf_{y \in B} \lvert \varphi(y) \rvert.
	$$
\end{f}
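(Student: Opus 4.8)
The plan is to derive this from the classical strong (strict) separation theorem — a closed convex set and a disjoint compact convex set in a locally convex space can be separated by a continuous functional with a gap — and then to upgrade the resulting one-sided estimate to the two-sided one in the statement by exploiting the symmetry of $A$ and the fact that $0\in A$.

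\emph{Step 1 (ordinary separation).} First I would separate $A$ from $B$: since $A$ is closed and $B$ is compact with $A\cap B=\emptyset$, the standard Hahn--Banach strict separation theorem produces $\varphi\in E^{*}$ and real numbers $r<s$ with $\varphi(a)\le r$ for all $a\in A$ and $\varphi(b)\ge s$ for all $b\in B$. (In the situation where this is applied $B$ is convex, or even a singleton; for a general compact $B$ one would first replace it by a convex compact set, e.g.\ $\overline{\co}(B)$, and check it stays disjoint from $A$ — this is precisely where convexity of the compact side is needed.)

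\emph{Step 2 (both estimates via symmetry and $0\in A$).} Because $A$ is absolutely convex and non-empty, $0\in A$, so $r\ge\varphi(0)=0$; and $-a\in A$ whenever $a\in A$, so $\varphi(a)\ge -r$ as well, whence $\lvert\varphi(a)\rvert\le r$ for every $a\in A$, i.e.\ $\sup_{x\in A}\lvert\varphi(x)\rvert\le r$. On the other side, $s>r\ge 0$, so $\varphi(b)\ge s>0$ for $b\in B$ gives $\lvert\varphi(b)\rvert=\varphi(b)\ge s$, i.e.\ $\inf_{y\in B}\lvert\varphi(y)\rvert\ge s$. Combining,
$$\sup_{x\in A}\lvert\varphi(x)\rvert\le r<s\le\inf_{y\in B}\lvert\varphi(y)\rvert,$$
which is the claim.

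The genuinely delicate point is Step 1: strict separation requires the compact side to be convex, so the real content of the argument is to be in a setting (convex or singleton $B$, or a convex compact set still disjoint from $A$) in which that theorem applies; Steps 2 is then just bookkeeping that turns the half-space estimate into the slab estimate.
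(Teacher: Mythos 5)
The paper itself gives no proof of this Fact (it is quoted from Jarchow), and your argument is precisely the standard proof behind the cited corollary: strict Hahn--Banach separation of a closed convex set from a disjoint convex compact set, then upgrading the one-sided bound using $0\in A$ and $-A=A$. Your Step 2 is complete and correct: from $\varphi\le r$ on $A$ and $\varphi\ge s$ on $B$ with $r<s$, balancedness gives $\sup_{x\in A}\lvert\varphi(x)\rvert\le r$, and $r\ge\varphi(0)=0$ forces $\lvert\varphi(y)\rvert=\varphi(y)\ge s$ on $B$.

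Your unease about Step 1 is justified, and in fact decisive: as literally stated, with $B$ merely compact, the Fact is false. Take $E=\mathbb{R}^{2}$, $A=\{0\}$ (closed, absolutely convex) and $B$ the unit circle: every nonzero $\varphi\in E^{*}$ vanishes somewhere on $B$, so $\inf_{y\in B}\lvert\varphi(y)\rvert=0=\sup_{x\in A}\lvert\varphi(x)\rvert$. So the hypothesis must include convexity of the compact set (as in Jarchow's original formulation), and your proposed repair --- replacing $B$ by $\overline{\co}(B)$ --- cannot work in general: disjointness from $A$ may be destroyed (in the example above $0\in\overline{\co}(B)\cap A$), and in a lcs that is not quasi-complete $\overline{\co}(B)$ need not even be compact. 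None of this causes trouble downstream: the Fact is invoked only in Lemma \ref{lemma:coRl_l1_embedding}, with $B$ a singleton, which is compact and convex, and for that case --- indeed for any convex compact $B$ --- your proof is complete and is the intended one.
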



The following is a 
 (probably known) consequence of the Banach-Grothendieck theorem 
 \cite[p.~147 Thm.~2]{Jarchow}. 
\begin{lemma} \label{lemma:discrete_measures_are_dense}
	Let $K$ be a compact 
	space, and let $i \colon K \hookrightarrow C(K)^{*}$ be the standard weak-star  embedding.  
	Then $\overline{\acx}^{w^{*}} i(K) = B_{C(K)^{*}}$.
\end{lemma}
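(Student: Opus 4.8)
The plan is to show the nontrivial inclusion $B_{C(K)^{*}} \subseteq \overline{\acx}^{w^{*}} i(K)$, the reverse inclusion being immediate since $i(K) \subseteq B_{C(K)^{*}}$ (each point mass has norm $\leq 1$) and $B_{C(K)^{*}}$ is weak-star closed and absolutely convex by Alaouglu-Bourbaki (Fact \ref{l:Al-Bo}). For the hard inclusion I would argue by contradiction using the separation theorem. Suppose there is $\mu \in B_{C(K)^{*}}$ with $\mu \notin N := \overline{\acx}^{w^{*}} i(K)$. Since $N$ is weak-star closed and absolutely convex and $\{\mu\}$ is weak-star compact, Fact \ref{fact:absolutely_convex_separation} (applied in the locally convex space $(C(K)^{*}, w^{*})$, whose dual is $C(K)$) yields some $f \in C(K)$ with
$$
\sup_{\nu \in N} \lvert \nu(f) \rvert < \lvert \mu(f) \rvert.
$$

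In particular $\sup_{x \in K} \lvert i(x)(f) \rvert = \sup_{x \in K} \lvert f(x) \rvert = \lVert f \rVert_{\infty}$ bounds the left-hand side, so $\lVert f \rVert_{\infty} < \lvert \mu(f) \rvert$. But $\mu \in B_{C(K)^{*}}$ means $\lVert \mu \rVert \leq 1$, hence $\lvert \mu(f) \rvert \leq \lVert f \rVert_{\infty}$, a contradiction. This gives $B_{C(K)^{*}} \subseteq N$ and completes the proof.

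The only subtlety — and what I would be careful about — is making sure the separation theorem is being applied in the correct topological vector space: we need $(C(K)^{*}, \text{weak-star})$ as the ambient lcs, whose topological dual is canonically $C(K)$ itself, so that the separating functional really is integration against some $f \in C(K)$ rather than against a general element of $C(K)^{**}$. This is standard (the weak-star topology on a dual is precisely the one whose continuous linear functionals come from the predual), and it is exactly the content invoked when the lemma statement references the Banach-Grothendieck theorem: one may alternatively phrase the argument as saying that a weak-star continuous linear functional on $C(K)^{*}$ bounded by $1$ on $i(K)$ is bounded by $1$ on $\overline{\acx}^{w^{*}} i(K)$, which is the bipolar of $i(K)$ computed inside the dual pair $\langle C(K), C(K)^{*} \rangle$. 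Either way the main (and essentially only) step is this single Hahn-Banach separation, and there is no serious obstacle; the proof is short.
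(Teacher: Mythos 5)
Your proof is correct, and it is essentially the argument the paper has in mind: the paper simply records the lemma as a consequence of the Banach--Grothendieck theorem (i.e., that the dual of $(C(K)^{*}, w^{*})$ is $C(K)$), and your Hahn--Banach separation (equivalently, bipolar) argument is precisely the standard way of spelling that out, including the key point that the separating functional lives in $C(K)$.
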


\sk
\section{Bornological classes} 
\label{s:Born} 

A \textit{bornology} $\mathcal{B}$ on a lcs $E$ is a family of subsets in $E$ which covers $E$, and is hereditary 
under inclusion (i.e. if $A \in \mathfrak{B}$ and $B \subseteq A$ then $B \in \mathfrak{B}$) and finite unions. So, every bornology contains all finite subsets. 

A \emph{vector bornology} \cite[Definition 1:1'2]{Born} is a bornology $\mathcal{B}$ on 
$E$ such that whenever $A, B \in \mathcal{B}$ and $\alpha \in \R$ we have:
\ben
	\item $A + B \in \mathcal{B}$;
	\item $\alpha A \in \mathcal{B}$;
	\item $\bo A \in \mathcal{B}$.
\een
If $\mathcal{B}$ is closed 
under taking convex hulls, it is said to be a \emph{convex bornology}. 
It is said to be \emph{saturated} if the closure of sets in $\mathcal{B}$ remains in $\mathcal{B}$.
Moreover, it is 
\emph{separated} if it has no non-trivial bounded subspaces.


\begin{defin} \label{def:bornological_class}
	A \emph{bornological class} $\mathfrak{B}$ is  
	an assignment 
	$$Comp \to \{Bornologies\},  \ \ \ K \mapsto \mathfrak{B}_{K}$$ 
	from the class of all compact spaces $Comp$ to the class of 
	vector bornologies such that 
	$\mathfrak{B}_{K}$  
	is a 
	separated
	convex vector bornology on
	the Banach space $C(K)$ 
	satisfying the following properties:	
	\ben
	\item \textbf{boundedness}: 
	$\mathfrak{B}_{K}$ consists of 
	bounded subsets in $C(K)$.
	\item \textbf{consistency}: Suppose that $\varphi \colon K_{1} \to K_{2}$ is a continuous map. 
	\ben
	\item 
	If $A \in \mathfrak{B}_{K_{2}}$, 
	then $A \circ \varphi \in \mathfrak{B}_{K_{1}}$.
	\item If $\varphi$ is surjective, then the converse is also true, namely that $A \circ \varphi \in \mathfrak{B}_{K_{1}}$ implies $A \in \mathfrak{B}_{K_{2}}$.
	\een
	\item \textbf{Bipolarity}: If $A \in \mathfrak{B}_{K}$, then $A^{\circ\circ} = \overline{\acx}^{w} A \in \mathfrak{B}_{K}$ where the polar is taken with respect to the dual $C(K)^{*}$ (note that we 
	use the Bipolar Theorem). 
	\een

\end{defin}

\sk 
We  write $\BTame, \BNP$ and $\BDLP$ for the classes of tame, fragmented and DLP function families, respectively.
Recall that by Lemma \ref{f:sub-fr}, $\BTame$ coincides with the class of 
eventually fragmented 
families. 

\begin{prop} \label{prop:examples_of_bornological_categories}
	$\BTame$, $\BNP$ and $\BDLP$ are 
	bornological classes.
\end{prop}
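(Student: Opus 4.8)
The plan is to verify, for each of the three assignments $\BTame$, $\BNP$, $\BDLP$, the four defining conditions of Definition \ref{def:bornological_class}: (a) each $\mathfrak{B}_K$ is a separated convex vector bornology on $C(K)$; (b) boundedness; (c) consistency under continuous maps (and the converse for surjections); (d) bipolarity. Most of the work has already been isolated into the preceding lemmas, so the proof should be an organized bookkeeping exercise, invoking the right lemma in each case.

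First I would observe that boundedness is built into all three definitions (Definitions \ref{d:fr-family}, \ref{d:TameFamily}, \ref{d:DLP} all speak of bounded families), so (b) is immediate. For the bornology axioms in (a): hereditariness under subsets and closure under finite unions are trivial from the definitions of ``fragmented family'', ``tame family'', and ``DLP family'' (a subfamily of a fragmented/tame/DLP family is again such, and a finite union cannot create an independent sequence, a failure of simultaneous fragmentation, or a double-limit violation without one of the pieces already containing it — for tameness one uses that an independent sequence has infinitely many terms in one of the finitely many pieces; similarly for the others). Scalar multiples $\alpha A$ and balanced hulls $\bo A$ are handled by Lemma \ref{l:convFr}(1) (for $\BNP$: $\acx$ preserves fragmentedness, hence so does $\bo$), by Lemma \ref{lemma:double_subsequence_of_bounded_functions_is_DLPable}(2) (for $\BDLP$), and for $\BTame$ by the trivial fact that rescaling functions by bounded scalars does not create or destroy combinatorial independence (if $\{\alpha_n f_n\}$ were independent with bounds $a<b$ and $|\alpha_n|$ bounded away from... — actually one passes to a subsequence on which $\alpha_n$ converges; if the limit is nonzero one gets independence of $\{f_n\}$, and if some $\alpha_n=0$ that term is constant and can be dropped). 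Sums $A+B$: for $\BDLP$ this is Lemma \ref{lemma:double_subsequence_of_bounded_functions_is_DLPable}(3); for $\BNP$ fragmentedness of $A+B$ follows from fragmentedness of $\acx(A\cup B)\supseteq A+B$ via Lemma \ref{l:convFr}(1); for $\BTame$ one uses the sum characterization through Rosenthal's theory (an $l^1$-sequence in $A+B$, via Lemma \ref{f:sub-fr}, forces one in $A$ or $B$ — this is the standard stability of non-containment of $l^1$ under sums, which in the function-family language follows by passing to pointwise-convergent subsequences). Convexity of the bornology (closure under $\co$) follows for $\BNP$ from Lemma \ref{l:convFr}(1), for $\BTame$ from Lemma \ref{l:convFr}(2) combined with Lemma \ref{f:sub-fr} (eventual fragmentability $=$ tameness for bounded families of continuous functions on compacta), and for $\BDLP$ from the already-verified closure under $+$ and under scalar multiplication together with $\co(F) \subseteq \overline{\acx}^w F$ and Lemma \ref{lemma:DLP_is_locally_convex} / \ref{f:young}. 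Separatedness: a nontrivial bounded linear subspace $L$ of $C(K)$ would contain, after normalization, a sequence that is certainly not tame/fragmented/DLP — e.g. an infinite-dimensional normed subspace contains an $l^1$-sequence or a sequence witnessing failure; more simply, since each $\mathfrak{B}_K$ consists of bounded sets and a subspace in $\mathfrak{B}_K$ must be bounded hence $\{0\}$, separatedness is automatic once we know $\mathfrak{B}_K \subseteq \{\text{bounded sets}\}$, which is (b).

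Next, consistency (c): for a continuous $\varphi\colon K_1\to K_2$ and $A\in\mathfrak{B}_{K_2}$, that $A\circ\varphi\in\mathfrak{B}_{K_1}$, with the converse when $\varphi$ is onto, is exactly Lemma \ref{l:surjection_of_fragmented_family} for $\BNP$, Lemma \ref{lem:surjective_image_of_independent_sequence} for $\BTame$, and Lemma \ref{lemma:double_subsequence_of_bounded_functions_is_DLPable}(4) for $\BDLP$. (One must also note $A\circ\varphi$ is still a bounded family in $C(K_1)$, which is clear since $\|f\circ\varphi\|_\infty\le\|f\|_\infty$.) Finally, bipolarity (d): $A^{\circ\circ}=\overline{\acx}^w A \in \mathfrak{B}_K$ is Corollary \ref{cor:fragmented_bipolarity} for $\BNP$ (eventually-fragmented case), the same Corollary \ref{cor:fragmented_bipolarity} combined with Lemma \ref{f:sub-fr} for $\BTame$ (since tame $=$ eventually fragmented), and Lemma \ref{lemma:DLP_is_locally_convex} (equivalently Fact \ref{f:young}) for $\BDLP$.

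I expect the main obstacle to be the $\BTame$ case of the vector-bornology axioms — specifically closure under sums and under convex hulls — because, unlike fragmentedness, tameness is not a ``linear'' property one can push around by an elementary $\eps$-small computation; one genuinely needs the Rosenthal/Talagrand synthesis packaged in Lemma \ref{f:sub-fr} (tame $\Leftrightarrow$ eventually fragmented for bounded $F\subseteq C(K)$), after which $\BTame$ inherits everything from $\BNP$-type arguments applied to subsequences. Everything else is routine invocation of the cited lemmas; the only care needed is to keep track, at each step, that we remain inside bounded families of \emph{continuous} functions on a \emph{compact} space, which is where Lemma \ref{f:sub-fr} and Lemma \ref{t:countDetermined} apply.
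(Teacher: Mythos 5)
Your proposal is correct and follows essentially the same route as the paper: boundedness holds by definition, consistency is handled by Lemmas \ref{lem:surjective_image_of_independent_sequence}, \ref{l:surjection_of_fragmented_family} and \ref{lemma:double_subsequence_of_bounded_functions_is_DLPable}(4), and bipolarity by Corollary \ref{cor:fragmented_bipolarity} (with Lemma \ref{f:sub-fr} for the tame case) and Lemma \ref{lemma:DLP_is_locally_convex}, exactly as in the paper's table. The only difference is that the paper dismisses the vector-bornology axioms as obvious, whereas you spell them out; your added details (e.g.\ tame sums and balanced hulls via pointwise-convergent subsequences and Lemma \ref{f:sub-fr}, separatedness from boundedness) are sound.
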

\begin{proof}
	First, it is obvious that all three of these classes consists of 
	 vector bornologies. 
	We organize the rest of the proof in the following table:
	\begin{center}
		\begin{tabular}{||c || c | c | c||} 
			\hline
			Property & $\BTame$ & $\BNP$ & $\BDLP$ \\ [0.5ex] 
			\hline\hline
			boundedness & \multicolumn{3}{|c|}{by definition} \\ 
			\hline
			consistency & Lemma \ref{lem:surjective_image_of_independent_sequence} & Lemma \ref{l:surjection_of_fragmented_family} & Lemma \ref{lemma:double_subsequence_of_bounded_functions_is_DLPable}.4 \\ 
			\hline
			bipolarity & 
				\multicolumn{2}{|c|}{
					Corollary \ref{cor:fragmented_bipolarity}
				} &
				{Lemma \ref{lemma:DLP_is_locally_convex}} \\
			\hline
			\hline
		\end{tabular}
	\end{center}
\end{proof}

The following is easy to see.
\begin{lem}\label{lemma:euclidean_bornology}
	The only separated, convex, vector bornology on $\R$ is the 
	euclidean bornology $B_{e}$.
\end{lem}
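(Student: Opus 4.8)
\textbf{Proof plan for Lemma \ref{lemma:euclidean_bornology}.}
The plan is to show that any separated, convex, vector bornology $\mathcal{B}$ on $\R$ must coincide with the euclidean bornology $B_e$ (the bornology of bounded subsets of $\R$ in the usual sense). First I would establish the inclusion $\mathcal{B} \subseteq B_e$: suppose some $A \in \mathcal{B}$ were unbounded in the euclidean sense. Then $A$ contains a sequence $|a_n| \to \infty$; since a bornology is hereditary, every subset of $A$ lies in $\mathcal{B}$, so in particular $\{a_n : n \in \N\} \in \mathcal{B}$. I would then use the vector-bornology axioms (closure under scalar multiples and finite unions, together with heredity) to produce an arbitrarily large — in fact, an absorbing — set in $\mathcal{B}$: rescaling the single point $a_n$ by $1/a_n$ gives $1 \in \mathcal{B}$ (as a singleton, via heredity inside $\{a_n\}$ scaled), hence by the scalar-multiple axiom the whole line $\R = \bigcup_{r} r\{1\}$ would be reachable. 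More carefully: from $|a_n|\to\infty$ and $\{a_n\}\in\mathcal B$ I get, for each fixed $c \in \R$, some $n$ with $|a_n| \ge |c|$, and then $c \in [-1,1]\cdot\{a_n/|a_n|\}\cdot|a_n| \subseteq \bo(\{a_n\})$ scaled appropriately — so $\{c\} \in \mathcal{B}$ for every $c$, and then using the balanced-hull axiom on $\{a_n\}$ one sees $[-|a_n|,|a_n|] \subseteq \bo(\{a_n\}) \in \mathcal{B}$, whence every bounded interval lies in $\mathcal{B}$ and (taking the union over $n$, noting $\mathcal B$ covers $\R$ anyway) this forces $\R$ itself not to be in $\mathcal B$ — but the subspace $\R \subseteq \R$ being bounded contradicts \emph{separatedness}. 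Hence $\mathcal{B} \subseteq B_e$.

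For the reverse inclusion $B_e \subseteq \mathcal{B}$, I would use that a bornology covers $\R$: every point $\{c\}$ lies in some member of $\mathcal{B}$, hence $\{c\} \in \mathcal{B}$ by heredity. In particular $\{1\} \in \mathcal{B}$, so $\bo(\{1\}) = [-1,1] \in \mathcal{B}$ by the balanced-hull axiom, and then $r[-1,1] = [-r,r] \in \mathcal{B}$ for every $r > 0$ by the scalar-multiple axiom. Since every euclidean-bounded subset of $\R$ is contained in some $[-r,r]$, heredity gives $B_e \subseteq \mathcal{B}$. Combining the two inclusions yields $\mathcal{B} = B_e$.

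The main obstacle — really the only subtle point — is making the first inclusion airtight: one must be careful that the argument genuinely uses \emph{separatedness} (otherwise the trivial bornology of \emph{all} subsets of $\R$ would be a counterexample to uniqueness), and that it does not accidentally use convexity in a way that is unnecessary. The cleanest formulation is: if $\mathcal B \not\subseteq B_e$ then $\mathcal B$ contains an unbounded set, from which scaling and the balanced-hull axiom produce $[-r,r] \in \mathcal B$ for arbitrarily large $r$, hence (since $\mathcal B$ is a bornology, closed under the relevant operations) $\R \in \mathcal B$; but $\R$ is a nontrivial bounded subspace, contradicting separatedness. I would present this contrapositive form to keep the logic transparent. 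Convexity is not actually needed for this particular lemma, but there is no harm in having it available; I would simply not invoke it.
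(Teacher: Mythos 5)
The paper offers no proof of this lemma (it is introduced with ``The following is easy to see''), so there is nothing to compare against; your argument is the intended elementary one, and both inclusions are correct: separatedness rules out euclidean-unbounded members, while covering together with heredity, the balanced-hull axiom and scalar multiples yields every euclidean-bounded set. One phrasing should be tightened, though: a bornology is closed only under \emph{finite} unions, so do not argue by ``taking the union over $n$'' of the intervals $[-\lvert a_n\rvert,\lvert a_n\rvert]$; instead observe that the single set $\bo(\{a_n\}_{n\in\N})$, which belongs to $\mathcal{B}$, contains every such interval and hence equals $\R$ once $\lvert a_n\rvert\to\infty$, so $\R\in\mathcal{B}$ in one step and separatedness is contradicted (also, ``forces $\R$ itself not to be in $\mathcal B$'' is evidently a slip for ``to be in $\mathcal B$'', as your closing contrapositive formulation makes clear).
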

%

\begin{defin} \label{def:b_small_space}
	Let $\mathfrak{B}$ be a bornological class. 
	A bounded subset $B \subseteq E$ is said to be \emph{$\mathfrak{B}$-small} if for every 
	$M \in \eqc(E^{*})$, 
	$r_{M}(B) \in \mathfrak{B}_{M}$ where $r_{M} \colon E \to C(M)$ is the 
	\textit{restriction operator}.
	
	
	A locally convex space is said to be \emph{$\mathfrak{B}$-small} if every bounded subset is $\mathfrak{B}$-small.
\end{defin}

\sk 
\begin{lemma} \label{lemma:class_induces_bornology}
	Let $E$ be a locally convex space and $\mathfrak{B}$ a bornological 
	class.
	The family of $\mathfrak{B}$-small subsets in $E$
	is a 
	saturated, convex vector bornology, denoted by 
	$\bsmall{E}$.
\end{lemma}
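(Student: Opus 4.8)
The plan is to verify each bornology axiom for the family $\bsmall{E}$ of $\mathfrak{B}$-small subsets of $E$, pushing everything through the restriction operators $r_M \colon E \to C(M)$ for $M \in \eqc(E^*)$ and using that each $\mathfrak{B}_M$ is already a separated convex vector bornology on $C(K)$. The guiding principle is that $r_M$ is linear and continuous, so it commutes with all the algebraic operations (sums, scalar multiples, balanced/convex hulls) and respects boundedness; hence membership in $\bsmall{E}$, being defined ``pointwise in $M$'', inherits the closure properties from the target bornologies $\mathfrak{B}_M$.

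\begin{proof}
	Fix a locally convex space $E$ and a bornological class $\mathfrak{B}$.

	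\emph{Covering and hereditariness.}  Every finite subset of $E$ is bounded, and for $M \in \eqc(E^*)$ its image $r_M(F)$ is a finite subset of $C(M)$, hence lies in $\mathfrak{B}_M$ (every bornology contains all finite subsets).  So $\bsmall{E}$ covers $E$.  If $B \in \bsmall{E}$ and $A \subseteq B$, then $A$ is bounded and $r_M(A) \subseteq r_M(B) \in \mathfrak{B}_M$, so $r_M(A) \in \mathfrak{B}_M$ by hereditariness of $\mathfrak{B}_M$; thus $A \in \bsmall{E}$.

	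\emph{Vector bornology.}  Let $A, B \in \bsmall{E}$ and $\alpha \in \R$.  Then $A + B$, $\alpha A$ and $\bo A$ are all bounded in $E$ (boundedness is preserved by sums, scalar multiples and balanced hulls of bounded sets).  Since $r_M$ is linear, $r_M(A+B) = r_M(A) + r_M(B)$, $r_M(\alpha A) = \alpha\, r_M(A)$, and $r_M(\bo A) = \bo\, r_M(A)$.  As $\mathfrak{B}_M$ is a vector bornology on $C(M)$, all three of these sets belong to $\mathfrak{B}_M$.  Hence $A + B, \alpha A, \bo A \in \bsmall{E}$, and $\bsmall{E}$ is a vector bornology.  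Finite unions are handled the same way, using that $r_M(A \cup B) = r_M(A) \cup r_M(B)$ and that $\mathfrak{B}_M$ is closed under finite unions (or, more crudely, $A \cup B \subseteq \acx(A \cup B) \subseteq \co(\bo A + \bo B)$ together with convexity below).

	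\emph{Convexity.}  If $A \in \bsmall{E}$, then $\co A$ is bounded in $E$, and $r_M(\co A) = \co\, r_M(A) \in \mathfrak{B}_M$ since $\mathfrak{B}_M$ is a convex bornology.  Thus $\co A \in \bsmall{E}$.

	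\emph{Saturation.}  Let $A \in \bsmall{E}$ and put $\overline{A}$ for its closure in $E$.  Then $\overline{A}$ is bounded.  We must show $r_M(\overline{A}) \in \mathfrak{B}_M$.  The restriction operator $r_M \colon E \to C(M)$ is continuous into the $w^*$-topology on $C(M)$ in the sense that $r_M(\overline{A})$ lies in the $w^*$-closure, actually the weak closure, of $r_M(A)$: indeed $r_M$ is weakly continuous (Fact \ref{f:adjoint_is_continuous}) as a continuous linear map, so $r_M(\overline{A}) \subseteq \overline{r_M(A)}^{\,w} \subseteq \overline{\acx}^{w} r_M(A)$.  By the bipolarity axiom of $\mathfrak{B}$, $\overline{\acx}^{w} r_M(A) = (r_M(A))^{\circ\circ} \in \mathfrak{B}_M$, and hence $r_M(\overline{A}) \in \mathfrak{B}_M$ by hereditariness.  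Therefore $\overline{A} \in \bsmall{E}$, and $\bsmall{E}$ is saturated.
\end{proof}

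\medskip

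The routine parts are the covering/hereditary/vector/convex axioms, where one only has to observe that $r_M$ is linear and that images of bounded sets are bounded; these are essentially bookkeeping.  \textbf{The main obstacle} is saturation: unlike the algebraic operations, topological closure in $E$ is not literally carried to closure in $C(M)$ by $r_M$, because $r_M$ need not be a homeomorphism onto its image.  The fix is to pass through the \emph{weak} closure (using continuity of $r_M$ hence weak continuity, Fact \ref{f:adjoint_is_continuous}) and then invoke exactly the bipolarity axiom built into the definition of a bornological class, which tells us $\overline{\acx}^{w}$ of a member is again a member.  One small subtlety worth double-checking when writing the final version: whether we need $\overline{A}$ to sit inside $\overline{r_M(A)}^{w}$ (true by continuity) rather than merely inside $\overline{\acx}^{w} r_M(A)$ — the latter is all we use, so continuity of $r_M$ suffices and no further hypothesis on $E$ is needed.
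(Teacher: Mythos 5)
Your proposal is correct and follows essentially the same route as the paper: the algebraic/hereditary axioms are routine via linearity of $r_M$, and the only substantive point, saturation, is handled exactly as in the paper by the chain $r_M(\overline{A}) \subseteq \overline{r_M(A)}^{\,w} \subseteq (r_M(A))^{\circ\circ} \in \mathfrak{B}_M$ using the bipolarity axiom and hereditariness. (Only a cosmetic slip: the relevant topology on $C(M)$ is the weak, not weak-star, topology, and plain continuity of $r_M$ into the sup-norm already gives the needed containment, as you note yourself.)
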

\begin{proof}
	The only non-trivial assertion is that $\bsmall{E}$ is saturated.
	Suppose that ${A \in \bsmall{E}}$. We show that $\overline{A} \in \bsmall{E}$.
	Let 
	$M \in \eqc(E^*)$ 
	and ${r \colon E \to C(M)}$ be the restriction 
	operator. 
	By definition, $r(A) \in \mathfrak{B}_{M}$.
	Also, since $\mathfrak{B}$ 
	satisfies bipolarity, $\overline{r(A)} \subseteq \overline{r(A)}^{w} \subseteq (r(A))^{\circ\circ} \in \mathfrak{B}_{M}$.
	By continuity, $r(\overline{A}) \subseteq \overline{r(A)}$.
	Hence, $r(\overline{A}) \in \mathfrak{B}_{M}$. Therefore, $\overline{A} \in \bsmall{E}$. 
\end{proof}
  
\begin{lem} \label{lemma:b_small_and_adjoint_maps}
	Suppose that $T\colon E \to F$ is a continuous linear map between locally convex spaces, $B \subseteq E$ is bounded and $M \in \eqc(F^{*})$.
	Let $r_{T^{*}(M)} \colon E \to C(T^{*}(M))$ and $r_{M}\colon F \to C(M)$ be the restriction maps.
	Then $r_{T^{*}(M)}(B) \in \mathfrak{B}_{T^{*}(M)}$ if and only if $r_{M}(T(B)) \in \mathfrak{B}_{M}$.
\end{lem}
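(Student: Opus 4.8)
The plan is to exhibit the adjoint $T^{*}$ as a continuous surjection between the two compact spaces $M$ and $T^{*}(M)$, and then to reduce the equivalence to the consistency axiom of a bornological class (Definition \ref{def:bornological_class}(2)).

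First I would check that $T^{*}(M)$ is a compact space and that $\varphi := T^{*}|_{M}\colon M \to T^{*}(M)$ is a continuous surjection, where both $M$ and $T^{*}(M)$ carry their weak-star topologies. Indeed, by Fact \ref{f:adjoint_is_continuous} the adjoint $T^{*}\colon F^{*}\to E^{*}$ is weak-star continuous, and $M$ is weak-star compact, so its continuous image $T^{*}(M)$ is weak-star compact, and $\varphi$ is continuous; surjectivity onto $T^{*}(M)$ holds by definition of that set.

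Next I would identify the pullback family $r_{T^{*}(M)}(B)\circ\varphi$ with $r_{M}(T(B))$. For $b\in B$ the function $r_{T^{*}(M)}(b)\in C(T^{*}(M))$ is given by $\psi\mapsto\psi(b)$, so for $\phi\in M$,
$$
\bigl(r_{T^{*}(M)}(b)\circ\varphi\bigr)(\phi) = (T^{*}\phi)(b) = \phi(Tb) = \bigl(r_{M}(Tb)\bigr)(\phi).
$$
Hence $r_{T^{*}(M)}(b)\circ\varphi = r_{M}(Tb)$ for every $b\in B$, and therefore $r_{T^{*}(M)}(B)\circ\varphi = r_{M}(T(B))$ as families of functions on $M$. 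Since $\varphi$ is a continuous surjection of compact spaces, the consistency axiom for $\mathfrak{B}$ (Definition \ref{def:bornological_class}(2), both directions) yields that $r_{T^{*}(M)}(B)\in\mathfrak{B}_{T^{*}(M)}$ if and only if $r_{T^{*}(M)}(B)\circ\varphi\in\mathfrak{B}_{M}$, i.e.\ if and only if $r_{M}(T(B))\in\mathfrak{B}_{M}$, which is exactly the asserted equivalence.

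There is essentially no deep obstacle here: the argument is bookkeeping around the definition of the adjoint. The only point deserving a moment's care is verifying that $\varphi$ is continuous and onto with respect to the correct (weak-star) topologies, so that the consistency axiom genuinely applies; once that is in place, the identity $r_{T^{*}(M)}(b)\circ\varphi = r_{M}(Tb)$ is immediate from the pairing, and the conclusion follows.
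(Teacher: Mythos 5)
Your proposal is correct and follows essentially the same route as the paper: both establish the identity $r_{M}(T(B)) = r_{T^{*}(M)}(B)\circ T^{*}$ from the definition of the adjoint and then invoke the consistency axiom for the surjection $M \to T^{*}(M)$. Your explicit verification that $T^{*}|_{M}$ is a weak-star continuous surjection onto the compact set $T^{*}(M)$ is left implicit in the paper but is exactly the point that justifies applying consistency.
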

\begin{proof}
	We will show that:
	$$
	r_{M}(T(B)) = r_{T^{*}(M)}(B) \circ T^{*}.
	$$
	The claim is then obvious from the consistency 
	property (which we apply for the map $M \to T^*(M)$).
	Indeed, for every $x \in B$ and $\varphi \in M$, we have:
	$$
	(r_{M}(T(x)))(\varphi) := 
	\varphi(T(x)) = 
	(T^{*}(\varphi))(x) = 
	(r_{T^{*}(M)}(x))(T^{*}(\varphi)) = 
	(r_{T^{*}(M)}(x) \circ T^{*})(\varphi).
	$$
\end{proof}

\begin{lem} \label{lemma:image_of_a_B_small_set}
	Suppose that $T\colon E \to F$ is a continuous linear map and 
	 $A \in \bsmall{E}$.
	Then $T(A) \in \bsmall{F}$.
\end{lem}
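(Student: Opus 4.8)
The plan is to unwind Definition \ref{def:b_small_space} and reduce everything to Lemma \ref{lemma:b_small_and_adjoint_maps}. Fix an arbitrary $M \in \eqc(F^{*})$; by the definition of $\mathfrak{B}$-smallness it suffices to show that $r_{M}(T(A)) \in \mathfrak{B}_{M}$, where $r_{M}\colon F \to C(M)$ is the restriction operator. Along the way I will also note that $T(A)$ is bounded, being the continuous linear image of the bounded set $A$.

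The first step is to verify that $T^{*}(M) \in \eqc(E^{*})$. Since $M$ is equicontinuous, there is a neighborhood $U$ of $0$ in $F$ with $M \subseteq U^{\circ}$; then $T^{-1}(U)$ is a neighborhood of $0$ in $E$, and for $\varphi \in M$, $x \in T^{-1}(U)$ one computes $\lvert (T^{*}\varphi)(x) \rvert = \lvert \varphi(T(x)) \rvert \le 1$, so $T^{*}(M) \subseteq (T^{-1}(U))^{\circ}$ is equicontinuous. It is also weak-star compact, being the image of the weak-star compact set $M$ under the weak-star continuous map $T^{*}$ (Fact \ref{f:adjoint_is_continuous}); one may also invoke Alaouglu--Bourbaki (Fact \ref{l:Al-Bo}). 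Hence $T^{*}(M) \in \eqc(E^{*})$.

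Now the hypothesis $A \in \bsmall{E}$, applied to the equicontinuous weak-star compact set $T^{*}(M)$, gives $r_{T^{*}(M)}(A) \in \mathfrak{B}_{T^{*}(M)}$. By Lemma \ref{lemma:b_small_and_adjoint_maps}, applied to the operator $T$, the bounded set $A$, and $M \in \eqc(F^{*})$, this is equivalent to $r_{M}(T(A)) \in \mathfrak{B}_{M}$, which is exactly what we needed. Since $M \in \eqc(F^{*})$ was arbitrary and $T(A)$ is bounded, we conclude $T(A) \in \bsmall{F}$.

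I do not anticipate a genuine obstacle: the statement is essentially bookkeeping around the definition, and the only computational point is the equicontinuity of $T^{*}(M)$, which is immediate once one writes down the polar. The substantive content has already been isolated in Lemma \ref{lemma:b_small_and_adjoint_maps} (which in turn uses the consistency axiom of a bornological class); the one thing to be careful about is to invoke that lemma in the correct direction, with $M$ on the target side and $T^{*}(M)$ on the source side.
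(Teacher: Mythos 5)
Your proposal is correct and follows essentially the same route as the paper: apply the hypothesis to $T^{*}(M)$ and then transfer via Lemma \ref{lemma:b_small_and_adjoint_maps}. The only difference is that you spell out the (standard) verification that $T^{*}(M)\in\eqc(E^{*})$ and the boundedness of $T(A)$, which the paper's proof leaves implicit.
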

\begin{proof}
	Let $M \in \eqc(F^{*})$.
	By definition, $r_{E}(A) \in \mathfrak{B}_{E}$.
	Applying Lemma \ref{lemma:b_small_and_adjoint_maps}, we conclude that $r_{F}(T(A)) \in \mathfrak{B}_{M}$.
	This is true for every $M \in \eqc(F^{*})$, proving the desired result.
\end{proof}

\sk 
Recall that a continuous linear map $f\colon E_1 \to E_2$ is said to be \textit{bound covering} if for every bounded $B_2 \subset E_2$ there exists a bounded subset $B_1 \subset E_1$ such that $f(B_1)=B_2$. 

\begin{cor} \label{cor:bound_covering_b_small}
	The class of $\mathfrak{B}$-small locally convex spaces is closed under bound covering maps.
\end{cor}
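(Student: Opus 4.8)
The plan is to deduce this immediately from Lemma~\ref{lemma:image_of_a_B_small_set}, which already contains all the content: a continuous linear map carries $\mathfrak{B}$-small subsets to $\mathfrak{B}$-small subsets. So the corollary should be essentially a one-line formality once that lemma is in hand.

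Concretely, I would argue as follows. Let $f \colon E_1 \to E_2$ be a bound covering continuous linear map whose domain $E_1$ is a $\mathfrak{B}$-small locally convex space, and let $B_2 \subseteq E_2$ be an arbitrary bounded subset; it suffices to show $B_2 \in \bsmall{E_2}$. By the definition of a bound covering map there is a bounded subset $B_1 \subseteq E_1$ with $f(B_1) = B_2$. Since $E_1$ is $\mathfrak{B}$-small we have $B_1 \in \bsmall{E_1}$, and hence $B_2 = f(B_1) \in \bsmall{E_2}$ by Lemma~\ref{lemma:image_of_a_B_small_set}. As $B_2$ ranged over all bounded subsets of $E_2$, this shows that $E_2$ is $\mathfrak{B}$-small, which is exactly the assertion.

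I do not expect any real obstacle here: the substantive work has already been carried out in Lemma~\ref{lemma:b_small_and_adjoint_maps} and Lemma~\ref{lemma:image_of_a_B_small_set}, which rest on the consistency axiom for the bornological class $\mathfrak{B}$ applied to the induced map $M \mapsto f^{*}(M)$ between equicontinuous compactologies. Two small points are worth noting in passing. First, a bound covering map is automatically surjective, since $E_2$ is the union of its bounded subsets and each of these lies in the range $f(E_1)$; hence there is no implicit extra hypothesis concealed in the statement. Second, no openness or other structural assumption on $f$ is required, because Lemma~\ref{lemma:image_of_a_B_small_set} needs only continuity and linearity of the map. Thus the proof is a two-line deduction, and the only thing to be careful about is to run the argument for \emph{every} bounded subset $B_2$ of $E_2$, which is precisely what the bound covering hypothesis supplies.
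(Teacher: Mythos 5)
Your argument is exactly the intended one: the paper states the corollary without proof precisely because it follows, as you say, by combining the definition of a bound covering map with Lemma~\ref{lemma:image_of_a_B_small_set}. The proposal is correct and matches the paper's (implicit) reasoning.
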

In Proposition \ref{p:variety}, we show that 
``bound covering" is really essential.

\begin{prop} \label{prop:b_small_subspace}
	The class of $\mathfrak{B}$-small locally convex spaces is closed under 
	taking linear subspaces. 
	
	In fact, if $E$ is a locally convex space, $F \subseteq E$ is a subspace and $B \subseteq F$ is a bounded subset, then $B \in \bsmall{E}$ if and only if $B \in \bsmall{F}$.
\end{prop}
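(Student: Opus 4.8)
The plan is to reduce the whole statement to the transfer lemma for $\mathfrak{B}$-smallness along continuous linear maps, namely Lemma \ref{lemma:b_small_and_adjoint_maps}, applied to the inclusion $i\colon F\hookrightarrow E$, combined with a comparison of the two equicontinuous compactologies $\eqc(E^{*})$ and $\eqc(F^{*})$ under the adjoint $i^{*}$. First I would record that the headline assertion follows from the ``in fact'' equivalence: a subset $B\subseteq F$ is bounded in $F$ if and only if it is bounded in $E$ (zero-neighbourhoods of $F$ are traces of those of $E$, and $i$ is continuous), so if $E$ is $\mathfrak{B}$-small then every bounded $B\subseteq F$ lies in $\bsmall{E}$, hence in $\bsmall{F}$ by the equivalence, making $F$ a $\mathfrak{B}$-small space. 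Thus it suffices to fix a bounded $B\subseteq F$ and prove $B\in\bsmall{E}\iff B\in\bsmall{F}$.

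For the implication $B\in\bsmall{E}\Rightarrow B\in\bsmall{F}$: let $M'\in\eqc(F^{*})$ be arbitrary. By Lemma \ref{f:equicontinuous_extension} there is $M\in\eqc(E^{*})$ with $M'=i^{*}(M)$. Since $i(B)=B$ and $B\in\bsmall{E}$, the restriction $r_{M}(B)\in\mathfrak{B}_{M}$. Now apply Lemma \ref{lemma:b_small_and_adjoint_maps} with $T=i$: it gives $r_{i^{*}(M)}(B)\in\mathfrak{B}_{i^{*}(M)}$ iff $r_{M}(i(B))\in\mathfrak{B}_{M}$, and the right-hand side holds, so $r_{M'}(B)\in\mathfrak{B}_{M'}$. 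As $M'$ was arbitrary, $B\in\bsmall{F}$.

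For the converse $B\in\bsmall{F}\Rightarrow B\in\bsmall{E}$: let $M\in\eqc(E^{*})$. The point is that $i^{*}(M)\in\eqc(F^{*})$: it is weak-star compact because $i^{*}$ is weak-star continuous (Fact \ref{f:adjoint_is_continuous}) and $M$ is weak-star compact, and it is equicontinuous because, choosing a zero-neighbourhood $U$ in $E$ with $\sup_{\psi\in M}\sup_{x\in U}\lvert\psi(x)\rvert\le 1$, the set $U\cap F$ witnesses equicontinuity of $i^{*}(M)$ in $F^{*}$. Since $B\in\bsmall{F}$ we get $r_{i^{*}(M)}(B)\in\mathfrak{B}_{i^{*}(M)}$, and Lemma \ref{lemma:b_small_and_adjoint_maps} with $T=i$ again yields $r_{M}(B)\in\mathfrak{B}_{M}$; hence $B\in\bsmall{E}$. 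The only genuinely nontrivial input here is the correspondence of the two compactologies — the surjectivity-type statement that every $M'\in\eqc(F^{*})$ is $i^{*}$ of some $M\in\eqc(E^{*})$, which is exactly Lemma \ref{f:equicontinuous_extension} (resting on Hahn--Banach extension and Alaoglu--Bourbaki) — so that is where I would expect the main obstacle to sit, the rest being the bookkeeping already packaged in Lemma \ref{lemma:b_small_and_adjoint_maps}.
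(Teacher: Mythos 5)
Your proposal is correct and follows essentially the same route as the paper: the direction $B\in\bsmall{E}\Rightarrow B\in\bsmall{F}$ is exactly the paper's argument via Lemma \ref{f:equicontinuous_extension} and Lemma \ref{lemma:b_small_and_adjoint_maps}, and your converse direction simply unwinds Lemma \ref{lemma:image_of_a_B_small_set} (which the paper cites directly) for the inclusion map, with the useful extra care of checking that $i^{*}(M)\in\eqc(F^{*})$. No gaps.
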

\begin{proof} 
	First, if $B \in \bsmall{F}$ then clearly $B \in \bsmall{E}$ in virtue of Lemma \ref{lemma:image_of_a_B_small_set}.
	
	Conversely, suppose that $B \in \bsmall{E}$.  We will show that $B \in \bsmall{F}$.
Let ${M \in \eqc(F^*)}$. 
	By Lemma \ref{f:equicontinuous_extension}, we can find 
		$N \in \eqc(E^*)$.
	such that ${i^{*}(N) = M}$ where $i\colon F \to E$ is the inclusion map.  
	
	Let $r_{E}\colon E \to C(N)$ and $r_{F}\colon F \to C(M)$ be the restriction maps.
	${B = i(B) \in \bsmall{E}}$ by definition, and therefore $r_{E}(i(B)) \in \mathfrak{B}_{N}$.
	Applying Lemma \ref{lemma:b_small_and_adjoint_maps}, we conclude that $r_{F}(B) \in \mathfrak{B}_{i^{*}(N)} = \mathfrak{B}_{M}$.
	This is true for every 
	$M \in \eqc(F^*)$ and therefore $B \in \bsmall{F}$.
\end{proof} 

Suppose that $E := \prod\limits_{i=1}^{n} E_{i}$ is the product of topological vector spaces $\{E_{i}\}_{i=1}^{n}$. 
Write $\pi_{i} \colon E \to E_{i}$ for the projection map.
Also, let $\Delta_{i} \colon E_{i} \to E$ be the \textit{dissection map} sending an element $x \in E_{i}$ to an element of $E$ whose $i$'th entry is $x$ and the rest are $0$.
Finally, consider the adjoint map $\Delta_{i}^{*} \colon E^{*} \to E_{i}^{*}$ defined by
$
\Delta_{i}^{*}(\varphi) :=
\varphi \circ \Delta_{i}.
$
\begin{lemma} \label{lemma:product_is_sum_of_restrictions}
	Let $E_{1}, \dotsc, E_{n}$ be locally convex spaces, 
	$A_{i} \subseteq E_{i}$ for $i \in \{1, \dotsc, n\}$ and $M \subseteq E^{*}$ be subsets.
	Consider $E = \prod\limits_{i=1}^{n} E_{i}$ and $A = \prod\limits_{i=1}^{n} A_{i}$.
	For every $1 \leq i \leq n$, let $r_{i} \colon E_{i} \to C(\Delta_{i}^{*}(M))$ and $r \colon E \to C(M)$ be the restriction maps.
	Then
	$$
	r(A) \subseteq \sum_{i=1}^{n} r_{i}(A_{i}) \circ \Delta_{i}^{*}.
	$$
\end{lemma}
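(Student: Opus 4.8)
The plan is to prove the inclusion by a direct, coordinate-by-coordinate unwinding of the definitions of the dissection maps $\Delta_i$, their adjoints $\Delta_i^{*}$, and the restriction operators. The starting point is the elementary identity
$$
  a = \sum_{i=1}^{n} \Delta_{i}(\pi_{i}(a)) \qquad (a \in E = \textstyle\prod_{i=1}^{n} E_{i}),
$$
which holds because $\Delta_{i}(\pi_{i}(a))$ is, by definition, the element of $E$ whose $i$-th coordinate equals that of $a$ and whose other coordinates vanish. Fix $a = (a_{1}, \dotsc, a_{n}) \in A = \prod_{i=1}^{n} A_{i}$, so that $a_{i} = \pi_{i}(a) \in A_{i}$ for every $i$.

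Next I would evaluate $r(a) \in C(M)$ at an arbitrary $\varphi \in M$. Using linearity of $\varphi$, the decomposition above, the defining relation $\Delta_{i}^{*}(\varphi) = \varphi \circ \Delta_{i}$, and the definitions of $r$ and $r_{i}$, one computes
$$
  (r(a))(\varphi) = \varphi(a) = \varphi\Bigl(\sum_{i=1}^{n} \Delta_{i}(a_{i})\Bigr) = \sum_{i=1}^{n} \varphi(\Delta_{i}(a_{i})) = \sum_{i=1}^{n} (\Delta_{i}^{*}(\varphi))(a_{i}) = \sum_{i=1}^{n} \bigl(r_{i}(a_{i}) \circ \Delta_{i}^{*}\bigr)(\varphi).
$$
Since $\varphi \in M$ was arbitrary, this shows $r(a) = \sum_{i=1}^{n} r_{i}(a_{i}) \circ \Delta_{i}^{*}$ as functions on $M$; before writing this as an identity in $C(M)$ I would note that each $r_{i}(a_{i}) \circ \Delta_{i}^{*}$ is indeed continuous on $M$, because $\Delta_{i}^{*}$ is weak-star continuous by Fact \ref{f:adjoint_is_continuous} and $r_{i}(a_{i})$ is the restriction to $\Delta_{i}^{*}(M)$ of the weak-star continuous evaluation $\psi \mapsto \psi(a_{i})$ on $E_{i}^{*}$. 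As $a_{i} \in A_{i}$, the right-hand side belongs to $\sum_{i=1}^{n} r_{i}(A_{i}) \circ \Delta_{i}^{*}$, and letting $a$ range over $A$ yields $r(A) \subseteq \sum_{i=1}^{n} r_{i}(A_{i}) \circ \Delta_{i}^{*}$.

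There is essentially no obstacle here: the statement is a formal consequence of the definitions, and the only point requiring a little care is the bookkeeping of which dual space each functional lives in (so that the composition $r_{i}(a_{i}) \circ \Delta_{i}^{*}$ and the Minkowski sum on the right-hand side are legitimately subsets of $C(M)$). In fact the same computation shows that the inclusion is an equality, but only the stated inclusion is needed for the applications (e.g.\ stability of $\mathfrak{B}$-small bornologies under finite products), so I would record just that.
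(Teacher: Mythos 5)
Your proof is correct and follows essentially the same route as the paper's: decompose $a = \sum_{i=1}^{n}\Delta_{i}(\pi_{i}(a))$, apply linearity of $\varphi \in M$ together with $\Delta_{i}^{*}(\varphi) = \varphi \circ \Delta_{i}$, and read off $(r(a))(\varphi) = \sum_{i=1}^{n}(r_{i}(a_{i}) \circ \Delta_{i}^{*})(\varphi)$. The brief continuity remark you add is a harmless extra check not present in the paper's argument.
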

\begin{proof}
	Indeed, suppose that $a = (a_{1}, \dotsc, a_{n}) \in A$, meaning that $a_{i} \in A_{i}$ for every $1 \leq i \leq n$.
	We claim that 
	$$
	r(a) = \sum_{i=1}^{n} r_{i}(a_{i}) \circ \Delta_{i}^{*} \in \sum_{i=1}^{n} r_{i}(A_{i}) \circ \Delta_{i}^{*}.
	$$
	Let $\varphi \in M$.
	For every $x \in E$ we can write
	$$
	x = \sum_{i=1}^{n} \Delta_{i}(\pi_{i}(x)),
	$$
	and therefore
	$$
	\varphi(x) = 
	\varphi \left( \sum_{i=1}^{n} \Delta_{i}(\pi_{i}(x)) \right) =
	\sum_{i=1}^{n} (\varphi \circ \Delta_{i}) (\pi_{i}(x)) =
	\sum_{i=1}^{n} (\Delta_{i}^{*}(\varphi)) (\pi_{i}(x)).
	$$
	In particular, 
	$$
	(r(a))(\varphi) := 
	\varphi(a) = 
	\sum_{i=1}^{n} (\Delta_{i}^{*}(\varphi)) (a_{i}) = 
	\sum_{i=1}^{n} (r_{i}(a_{i}) \circ \Delta_{i}^{*})(\varphi).
	$$
\end{proof}

\begin{remark} \label{r:dualities}  \ 
	\begin{enumerate}
		\item \cite[IV 4.3, Theorem]{Schaefer} Let $E=\prod_{i \in I} E_i$ be a product of lcs $E_i$. Then its dual $E^*$ is \textbf{algebraically} the locally convex direct sum $\bigoplus_{i \in I} E_i^*$ with the corresponding duality 
		$$
		E \times E^* \to \R, \ (v,u) = \left((v_i)_{i \in I},\sum_{i \in I} u_i\right) \mapsto \sum_{i \in I} \langle v_i,u_i \rangle.$$
		\cite[Section 8.8, Proposition 1]{Jarchow} A basis of the equicontinuous compactology $\eqc(E^*)$ on $E^*$ is obtained by taking all sets of the form $\sum_{j \in J} H_j$, where $J$ is finite and $H_j \in \eqc(E_j)$.
		
		\item \cite[IV 4.3, Corollary 1]{Schaefer} Similarly, if $E=\bigoplus_{i \in I} E_i$ is a lc sum then its dual $E^*$ is \textbf{algebraically} the locally convex product $\prod_{i \in I} E_i^*$ with the corresponding duality 
		$$
		E \times E^* \to \R, \ (v,u)=\left(\sum_{i \in I} v_i,(u_i)_{i \in I}\right) \mapsto \sum_{i \in I} \langle v_i,u_i \rangle.$$
		A basis of the equicontinuous compactology $\eqc(E^*)$ on $E^*$ is obtained by taking all sets of the form $\prod_{i \in I} H_i$, where $H_i \in \eqc(E_i)$.

		By \cite[II, 6.3]{Schaefer}, for every bounded subset $B$ of a locally convex direct sum
		${\bigoplus}_{i\in I} E_i,$ there exists a finite set
		$J\subset I$ such that $pr_i (B)$ is zero for 
		every $i\not\in J.$
	\end{enumerate} 
\end{remark}
 
\sk 
\begin{lemma} \label{lemma:product_of_finite_B_small}
	Let $E_{1}, \dotsc, E_{n}$ be locally convex spaces and let $A_{i} \subseteq E_{i}$ for $i \in \{1, \dotsc, n\}$ be $\mathfrak{B}$-small subsets.
Then $A = \prod\limits_{i=1}^{n} A_{i}$ is $\mathfrak{B}$-small in $E = \prod\limits_{i=1}^{n} E_{i}$.
\end{lemma}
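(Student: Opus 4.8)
The plan is to verify the defining condition of $\mathfrak{B}$-smallness directly for $A=\prod_{i=1}^n A_i$: fix an arbitrary $M\in\eqc(E^*)$ and show that $r_M(A)\in\mathfrak{B}_M$, where $r_M\colon E\to C(M)$ is the restriction operator. The natural tool is Lemma \ref{lemma:product_is_sum_of_restrictions}, which gives the inclusion
$$
r_M(A)\subseteq \sum_{i=1}^{n} r_i(A_i)\circ \Delta_i^{*},
$$
where $r_i\colon E_i\to C(\Delta_i^{*}(M))$ is the restriction operator and $\Delta_i^{*}\colon E^{*}\to E_i^{*}$ is the adjoint of the dissection map $\Delta_i$. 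So everything reduces to showing that each summand $r_i(A_i)\circ\Delta_i^{*}$ lies in $\mathfrak{B}_M$, and then invoking the bornology axioms.

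First I would check that $\Delta_i^{*}(M)\in\eqc(E_i^{*})$. This is the only genuinely ``analytic'' point: since $\Delta_i\colon E_i\to E$ is continuous and linear, its adjoint $\Delta_i^{*}$ is weak-star continuous (Fact \ref{f:adjoint_is_continuous}), hence $\Delta_i^{*}(M)$ is weak-star compact as a continuous image of the compact set $M$; and if $M\subseteq U^{\circ}$ for a zero-neighborhood $U\subseteq E$, then $\Delta_i^{*}(M)\subseteq(\Delta_i^{-1}(U))^{\circ}$, so $\Delta_i^{*}(M)$ is equicontinuous. (Alternatively one can read this off Remark \ref{r:dualities}(1).) Given this, the hypothesis that $A_i$ is $\mathfrak{B}$-small applied to the equicontinuous weak-star compact set $\Delta_i^{*}(M)$ yields $r_i(A_i)\in\mathfrak{B}_{\Delta_i^{*}(M)}$.

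Next I would feed this through the consistency axiom of the bornological class (Definition \ref{def:bornological_class}) applied to the continuous surjection $\Delta_i^{*}|_M\colon M\twoheadrightarrow\Delta_i^{*}(M)$: from $r_i(A_i)\in\mathfrak{B}_{\Delta_i^{*}(M)}$ we get $r_i(A_i)\circ\Delta_i^{*}|_M\in\mathfrak{B}_M$. Finally, $\mathfrak{B}_M$ is a (separated, convex) vector bornology, hence closed under finite sums, so $\sum_{i=1}^{n}r_i(A_i)\circ\Delta_i^{*}\in\mathfrak{B}_M$; since bornologies are hereditary under inclusion, the displayed inclusion above gives $r_M(A)\in\mathfrak{B}_M$. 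As $M\in\eqc(E^*)$ was arbitrary, $A$ is $\mathfrak{B}$-small in $E$, which is what we wanted.

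I do not expect a serious obstacle here: the content is entirely in Lemma \ref{lemma:product_is_sum_of_restrictions} plus bookkeeping with the bornological-class axioms. The one spot requiring a little care is confirming $\Delta_i^{*}(M)\in\eqc(E_i^{*})$ so that $\mathfrak{B}$-smallness of $A_i$ is actually applicable, and making sure the consistency axiom is invoked with the correct (surjective) map $\Delta_i^{*}|_M$ rather than with $\Delta_i^{*}$ on all of $E^*$.
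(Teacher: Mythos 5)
Your proposal is correct and follows essentially the same route as the paper: reduce via Lemma \ref{lemma:product_is_sum_of_restrictions}, apply $\mathfrak{B}$-smallness of each $A_i$ to $M_i:=\Delta_i^{*}(M)$, then use consistency and the vector-bornology axioms of $\mathfrak{B}_M$. You merely spell out two points the paper leaves implicit (that $\Delta_i^{*}(M)\in\eqc(E_i^{*})$, and that consistency is invoked for the restricted map $\Delta_i^{*}|_M$ — where, in fact, only direction (a) of the consistency axiom is needed, so surjectivity is not required).
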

\begin{proof}
	Let $M \in \eqc(E^{*})$. 
	Then ${M_{i} := \Delta_{i}^{*}(M) \in \eqc(E_i^{*})}$, where 
	${\Delta_{i}^{*}: E^{*} \to E_{i}^{*}}$ are defined as in Lemma \ref{lemma:product_is_sum_of_restrictions}.   
	As before, let ${r_{i}: E_{i} \to C(M_{i})}$ be the restriction map.
	By definition, $r_{i}(A_{i}) \in \mathfrak{B}_{M_{i}}$.
	Since $\mathfrak{B}$ is a 
	bornological 
	class, this implies that $r_{i}(A_{i}) \circ \Delta_{i}^{*} \in \mathfrak{B}_{M}$ for every $1 \leq i \leq n$.
	Also, $\mathfrak{B}_{M}$ is a linear bornology and therefore $A ' := \sum\limits_{i=1}^{n} r_{i}(A_{i}) \circ \Delta_{i}^{*} \in \mathfrak{B}_{M}$.
	By Lemma \ref{lemma:product_is_sum_of_restrictions}, $r(A) \subseteq A'$, proving the desired result.
\end{proof}

The following remark
 is a consequence of the previous lemma and Lemma \ref{lemma:euclidean_bornology}.
\begin{remark} \label{remark:extensibility_induction}
	If $\mathfrak{B}$ is a bornological class and $F$ is finite, then $\mathfrak{B}_{F}$ is simply the euclidean bornology.
\end{remark}

\begin{cor} \label{cor:B_small_prod-sums} 
	Arbitrary products and direct sums of $\mathfrak{B}$-small spaces are $\mathfrak{B}$-small.
\end{cor}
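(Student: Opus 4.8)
The plan is to reduce both statements to two facts already in hand: the finite-product Lemma \ref{lemma:product_of_finite_B_small} and the subspace-stability Proposition \ref{prop:b_small_subspace}. The mechanism is that in a locally convex direct sum every \emph{bounded} set has finite support, while in a product every \emph{equicontinuous weak-star compact} subset of the dual ``lives on'' finitely many coordinates; both facts are recorded in Remark \ref{r:dualities}.

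First I would dispose of the direct sum $E=\bigoplus_{i\in I}E_i$. Given a bounded $B\subseteq E$, the last assertion of Remark \ref{r:dualities}(2) (the consequence of \cite[II, 6.3]{Schaefer}) yields a finite $J\subseteq I$ with $pr_i(B)=0$ for all $i\notin J$, so $B$ sits inside the linear subspace $E_J:=\bigoplus_{j\in J}E_j$, which, $J$ being finite, carries exactly the product topology $\prod_{j\in J}E_j$. Each $pr_j(B)$ is bounded, hence $\mathfrak{B}$-small in $E_j$ because $E_j$ is a $\mathfrak{B}$-small space; by Lemma \ref{lemma:product_of_finite_B_small} the set $\prod_{j\in J}pr_j(B)$ is $\mathfrak{B}$-small in $E_J$, and so is its subset $B$ (a bornology being hereditary). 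Applying Proposition \ref{prop:b_small_subspace} to $E_J\subseteq E$ gives $B\in\bsmall{E}$; as $B$ was an arbitrary bounded set, $E$ is $\mathfrak{B}$-small.

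For the product $E=\prod_{i\in I}E_i$ I would argue straight from Definition \ref{def:b_small_space}. Fix a bounded $B\subseteq E$ and an $M\in\eqc(E^{*})$. By Remark \ref{r:dualities}(1) there are a finite $J\subseteq I$ and $H_j\in\eqc(E_j^{*})$ with $M\subseteq M':=\sum_{j\in J}H_j$; since every functional in $M'$ annihilates $\ker p_J$, where $p_J\colon E\to E_J:=\prod_{j\in J}E_j$ is the projection, one has $M'=p_J^{*}(N)$ for the corresponding basic equicontinuous set $N:=\sum_{j\in J}H_j\in\eqc(E_J^{*})$. Now $p_J(B)\subseteq\prod_{j\in J}pr_j(B)$, which is $\mathfrak{B}$-small in $E_J$ by Lemma \ref{lemma:product_of_finite_B_small}, so the restriction $r_N(p_J(B))$ lies in $\mathfrak{B}_N$; Lemma \ref{lemma:b_small_and_adjoint_maps} applied with $T=p_J$ then gives $r_{M'}(B)\in\mathfrak{B}_{M'}$. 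Finally the inclusion $\iota\colon M\hookrightarrow M'$ is a continuous map between compacta and $r_M(B)=r_{M'}(B)\circ\iota$, so the consistency property in Definition \ref{def:bornological_class} yields $r_M(B)\in\mathfrak{B}_M$. As $M$ and $B$ were arbitrary, $E$ is $\mathfrak{B}$-small.

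The only genuinely delicate point I anticipate is the bookkeeping in the product case: checking that a basic equicontinuous compact subset $M'$ of $(\prod E_i)^{*}$ is precisely $p_J^{*}(N)$ for an equicontinuous compact $N$ on the finite subproduct, and then threading this identification through Lemma \ref{lemma:b_small_and_adjoint_maps} and the consistency axiom. None of this is deep, but it is exactly where the duality descriptions of Remark \ref{r:dualities} do the essential work; everything else is a direct appeal to the finite-product and subspace results already established.
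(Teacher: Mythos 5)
Your proof is correct and follows essentially the same route as the paper: reduce products to finite subproducts via Remark \ref{r:dualities}(1) and direct sums to finite subsums via Remark \ref{r:dualities}(2), then invoke Lemma \ref{lemma:product_of_finite_B_small}. You merely make explicit (via Lemma \ref{lemma:b_small_and_adjoint_maps}, the consistency axiom, and Proposition \ref{prop:b_small_subspace}) the bookkeeping the paper compresses into ``the system $(B,M)$ can be isomorphically embedded'' and ``a very similar technique''.
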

\begin{proof}
	First consider the case of products.
	Let $\{E_{i}\}_{i \in I}$ be a family of $\mathfrak{B}$-small spaces.  
	Let ${B \subseteq E := \prod\limits_{i \in I} E_{i}}$ be bounded, $M \in \eqc(E^{*})$ and $r \colon B \to C(M)$ be the restriction map.
	We show that $r(B) \in \mathfrak{B}_{M}$. 
	Indeed, using Remark \ref{r:dualities}.1, we know that there is a finite $J \subseteq I$ and $H_{j} \in \eqc(E_{j}^{*})$ such that $M \subseteq \sum\limits_{j \in J} H_{j}$.
	Thus, the system $(B, M)$ where $B$ is considered as a family of functions over $M$ can be 
	isomorphically embedded in $(\prod\limits_{j \in J} E_{j}, \prod\limits_{j \in J} E_{j}^{*})$.
	However, this family is $\mathfrak{B}$-small as a consequence of Lemma \ref{lemma:product_of_finite_B_small}.
	
	For the case of direct sums, we use a very similar technique, this time leveraging Remark \ref{r:dualities}.2 by factoring the bounded set to finite components rather than the equicontinuous family.
\end{proof}

\begin{cor} \label{cor:weak_topology_is_b_small}
	If $\mathfrak{B}$ is a bornological class, then every locally convex $E$ with the weak topology is $\mathfrak{B}$-small.
\end{cor}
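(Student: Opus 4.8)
The plan is to reduce the statement to the two closure properties just obtained, Corollary~\ref{cor:B_small_prod-sums} and Proposition~\ref{prop:b_small_subspace}, together with the (essentially trivial) base case that the line $\R$ is $\mathfrak{B}$-small. For the base case: $\R^{*}\cong\R$ is a one-dimensional normed space, so by Alaoglu--Bourbaki (Fact~\ref{l:Al-Bo}) every $M\in\eqc(\R^{*})$ is simply a compact, hence closed bounded, subset of $\R$. The restriction operator $r_{M}\colon\R\to C(M)$ sends $a\in\R$ to the function $t\mapsto at$, i.e.\ $r_{M}(a)=a\cdot\id_{M}$. Given a bounded $B\subseteq\R$, choose $R>0$ with $B\subseteq[-R,R]$; then $r_{M}(B)\subseteq R\cdot\bo(\{\id_{M}\})$. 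The one-element set $\{\id_{M}\}$ lies in $\mathfrak{B}_{M}$ (every bornology contains all finite subsets), and since $\mathfrak{B}_{M}$ is a vector bornology, $R\cdot\bo(\{\id_{M}\})\in\mathfrak{B}_{M}$; by heredity $r_{M}(B)\in\mathfrak{B}_{M}$. Hence $\R$ is $\mathfrak{B}$-small (this can also be read off from Remark~\ref{remark:extensibility_induction}).

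Next comes the reduction. For a locally convex space $E$, the space $E_{w}:=(E,\sigma(E,E^{*}))$ is linearly homeomorphic to a linear subspace of the product $\R^{E^{*}}=\prod_{\varphi\in E^{*}}\R$: the evaluation map $J\colon E\to\R^{E^{*}}$, $J(x):=(\varphi(x))_{\varphi\in E^{*}}$, is linear, and by the very definition of $\sigma(E,E^{*})$ as the initial topology determined by $E^{*}$ it is a topological isomorphism of $E_{w}$ onto $J(E)$ equipped with the product topology (injectivity is the Hahn--Banach statement that $E^{*}$ separates the points of the Hausdorff lcs $E$). By the base case and Corollary~\ref{cor:B_small_prod-sums}, the product $\R^{E^{*}}$ is $\mathfrak{B}$-small; by Proposition~\ref{prop:b_small_subspace} its subspace $J(E)\cong E_{w}$ is $\mathfrak{B}$-small, which is the assertion.

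I do not expect a genuine obstacle here; the only point deserving a moment's care is the identification of $\sigma(E,E^{*})$ with the subspace topology inherited from $\R^{E^{*}}$, so that Proposition~\ref{prop:b_small_subspace} applies verbatim. If one prefers to avoid the embedding (and the separation hypothesis), one can argue directly: for $M\in\eqc(E^{*})$, since $E$ carries the weak topology, $M$ is contained in the polar of a basic $0$-neighborhood, hence (Fact~\ref{fact:bipolar_theorem}) in the weak-star closed absolutely convex hull of a finite set $\{\varphi_{1},\dots,\varphi_{n}\}\subseteq E^{*}$, and therefore in the finite-dimensional subspace $W:=\spn\{\varphi_{1},\dots,\varphi_{n}\}$ of $E^{*}$; then for bounded $B\subseteq E$ the set $r_{M}(B)$ consists of restrictions to $M$ of linear functionals on $W$ whose coordinates (in any fixed basis of $W$) range over a bounded subset of $\R^{\dim W}$, so $r_{M}(B)$ is contained in a bounded multiple of $\acx$ of the finitely many coordinate functionals of $W$ restricted to $M$, and one concludes as above using that $\mathfrak{B}_{M}$ is a convex vector bornology containing all finite sets.
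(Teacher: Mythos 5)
Your proof is correct and follows essentially the same route as the paper: the paper's own argument is exactly the embedding of $E_{w}$ into $\R^{E^{*}}$ followed by an appeal to Theorem \ref{thm:properties_of_b_small_classes} (which bundles Corollary \ref{cor:B_small_prod-sums} and Proposition \ref{prop:b_small_subspace}). Your explicit verification that $\R$ itself is $\mathfrak{B}$-small, and the embedding-free alternative at the end, are just careful spellings-out of points the paper leaves implicit.
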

\begin{proof}
	First, recall that $E_{w}$ can be embedded in $\R^{E^{*}}$.
	In virtue of Theorem \ref{thm:properties_of_b_small_classes}, $E_{w}$ is $\mathfrak{B}$-small as a subspace of the product of $\mathfrak{B}$-small spaces.
\end{proof}

\begin{lem} \label{lemma:large_subspace_of_b_small}
	Suppose that $\mathfrak{B}$ is 
	a bornological class and 
	$F$ is a dense large subspace of a lcs $E$.
	Then $F$ is $\mathfrak{B}$-small if and only if $E$ is $\mathfrak{B}$-small.
\end{lem}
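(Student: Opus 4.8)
The plan is as follows. One implication is immediate and needs neither density nor largeness of $F$: if $E$ is $\mathfrak{B}$-small, then every linear subspace is $\mathfrak{B}$-small by Proposition \ref{prop:b_small_subspace} (alternatively, apply Corollary \ref{cor:bound_covering_b_small} to the inclusion $F \hookrightarrow E$ when $F$ is large, since then the inclusion is bound covering), so in particular $F$ is $\mathfrak{B}$-small.

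For the converse, assume $F$ is $\mathfrak{B}$-small and let $B \subseteq E$ be an arbitrary bounded subset; the goal is $B \in \bsmall{E}$. Since $F$ is large in $E$, there exists a bounded subset $B' \subseteq F$ with $B \subseteq \overline{B'}$, the closure taken in $E$. Because $F$ is $\mathfrak{B}$-small we have $B' \in \bsmall{F}$, and then the second assertion of Proposition \ref{prop:b_small_subspace} upgrades this to $B' \in \bsmall{E}$. Now invoke Lemma \ref{lemma:class_induces_bornology}: $\bsmall{E}$ is a \emph{saturated} convex vector bornology, so $\overline{B'} \in \bsmall{E}$. Finally, a bornology is hereditary under inclusion, so $B \subseteq \overline{B'}$ yields $B \in \bsmall{E}$. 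As $B$ was arbitrary, $E$ is $\mathfrak{B}$-small.

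The only points requiring attention are the bookkeeping among the three ingredients being combined: largeness (to replace the given bounded set $B \subseteq E$ by a bounded set $B'$ genuinely sitting inside $F$), the subspace proposition (to transfer $\mathfrak{B}$-smallness of $B'$ from $F$ to $E$), and saturation of $\bsmall{E}$ (to absorb the closure operation that the definition of largeness inevitably introduces). No new estimate or construction is needed — this lemma is a formal consequence of the earlier structural results — so I do not expect any serious obstacle.
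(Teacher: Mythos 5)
Your proof is correct and takes essentially the same route as the paper: the easy direction via Proposition \ref{prop:b_small_subspace}, and the converse by choosing a bounded $C \subseteq F$ with $B \subseteq \overline{C}$ (largeness), transferring $C$ from $\bsmall{F}$ to $\bsmall{E}$ by Proposition \ref{prop:b_small_subspace}, and absorbing the closure by saturation of $\bsmall{E}$ (Lemma \ref{lemma:class_induces_bornology}) together with heredity. One caveat about your parenthetical aside: the inclusion $F \hookrightarrow E$ of a large dense subspace is in general \emph{not} bound covering (largeness only places a bounded set of $E$ inside the closure of a bounded set of $F$, not inside $F$ itself), and in any case Corollary \ref{cor:bound_covering_b_small} transfers $\mathfrak{B}$-smallness from the domain to the codomain, i.e.\ it would serve the direction $F \Rightarrow E$ rather than $E \Rightarrow F$; your primary justification via Proposition \ref{prop:b_small_subspace} is the correct one, so the proof stands without that remark.
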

\begin{proof}
	If $E$ is $\mathfrak{B}$-small then so is $F$ in virtue of Proposition \ref{prop:b_small_subspace}.
	
	Conversely, assume that $F$ is $\mathfrak{B}$-small.
	Suppose that $B \subseteq E$ is bounded.
	By definition, there is a bounded $C \subseteq F$ such that
	$B \subseteq \overline{C}$.
	Since $F$ is $\mathfrak{B}$-small, $C \in \bsmall{F}$.
	Applying Proposition \ref{prop:b_small_subspace}, we conclude that $C \in \bsmall{E}$.
	Finally, using Lemma \ref{lemma:class_induces_bornology}, we conclude that 
	$
	B \subseteq \overline{C} \in \bsmall{E}.
	$
\end{proof}

\begin{thm} \label{thm:properties_of_b_small_classes}
	The class of $\mathfrak{B}$-small locally convex spaces is closed under:
	\ben
	\item subspaces
	\item bound covering maps
	\item 
	products
	\item 
	direct sums
	\item inverse limits.
	\een
	Moreover, if 
	$F$ is a large, dense subspace of the locally convex space $E$, and $F$ is $\mathfrak{B}$-small, then so is $E$.
	In particular, if $V$ is a normed $\mathfrak{B}$-small space, then so is its completion. 
\end{thm}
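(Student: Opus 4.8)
The plan is to assemble the theorem from the lemmas, propositions and corollaries already proved in this section; no new argument is really needed. First I would dispose of clause (1) by citing Proposition \ref{prop:b_small_subspace}, which states exactly that a linear subspace of a $\mathfrak{B}$-small lcs is $\mathfrak{B}$-small (indeed, that for a subspace $F \subseteq E$ a bounded $B \subseteq F$ lies in $\bsmall{E}$ iff it lies in $\bsmall{F}$). Clause (2) is Corollary \ref{cor:bound_covering_b_small}, and clauses (3) and (4) together are Corollary \ref{cor:B_small_prod-sums}, which handles arbitrary products and arbitrary locally convex direct sums.

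For clause (5) I would argue that an inverse (projective) limit $\varprojlim E_{i}$ of $\mathfrak{B}$-small spaces is, by the standard description of projective limits in the category of locally convex spaces, a linear subspace of the product $\prod_{i} E_{i}$ (the subspace of compatible families, equipped with the induced topology). The product is $\mathfrak{B}$-small by clause (3), hence the subspace is $\mathfrak{B}$-small by clause (1). For the ``moreover'' assertion, the large dense subspace statement is precisely Lemma \ref{lemma:large_subspace_of_b_small}. The claim about completions then follows because every dense subspace of a normed space is large (recalled in the preliminaries): a normed $\mathfrak{B}$-small space $V$ sits as a large dense subspace of its completion $\widehat{V}$, and Lemma \ref{lemma:large_subspace_of_b_small} upgrades $\mathfrak{B}$-smallness from $V$ to $\widehat{V}$.

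I do not expect any genuine obstacle here, since the theorem is essentially a consolidation of earlier results; the only line that is not a bare citation is the observation that a projective limit of locally convex spaces embeds as a (closed) linear subspace of the product, which is immediate from its definition.
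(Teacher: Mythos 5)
Your proposal is correct and is essentially the paper's own proof, which simply cites Corollary \ref{cor:bound_covering_b_small}, Proposition \ref{prop:b_small_subspace}, Corollary \ref{cor:B_small_prod-sums} and Lemma \ref{lemma:large_subspace_of_b_small}. The only difference is that you make explicit the (correct) observation that an inverse limit embeds as a subspace of the product, a step the paper leaves implicit.
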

\begin{proof}
	Apply  Corollary \ref{cor:bound_covering_b_small}, Proposition \ref{prop:b_small_subspace}, Corollary \ref{cor:B_small_prod-sums} and Lemma \ref{lemma:large_subspace_of_b_small}.
\end{proof}

\sk 
\subsection{Relation to the Mackey Topology}
\begin{defin} \label{def:strong_b_small}
	Let $\mathfrak{B}$ be a bornological class. 
	A bounded subset $B \subseteq E$ is said to be \emph{Mackey $\mathfrak{B}$-small} if for every 
	absolutely convex weak-star compact, (not necessarily equicontinuous) subset $M \subset E^{*}$, $B$ viewed as a 
bounded family of functions 
	on $M$ belongs to $\mathfrak{B}_{M}$.
	
	A locally convex space is said to be \emph{Mackey $\mathfrak{B}$-small} if every bounded subset is $\mathfrak{B}$-small.
\end{defin}

\sk 
Recall that the
\emph{Mackey topology} on a 
lcs $(E, \tau)$ is the strongest topology compatible with its dual $E^{*}$. 
We will often denote it as $\tau_{\mu}$.
It is exactly the polar topology induced by all weak-star compact, absolutely convex subsets of $E^{*}$ \cite[p.~131]{Schaefer}.
$E$ is said to be a \emph{Mackey space} if $\tau = \tau_{\mu}$.

\begin{prop} \label{prop:strong_b_small_and_mackey}
	A bounded subset $B$ in 
	$E$ is Mackey $\mathfrak{B}$-small if and only if it is $\mathfrak{B}$-small with respect to the Mackey topology.
	The same can be said for the entire space $E$.
\end{prop}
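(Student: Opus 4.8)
The plan is to recognise Definition \ref{def:strong_b_small} as nothing but Definition \ref{def:b_small_space} applied to $E$ equipped with its Mackey topology $\tau_{\mu}$, once the relevant bounded sets and equicontinuous compactologies have been matched up. First I would note that $(E,\tau)$ and $(E,\tau_{\mu})$ have \emph{the same} bounded sets: $\tau_{\mu}$ is compatible with the duality $\langle E,E^{*}\rangle$, so a subset is $\tau$-bounded iff it is $\sigma(E,E^{*})$-bounded iff it is $\tau_{\mu}$-bounded (here we use the cited fact that boundedness coincides with weak boundedness). Hence the phrase ``bounded subset $B$ in $E$'' is unambiguous, and the assertion about the whole space follows formally from the assertion for a fixed $B$ by quantifying over all bounded $B$.

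The key point is the description of $\eqc\bigl((E,\tau_{\mu})^{*}\bigr)$. By the very definition of the Mackey topology as a polar topology, $\tau_{\mu}$ is induced by the family $\mathcal{M}$ of \emph{all} absolutely convex weak-star compact subsets of $E^{*}$ (\cite[p.~131]{Schaefer}); consequently a subset of $E^{*}$ is $\tau_{\mu}$-equicontinuous exactly when it is contained in some member of $\mathcal{M}$, and (invoking Alaouglu--Bourbaki, Fact \ref{l:Al-Bo}) $\mathcal{M}$ is itself a basis of $\eqc\bigl((E,\tau_{\mu})^{*}\bigr)$ consisting of weak-star compact sets. So the $M$'s appearing in Definition \ref{def:strong_b_small} are precisely a cofinal (by inclusion) family in $\eqc\bigl((E,\tau_{\mu})^{*}\bigr)$.

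Now I would run the two implications. If $B$ is Mackey $\mathfrak{B}$-small and $M' \in \eqc\bigl((E,\tau_{\mu})^{*}\bigr)$ is arbitrary, pick $M \in \mathcal{M}$ with $M' \subseteq M$; the inclusion $\iota\colon M' \hookrightarrow M$ is a continuous map of compact spaces, and $r_{M'}(B) = r_{M}(B)\circ\iota$, so the consistency axiom (Definition \ref{def:bornological_class}(2a)) carries $r_{M}(B) \in \mathfrak{B}_{M}$ over to $r_{M'}(B) \in \mathfrak{B}_{M'}$; thus $B$ is $\mathfrak{B}$-small for $\tau_{\mu}$. Conversely, if $B$ is $\mathfrak{B}$-small for $\tau_{\mu}$, then in particular $r_{M}(B) \in \mathfrak{B}_{M}$ for every $M \in \mathcal{M}$, which is literally the definition of Mackey $\mathfrak{B}$-smallness. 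Applying this to every bounded subset (the same collection for $\tau$ and $\tau_{\mu}$ by the first paragraph) gives the statement for the entire space.

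The only genuinely delicate step, which I would make sure to state carefully, is the identification in the second paragraph of $\eqc\bigl((E,\tau_{\mu})^{*}\bigr)$ with the down-set of $\mathcal{M}$; once this standard fact about polar topologies is in place, the rest is a purely formal unwinding of the two definitions together with a single use of the consistency axiom applied to the inclusion of one compact set into another.
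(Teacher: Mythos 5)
Your proposal is correct and takes essentially the same route as the paper: both arguments amount to unwinding the two definitions using that the Mackey topology is compatible with the duality, that the $\tau_{\mu}$-equicontinuous sets are exactly those contained in absolutely convex weak-star compact subsets of $E^{*}$, and the consistency/heredity of the bornological class. The only cosmetic difference is that the paper's converse passes through the bipolar $M^{\circ\circ}$ of a given absolutely convex weak-star compact $M$ (using that $M^{\circ}$ is a Mackey neighborhood), whereas you note directly that each such $M$ already lies in $\eqc\bigl((E,\tau_{\mu})^{*}\bigr)$.
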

\begin{proof}
	Recall (\cite[p.~158, Thm.~5]{Jarchow}) that the Mackey topology is compatible with the dual $E^{*}$.
	As a consequence, it has the same 
	absolutely convex, weak-star compact subsets 
	for the usual topology.
	Therefore, if $B$ is Mackey $\mathfrak{B}$-small, then $B$ is $\mathfrak{B}$-small with respect to the Mackey topology.
	
	Conversely, suppose that $B$ is $\mathfrak{B}$-small with respect to the Mackey topology, and let $M \subseteq E^{*}$ be an
	absolutely convex weak-star compact subset.
	By definition, the polar $M^{\circ}$ is an open neighborhood of zero in the Mackey topology.
	Thus, $M^{\circ\circ}$ is equicontinuous.
	As a consequence, $B$ is $\mathfrak{B}$-small over $M^{\circ\circ}$.
	However, $M \subseteq M^{\circ\circ}$, and therefore $B$ is $\mathfrak{B}$-small over $M$, as required.
\end{proof}

A lcs $E$ 
is said to be \emph{barreled} if every barrel (Definition \ref{d:gauge}) is a neighborhood of zero.
This class 
includes all 
reflexive and complete metrizable (i.e., Frechet) spaces.
By \cite[p.~132, Lemma 3.4]{Schaefer}, every barreled or metrizable space is a 
 Mackey space.

\begin{cor} \label{cor:mackey_spaces_and_b_smallness}
	If $E$ is a Mackey space (e.g., barrelled or metrizable), then it is $\mathfrak{B}$-small if and only if it is Mackey $\mathfrak{B}$-small.
\end{cor}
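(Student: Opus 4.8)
The plan is to deduce this corollary directly from Proposition \ref{prop:strong_b_small_and_mackey} together with the cited fact that barrelled and metrizable spaces are Mackey spaces. First I would record that if $E$ is a Mackey space, then by definition $\tau = \tau_\mu$, so ``$\mathfrak{B}$-small'' and ``$\mathfrak{B}$-small with respect to the Mackey topology'' are literally the same statement for $E$ (and for each of its bounded subsets, noting that the bounded sets of $(E,\tau)$ and $(E,\tau_\mu)$ coincide since both topologies are compatible with the same dual $E^{*}$ and boundedness equals weak boundedness). Then I would invoke Proposition \ref{prop:strong_b_small_and_mackey}, which says that being Mackey $\mathfrak{B}$-small is equivalent to being $\mathfrak{B}$-small with respect to the Mackey topology; chaining the two equivalences gives that $E$ is $\mathfrak{B}$-small iff it is Mackey $\mathfrak{B}$-small.

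More concretely, the proof is essentially one line: by \cite[p.~132, Lemma 3.4]{Schaefer} (quoted just above the statement) a barrelled or metrizable space is a Mackey space, i.e.\ $\tau=\tau_\mu$; hence the phrase ``$\mathfrak{B}$-small with respect to the Mackey topology'' in Proposition \ref{prop:strong_b_small_and_mackey} simply reads ``$\mathfrak{B}$-small'', and that proposition then yields the claimed equivalence. I would also remark that the equivalence for the whole space follows from the equivalence for each bounded subset, exactly as in Proposition \ref{prop:strong_b_small_and_mackey}.

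There is essentially no obstacle here; the only point that deserves a word of care is that one must know the bounded subsets of $E$ do not change when passing to the Mackey topology. This is immediate because $(E,\tau)$ and $(E,\tau_\mu)$ have the same dual, so the same weakly bounded sets, and boundedness coincides with weak boundedness (as noted in the preliminaries). Thus the notion of ``bounded subset $B \subseteq E$'' appearing in the definition of $\mathfrak{B}$-smallness is unambiguous, and the corollary follows.

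\begin{proof}
	By \cite[p.~132, Lemma 3.4]{Schaefer}, every barrelled or metrizable locally convex space is a Mackey space, so it suffices to prove the statement for an arbitrary Mackey space $E$, i.e.\ one with $\tau = \tau_\mu$. Since $(E,\tau)$ and $(E,\tau_\mu)$ have the same dual $E^{*}$, they have the same weakly bounded sets; as boundedness coincides with weak boundedness, they have the same bounded subsets. Therefore ``$\mathfrak{B}$-small'' and ``$\mathfrak{B}$-small with respect to the Mackey topology'' are identical assertions, both for a given bounded subset $B \subseteq E$ and for the space $E$ itself. Combining this with Proposition \ref{prop:strong_b_small_and_mackey}, which states that ($B$, resp.\ $E$, is) Mackey $\mathfrak{B}$-small if and only if ($B$, resp.\ $E$, is) $\mathfrak{B}$-small with respect to the Mackey topology, we conclude that $E$ is $\mathfrak{B}$-small if and only if it is Mackey $\mathfrak{B}$-small.
\end{proof}
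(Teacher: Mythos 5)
Your proposal is correct and matches the paper's (implicit) argument: the corollary is stated there as an immediate consequence of Proposition \ref{prop:strong_b_small_and_mackey} together with the cited fact that barrelled and metrizable spaces are Mackey, exactly as you argue. Your extra remark about bounded sets is harmless but unnecessary, since for a Mackey space $\tau=\tau_{\mu}$ literally, so the two notions of $\mathfrak{B}$-smallness coincide verbatim.
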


For basic information about Mackey topologies and related topics, we refer to \cite{Jarchow,Schaefer}. 
For some generalizations 
see \cite{AD}. 

\sk 
\subsection{The Co-Bornology and Strongest Topologies}
\label{subsection:co_bornology}


\begin{defin} \label{defin:polarly_compatible}
	A bornological class $\mathfrak{B}$ is said to be \emph{polarly compatible} if whenever $A \in \mathfrak{B}_{K}$ for compact $K$, then $r_{B_{C(K)^{*}}}(A) \in 
	\mathfrak{B}_{B_{C(K)^{*}}}$ where 
	$r_{B_{C(K)^{*}}}\colon C(K) \to C(B_{C(K)^{*}})$ is the canonical map
	defined by:
	$$
	  (r_{B_{C(K)^{*}}}(f))(\varphi) := \varphi(f).
	$$
\end{defin}

%

For every lcs (e.g., Banach space) $(E,\tau)$ and a
bornological class $\mathfrak{B}$, one may define the 
\emph{strongest locally convex $\mathfrak{B}$-small topology}  
$\tau_{\mathfrak{B}}$ on $E$. 
We mean the supremum of all $\mathfrak{B}$-small locally convex topologies on $E$ which are coarser than $\tau$. 
Since the class of $\mathfrak{B}$-small spaces is closed under products and subspaces (by Theorem \ref{thm:properties_of_b_small_classes}), 
we obtain that $\tau_{\mathfrak{B}}$ 
is well-defined and 
it is a $\mathfrak{B}$-small locally convex topology on $E$ (such that $\tau_{\mathfrak{B}} \subseteq \tau$). 
The weak topology $\tau_w$ on $E$ is always $\mathfrak{B}$-small (by Corollary \ref{cor:weak_topology_is_b_small}), and therefore 
$$
  \tau_{w} \subseteq \tau_{\mathfrak{B}} \subseteq \tau.
$$
As a consequence, $\tau_{\mathfrak{B}}$ is always a Hausdorff locally convex topology on $E$. 
Below, 
in Theorem \ref{thm:strongest_b_topology}, we give a description of this topology
for polarly compatible classes
 as a naturally defined polar topology.

\begin{lem} \label{p:from X to B*} \cite[Prop.~4.19]{GM-rose} 
	Let $K$ be a compact space and $F \subset C(K)$ is bounded.
	Then  
	$F$ is a tame family for $K$ if and only if $F$ is a tame family for the weak-star compact unit ball $B_{C(K)^*}$. 
	Equivalently, $F$ is a tame subset (in terms of Definition \ref{d:TameSet}) of the Banach space $C(K)$. 
\end{lem}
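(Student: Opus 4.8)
The plan is to reduce everything to Lemma \ref{f:sub-fr}, which already equates tameness of a bounded $F\subseteq C(K)$ on $K$ with eventual fragmentability, and to the analogous characterization for the ball $B_{C(K)^*}$. First I would recall the standard isometric embedding $C(K)\hookrightarrow C(B_{C(K)^*})$ given by evaluation, $f\mapsto \hat f$ with $\hat f(\varphi)=\varphi(f)$; under this map the original point-evaluations on $K$ correspond to the Dirac functionals, i.e.\ to the image $i(K)\subseteq B_{C(K)^*}$ of the canonical embedding. The content of the statement is that $F$ is tame as a family of functions on the ``big'' domain $B_{C(K)^*}$ if and only if it is tame on the ``small'' domain $K$ sitting inside it.

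One direction is immediate: if $F$ is tame on $B_{C(K)^*}$, then restricting the functions to the subset $i(K)$ can only destroy independent sequences, not create them, so $F$ is tame on $K$. (More precisely, an independent sequence in $F$ regarded on $K$ is, after identifying $K$ with $i(K)$, an independent sequence on $B_{C(K)^*}$ with the same bounds of independence.) For the converse, suppose $F$ is tame on $K$. By Lemma \ref{f:sub-fr} applied to the compact space $K$, every sequence in $F$ has a pointwise convergent subsequence in $\R^K$, and the pointwise closure of $F$ in $\R^K$ consists of fragmented maps; equivalently $F$ is an eventually fragmented family on $K$. Now I would invoke Corollary \ref{cor:fragmented_bipolarity} (or directly Lemma \ref{lemma:DLP_is_locally_convex}-style reasoning together with the bipolar theorem): the weak-closed absolutely convex hull $\overline{\acx}^{w}F$ taken in $C(K)$ is again eventually fragmented on $K$, and by the Bipolar Theorem (Fact \ref{fact:bipolar_theorem}) this hull equals $F^{\circ\circ}$, the polar being computed with respect to the duality $\langle C(K),C(K)^*\rangle$. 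Since $F$ is bounded, $F^{\circ\circ}$ is a bounded (indeed weak-star-closed absolutely convex) set whose ``shape'' controls the whole ball: after normalization we may assume $F\subseteq B_{C(K)}$, so $F^{\circ\circ}\subseteq B_{C(K)}=B_{C(K)}$, while the evaluation pairing of $B_{C(K)}$ with $B_{C(K)^*}$ is exactly the relevant one.

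The key step, and the one I expect to carry the real weight, is transporting eventual fragmentability of $F$ on $K$ to eventual fragmentability of $F$ on $B_{C(K)^*}$. Here I would use that $B_{C(K)^*}=\overline{\acx}^{w^*}i(K)$ by Lemma \ref{lemma:discrete_measures_are_dense}, together with the fact (Lemma \ref{lemma:DLP_is_locally_convex} is the DLP analogue; for fragmentability one uses Lemma \ref{l:convFr}(2) and Lemma \ref{l:p-clFr}) that a fragmented family on a compact space $X$ remains fragmented after passing to the pointwise closure and to convex combinations, hence after passing to $\overline{\acx}^{w^*}$ of the domain in the appropriate sense. Concretely: each $x^{**}\in C(K)^{**}$ restricted to $B_{C(K)^*}$ is obtained as a pointwise limit of elements of $C(K)$ restricted there, so fragmentedness on $i(K)$ of a given (countable) subfamily, combined with weak-star density of $\acx i(K)$ in $B_{C(K)^*}$ and the countable-determination Lemma \ref{t:countDetermined}, upgrades to fragmentedness on all of $B_{C(K)^*}$. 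Running this through every subsequence gives eventual fragmentability on $B_{C(K)^*}$, and then Lemma \ref{f:sub-fr} (now applied with compact space $B_{C(K)^*}$, noting $C(B_{C(K)^*})\ni \hat f$) returns that $F$ is tame on $B_{C(K)^*}$. The main obstacle is being careful that all the ``closure/convex hull preserves fragmentability'' lemmas are applied in the right topology: the relevant closure of $i(K)$ is the \emph{weak-star} closure in $C(K)^*$, and one must check that the pointwise (on $F$) topology on $B_{C(K)^*}$ is coarser than, or compatible with, the weak-star topology so that Lemma \ref{l:p-clFr} and Lemma \ref{l:convFr} apply verbatim — equivalently, that $F$ being eventually fragmented on a weak-star-dense subset of the compact set $B_{C(K)^*}$ forces it on the whole set, which is exactly the surjective-preimage direction of the consistency property of $\BTame$ (Lemma \ref{lem:surjective_image_of_independent_sequence}) applied to the continuous surjection from a suitable compactification, or more simply Lemma \ref{t:countDetermined} plus density. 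Finally, I would remark that the last sentence of the statement — identifying this with being a tame subset of $C(K)$ in the sense of Definition \ref{d:TameSet} — is then immediate, since tameness of $B_{C(K)}$ for $B_{C(K)^*}$ is by definition (and by Lemma \ref{l:RosBanSpCharact}-type reasoning) equivalent to tameness for every bounded subset of $C(K)^*$.
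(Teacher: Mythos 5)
Your easy direction (tameness on $B_{C(K)^*}$ restricts to tameness on $i(K)\cong K$) is fine, and note that the paper itself does not prove this lemma but simply cites \cite[Prop.~4.19]{GM-rose}. The problem is in your converse, precisely at the step you yourself flag as carrying the weight. The lemmas you invoke to ``upgrade'' eventual fragmentability from $K$ to $B_{C(K)^*}$ -- Lemma \ref{l:convFr}, Lemma \ref{l:p-clFr}, Corollary \ref{cor:fragmented_bipolarity} -- all concern closures and convex hulls of the \emph{family} $F$ inside $C(K)$; they say nothing about enlarging the \emph{domain} from $i(K)$ to $\overline{\acx}^{w^*}i(K)=B_{C(K)^*}$, which is what you need. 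The general principle you fall back on, that (eventual) fragmentability of a family of continuous functions on a dense subset of a compact space forces it on the whole space, is false: the coordinate projections on $\{0,1\}^{\N}$ are trivially a fragmented family on any countable dense subset, but not on the cube. Nor does Lemma \ref{lem:surjective_image_of_independent_sequence} help, since there is no continuous surjection onto $B_{C(K)^*}$ available from (a compactification of) $i(K)$ doing what you want. The true transfer statement (a family fragmented on $K$ is fragmented on $B_{C(K)^*}$, i.e.\ is an Asplund set) is correct but is itself a nontrivial fact -- the paper cites it separately for the $\BNP$ case to \cite{Me-b} -- and its proof needs a measure-theoretic regularity argument: from a countable $\rho_C$-dense set $D\subset K$ one approximates each $\mu\in B_{C(K)^*}$, uniformly over the countable subfamily $C$, by finite absolutely convex combinations of Dirac measures at points of $D$, and only then does Lemma \ref{t:countDetermined} apply to $(B_{C(K)^*},\rho_C)$. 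As written, your argument has a genuine gap exactly there.

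The irony is that Lemma \ref{f:sub-fr}, which you start from, already contains two routes that make the converse almost immediate and avoid fragmentability transfer altogether. Either use condition (1): being an $l^1$-sequence depends only on the sup-norm, and the evaluation embedding $C(K)\hookrightarrow C(B_{C(K)^*})$ is isometric (Hahn--Banach), so ``no $l^1$-sequence in $F$'' is the same assertion over $K$ and over $B_{C(K)^*}$; this is exactly the mechanism of Lemma \ref{lemma:specific_l1_when_not_tame}. Or use condition (3): a bounded sequence in $C(K)$ which converges pointwise on $K$ converges, by Lebesgue's dominated convergence theorem, under every measure in $B_{C(K)^*}$, i.e.\ pointwise on $B_{C(K)^*}$; so eventual pointwise convergence on $K$ gives eventual pointwise convergence on the ball, and Lemma \ref{f:sub-fr} applied to the compact space $B_{C(K)^*}$ finishes. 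Either of these would replace your problematic paragraph; the final identification with Definition \ref{d:TameSet} is then, as you say, immediate.
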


\begin{cor} \label{cor:tame_is_polarly_compatible}
	The class $\BTame$ is polarly compatible.
\end{cor}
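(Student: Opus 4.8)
The plan is to verify the defining condition of polar compatibility (Definition \ref{defin:polarly_compatible}) directly for $\mathfrak{B}=\BTame$, and to observe that this is essentially a restatement of Lemma \ref{p:from X to B*}. Concretely: fix a compact space $K$ and a set $A\in\BTame_{K}$, i.e.\ $A$ is a bounded tame family of continuous functions on $K$. We must show that $r_{B_{C(K)^{*}}}(A)\in\BTame_{B_{C(K)^{*}}}$, where $r_{B_{C(K)^{*}}}\colon C(K)\to C(B_{C(K)^{*}})$ is the canonical evaluation map $(r_{B_{C(K)^{*}}}(f))(\varphi)=\varphi(f)$.

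First I would note that $r_{B_{C(K)^{*}}}(A)$, as a family of functions on the weak-star compact ball $B_{C(K)^{*}}$, is exactly $A$ regarded as a family of functions on $B_{C(K)^{*}}$ via the canonical pairing; moreover these are continuous functions on $B_{C(K)^{*}}$ (each evaluation $\varphi\mapsto\varphi(f)$ is weak-star continuous) and the family is bounded since $A$ is norm-bounded in $C(K)$. Next I would apply Lemma \ref{p:from X to B*}: it states precisely that a bounded $F\subset C(K)$ is a tame family on $K$ if and only if it is a tame family on $B_{C(K)^{*}}$. Since $A$ is tame on $K$ by hypothesis, it is tame on $B_{C(K)^{*}}$, which is to say $r_{B_{C(K)^{*}}}(A)\in\BTame_{B_{C(K)^{*}}}$ by the very definition of $\BTame_{B_{C(K)^{*}}}$ (Definition \ref{d:TameFamily} together with the fact that $\BTame$ assigns to a compact space the bornology of tame families on it). This is all that polar compatibility requires.

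There is essentially no obstacle here, since the content has been isolated into Lemma \ref{p:from X to B*} (itself resting on \cite[Prop.~4.19]{GM-rose}); the only thing to be slightly careful about is the bookkeeping that ``$r_{B_{C(K)^{*}}}(A)$ as a subset of $C(B_{C(K)^{*}})$'' and ``$A$ as a family of functions on $B_{C(K)^{*}}$'' are literally the same object, so that the notion of tameness used in Lemma \ref{p:from X to B*} matches the notion defining $\BTame_{B_{C(K)^{*}}}$. I would state the proof in two or three lines along these lines, citing Lemma \ref{p:from X to B*} as the key input.

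\begin{proof}
	Let $K$ be a compact space and $A\in\BTame_{K}$, i.e.\ $A$ is a bounded tame family in $C(K)$.
	The map $r_{B_{C(K)^{*}}}\colon C(K)\to C(B_{C(K)^{*}})$ sends $f$ to the weak-star continuous function $\varphi\mapsto\varphi(f)$ on $B_{C(K)^{*}}$, so $r_{B_{C(K)^{*}}}(A)$ is precisely $A$ viewed as a (bounded) family of functions on $B_{C(K)^{*}}$ via the canonical pairing.
	By Lemma \ref{p:from X to B*}, $A$ is a tame family on $K$ if and only if it is a tame family on $B_{C(K)^{*}}$; since $A$ is tame on $K$, it is tame on $B_{C(K)^{*}}$, that is, $r_{B_{C(K)^{*}}}(A)\in\BTame_{B_{C(K)^{*}}}$.
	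As this holds for every compact $K$ and every $A\in\BTame_{K}$, the class $\BTame$ is polarly compatible.
\end{proof}
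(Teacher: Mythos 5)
Your proof is correct and takes the same route as the paper: the paper states this as an immediate consequence of Lemma \ref{p:from X to B*}, which is exactly the key input you invoke. The only content beyond that lemma is the bookkeeping identification of $r_{B_{C(K)^{*}}}(A)$ with $A$ viewed as functions on $B_{C(K)^{*}}$, which you handle correctly.
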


\begin{remark}
$\BNP$ is also polarly compatible.
	In fact, an analogous statement to Lemma \ref{p:from X to B*} 
	holds about fragmented maps 
	\cite{Me-b}.
\end{remark}

\begin{lem} \label{lemma:weak_star_convex_hull_of_equicontinuous}
${\overline{\acx}^{w^{*}}(M)} \in \eqc(E^*)$ for every $M \in \eqc(E^*)$.
\end{lem}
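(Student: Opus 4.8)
The plan is to exploit the fact that equicontinuity of a subset of $E^{*}$ is witnessed by containment in the polar of a neighborhood of zero, and that such polars are already absolutely convex, weak-star closed and weak-star compact, so that taking $\overline{\acx}^{w^{*}}$ does not leave the world of equicontinuous compact sets.

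First I would fix $M \in \eqc(E^{*})$ and, by the definition of equicontinuity, choose a neighborhood $U$ of $0$ in $E$ with $M \subseteq U^{\circ}$ (one may take $U$ to be a closed disk, though this is not needed). The polar $U^{\circ} \subseteq E^{*}$ is absolutely convex and weak-star closed — this holds for the polar of an arbitrary subset — and it is equicontinuous, being the polar of a neighborhood of zero. By the Alaoglu--Bourbaki theorem (Fact \ref{l:Al-Bo}), $U^{\circ}$, being equicontinuous and weak-star closed, is weak-star compact.

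Next I would observe that $\overline{\acx}^{w^{*}}(M)$ is, by definition, the smallest weak-star closed absolutely convex subset of $E^{*}$ containing $M$. Since $U^{\circ}$ is a weak-star closed absolutely convex set containing $M$, it follows that $\overline{\acx}^{w^{*}}(M) \subseteq U^{\circ}$. Consequently $\overline{\acx}^{w^{*}}(M)$ is equicontinuous, as a subset of the equicontinuous set $U^{\circ}$, and it is weak-star compact, being a weak-star closed subset of the weak-star compact set $U^{\circ}$. Hence $\overline{\acx}^{w^{*}}(M) \in \eqc(E^{*})$, as claimed.

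There is essentially no obstacle here: the only points to keep straight are that polars are automatically absolutely convex and weak-star closed (so no closure beyond the weak-star one is required), that the polar of a $0$-neighborhood is by definition equicontinuous, and that equicontinuity passes to arbitrary subsets while weak-star compactness passes to weak-star closed subsets. Alternatively one could run the whole argument through the bipolar theorem in the duality $\langle E, E^{*}\rangle$, but the neighborhood-polar route above is the most direct.
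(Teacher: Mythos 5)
Your proof is correct and follows essentially the same route as the paper: choose a zero-neighborhood $U$ with $M \subseteq U^{\circ}$, note that $U^{\circ}$ is absolutely convex, weak-star closed and weak-star compact (Alaoglu--Bourbaki), and conclude that $\overline{\acx}^{w^{*}}(M)$ is a weak-star closed subset of $U^{\circ}$, hence equicontinuous and weak-star compact. Nothing further is needed.
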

\begin{proof}
	By definition, there is a neighborhood $\eps \subseteq E$ of zero such that $M \subseteq \eps^{\circ}$.
	Note that $\eps^{\circ}$ is convex and weak-star compact by 
	 Alaouglu-Bourbaki's theorem. 
	Finally note that 
	$\overline{\acx}^{w^{*}}(M)$ is a weak-star closed subspace of $\eps^{\circ}$. 
\end{proof}


\begin{lem} \label{lemma:functional_dual_surjection}
	Let 
	$M \subseteq E^{*}$ be 
	an equicontinuous, 
	weak-star compact subset.
	Write 
	${T := \overline{\acx} M}$.
	Then there exists a 
	surjective $j\colon B_{C(M)^{*}} \to T$ such that 
	${r_{T}(x) \circ j  = r_{B_{C(M)^{*}}}(r_{M}(x))}$ 
	for every $x \in E$.
	\[\begin{tikzcd}
		B_{C(M)^{*}} & T \\
		& \R
		\arrow["j", from=1-1, to=1-2]
		\arrow["r_{T}(x)", from=1-2, to=2-2]
		\arrow["\gamma"', from=1-1, to=2-2]
	\end{tikzcd}\]
where $\gamma:=r_{B_{C(M)^{*}}}(r_{M}(x))$. 
\end{lem}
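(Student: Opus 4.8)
The goal is to build a surjection $j\colon B_{C(M)^{*}} \to T := \overline{\acx}\,M$ (closure taken in the weak topology of $E^*$, which by Mazur equals the weak-star closure since $\overline{\acx}^{w^*}M \in \eqc(E^*)$ is convex) making the triangle commute. The key observation is that $M$, being weak-star compact in $E^*$, sits inside $C(M)^{*}$ via the canonical weak-star embedding $i_M\colon M \hookrightarrow C(M)^{*}$, $\varphi \maps(r_M(x) \mapsto \varphi(x))$; wait — more precisely, each $\varphi\in M$ defines a point of $C(M)^*$ by $f \mapsto f(\varphi)$ (point evaluation), and by Lemma \ref{lemma:discrete_measures_are_dense} the weak-star closed absolutely convex hull of $i_M(M)$ is all of $B_{C(M)^{*}}$. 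First I would define a candidate for $j$ on the \emph{generating} set $i_M(M)$: send the evaluation functional at $\varphi\in M$ to the point $\varphi \in T \subseteq E^*$. This is well-defined because the correspondence $\varphi \leftrightarrow (\text{eval at }\varphi)$ is a bijection of $M$ onto $i_M(M)$. The defining identity then reads $(r_T(x)\circ j)(\text{eval at }\varphi) = r_T(x)(\varphi) = \varphi(x) = (r_M(x))(\varphi) = (r_{B_{C(M)^{*}}}(r_M(x)))(\text{eval at }\varphi)$, which is exactly $\gamma$ on $i_M(M)$.

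\textbf{Extending $j$ affinely/continuously.} The map $j$ on $i_M(M)$ must be extended to all of $B_{C(M)^{*}} = \overline{\acx}^{w^*} i_M(M)$. The natural way is to use the embedding of $E^*_{w^*}$ — or rather, observe that the restriction-dual map works: the restriction operator $r_M\colon E \to C(M)$ has an adjoint $r_M^{*}\colon C(M)^{*} \to E^{*}$ which is weak-star-to-weak-star continuous (Fact \ref{f:adjoint_is_continuous}). I would take $j := r_M^{*}$ (possibly corestricted). One checks that $r_M^{*}$ sends the evaluation functional at $\varphi\in M$ to the element $\psi\in E^*$ with $\psi(x) = (r_M^{*}(\text{eval}_\varphi))(x) = \text{eval}_\varphi(r_M(x)) = (r_M(x))(\varphi) = \varphi(x)$, i.e. $\psi = \varphi$; so $r_M^{*}\circ i_M = \mathrm{id}_M$ as a map $M \to E^*$. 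Since $r_M^{*}$ is weak-star continuous and linear, it maps $B_{C(M)^{*}} = \overline{\acx}^{w^*}\,i_M(M)$ onto $\overline{\acx}^{w^*}\,r_M^{*}(i_M(M)) = \overline{\acx}^{w^*}\, M$; and because this last set is equicontinuous and weak-star compact (Lemma \ref{lemma:weak_star_convex_hull_of_equicontinuous}), by Mazur's theorem it coincides with $\overline{\acx}\,M = T$. Surjectivity onto $T$ follows, using weak-star compactness of $B_{C(M)^{*}}$ so that its continuous linear image is already closed. For the commuting triangle: for every $x\in E$ and every $\mu \in B_{C(M)^{*}}$,
$$
(r_T(x)\circ j)(\mu) = r_T(x)(r_M^{*}(\mu)) = (r_M^{*}(\mu))(x) = \mu(r_M(x)) = (r_{B_{C(M)^{*}}}(r_M(x)))(\mu) = \gamma(\mu),
$$
which is the desired identity.

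\textbf{Main obstacle.} The only subtle point is checking that the image $r_M^{*}(B_{C(M)^{*}})$ is \emph{exactly} $T = \overline{\acx}\,M$ and not something smaller or larger — specifically that $r_M^{*}$ commutes with taking absolutely convex weak-star-closed hulls. One direction ($r_M^{*}(\overline{\acx}^{w^*} i_M(M)) \subseteq \overline{\acx}^{w^*} M$) is immediate from linearity and weak-star continuity; the reverse needs that $r_M^{*}$ restricted to $B_{C(M)^{*}}$ is a weak-star homeomorphism onto its image, or at least a closed map, which holds because $B_{C(M)^{*}}$ is weak-star compact (Banach-Alaoglu) and the target is Hausdorff. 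Combined with Lemma \ref{lemma:discrete_measures_are_dense} identifying $B_{C(M)^{*}}$ with $\overline{\acx}^{w^*}i_M(M)$, and Lemma \ref{lemma:weak_star_convex_hull_of_equicontinuous} plus Mazur's theorem (Fact \ref{f:Mazur}) to pass between $\overline{\acx}^{w^*}M$ and $\overline{\acx}\,M$, the argument closes. I would also remark that $j$ so constructed is automatically continuous (weak-star to weak), though only surjectivity and the triangle identity are asserted.
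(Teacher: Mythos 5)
Your proposal is correct, but it takes a genuinely different route from the paper. The paper builds $j$ by hand: it defines $j'$ on $M':=\acx i(M)$ by the explicit formula $\sum \alpha_m i(\varphi_m)\mapsto \sum\alpha_m\varphi_m$, checks well-definedness and uniform continuity for the weak-star uniformities, extends $j'$ to $\overline{M'}^{w^{*}}=B_{C(M)^{*}}$ using completeness of the compact set $\overline{\acx}^{w^{*}}M$ (Lemma \ref{lemma:weak_star_convex_hull_of_equicontinuous}), deduces surjectivity from compactness of $B_{C(M)^{*}}$ plus density of $j(M')$, and finally verifies the commuting identity on the dense set $M'$. You instead recognize $j$ as the restriction to $B_{C(M)^{*}}$ of the adjoint $r_M^{*}\colon C(M)^{*}\to E^{*}$ of the restriction operator; then weak-star continuity is Fact \ref{f:adjoint_is_continuous}, the triangle identity $(r_T(x)\circ j)(\mu)=\mu(r_M(x))$ is literally the definition of the adjoint (no density argument needed), and surjectivity follows from Alaoglu, Lemma \ref{lemma:discrete_measures_are_dense}, and the fact that the weak-star compact image of $B_{C(M)^{*}}$ is closed, absolutely convex and contains $M$, hence equals $\overline{\acx}^{w^{*}}M$, while the reverse inclusion is immediate from linearity and continuity. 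Your route is shorter, sidesteps the well-definedness check for $j'$ (which the paper glosses over), and shows in passing that $j$ is forced by the stated identity to be $r_M^{*}$; the paper's route is more elementary and self-contained. One caveat: your appeal to Mazur's theorem to identify $\overline{\acx}^{w^{*}}M$ with $\overline{\acx}\,M$ is not valid, since Mazur concerns the weak topology $\sigma(E^{*},E^{**})$ of $E^{*}$, not the weak-star topology $\sigma(E^{*},E)$, and in general these hulls differ (e.g.\ point masses in $C[0,1]^{*}$). However, the paper's own proof (and its later application) silently takes $T=\overline{\acx}^{w^{*}}M$, so this is an ambiguity inherited from the statement rather than a gap in your argument; with $T$ read as the weak-star closed absolutely convex hull, your proof is complete.
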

\begin{proof}  
	Let $i\colon M \to B_{C(M)^{*}}$ be the inclusion map.
	Define $M' := \acx i(M)$ and $j' \colon M' \to \acx M$ via:
	$$
	  j'\left( \sum_{m=1}^{n} \alpha_{m} i(\varphi_{m}) \right) :=
	  \sum_{m=1}^{n} \alpha_{m} \varphi_{m}.
	$$
	It is easy to see that this function is well-defined and linear. 
 	We will now show that it is 
	uniformly continuous
	with respect to the standard uniformities of the weak-star topologies.
	Let us write
	$$
	U_{X}(p_{1}, \dotsc, p_{t}; \eps) := 
	\{(f, g) \in X \times X \mid \forall 1 \leq k \leq t: \lvert f(p_{k}) - g(p_{k}) \rvert < \eps \},
	$$
	where $X \in \{M', \acx M\}$.
	Suppose that $x_{1}, \dotsc, x_{t} \in E$ and $\eps > 0$.
	It is easy to see that:
	$$
	  j'\left( U_{M'}(r_{M}(x_{1}), \dotsc, r_{M}(x_{t}); \eps) \right) \subseteq
	  U_{M}(x_{1}, \dotsc, x_{t}; \eps).
	$$
	By definition, $j'$ is uniformly continuous.
	
	Now, by Lemma \ref{lemma:weak_star_convex_hull_of_equicontinuous}, $T := \overline{\acx}^{w^{*}} M$ is compact, and in particular complete.
	We can thus extend $j'$ to a continuous function $j \colon \overline{M'}^{w^{*}} \to T$.
	By Lemma \ref{lemma:discrete_measures_are_dense}, $\overline{M'}^{w^{*}} = B_{C(M)^{*}}$.
		
	Since $B_{C(M)^{*}}$ is compact, $j(B_{C(M)^{*}})$ is closed.
	Moreover, $\acx M = j(M') \subseteq j(B_{C(M)^{*}})$ so it is also dense.
	As a consequence, $j$ is surjective.
	
	Now suppose that $x \in E$ and $\Phi = \sum_{m=1}^{n} \alpha_{m} i(\varphi_{m}) \in M' \subseteq B_{C(M)^{*}}$.
	We thus have:
	\begin{align*}
		(r_{T}(x) \circ j)(\Phi) =\ &
		r_{T}(x)(j(\Phi)) = 
		r_{T}(x)(j'(\Phi)) \\
		=\ & r_{T}(x)\left(\sum_{m=1}^{n} \alpha_{m} \varphi_{m}\right) = 
		 \left(\sum_{m=1}^{n} \alpha_{m} \varphi_{m}\right)(x) \\
		=\ & \sum_{m=1}^{n} \alpha_{m} \varphi_{m}(x) = 
		\sum_{m=1}^{n} \alpha_{m} (r_{M}(x))(\varphi_{m}) \\
		=\ & \sum_{m=1}^{n} \alpha_{m} (i(\varphi_{m}))(r_{M}(x)) = 
		\left( \sum_{m=1}^{n} \alpha_{m} i(\varphi_{m}) \right) (r_{M}(x)) \\
		=\ &\Phi(r_{M}(x)) =
		(r_{B_{C(M)^{*}}}(r_{M}(x)))(\Phi).	
	\end{align*}
	As a consequence, $(r_{T}(x) \circ j)_{\mid M'} = (r_{B_{C(M)^{*}}}(r_{M}(x)))_{\mid M'}$.
	Since $r_{T}(x) \circ j$ and $r_{B_{C(M)^{*}}}(r_{M}(x))$ are continuous, and $M'$ is dense in $B_{C(M)^{*}}$, we have $r_{T}(x) \circ j = r_{B_{C(M)^{*}}}(r_{M}(x))$.
\end{proof}

\begin{lem} \label{lemma:sufficient_for_locally_convex_bornology}
	Let $V$ be a vector space and let $\mathcal{B} \subseteq \mathcal{P}(V)$ be a family of subsets of $V$.
	Then the following requirements are enough to conclude that $\mathcal{B}$ is a convex bornology:
	\ben
		\item Singletons belong to $\mathcal{B}$.
		\item If $M \in \mathcal{B}$ and $N \subseteq M$ then $N \in \mathcal{B}$.
		\item Union of two elements of $\mathcal{B}$ remains in $\mathcal{B}$.
		\item If $M \in \mathcal{B}$ and $r \in \R$ then $r M \in \mathcal{B}$.
		\item For every $M \in \mathcal{B}$, the absolutely convex hull $\acx M \in \mathcal{B}$.
	\een
\end{lem}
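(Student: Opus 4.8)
The plan is to verify, one property at a time, that $\mathcal{B}$ satisfies all the axioms of a convex vector bornology, drawing each time on the appropriate hypothesis among (1)--(5). Concretely, I must check: (a) $\mathcal{B}$ covers $V$; (b) $\mathcal{B}$ is hereditary under inclusion; (c) $\mathcal{B}$ is stable under finite unions; (d) $A+B\in\mathcal{B}$ whenever $A,B\in\mathcal{B}$; (e) $\alpha A\in\mathcal{B}$ for every $\alpha\in\R$; (f) $\bo A\in\mathcal{B}$; and (g) $\co A\in\mathcal{B}$.

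Most of these are immediate. Property (a) follows from hypothesis (1), since $x\in\{x\}\in\mathcal{B}$ for every $x\in V$; heredity (b) is precisely hypothesis (2) (in particular $\emptyset\in\mathcal{B}$, being a subset of any singleton); stability under finite unions (c) follows from hypothesis (3) by a trivial induction; and (e) is exactly hypothesis (4). For (f) and (g) I would use the elementary inclusions $\bo M\subseteq\acx M$ and $\co M\subseteq\acx M$ (both following from $\acx M=\co(\bo M)$): by hypothesis (5) we have $\acx M\in\mathcal{B}$, hence $\bo M\in\mathcal{B}$ and $\co M\in\mathcal{B}$ by heredity (2).

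The only step that needs a genuine (though short) argument is (d). Given $A,B\in\mathcal{B}$, set $C:=A\cup B$, which lies in $\mathcal{B}$ by (3); then $\acx C\in\mathcal{B}$ by (5), and $2\,\acx C\in\mathcal{B}$ by (4). For any $a\in A$ and $b\in B$ we have $\frac{1}{2}(a+b)\in\co(A\cup B)\subseteq\acx C$, so $a+b=2\cdot\frac{1}{2}(a+b)\in 2\,\acx C$; hence $A+B\subseteq 2\,\acx C\in\mathcal{B}$, and heredity (2) yields $A+B\in\mathcal{B}$. Assembling (a)--(g) shows $\mathcal{B}$ is a convex vector bornology.

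The "hard part" here is essentially nonexistent: the only mild subtlety is that one cannot obtain $A+B$ from the hypotheses directly but must route it through $2\,\acx(A\cup B)$, using that a midpoint of two points of $A\cup B$ is a convex combination; everything else is straightforward bookkeeping with the inclusions $\bo\subseteq\acx$ and $\co\subseteq\acx$.
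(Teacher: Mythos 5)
Your proposal is correct and follows essentially the same route as the paper: the only nontrivial axiom is stability under sums, and you handle it exactly as the paper does, via the inclusion $A+B \subseteq 2\,\acx(A \cup B)$ together with hypotheses (3), (5), (4) and heredity. The remaining verifications (covering via singletons, heredity, unions, scalars, and the balanced/convex hulls through $\bo M \subseteq \acx M$ and $\co M \subseteq \acx M$) match the paper's implicit bookkeeping.
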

\begin{proof}
	First, it is easy to see that
	$$
	  \bigcup_{A \in \mathcal{B}} A \supseteq
	  \bigcup_{x \in V} \{x\} \supseteq
	  V,
	$$
	so it only remains to show that sums of elements in $\mathcal{B}$ remains in $\mathcal{B}$.
	Indeed, if $A_{1}, A_{2} \in \mathcal{B}$, then so are $A_{1} \cup A_{2}$ and $2\acx (A_{1} \cup A_{2})$.
	Finally, note that:
	$$
	  A_{1} + A_{2} \subseteq 2\acx (A_{1} \cup A_{2}).
	$$
\end{proof}

\begin{defin}
	Let $\mathfrak{B}$ be a bornological class and 
	$A \subseteq E$ is a bounded subset.
	An equicontinuous, $M \subseteq E^{*}$ is said to be \emph{co-$\mathfrak{B}$-small 
	with respect to $A$} if $r(A) \in \mathfrak{B}_{\overline{M}^{w^{*}}}$, where $r \colon E \to C(\overline{M}^{w^{*}})$ is the restriction map.
	If this is true for every bounded subset of $E$, then we will simply say that $M$ is \emph{co-$\mathfrak{B}$-small}.
\end{defin}

%

\begin{lem} \label{lemma:co_small_extension}
	Suppose that $\mathfrak{B}$ is a bornological class, $E$ a locally convex space, $M \in \eqc(E^{*})$ and $A \subseteq E$ is bounded.
	If $M$ is co-$\mathfrak{B}$-small with respect to $A$, then so is $M' := M \cup \{0\}$.
\end{lem}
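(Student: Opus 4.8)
The plan is to pass to the single compact space $K' := \overline{M'}^{w^{*}}$ and compare it with $K := \overline{M}^{w^{*}}$. Since weak-star closure commutes with finite unions, $K' = \overline{M \cup \{0\}}^{w^{*}} = K \cup \{0\}$, so $K'$ is obtained from $K$ by adjoining the zero functional as a single extra point; moreover $K$, being weak-star compact, is weak-star closed in $E^{*}$, hence $K$ is \emph{clopen} in $K'$ and $K' \setminus K \subseteq \{0\}$. Two degenerate cases are immediate. If $0 \in K$, then $K' = K$, the restriction maps $r' \colon E \to C(K')$ and $r \colon E \to C(K)$ coincide, and the assertion is just the hypothesis $r(A) \in \mathfrak{B}_{K}$. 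If $K = \emptyset$, then $M = \emptyset$, so $K' = \{0\}$ and each $r'(a)$ is the zero function on $K'$, so $r'(A)$ is a finite subset of $C(K')$ and hence lies in $\mathfrak{B}_{K'}$. Thus I may assume $K \neq \emptyset$ and $0 \notin K$, so that $\{0\}$ is an isolated clopen point of $K' = K \sqcup \{0\}$; fix a point $k_{0} \in K$.

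Next I would introduce the continuous retraction $p \colon K' \to K$ defined to be the identity on $K$ and to send the extra point $0$ to $k_{0}$; it is continuous because $K$ and $\{0\}$ are open in $K'$ and the restriction of $p$ to each of these two open pieces (the identity, and a constant map) is continuous. The consistency property of the bornological class $\mathfrak{B}$, applied to $p$ and to $r(A) \in \mathfrak{B}_{K}$, yields $r(A) \circ p \in \mathfrak{B}_{K'}$. This family is almost $r'(A)$: for $a \in A$ the function $r(a) \circ p$ agrees with $r'(a)$ (both equal $r(a)$) on $K$, while at the adjoined point it takes the value $r(a)(k_{0}) = \langle a, k_{0} \rangle$ instead of $r'(a)(0) = \langle a, 0 \rangle = 0$. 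Hence, for every $a \in A$,
$$
r'(a) = r(a) \circ p \; - \; \langle a, k_{0} \rangle \, \ch_{\{0\}},
$$
where $\ch_{\{0\}} \in C(K')$ is the characteristic function of the clopen point $0$.

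It then remains to absorb this one-dimensional correction term. Because $A$ is bounded it is weakly bounded, so $C := \sup_{a \in A} \lvert \langle a, k_{0} \rangle \rvert < \infty$, and the set $D := \{ t\, \ch_{\{0\}} : \lvert t \rvert \le C \} = C \cdot \bo(\{\ch_{\{0\}}\})$ belongs to $\mathfrak{B}_{K'}$, since $\mathfrak{B}_{K'}$ is a vector bornology containing the singleton $\{\ch_{\{0\}}\}$. Consequently $r'(A) \subseteq (r(A) \circ p) + D$, and the right-hand side lies in $\mathfrak{B}_{K'}$ because a vector bornology is closed under sums; by heredity $r'(A) \in \mathfrak{B}_{K'}$, which is precisely the statement that $M'$ is co-$\mathfrak{B}$-small with respect to $A$.

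The only genuinely delicate point — the one I would be most careful about — is not the topology of $K'$, which is routine, but the observation that the maps one is first tempted to use (the inclusion $K \hookrightarrow K'$, or a surjection collapsing $K'$ onto $K$) do not intertwine the two restriction maps on the nose; the entire mismatch is concentrated at the adjoined point and is neutralized by the finite-dimensional correction term $D$. Everything else is a direct application of the consistency property together with the vector-bornology axioms.
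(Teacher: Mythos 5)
Your proof is correct and follows essentially the same route as the paper: in the nontrivial case where $0$ is isolated in $M'$, you use the retraction onto $M$ (the paper's map $j$) together with consistency, and then cancel the mismatch at the adjoined point by the one-dimensional correction $\langle x, k_{0}\rangle\,\ch_{\{0\}}$, which is exactly the paper's term $r_{N}(x)\circ s$. The only (inessential) difference is that you certify the correction family via the vector-bornology axioms applied to scalar multiples of $\ch_{\{0\}}$, whereas the paper pulls it back through the two-point space $N=\{0,\varphi_{0}\}$ using consistency and the fact that $\mathfrak{B}_{N}$ is the euclidean bornology.
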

\begin{proof}
	If $0 \in M$ then we are done.
	Otherwise, recall that $M$ is compact and therefore closed.
	As a consequence, $0$ is isolated in $M'$.
	Choose $\varphi_{0} \in M$ and write $N := \{0, \varphi_{0}\}$.
	We now define $j\colon M' \to M$ and $s\colon M' \to N$ by
	$$
	  j(\varphi) := \begin{cases}
	  	\varphi & \varphi \neq 0 \\
	  	\varphi_{0} & \varphi = 0
	  \end{cases},\ 
  	  s(\varphi) := \begin{cases}
  	  	0 & \varphi \neq 0 \\
  	  	\varphi_{0} & \varphi = 0.
  	  \end{cases}.
	$$
	Note that both of these maps are continuous.
	In virtue of Remark \ref{lemma:euclidean_bornology}, $\mathfrak{B}_{N}$ is the euclidean bornology.
	Namely, every bounded subset belongs to $\mathfrak{B}_{N}$.
	In particular, so does $r_{N}(A) \in \mathfrak{B}_{N}$.
	Thus, ${r_{N}(A) \circ s \in \mathfrak{B}_{M'}}$.
	Moreover, it is easy to see that $r_{M}(A) \circ j \in \mathfrak{B}_{M'}$.
	We claim that
	$$
	 r_{M'}(A) \subseteq r_{M}(A) \circ j - r_{N}(A) \circ s.
	$$
	More specifically, we claim that for every $x \in A$:
	$$
	  r_{M'}(x) = r_{M}(x) \circ j - r_{N}(x) \circ s.
	$$
	Indeed, suppose that $x \in A$, then for every $\varphi \in M$ we have:
	$$
	 (r_{M'}(x))(\varphi) = \varphi(x) = 
	 (j(\varphi))(x) =
	 (j(\varphi))(x) - (s(\varphi))(x) = 
	 (r_{M}(x) \circ j - r_{N}(x) \circ s)(\varphi).
	$$
	Also:
	$$
	  (r_{M'}(x))(0) = 
	  0(x) =
	  0 =
	  \varphi_{0}(x) - \varphi_{0}(x) = 
	  (j(0))(x) - (s(0))(x) = 
	  (r_{M}(x) \circ j - r_{N}(x) \circ s)(0).
	$$
	As a consequence, we have verified the identity for every $\varphi \in M'$.
	Since $\mathfrak{B}_{M'}$ is a vector bornology we conclude that $r_{M'}(A) \in \mathfrak{B}_{M'}$ as required. 
\end{proof}

\begin{lemma} \label{lemma:co_b_small_is_locally_convex}
	Let $\mathfrak{B}$ be a polarly compatible bornological class  
	and let $A \subseteq E$ be bounded.
	The family of co-$\mathfrak{B}$-small subsets with
	respect to $A$ of $E^{*}$ is a weak-star saturated, convex bornology. 
	Denote this bornology as
	$\localcobsmall{E}{A}$.
	We also write 
	$$
	  {\cobsmall{E} := \bigcap\limits_{A \subseteq E} \localcobsmall{E}{A}}
	$$ where $A$ runs over bounded subsets.
	Clearly, $\cobsmall{E}$ is also a weak-star saturated, locally convex bornology.
\end{lemma}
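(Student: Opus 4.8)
The plan is to verify that the family $\mathcal{B}_{0}$ of all equicontinuous subsets of $E^{*}$ that are co-$\mathfrak{B}$-small with respect to $A$ meets the five requirements of Lemma~\ref{lemma:sufficient_for_locally_convex_bornology}, and then to read off the remaining assertions directly from the definitions. First observe that weak-star saturatedness of $\mathcal{B}_{0}$ is essentially tautological: for equicontinuous $M$ the set $\overline{M}^{w^{*}}$ is again equicontinuous and weak-star compact (Fact~\ref{l:Al-Bo}), and since $\overline{\overline{M}^{w^{*}}}^{w^{*}}=\overline{M}^{w^{*}}$ we see that $M\in\mathcal{B}_{0}$ if and only if $\overline{M}^{w^{*}}\in\mathcal{B}_{0}$. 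In particular, when checking the conditions of Lemma~\ref{lemma:sufficient_for_locally_convex_bornology} we may always replace a member by its weak-star closure, hence work with weak-star compact sets only.

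Conditions (1), (2) and (4) of Lemma~\ref{lemma:sufficient_for_locally_convex_bornology} are routine. A singleton $\{\varphi\}$ satisfies $C(\{\varphi\})=\R$ and the restriction $\varphi(A)$ is a bounded subset of $\R$ (as $A$ is bounded), hence lies in $\mathfrak{B}_{\{\varphi\}}$ by Remark~\ref{remark:extensibility_induction}. For $N\subseteq M$ one applies the consistency property of $\mathfrak{B}$ to the inclusion $\overline{N}^{w^{*}}\hookrightarrow\overline{M}^{w^{*}}$, using that the restriction of $A$ to $\overline{N}^{w^{*}}$ equals its restriction to $\overline{M}^{w^{*}}$ composed with this inclusion; closure under a scalar $\lambda\neq 0$ uses, in the same way, the weak-star homeomorphism $\overline{M}^{w^{*}}\to\overline{\lambda M}^{w^{*}}$, $\varphi\mapsto\lambda\varphi$, together with the fact that each $\mathfrak{B}_{K}$ is a vector bornology (and $\lambda=0$ reduces to the singleton case).

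The two substantive conditions are (5) and (3), and (5) is where the hypothesis of polar compatibility enters; I expect this to be the \emph{main obstacle}, since it is exactly the step that cannot be handled by manipulating the compact index spaces. By saturatedness we may assume $M$ is weak-star compact, so Lemma~\ref{lemma:functional_dual_surjection} applies and furnishes a weak-star continuous surjection $j\colon B_{C(M)^{*}}\to T$, where $T:=\overline{\acx}^{w^{*}}M$, with $r_{T}(x)\circ j=r_{B_{C(M)^{*}}}(r_{M}(x))$ for all $x\in E$. Now $M\in\mathcal{B}_{0}$ means $r_{M}(A)\in\mathfrak{B}_{M}$, so polar compatibility (Definition~\ref{defin:polarly_compatible}) gives $r_{B_{C(M)^{*}}}(r_{M}(A))\in\mathfrak{B}_{B_{C(M)^{*}}}$; since this family equals $r_{T}(A)\circ j$, the (surjective) consistency property applied to $j$ yields $r_{T}(A)\in\mathfrak{B}_{T}$, i.e.\ $\acx M\in\mathcal{B}_{0}$. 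Condition (3) I would then deduce from (5) together with the ``product'' computation behind Lemma~\ref{lemma:product_is_sum_of_restrictions}: given nonempty $M_{1},M_{2}\in\mathcal{B}_{0}$, set $K_{i}:=\overline{\acx}^{w^{*}}M_{i}$, so that $r_{K_{i}}(A)\in\mathfrak{B}_{K_{i}}$ by (5) and $0\in K_{i}$, whence $\overline{M_{1}\cup M_{2}}^{w^{*}}\subseteq K_{1}\cup K_{2}\subseteq K_{1}+K_{2}$. The addition map $\mathrm{add}\colon K_{1}\times K_{2}\to K_{1}+K_{2}$ is a continuous surjection between compacta and satisfies $r_{K_{1}+K_{2}}(x)\circ\mathrm{add}=r_{K_{1}}(x)\circ\mathrm{pr}_{1}+r_{K_{2}}(x)\circ\mathrm{pr}_{2}$, so consistency applied to the two projections places $r_{K_{1}+K_{2}}(A)\circ\mathrm{add}$ into the vector bornology $\mathfrak{B}_{K_{1}\times K_{2}}$, consistency applied to $\mathrm{add}$ then gives $r_{K_{1}+K_{2}}(A)\in\mathfrak{B}_{K_{1}+K_{2}}$, and restricting along $\overline{M_{1}\cup M_{2}}^{w^{*}}\hookrightarrow K_{1}+K_{2}$ shows $M_{1}\cup M_{2}\in\mathcal{B}_{0}$.

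Having verified all five conditions, Lemma~\ref{lemma:sufficient_for_locally_convex_bornology} shows that $\mathcal{B}_{0}=\localcobsmall{E}{A}$ is a convex vector bornology, and it is weak-star saturated by the first paragraph. Finally, $\cobsmall{E}=\bigcap_{A}\localcobsmall{E}{A}$ is an intersection of weak-star saturated convex vector bornologies on $E^{*}$: it covers $E^{*}$ because singletons belong to each $\localcobsmall{E}{A}$, while heredity, closure under finite unions, scalar multiples and absolutely convex hulls, and weak-star saturatedness all pass to intersections; so the last assertion is immediate.
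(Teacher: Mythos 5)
Your proof is correct and follows the paper's overall scheme: you verify the hypotheses of Lemma \ref{lemma:sufficient_for_locally_convex_bornology}, read saturation off the definition, get singletons from Remark \ref{remark:extensibility_induction}, handle heredity and scalars via the consistency property, and treat the absolutely convex hull exactly as the paper does (Lemma \ref{lemma:functional_dual_surjection}, polar compatibility, then surjective consistency). The one place you genuinely diverge is the finite-union step. The paper proves it directly and elementarily: it adjoins $0$ to $M$ and $N$ via Lemma \ref{lemma:co_small_extension}, forms the auxiliary compact space $P:=M\times N\times\{1,2\}$ with a continuous surjection onto $M\cup N$, and decomposes $r_{M\cup N}(A)\circ T$ as a sum through the maps $\theta_{M},\theta_{N}$. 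You instead deduce the union case from the hull case: since $K_{i}:=\overline{\acx}^{w^{*}}M_{i}$ are disks containing $0$, you have $M_{1}\cup M_{2}\subseteq K_{1}+K_{2}$, and the addition surjection $K_{1}\times K_{2}\to K_{1}+K_{2}$, the identity $r_{K_{1}+K_{2}}(x)\circ\mathrm{add}=r_{K_{1}}(x)\circ\mathrm{pr}_{1}+r_{K_{2}}(x)\circ\mathrm{pr}_{2}$, and surjective consistency give $K_{1}+K_{2}\in\localcobsmall{E}{A}$, after which heredity finishes. Both routes are valid; yours is a bit shorter and dispenses with Lemma \ref{lemma:co_small_extension} and the $\{1,2\}$-construction, at the price of making the union step depend on polar compatibility (through the hull step), whereas the paper's union argument works for an arbitrary bornological class — a distinction that costs nothing inside this lemma, where polar compatibility is assumed anyway, but keeps the paper's intermediate steps more modular. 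One small point you should make explicit is that $K_{1}+K_{2}$ is again equicontinuous and weak-star compact, so that its membership in $\localcobsmall{E}{A}$ is even meaningful; this is routine but is what licenses applying heredity at the end.
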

\begin{proof}
	We will prove the requirements of Lemma \ref{lemma:sufficient_for_locally_convex_bornology}:
	\ben
		\item Saturated: by definition. 
		Without loss of generality, we will assume that $M$ is weak-star closed for the rest of the proof.
		\item Singletons belong to $\localcobsmall{E}{A}$: 
		Consequence of Remark \ref{remark:extensibility_induction}.
		\item If $M \in \localcobsmall{E}{A}$ and $N \subseteq M$, then $N \in \localcobsmall{E}{A}$: Consider the inclusion map $i: N \to M$. 
		Let $r_{M}: E \to C(M)$ and $r_{N}: E \to C(N)$ be the restriction maps.
		By definition, $r_{M}(A) \in \mathfrak{B}_{M}$.
		In virtue of the consistency property, $r_{M}(A) \circ i \in \mathfrak{B}_{N}$.
		Also, note that $r_{N}(A) = r_{M}(A) \circ i$.
		By definition, $N \in \localcobsmall{E}{A}$.
		\item Union of finite elements of $\localcobsmall{E}{A}$ remains in $\localcobsmall{E}{A}$: Let $M$ and $N$ be members of   $\localcobsmall{E}{A}$.  We show that ${M \cup N \in \localcobsmall{E}{A}}$.
		Consider the space $P := M \times N \times \{1, 2\}$ and the continuous function $T\colon P \to (M \cup N)$ defined by:
		$$
		  T(\varphi, \psi, n) := \begin{cases}
		  	\varphi & n = 1 \\
		  	\psi & n = 2\\
		  \end{cases}.
		$$
		By definition, $r_{M}(A) \in \mathfrak{B}_{M}, r_{N}(A) \in \mathfrak{B}_{N}$;
		we will check that $r_{M \cup N}(A) \in \mathfrak{B}_{M \cup N}$.
		Because $T$ is surjective, and in virtue of the consistency property, we can equivalently show that $r_{M \cup N}(A) \circ T \in \mathfrak{B}_{P}$.
		
		In virtue of Lemma \ref{lemma:co_small_extension}, we can assume without loss of generality that $0 \in M$ and $0 \in N$.
		Now, consider the maps $\theta_{M} \colon P \to M$ and $\theta_{N} \colon P \to N$ defined as
		$$
		  \theta_{M}(\varphi, \psi, n) := \begin{cases}
		  	\varphi & n = 1 \\
		  	0 & n = 2 \\
		  \end{cases}
		$$
		$$
		  \theta_{N}(\varphi, \psi, n) := \begin{cases}
		  	0 & n = 1 \\
		  	\psi & n = 2 \\
		  \end{cases}.
		$$
		Using the consistency property, we know that $(r_{M}(A) \circ \theta_{M}) + (r_{N}(A) \circ \theta_{N}) \in \mathfrak{B}_{P}$.
		We claim that
		$$
		  r_{M \cup N}(A) \circ T \subseteq 
		  (r_{M}(A) \circ \theta_{M}) + (r_{N}(A) \circ \theta_{N}),
		$$
		completing this part of the proof.
		Indeed, for every $x \in A$, we have:
		$$
		  r_{M \cup N}(x) \circ T = 
		  (r_{M}(x) \circ \theta_{M}) + (r_{N}(x) \circ \theta_{N}).
		$$
		\item Suppose that $M \in \localcobsmall{E}{A}$ and $\alpha \in \R$, we show that $\alpha M \in \localcobsmall{E}{A}$.
		Consider the scalar map $S_{\alpha} \colon M \to \alpha M$ defined by $S_{\alpha}(\varphi) := \alpha\varphi$.
		By definition, ${r_{M}(A) \in \mathfrak{B}_{M}}$.
		Note that 
		$r_{M}(A) = r_{\alpha M}(A) \circ S_{\alpha}$.
		In virtue of consistency, $r_{\alpha M}(A) \in \mathfrak{B}_{\alpha M}$. 
		\item If $M \in \localcobsmall{E}{A}$, then so is its closed absolutely convex hull 
		$${T := \overline{\acx}^{w^{*}} M \in \localcobsmall{E}{A}}.$$ 
		Let ${i\colon M \hookrightarrow B_{C(M)^{*}}}$ be the inclusion map.
		By definition, $M$ is equicontinuous, so using Lemma \ref{lemma:functional_dual_surjection}, consider a continuous surjection ${j\colon B_{C(M)^{*}} \to T}$ such that for every $x \in E$, $r_{T}(x) \circ j = r_{B_{C(M)^{*}}}(r_{M}(x))$.
		In virtue of its construction, we have:
		$$
		  r_{T}(A) \circ j = r_{B_{C(M)^{*}}}(r_{M}(A)).
		$$
		Moreover, since $\mathfrak{B}$ is polarly compatible, $r_{B_{C(M)^{*}}}(r_{M}(A)) \in \mathfrak{B}_{B_{C(M)^{*}}}$.
		Finally, using the consistency property, we know that $r_{T}(A) \in \mathfrak{B}_{T}$, as required.
	\een
\end{proof}

Recall that every convex bornology $\mathcal{B}$ on the dual $E^{*}$, induces a locally convex topology on $E$ \cite[Thm.~5:1'1(a)]{Born}.
This is the \emph{polar} topology defined by the basis:
$$
  \{ M^{\circ} \mid M \in \mathcal{B} \}.
$$
Also, if $\mathcal{B}$ consists of equicontinuous subsets only, then its polar topology is weaker than the original topology \cite[Thm.~5:1'3]{Born}
\begin{defin}
	Let $\mathfrak{B}$ be a polarly compatible bornological class, and $(E, \tau)$ be a locally convex space.
	Recall that Lemma \ref{lemma:co_b_small_is_locally_convex} applies in this case so $\cobsmall{E}$ is a convex bornology.
	We define $\tau_{\mathfrak{B}}$ to be the polar topology generated by $\cobsmall{E}$.
	Since $\cobsmall{E}$ consists of equicontinuous subsets, $\tau_{\mathfrak{B}} \subseteq \tau$.
\end{defin}

Note that the requirements of Definition \ref{defin:polarly_compatible} are necessary to obtain Lemma \ref{lemma:co_b_small_is_locally_convex}, as can be seen in the following lemma.

\begin{lemma} \label{lemma:converse_of_locally_convex_co_b_small}
	Suppose that $\mathfrak{B}$ is a bornological class such that for every 
	$E$ and every bounded $A \subseteq E$, $\localcobsmall{E}{A}$ is a saturated locally convex bornology (with respect to the weak-star topology).
	Then $\mathfrak{B}$ is polarly compatible.
	
\end{lemma}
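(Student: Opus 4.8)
The plan is to specialize the hypothesis to $E = C(K)$ and exploit the Dirac embedding $i\colon K \hookrightarrow C(K)^{*}$ together with Lemma~\ref{lemma:discrete_measures_are_dense}. So let $K$ be compact and $A \in \mathfrak{B}_{K}$; I must produce $r_{B_{C(K)^{*}}}(A) \in \mathfrak{B}_{B_{C(K)^{*}}}$. First I would record the elementary facts: $B_{C(K)^{*}} = (B_{C(K)})^{\circ}$ is equicontinuous, hence so is every subset of it; in particular $i(K)$ is an equicontinuous, weak-star compact (so weak-star closed) subset of $E^{*}$; and the restriction map $r\colon C(K) \to C(i(K))$ is nothing but composition with the homeomorphism $i^{-1}\colon i(K) \to K$, since $(r(f))(i(x)) = f(x)$.

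Next I would use the consistency property of the bornological class $\mathfrak{B}$ applied to the continuous map $i^{-1}\colon i(K) \to K$: from $A \in \mathfrak{B}_{K}$ it follows that $r(A) = A \circ i^{-1} \in \mathfrak{B}_{i(K)}$. Since $i(K)$ is weak-star closed, this says exactly that $i(K)$ is co-$\mathfrak{B}$-small with respect to $A$, i.e. $i(K) \in \localcobsmall{E}{A}$. Now I invoke the hypothesis: $\localcobsmall{E}{A}$ is a saturated, convex vector bornology on $E^{*}$ with respect to the weak-star topology. Applying closure under absolutely convex hulls and then weak-star saturation to the member $i(K)$ yields $\overline{\acx}^{w^{*}}(i(K)) \in \localcobsmall{E}{A}$.

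Finally, by Lemma~\ref{lemma:discrete_measures_are_dense} this weak-star closed absolutely convex hull equals $B_{C(K)^{*}}$, so $B_{C(K)^{*}} \in \localcobsmall{E}{A}$. Unwinding the definition of co-$\mathfrak{B}$-small for the (already weak-star closed) set $B_{C(K)^{*}}$, this means precisely that $r_{B_{C(K)^{*}}}(A) \in \mathfrak{B}_{B_{C(K)^{*}}}$, where $(r_{B_{C(K)^{*}}}(f))(\varphi) = \varphi(f)$ --- which is the definition of polar compatibility. As $K$ and $A$ were arbitrary, $\mathfrak{B}$ is polarly compatible.

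The only non-formal ingredient --- and hence the step I would single out as the crux --- is the identification $\overline{\acx}^{w^{*}}(i(K)) = B_{C(K)^{*}}$, i.e. the weak-star density of the absolutely convex hull of the Dirac masses in the dual ball; this is supplied by Lemma~\ref{lemma:discrete_measures_are_dense}, so no genuine difficulty remains, everything else being bookkeeping with the consistency axiom and the definitions. The one subtlety to stay careful about is that co-$\mathfrak{B}$-smallness is defined through restriction to the \emph{weak-star closure} of the set in question, so the argument must be run with weak-star closed sets throughout; since both $i(K)$ and $B_{C(K)^{*}}$ are weak-star compact this is automatic here.
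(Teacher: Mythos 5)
Your proposal is correct and follows essentially the same route as the paper: embed $K$ into $B_{C(K)^{*}}$ via the evaluation map, use consistency to see that $i(K)$ is co-$\mathfrak{B}$-small with respect to $A$, apply the assumed saturated convex bornology structure to pass to $\overline{\acx}^{w^{*}}(i(K))$, and identify this hull with $B_{C(K)^{*}}$ by Lemma~\ref{lemma:discrete_measures_are_dense}. The only cosmetic difference is that you invoke consistency along $i^{-1}\colon i(K)\to K$ while the paper uses the surjective direction along $i\colon K\to i(K)$, which is the same bookkeeping.
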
 
\begin{proof}
	Suppose that $K$ is compact and $A \in \mathfrak{B}_{K}$.
	Write $r\colon C(K) \to C(B_{C(K)^{*}})$ for the restriction map and $i\colon K \to B_{C(K)^{*}}$ for the evaluation map.
	Also, define $K' := i(K)$.
	We need to show that ${r(A) \in \mathfrak{B}_{B_{C(K)^{*}}}}$.
	It is easy to see that $A = r(A) \circ i$.
	As a consequence, $r(A) \in \mathfrak{B}_{K'}$.
	In other words, ${r(A) \in \localcobsmall{C(K)^{*}}{K'}}$ and therefore
	$$
	  {r(A) \in \localcobsmall{C(K)^{*}}{\overline{\acx}^{w^{*}}K'}}.
	$$
	As a consequence, 
	$$r(A) \in \mathfrak{B}_{\overline{\acx}^{w^{*}} K'}.$$
	However, by Lemma \ref{lemma:discrete_measures_are_dense}, $\overline{\acx}^{w^{*}} K' = B_{C(K)^{*}}$, as required.
\end{proof}

\begin{cor}
	The class $\BDLP$ is polarly compatible.
\end{cor}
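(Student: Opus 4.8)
The plan is to check the defining condition of polar compatibility (Definition \ref{defin:polarly_compatible}) head-on: given a compact space $K$ and $A\in\BDLP_{K}$ — that is, a norm-bounded family $A\subseteq C(K)$ having the DLP as a family of functions on $K$ — I must show that $A$, pushed forward by the canonical map $r_{B_{C(K)^{*}}}$ to a family of functions on the weak-star compact ball $B_{C(K)^{*}}$, still has the DLP. In other words, the DLP has to propagate from $K$ to $B_{C(K)^{*}}$; this is the DLP counterpart of Lemma \ref{p:from X to B*} for tame families (and of the analogous fact for fragmented families).

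First I would record that the DLP is a \emph{symmetric} relation: for a bilinear pairing $\langle\cdot,\cdot\rangle\colon P\times Q\to\R$ that is bounded on $\mathcal G\times D$ (with $\mathcal G\subseteq P$, $D\subseteq Q$), the family $\mathcal G$ has the DLP over $D$ if and only if $D$ has the DLP over $\mathcal G$, because each of the two conditions is literally the statement that $\lim_{n}\lim_{m}\langle g_{n},d_{m}\rangle=\lim_{m}\lim_{n}\langle g_{n},d_{m}\rangle$ whenever both iterated limits exist, for all sequences $(g_{n})$ in $\mathcal G$ and $(d_{m})$ in $D$. Now let $i\colon K\hookrightarrow C(K)^{*}$ be the standard weak-star embedding and take the evaluation pairing $\langle\varphi,f\rangle:=\varphi(f)$ on $C(K)^{*}\times C(K)$, which is bounded on $i(K)\times A$ since $A$ is norm-bounded. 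Because $\langle i(x),f\rangle=f(x)$, the hypothesis that $A$ has the DLP over $K$ is exactly the statement that $A$ has the DLP over $i(K)$; by symmetry, $i(K)$ has the DLP as a family of functions over $A$.

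Next I would apply Young's theorem (Fact \ref{f:young}) with the locally convex space $C(K)^{*}$ (carrying its weak-star topology, whose continuous dual is $C(K)$), the family $i(K)\subseteq C(K)^{*}$ and the domain $A\subseteq C(K)$. It gives that the bipolar of $i(K)$, which by the Bipolar Theorem is $\overline{\acx}^{w^{*}}\bigl(i(K)\bigr)$, has the DLP over $A$. By Lemma \ref{lemma:discrete_measures_are_dense}, $\overline{\acx}^{w^{*}}\bigl(i(K)\bigr)=B_{C(K)^{*}}$, so $B_{C(K)^{*}}$ has the DLP as a family of functions over $A$; applying symmetry once more, $A$ has the DLP over $B_{C(K)^{*}}$, i.e. $r_{B_{C(K)^{*}}}(A)\in\BDLP_{B_{C(K)^{*}}}$. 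Since $K$ and $A$ were arbitrary, $\BDLP$ is polarly compatible.

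The only point requiring care — and the place I expect to spend a sentence of justification — is that Young's theorem is being used in the ``reversed'' direction: one must check that $i(K)$ is genuinely a bounded family of functions over $A$, that $(C(K)^{*},w^{*})$ is a locally convex space so that the bipolar-theorem clause of Fact \ref{f:young} is available, and that the resulting weak closure $\overline{\acx}^{w}$ there is the weak-star closed absolutely convex hull (which holds because the dual of $(C(K)^{*},w^{*})$ is $C(K)$). As an alternative one could instead verify the hypothesis of Lemma \ref{lemma:converse_of_locally_convex_co_b_small}, namely that $\localcobsmall{E}{A}$ is always a saturated locally convex bornology for $\BDLP$; but that route re-runs the argument of Lemma \ref{lemma:co_b_small_is_locally_convex}, whose crucial step needs precisely the same DLP-propagation fact, so the symmetry argument above is the economical path.
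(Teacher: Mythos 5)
Your proof is correct and rests on exactly the same ingredients as the paper's: Young's theorem (Fact \ref{f:young}) applied with the two sides of the pairing interchanged (the symmetry of the DLP, which is precisely the paper's remark that $E$ and $F$ are interchangeable) together with $\overline{\acx}^{w^{*}}i(K)=B_{C(K)^{*}}$ from Lemma \ref{lemma:discrete_measures_are_dense}. The only difference is presentational: you verify Definition \ref{defin:polarly_compatible} directly, whereas the paper packages the same computation through Lemma \ref{lemma:converse_of_locally_convex_co_b_small}, as you yourself note.
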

\begin{proof}
	A consequence of Fact \ref{f:young} and Lemma \ref{lemma:converse_of_locally_convex_co_b_small}.
	Note that in Fact \ref{f:young}, $E$ and $F$ are interchangeable.
\end{proof}

\begin{lemma} \label{lemma:b_topology_relations}
	For every locally convex space $(E, \tau)$, we have:
	$$
	  \tau_{w} \subseteq \tau_{\mathfrak{B}} \subseteq \tau \subseteq \tau_{\mu}
	$$
	where $\tau_{w}$ is the weak topology and $\tau_{\mu}$ is the Mackey topology.
\end{lemma}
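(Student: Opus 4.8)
Throughout, $\mathfrak{B}$ is assumed to be a polarly compatible bornological class, so that by Lemma \ref{lemma:co_b_small_is_locally_convex} the family $\cobsmall{E}$ is a genuine (weak-star saturated, convex) bornology on $E^{*}$ and $\tau_{\mathfrak{B}}$ is the polar topology it generates. The plan is simply to check the three inclusions one at a time, each being an immediate consequence of material already assembled in this section.

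\emph{Step 1: $\tau \subseteq \tau_{\mu}$.} This is the Mackey--Arens property: $\tau_{\mu}$ is by definition the strongest locally convex topology on $E$ compatible with the duality $\langle E, E^{*}\rangle$, and $\tau$ is one such topology since $(E,\tau)$ has dual $E^{*}$ by hypothesis. Hence $\tau \subseteq \tau_{\mu}$.

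\emph{Step 2: $\tau_{\mathfrak{B}} \subseteq \tau$.} This was already noted right after the definition of $\tau_{\mathfrak{B}}$; I would just spell it out. By definition every member of $\cobsmall{E}$ lies in $\eqc(E^{*})$, i.e. is equicontinuous. A polar topology generated by a bornology consisting of equicontinuous subsets of $E^{*}$ is coarser than the original topology (\cite[Thm.~5:1'3]{Born}), so $\tau_{\mathfrak{B}} \subseteq \tau$.

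\emph{Step 3: $\tau_{w} \subseteq \tau_{\mathfrak{B}}$.} Recall that a neighborhood base at $0$ for the weak topology $\tau_{w} = \sigma(E,E^{*})$ is given by the polars $F^{\circ}$ of finite subsets $F \subseteq E^{*}$. Since $\cobsmall{E}$ is a bornology (Lemma \ref{lemma:co_b_small_is_locally_convex}) and every bornology contains all finite subsets and is stable under scalar multiples, for every finite $F \subseteq E^{*}$ and every $\varepsilon > 0$ we have $\tfrac{1}{\varepsilon}F \in \cobsmall{E}$, and $(\tfrac{1}{\varepsilon}F)^{\circ} = \varepsilon F^{\circ}$ is then a basic $\tau_{\mathfrak{B}}$-neighborhood of $0$. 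Thus every $\tau_{w}$-neighborhood of $0$ is a $\tau_{\mathfrak{B}}$-neighborhood, which gives $\tau_{w} \subseteq \tau_{\mathfrak{B}}$. (One may even avoid invoking Lemma \ref{lemma:co_b_small_is_locally_convex} here and argue directly that a finite, hence weak-star closed and equicontinuous, $M \subseteq E^{*}$ is co-$\mathfrak{B}$-small: $C(M)$ is finite-dimensional, so $\mathfrak{B}_{M}$ is the euclidean bornology by Remark \ref{remark:extensibility_induction}, and $r(A)$ is bounded for every bounded $A \subseteq E$.)

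I do not anticipate any real obstacle: the statement is a bookkeeping lemma that packages inclusions each of which has essentially been verified above. The only point meriting a sentence of care is Step 3 — identifying the weak topology as the polar topology of the finite subsets of $E^{*}$ and observing that those subsets lie in $\cobsmall{E}$ — and this is immediate from the fact that $\cobsmall{E}$ is a bornology.
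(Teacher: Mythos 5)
Your proposal is correct and follows essentially the same route as the paper: the paper compresses the argument into the single chain of bornologies $\mathcal{F} \subseteq \cobsmall{E} \subseteq \mathcal{E} \subseteq \mathcal{C}$ (finite, co-$\mathfrak{B}$-small, equicontinuous, weak-star compact absolutely convex subsets), whose polar topologies are $\tau_{w}, \tau_{\mathfrak{B}}, \tau, \tau_{\mu}$ by \cite[p.~131, Cor.~1]{Schaefer}, which is exactly what your three steps verify one inclusion at a time. The only cosmetic differences are that you invoke Mackey--Arens directly for $\tau \subseteq \tau_{\mu}$ instead of the bornology $\mathcal{C}$, and you spell out why finite subsets lie in $\cobsmall{E}$; both are fine.
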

\begin{proof}
	First, let us note that
	$$
	\mathcal{F} \subseteq \cobsmall{E} \subseteq \mathcal{E} \subseteq \mathcal{C}
	$$
	where $\mathcal{F}, \mathcal{E}$ and $\mathcal{C}$ are the bornologies of finite subsets, equicontinuous subsets, and weak-star compact absolutely convex subsets, respectively.
	It is known that their respective polar topologies are $\tau_{w}, \tau$ and $\tau_{\mu}$ (\cite[p.~131, Cor.~1]{Schaefer}).
	As a consequence:
	$$
	  \tau_{w} \subseteq \tau_{\mathfrak{B}} \subseteq \tau \subseteq \tau_{\mu}.
	$$
\end{proof}
\begin{lemma} \label{lemma:polar_is_b_small}
	Let $(E, \tau)$ be a locally convex space.
	The 
	topology $\tau_{\mathfrak{B}}$ is $\mathfrak{B}$-small.
\end{lemma}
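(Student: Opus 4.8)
The plan is to unwind $\mathfrak{B}$-smallness of $(E,\tau_{\mathfrak{B}})$ into a statement about the co-bornology $\cobsmall{E}$ and then close the argument with the consistency axiom of a bornological class.

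First I would record the two basic identifications that tame the topology $\tau_{\mathfrak{B}}$. Since $\tau_{w}\subseteq\tau_{\mathfrak{B}}\subseteq\tau$ by Lemma \ref{lemma:b_topology_relations} and $(E,\tau_{w})^{*}=(E,\tau)^{*}=E^{*}$, passing to continuous duals gives $(E,\tau_{\mathfrak{B}})^{*}=E^{*}$; and since a subset of a lcs is bounded iff it is weakly bounded and $\tau_{\mathfrak{B}}$ is compatible with the duality $\langle E,E^{*}\rangle$, the $\tau_{\mathfrak{B}}$-bounded subsets of $E$ are precisely the $\tau$-bounded ones. Thus it suffices to show: for every $\tau$-bounded $B\subseteq E$ and every $N\subseteq E^{*}$ that is weak-star compact and $\tau_{\mathfrak{B}}$-equicontinuous, $r_{N}(B)\in\mathfrak{B}_{N}$.

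Next I would describe those $N$. If $N$ is $\tau_{\mathfrak{B}}$-equicontinuous, then $N\subseteq U^{\circ}$ for some $\tau_{\mathfrak{B}}$-neighbourhood $U$ of $0$; since $\{M^{\circ}\colon M\in\cobsmall{E}\}$ is a neighbourhood basis of $\tau_{\mathfrak{B}}$ (closed under scaling), we get $N\subseteq M^{\circ\circ}$ for some $M\in\cobsmall{E}$. By the Bipolar Theorem (Fact \ref{fact:bipolar_theorem}) applied to subsets of $E^{*}$, $M^{\circ\circ}=\overline{\acx}^{w^{*}}M$, and this set is again in $\cobsmall{E}$ because $\cobsmall{E}$ is a weak-star saturated convex bornology and hence closed under weak-star closed absolutely convex hulls (Lemma \ref{lemma:co_b_small_is_locally_convex}). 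Put $M':=\overline{\acx}^{w^{*}}M$; then $N\subseteq M'$, $M'$ is weak-star closed, equicontinuous, hence weak-star compact by Alaoglu--Bourbaki, and $M'\in\cobsmall{E}$. Now, since $M'\in\cobsmall{E}=\bigcap_{A}\localcobsmall{E}{A}$, in particular $M'\in\localcobsmall{E}{B}$, and as $M'$ is weak-star closed the definition of co-$\mathfrak{B}$-smallness yields $r_{M'}(B)\in\mathfrak{B}_{M'}$. The inclusion $i\colon N\hookrightarrow M'$ is a continuous map of compact spaces, so the consistency property of $\mathfrak{B}$ (Definition \ref{def:bornological_class}) gives $r_{M'}(B)\circ i\in\mathfrak{B}_{N}$; and since $r_{M'}(x)\circ i=r_{N}(x)$ for every $x\in B$ we have $r_{M'}(B)\circ i=r_{N}(B)$, whence $r_{N}(B)\in\mathfrak{B}_{N}$. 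As $B$ and $N$ were arbitrary, $(E,\tau_{\mathfrak{B}})$ is $\mathfrak{B}$-small.

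I do not expect a serious obstacle here; the delicate point is the bookkeeping in the middle step — verifying that the bipolar taken inside $E^{*}$ really is $\overline{\acx}^{w^{*}}M$ and that saturation of $\cobsmall{E}$ is exactly what keeps this set inside the bornology — together with the mild observation that $\tau_{\mathfrak{B}}$-boundedness reduces to $\tau$-boundedness via compatibility of the duality. Everything else is engineered into the very definition of $\cobsmall{E}$, which is why the final step is just one application of consistency.
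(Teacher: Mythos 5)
Your proposal is correct and follows essentially the same route as the paper: both reduce to the fact that $\tau_{\mathfrak{B}}$-bounded sets coincide with $\tau$-bounded ones (via the sandwich $\tau_{w}\subseteq\tau_{\mathfrak{B}}\subseteq\tau_{\mu}$), cover a $\tau_{\mathfrak{B}}$-equicontinuous weak-star compact set by the bipolar of some $N\in\cobsmall{E}$, identify that bipolar with $\overline{\acx}^{w^{*}}N\in\cobsmall{E}$ using the Bipolar Theorem and Lemma \ref{lemma:co_b_small_is_locally_convex}, and then restrict back down. The only cosmetic difference is that you make the final restriction step explicit via the consistency axiom, where the paper leaves it implicit.
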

\begin{proof}
	Let $A \subseteq E$ be a bounded subset and $M \subseteq E^{*}$ be a weak-star compact, equicontinuous subset (with respect to $\tau_{\mathfrak{B}}$).
	We will show that $r(A) \in \mathfrak{B}_{M}$.
	
	By Lemma \ref{lemma:b_topology_relations}:
	$$
	  \tau_{w} \subseteq \tau_{\mathfrak{B}} \subseteq \tau_{\mu}.
	$$
	By \cite[p.~132, Corollary 2]{Schaefer}, we conclude that the bounded subsets of $\tau_{\mathfrak{B}}$ are the same as those of $\tau$.
	Thus, $A$ is bounded with respect to the original topology $\tau$.
	
	Since $M$ is equicontinuous, we can find a neighborhood $\eps$ of zero in $E$ such that $M \subseteq \eps^{\circ}$.
	By definition, we can find a subset $N \in \cobsmall{E}$ such that $N^{\circ} \subseteq \eps$.
	We therefore have
	$$
	  N^{\circ \circ} \supseteq \eps^{\circ} \supseteq M.
	$$
	By the bipolar theorem (Fact \ref{fact:bipolar_theorem}), 
	$$
	  N^{\circ \circ} = \overline{\acx}^{w} N.
	$$
	By Lemma \ref{lemma:co_b_small_is_locally_convex}, $\cobsmall{E}$ is weak-star saturated and locally convex, so 
	$${N^{\circ \circ} = \overline{\acx}^{w^{*}} N \in \cobsmall{E}}.$$
	In particular, ${r_{N^{\circ\circ}}(A) \in \mathfrak{B}_{N^{\circ\circ}}}$,  hence $r_{M}(A) \in \mathfrak{B}_{M}$.
\end{proof}
\begin{thm} \label{thm:strongest_b_topology}
	For every lcs 
	$(E, \tau)$, $\tau_{\mathfrak{B}}$ is the strongest locally convex, $\mathfrak{B}$-small topology coarser than $\tau$.
\end{thm}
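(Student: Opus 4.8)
The plan is to show that the polar topology $\tau_{\mathfrak{B}}$ generated by $\cobsmall{E}$ coincides with the topology $\tau_{\max}$ defined as the supremum of all $\mathfrak{B}$-small locally convex topologies on $E$ which are coarser than $\tau$. Recall first that $\tau_{\max}$ is itself $\mathfrak{B}$-small: $(E,\tau_{\max})$ embeds diagonally into the product of the spaces $(E,\tau_i)$ over a defining family, and $\mathfrak{B}$-smallness passes to products and subspaces (Theorem~\ref{thm:properties_of_b_small_classes}). So the theorem amounts to the identity $\tau_{\mathfrak{B}}=\tau_{\max}$. One inclusion, $\tau_{\mathfrak{B}}\subseteq\tau_{\max}$, is immediate: $\tau_{\mathfrak{B}}$ is locally convex (being a polar topology), it is $\mathfrak{B}$-small by Lemma~\ref{lemma:polar_is_b_small}, and it is coarser than $\tau$ because $\cobsmall{E}$ consists of equicontinuous subsets (Lemma~\ref{lemma:b_topology_relations}); hence $\tau_{\mathfrak{B}}$ is one of the topologies over which $\tau_{\max}$ is the supremum.

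The substance is the reverse inclusion. I would take an arbitrary $\mathfrak{B}$-small locally convex topology $\tau'\subseteq\tau$ and prove $\tau'\subseteq\tau_{\mathfrak{B}}$ by showing that a local basis of $\tau'$ consists of $\tau_{\mathfrak{B}}$-neighbourhoods of zero. Fix a $\tau'$-closed absolutely convex $\tau'$-neighbourhood $U$ of $0$ and set $M:=U^{\circ}\subseteq E^{*}$. By Mazur's theorem (Fact~\ref{f:Mazur}) $U$ is weakly closed, so by the bipolar theorem (Fact~\ref{fact:bipolar_theorem}, for the dual pair $\langle E,(E,\tau')^{*}\rangle$) we have $U=M^{\circ}$, with $M\subseteq(E,\tau')^{*}$ being $\tau'$-equicontinuous and, by Alaoglu--Bourbaki (Fact~\ref{l:Al-Bo}), weak-star compact. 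The point to get right here is the interplay of the two topologies: since $\tau'\subseteq\tau$, every $\tau'$-neighbourhood of $0$ is a $\tau$-neighbourhood, so $M$ is $\tau$-equicontinuous as well; and since $\sigma((E,\tau')^{*},E)$ is the topology induced on $(E,\tau')^{*}$ by $\sigma(E^{*},E)$, the set $M$ is weak-star compact and weak-star closed in $E^{*}$ too. Thus $M\in\eqc(E^{*})$ with $\overline{M}^{w^{*}}=M$, and simultaneously $M\in\eqc((E,\tau')^{*})$.

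Next I would verify $M\in\cobsmall{E}$, i.e. that $M$ is co-$\mathfrak{B}$-small with respect to every bounded subset $A\subseteq E$. For such an $A$: it is also $\tau'$-bounded (a coarser topology can only enlarge the bounded sets), and $(E,\tau')$ is $\mathfrak{B}$-small, so $A$ is a $\mathfrak{B}$-small subset of $(E,\tau')$; applying this at $M\in\eqc((E,\tau')^{*})$ gives $r_{M}(A)\in\mathfrak{B}_{M}$, where $r_{M}\colon E\to C(M)$ is the restriction operator. As $\overline{M}^{w^{*}}=M$, this is precisely the defining condition for $M$ to be co-$\mathfrak{B}$-small with respect to $A$. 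Since $A$ is arbitrary, $M\in\cobsmall{E}$, and therefore $U=M^{\circ}$ belongs to the defining neighbourhood basis of $\tau_{\mathfrak{B}}$. Hence $\tau'\subseteq\tau_{\mathfrak{B}}$; taking the supremum over all such $\tau'$ yields $\tau_{\max}\subseteq\tau_{\mathfrak{B}}$, which combined with the first paragraph gives $\tau_{\mathfrak{B}}=\tau_{\max}$.

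The only delicate part I anticipate is the bookkeeping in the middle step — keeping track of whether equicontinuity and boundedness are taken with respect to $\tau$ or to $\tau'$ — which is handled by the two elementary consequences of $\tau'\subseteq\tau$ used above: it shrinks the dual while keeping weak-star compact equicontinuous sets weak-star compact and equicontinuous in $E^{*}$, and it can only enlarge the family of bounded sets. Beyond Lemma~\ref{lemma:polar_is_b_small} and the structural properties of $\cobsmall{E}$ from Lemma~\ref{lemma:co_b_small_is_locally_convex} (polar compatibility of $\mathfrak{B}$ being used there), no further estimates are needed.
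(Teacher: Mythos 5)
Your proposal is correct, and its core mechanism is the one the paper uses: show $\tau_{\mathfrak{B}}$ is $\mathfrak{B}$-small via Lemma \ref{lemma:polar_is_b_small}, then show that each closed absolutely convex neighbourhood of a competing topology is the polar of a co-$\mathfrak{B}$-small set and recover it by the bipolar theorem. Where you genuinely diverge is in the duality bookkeeping. The paper's proof only treats topologies $\sigma$ with $\tau_{\mathfrak{B}} \subseteq \sigma \subseteq \tau$, so that the sandwich $\tau_w \subseteq \sigma \subseteq \tau_{\mu}$ and the cited results of Schaefer give $(E,\sigma)^{*}=E^{*}$ and identical bounded sets, after which the polar $\delta^{\circ}$ automatically lives in $\eqc(E^{*})$; the reduction from an arbitrary $\mathfrak{B}$-small topology coarser than $\tau$ to one containing $\tau_{\mathfrak{B}}$ is left implicit (it is covered by the earlier observation that the supremum of such topologies is again $\mathfrak{B}$-small). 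You instead take an arbitrary $\mathfrak{B}$-small $\tau'\subseteq\tau$ and work in the dual pair $\langle E,(E,\tau')^{*}\rangle$, observing that the polar $M=U^{\circ}$ of a closed disked $\tau'$-neighbourhood is $\tau$-equicontinuous, weak-star compact and weak-star closed in $E^{*}$, that the polar $M^{\circ}$ and the compact space $M$ do not depend on which dual one views them in, and that only the trivial direction "$\tau$-bounded $\Rightarrow$ $\tau'$-bounded" is needed. This buys a slightly more self-contained and more general argument — no appeal to compatibility of the dual or equality of bounded sets, and no implicit reduction step — at the cost of the extra care you flag about which topology equicontinuity and boundedness refer to; the paper's route is shorter because it outsources exactly that bookkeeping to standard duality theory.
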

\begin{proof}
	First, by Lemma \ref{lemma:polar_is_b_small}, $\tau_{\mathfrak{B}}$ is indeed a $\mathfrak{B}$-small topology.
	Now, suppose that $\tau_{\mathfrak{B}} \subseteq \sigma \subseteq \tau$ is a locally convex $\mathfrak{B}$-small topology.
	By Lemma \ref{lemma:b_topology_relations}, we can write
	$$
	  \tau_{w} \subseteq \tau_{\mathfrak{B}} \subseteq \sigma \subseteq \tau \subseteq \tau_{\mu}.
	$$
	In virtue of \cite[p.~132, Cor.~2]{Schaefer}, we know that $\tau_{\mathfrak{B}}$ and $\sigma$ have the same bounded sets.
	Now, suppose that $\eps \in \sigma$ is a neighborhood of zero.
	We can find a $\delta \in \sigma$ such that ${\overline{\acx} \delta \subseteq \eps}$.
	Note that because both $\tau_{\mathfrak{B}}$ and $\sigma$ are compatible with the dual $E^{*}$, the closure of convex sets (like $\acx \delta$) is equal to the weak closure 
	(\cite[p.~131, Cor.~6]{Jarchow}).
	We will show that $\overline{\acx} \delta \in \tau_{\mathfrak{B}}$, proving that $\sigma \subseteq \tau_{\mathfrak{B}}$.
	
	Since $\sigma$ is $\mathfrak{B}$-small, we know that $r(A) \in \mathfrak{B}_{\delta^{\circ}}$ for every bounded $A \subseteq E$.
	Since we already established that the bounded subsets of $\tau_{\mathfrak{B}}$ and $\sigma$ agree, it means that $\delta^{\circ} \in \cobsmall{E_{\tau}}$.
	Again, by definition, $\delta^{\circ \circ} \in \tau_{\mathfrak{B}}$.
	Using the Bipolar Theorem (Fact \ref{fact:bipolar_theorem}), we know that $\overline{\acx} \delta = \delta^{\circ \circ}$, as required.
\end{proof}
%
%
\sk 
\section{Locally convex analogues of reflexive and Asplund Banach spaces} 

\subsection*{DLP locally convex spaces}

The following well-known observation can be derived by results of \cite[Appendix A]{BJM}. 
\begin{f} \label{f:SepCont} 
	Let $F \times K \to \R$ be a bounded separately continuous map where $F$ and $K$ are compact. Then $F$, as a family of maps on $K$, is DLP.  
\end{f}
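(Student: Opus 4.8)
The plan is to prove this directly from Definition \ref{d:DLP}, using only the compactness of $F$ and $K$ together with separate continuity (boundedness will not actually be needed for the interchange-of-limits argument, though it does ensure that $F$, viewed as $\{\Phi(f,\cdot)\}_{f\in F}\subseteq C(K)$, is a genuinely \emph{bounded} family). Throughout, write $\Phi\colon F\times K\to\R$ for the given map and identify each $f\in F$ with the continuous function $\Phi(f,\cdot)\colon K\to\R$.

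Fix a sequence $\{f_n\}_{n\in\N}$ in $F$ and a sequence $\{x_m\}_{m\in\N}$ in $K$, and suppose both iterated limits
$$
a:=\lim_n\lim_m \Phi(f_n,x_m),\qquad b:=\lim_m\lim_n \Phi(f_n,x_m)
$$
exist. Since $F$ and $K$ are compact (hence countably compact), choose a cluster point $f_0\in F$ of $\{f_n\}$ and a cluster point $x_0\in K$ of $\{x_m\}$. The elementary tool, used repeatedly, is: if $g\colon Z\to\R$ is continuous and $z_0$ is a cluster point of a sequence $\{z_k\}$ in $Z$, then $g(z_0)$ is a cluster point of $\{g(z_k)\}$ in $\R$; consequently, if in addition $\{g(z_k)\}$ converges, then necessarily $g(z_0)=\lim_k g(z_k)$ (a convergent real sequence has a unique cluster value). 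This is the correct substitute for passing to convergent subsequences, which is unavailable because $F$ and $K$ need not be metrizable.

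Now apply this tool four times. First, for each $n$ the inner limit $\alpha_n:=\lim_m\Phi(f_n,x_m)$ exists, and since $\Phi(f_n,\cdot)$ is continuous and $x_0$ is a cluster point of $\{x_m\}$, we get $\Phi(f_n,x_0)=\alpha_n$; hence $a=\lim_n\Phi(f_n,x_0)$. Symmetrically, for each $m$ the inner limit $\beta_m:=\lim_n\Phi(f_n,x_m)$ exists, $\Phi(\cdot,x_m)$ is continuous, and $f_0$ is a cluster point of $\{f_n\}$, so $\Phi(f_0,x_m)=\beta_m$ and $b=\lim_m\Phi(f_0,x_m)$. Applying the tool once more to each of these two convergent sequences: since $\Phi(\cdot,x_0)$ is continuous and $f_0$ is a cluster point of $\{f_n\}$, the value $\Phi(f_0,x_0)$ is a cluster point of $\{\Phi(f_n,x_0)\}_n$, whence $\Phi(f_0,x_0)=a$; and since $\Phi(f_0,\cdot)$ is continuous and $x_0$ is a cluster point of $\{x_m\}$, likewise $\Phi(f_0,x_0)=b$. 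Therefore $a=\Phi(f_0,x_0)=b$, which is exactly the DLP.

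There is essentially no serious obstacle: the only delicate point is the non-metrizability of $F$ and $K$, which forces the argument to be phrased in terms of cluster points of the original sequences rather than convergent subsequences — precisely what the ``continuity transports cluster points'' observation handles. Alternatively, one can deduce the statement from the results of \cite[Appendix A]{BJM} on separately continuous maps on products of compacta, but the direct argument above is shorter and self-contained.
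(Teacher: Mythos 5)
Your proof is correct. Each of the four applications of the ``continuity transports cluster points'' observation is valid: the inner limits $\alpha_n=\lim_m\Phi(f_n,x_m)$ and $\beta_m=\lim_n\Phi(f_n,x_m)$ exist by the hypothesis of Definition \ref{d:DLP}, compactness guarantees the cluster points $f_0$ and $x_0$ exist, separate continuity identifies $\alpha_n=\Phi(f_n,x_0)$ and $\beta_m=\Phi(f_0,x_m)$, and the uniqueness of cluster values of convergent real sequences then pins both iterated limits to the single value $\Phi(f_0,x_0)$. You are also right that boundedness plays no role in the interchange argument itself, and your insistence on cluster points rather than convergent subsequences correctly handles the possible non-metrizability of $F$ and $K$. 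The comparison with the paper is that the paper gives no proof at all: the statement is recorded as a known Fact and attributed to the results of \cite[Appendix A]{BJM}, i.e.\ to the Grothendieck-type theory of separately continuous functions on products of compacta. Your argument is essentially the classical cluster-point proof that underlies that cited material, so what you gain is a short, self-contained verification inside the paper, at the cost of reproving a standard lemma; what the paper's citation buys is brevity and a pointer to the broader framework (weak compactness and the double limit criterion) in which this observation naturally lives.
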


The following result follows from Remark \ref{r:repres}. 

\begin{f} \label{f:DLPisFRAGM}  \cite{GM-tame}
	Let $K$ be a compact space and $F \subset C(K)$ be a norm bounded subset. So if $F$ is DLP on $K$ then $F$ is a fragmented family on $K$. 
\end{f}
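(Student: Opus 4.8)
The plan is to deduce this from the representation theory recalled in Remark \ref{r:repres}: represent a DLP family on a reflexive Banach space, and then transport fragmentability back along the representation by means of Lemma \ref{l:surjection_of_fragmented_family}.

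First I would apply Remark \ref{r:repres} (which rests on \cite{GM-tame} and the DFJP technique \cite{DFJP}) to the bounded DLP family $F$ of continuous functions on the compact space $K$, obtaining a reflexive Banach space $V$ together with a representation $(\nu,\alpha)$ of the canonical bounded map $F\times K\to\R$ on $V$. That is, bounded maps $\nu\colon F\to V$ and $\alpha\colon K\to V^{*}$, with $\alpha$ weak-star continuous, such that $f(x)=\langle\nu(f),\alpha(x)\rangle$ for all $f\in F$ and $x\in K$. I would then choose $c>0$ with $\alpha(K)\subseteq cB_{V^{*}}$ and note that $cB_{V^{*}}$ is weak-star compact by the Alaoglu--Bourbaki theorem.

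Next I would invoke the classical fact that every reflexive Banach space is Asplund (see \cite{Bourgin,Fabian}), which by the characterization recalled in the introduction means that $B_{V}$ is a fragmented family of functions on $B_{V^{*}}$; a trivial rescaling then shows that the bounded set $\nu(F)$, viewed as a family of real functions on the weak-star compact set $cB_{V^{*}}$ via the canonical pairing, is still a fragmented family. Since $\alpha\colon K\to cB_{V^{*}}$ is a continuous map between compact spaces, Lemma \ref{l:surjection_of_fragmented_family} yields that $\nu(F)\circ\alpha\subseteq\R^{K}$ is a fragmented family on $K$. Finally, $(\nu(f)\circ\alpha)(x)=\langle\nu(f),\alpha(x)\rangle=f(x)$, so $\nu(F)\circ\alpha=F$ as families of functions on $K$, and hence $F$ is a fragmented family on $K$.

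The only substantial ingredient is the representation theorem for DLP families (the reflexive case of Remark \ref{r:repres}); everything else is a routine pull-back of fragmentability along a continuous surjection of compacta. An alternative, representation-free route would be to use Grothendieck's double limit criterion to see that $F$ is relatively weakly compact in $C(K)$ and then argue directly that relatively weakly compact subsets of $C(K)$ are fragmented families — but this essentially reproves the reflexive case of the representation theorem, so the plan above is the most economical.
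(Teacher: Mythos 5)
Your argument is correct and is exactly the route the paper itself indicates: the statement is presented as a Fact citing \cite{GM-tame} with the one-line justification that it ``follows from Remark \ref{r:repres}'', i.e., represent the DLP family on a reflexive (hence Asplund) Banach space and pull fragmentability back along the weak-star continuous map $\alpha$. Your elaboration, including the rescaling to $cB_{V^{*}}$ and the use of Lemma \ref{l:surjection_of_fragmented_family}, fills in precisely those details and introduces no circularity.
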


Following the lines of Proposition \ref{prop:examples_of_bornological_categories} and Definition \ref{def:b_small_space}, we give the following definition.
\begin{defin} \label{d:DLPlcs} 
	Let $E$ be a locally convex space.
	\begin{itemize}
		\item We say that a bounded subset $B$ of $E$ is \emph{DLP} if $B \in \bsmall[\BDLP]{E}$. 
		Explicitly, $B$ is DLP if it is DLP as a family of functions over every 
		$M \in \eqc(E^{*})$.
		\item $E$ is said to be DLP, and write $E \in \DLP$ if $E$ is $\BDLP$-small.
		In other words, ${E \in \DLP}$ if and only if every bounded subset of $E$ is DLP.
	\end{itemize}  
\end{defin}

The following is a direct consequence of Theorem \ref{thm:properties_of_b_small_classes}.
\begin{thm} \label{thm:properties_of_dlp_class}
	The class $\DLP$ is closed under
	taking:
	\ben
	\item subspaces
	\item bound covering maps
	\item 
	products
	\item 
	direct sums
	\item inverse limits.
	\een
	Moreover, if $F$ is a large, dense subspace of the locally convex space $E$, and $F \in \DLP$, then $E \in DLP$.
	In particular, if $V$ is a normed DLP space, then so is its completion.
\end{thm}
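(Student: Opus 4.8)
The plan is to deduce everything directly from the general machinery of Section~\ref{s:Born}. First I would recall that, by Definition~\ref{d:DLPlcs}, a locally convex space $E$ belongs to $\DLP$ precisely when $E$ is $\BDLP$-small in the sense of Definition~\ref{def:b_small_space}, and that $\BDLP$ is a genuine bornological class by Proposition~\ref{prop:examples_of_bornological_categories}. Hence the statement is nothing but the specialization of Theorem~\ref{thm:properties_of_b_small_classes} to the particular bornological class $\mathfrak{B} = \BDLP$.

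Concretely, I would invoke Theorem~\ref{thm:properties_of_b_small_classes} with $\mathfrak{B} := \BDLP$: items (1)--(5) there give closure of the class of $\BDLP$-small spaces under subspaces, bound covering maps, products, direct sums and inverse limits, which is exactly clauses (1)--(5) of the present theorem (an inverse limit being a closed subspace of a product, this clause is in any case subsumed by (1) and (3) together with Proposition~\ref{prop:b_small_subspace}). The final two assertions --- that a large dense $\BDLP$-small subspace forces the ambient space to be $\BDLP$-small, and that the completion of a normed $\BDLP$-small space is again $\BDLP$-small --- are the corresponding final clauses of Theorem~\ref{thm:properties_of_b_small_classes}; for the completion statement one only needs to recall that every dense subspace of a normed space is large (as noted just before Lemma~\ref{lemma:large_subspace_strong_isomorphism}), so that $V$ is a large dense subspace of its completion $\widehat{V}$.

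Since Theorem~\ref{thm:properties_of_b_small_classes} has already been established, there is essentially no obstacle here: the only input that must be in place is the verification --- already carried out in Proposition~\ref{prop:examples_of_bornological_categories} via the consistency property (Lemma~\ref{lemma:double_subsequence_of_bounded_functions_is_DLPable}.4) and the bipolarity property (Lemma~\ref{lemma:DLP_is_locally_convex}) --- that $\BDLP$ satisfies the axioms of a bornological class. If I wanted the argument to be fully self-contained I would, at most, spell out why an inverse limit $\varprojlim E_i$ embeds into $\prod_i E_i$ as a closed (hence, by Proposition~\ref{prop:b_small_subspace}, $\BDLP$-small) subspace, but this is routine and already packaged inside Theorem~\ref{thm:properties_of_b_small_classes}.
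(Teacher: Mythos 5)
Your proposal is correct and is exactly the paper's argument: the paper also obtains this theorem as the specialization of Theorem \ref{thm:properties_of_b_small_classes} to $\mathfrak{B}=\BDLP$, with Proposition \ref{prop:examples_of_bornological_categories} supplying the verification that $\BDLP$ is a bornological class. Nothing further is needed.
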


\begin{prop} \label{p:w-compDLP} 
	Every relatively weakly compact subset $B$ in a lcs $E$ 
	is DLP. 
\end{prop}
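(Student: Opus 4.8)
The plan is to deduce the statement directly from Fact~\ref{f:SepCont} on bounded separately continuous maps over products of compacta. By Definition~\ref{d:DLPlcs}, saying that $B$ is DLP means exactly that for every $M \in \eqc(E^{*})$ the set $B$, regarded as a family of real functions on $M$ via the canonical pairing, has the DLP; so I would fix an arbitrary $M \in \eqc(E^{*})$ and work with it. By Fact~\ref{l:Al-Bo} (Alaoglu--Bourbaki) we may take $M$ weak-star compact, so $M$ with its weak-star topology is a compact topological space.

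Next I would put $K := \overline{B}^{w}$, the weak closure of $B$ in $E$, which is weakly compact by the hypothesis that $B$ is relatively weakly compact, and which is bounded (a weakly compact set is weakly bounded, hence bounded, using that boundedness coincides with weak boundedness). Consider the canonical bilinear pairing restricted to $K \times M$, namely $\Phi(b,\varphi) := \varphi(b)$, with $K$ carrying the weak topology of $E$ and $M$ the weak-star topology. Then $\Phi$ is separately continuous --- for fixed $\varphi \in M$ the map $b \mapsto \varphi(b)$ is weakly continuous, and for fixed $b \in E$ the map $\varphi \mapsto \varphi(b)$ is weak-star continuous --- and $\Phi$ is bounded, since equicontinuity of $M$ makes $\rho_{M}$ (Definition~\ref{d:rho_M}) a continuous seminorm, whence $C := \sup_{b \in K}\rho_{M}(b) < \infty$ and $|\Phi(b,\varphi)| \le \rho_{M}(b) \le C$ on $K \times M$.

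Now $K$ and $M$ are both compact, so Fact~\ref{f:SepCont} applies and yields that $K$, viewed as a family of maps on $M$, is DLP. Since $B \subseteq K$ and the DLP is trivially inherited by subfamilies (every sequence in $B$ is a sequence in $K$), $B$ is DLP as a family on $M$. As $M \in \eqc(E^{*})$ was arbitrary, $B \in \bsmall[\BDLP]{E}$, i.e. $B$ is DLP, as desired.

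I do not anticipate a genuine obstacle; the only points needing care are the routine checks that $\Phi$ is separately continuous and bounded on $K \times M$ (equicontinuity of $M$ and boundedness of $K = \overline{B}^{w}$ are precisely what is used here), and the observation that ``relatively weakly compact'' is exactly what furnishes a compact space $K$ to feed into Fact~\ref{f:SepCont}. An alternative, essentially equivalent route would push $B$ forward along the continuous --- hence weakly continuous --- restriction operator $r_{M}\colon E \to C(M)$, observe that $r_{M}(B)$ is then relatively weakly compact in $C(M)$, and invoke Grothendieck's classical double-limit criterion for $C(M)$; but the argument through Fact~\ref{f:SepCont} stays inside the tools already set up in the paper.
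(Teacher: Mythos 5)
Your argument is correct and is essentially the paper's own proof: both fix $M \in \eqc(E^{*})$, view the canonical pairing on a weakly compact set times $M$ as a bounded separately continuous map, and invoke Fact~\ref{f:SepCont}; the only cosmetic differences are that you pass explicitly to the weak closure $\overline{B}^{w}$ and phrase boundedness via $\rho_{M}$, while the paper argues it with a zero-neighborhood $O$ and $B \subset cO$. Nothing further is needed.
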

\begin{proof}
	Let $B$ be weakly compact in a lcs $E$ and $M \in \eqc(E^*)$. Then the natural map $w \colon B \times M \to \R$ is separately continuous. Observe that $w$ is a bounded map. Indeed, there exists a neighborhood $O$ of zero in $E$ such that $|u(x)| <1$ for every  $x \in O$ and $u \in M$. Since $B$ is bounded, there exists $c \in \R$ such that $B \subset c O$. Then $|u(x)| <c$ for every  $x \in B$ and $u \in M$. Now, by Fact \ref{f:SepCont}, we obtain that $B$ is DLP on $M$. 
	\end{proof}

The converse is true for Banach spaces.
Namely, a bounded subset $B$ of a Banach space $V$ is DLP iff $B$ is relatively weakly compact (see \cite[Thm.~A5]{BJM}). 
As a consequence, a Banach space $V$ is reflexive iff it is DLP.

\begin{f} \label{f:semi_reflexive_heine_borel}  
	\cite[Ch. IV, 5.5]{Schaefer} 
	A locally convex space is semi-reflexive if and only if every bounded subset is 
	relatively weakly compact.
\end{f}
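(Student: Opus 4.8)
The plan is to recall the standard proof of this classical fact via the dual pairing $\langle E', E''\rangle$, where $E'_\beta = (E', \beta(E', E))$ denotes the strong dual of $E$ and $E'' := (E'_\beta)'$ its dual, and where, by definition, $E$ is semi-reflexive exactly when the canonical injection $J \colon E \hookrightarrow E''$ is surjective. The two tools are the Alaoglu--Bourbaki theorem (Fact \ref{l:Al-Bo}), applied this time to $E'_\beta$, together with the bipolar theorem (Fact \ref{fact:bipolar_theorem}) used in the pairing $\langle E', E''\rangle$. Throughout one exploits that $J$ is a homeomorphism of $(E, \sigma(E, E'))$ onto $(J(E), \sigma(E'', E')|_{J(E)})$, since the functionals of $E'$ act on $E$ and on $E''$ by the same formula.

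For the implication ``semi-reflexive $\Rightarrow$ every bounded set is relatively weakly compact'' I would argue as follows. Let $B \subseteq E$ be bounded. Its polar $B^{\circ} \subseteq E'$ is, by the very definition of $\beta(E', E)$, a neighbourhood of $0$ in $E'_\beta$, so the bipolar $B^{\circ\circ} \subseteq E''$ (the polar of $B^{\circ}$ taken across the pairing $\langle E', E''\rangle$) is an equicontinuous subset of $E''$. By Alaoglu--Bourbaki applied to $E'_\beta$, $B^{\circ\circ}$ is $\sigma(E'', E')$-compact. If $E$ is semi-reflexive then $E'' = J(E)$ and $\sigma(E'', E')$ restricts to the weak topology $\sigma(E, E')$ on $E$, so $B \subseteq B^{\circ\circ}$ lies inside a weakly compact subset of $E$; hence $B$ is relatively weakly compact.

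For the converse, take $\xi \in E''$. Continuity of $\xi$ on $E'_\beta$ yields a bounded set $B_0 \subseteq E$ with $|\xi| \le 1$ on $B_0^{\circ}$, that is, $\xi \in (B_0^{\circ})^{\circ}$. Put $B := \overline{\acx}(B_0)$, which is again bounded, closed and absolutely convex, and note that $\xi \in (B^{\circ})^{\circ}$ since $B \supseteq B_0$. By hypothesis $B$ is weakly compact in $E$, hence $\sigma(E'', E')$-compact, hence $\sigma(E'', E')$-closed; being in addition absolutely convex it coincides with its own bipolar in $\langle E', E''\rangle$, so $(B^{\circ})^{\circ} = B$. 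Therefore $\xi \in B \subseteq E$, which shows $J$ is onto.

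The delicate point, and the one I would verify most carefully, is the bookkeeping of the three weak topologies involved ($\sigma(E, E')$ on $E$, $\sigma(E'', E')$ on $E''$, and its restriction to $J(E)$) together with the two successive polar operations across the pairings $\langle E, E'\rangle$ and $\langle E', E''\rangle$; once one observes that $J$ intertwines $\sigma(E, E')$ with $\sigma(E'', E')|_{J(E)}$, the rest is routine. It is worth remarking that the converse direction uses the full strength of the hypothesis --- namely that \emph{every} bounded set, in particular $\overline{\acx}(B_0)$, is relatively weakly compact --- which is exactly what spares us any appeal to Krein's theorem on closed convex hulls of weakly compact sets.
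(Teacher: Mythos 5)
Your proof is correct: both directions are the standard duality argument (equicontinuity of $B^{\circ\circ}$ as the polar of the strong neighbourhood $B^{\circ}$ plus Alaoglu--Bourbaki in one direction, and the bipolar identification $(B^{\circ})^{\circ}=J(B)$ for the weakly compact, absolutely convex set $B=\overline{\acx}(B_{0})$ in the other), and your bookkeeping of the pairings $\langle E,E'\rangle$, $\langle E',E''\rangle$ and of the three weak topologies is accurate, including the observation that the blanket hypothesis applied to $\overline{\acx}(B_{0})$ spares you Krein's theorem. The paper gives no proof of this statement at all --- it is quoted as a Fact with a citation to Schaefer, Ch.~IV, 5.5 --- and your argument is essentially the classical one found there, so there is nothing to reconcile.
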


Recall that a lcs $E$ is said to be \textit{boundedly-complete} (or, \textit{quasi-complete, \cite{Jarchow}}) if every closed bounded subset in $E$ is complete. An equivalent condition is that every bounded Cauchy net converges.  
Every boundedly-complete lcs is sequentially complete and every sequentially complete lcs is locally complete (Definition \ref{d:gauge}).  
Note that every weakly compact subset in a lcs is complete, \cite[p.~90]{Groth}. This implies (by Fact \ref{f:semi_reflexive_heine_borel}) that every semi-reflexive space is boundedly-complete. In Theorem \ref{thm:dlp_iff_semireflexive} we make this more precise.

\begin{thm} \label{thm:dlp_iff_semireflexive}  
	$E$ is semi-reflexive if and only if $E$ is boundedly-complete and DLP.  
\end{thm}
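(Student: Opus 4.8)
The plan is to prove both directions by combining the Banach-space characterization of reflexivity via DLP (stated just before the theorem: a bounded subset of a Banach space is DLP iff it is relatively weakly compact) with Fact~\ref{f:semi_reflexive_heine_borel} (semi-reflexivity = every bounded set relatively weakly compact) and the remarks on completeness of weakly compact sets.

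For the forward direction, assume $E$ is semi-reflexive. By Fact~\ref{f:semi_reflexive_heine_borel} every bounded subset of $E$ is relatively weakly compact; since weakly compact subsets of a lcs are complete (\cite[p.~90]{Groth}), every closed bounded subset is complete, so $E$ is boundedly-complete. For DLP: if $B$ is bounded then its weak closure $\overline{B}^{w}$ is weakly compact, hence (by Proposition~\ref{p:w-compDLP}, applied to $\overline{B}^{w}$ which is relatively weakly compact, indeed weakly compact) $\overline{B}^{w}$ is DLP on every $M\in\eqc(E^*)$; a subset of a DLP family is DLP, so $B$ is DLP. Thus $E\in\DLP$.

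For the converse, assume $E$ is boundedly-complete and DLP; I must show every bounded $B\subseteq E$ is relatively weakly compact, then invoke Fact~\ref{f:semi_reflexive_heine_borel}. Fix a bounded $B$; without loss of generality take $B$ closed, absolutely convex (replacing $B$ by $\overline{\acx}B$, which is still bounded, still DLP by the bipolarity/saturation properties recorded for $\BDLP$-small sets, and closed). Since $E$ is boundedly-complete, $B$ is complete. Now consider the normed space $E_B=(\spn B, q_B)$ with closed unit ball $B$ (Lemma~\ref{lemma:ball_of_gauge}); completeness of $B$ in $E$ together with Fact~\ref{f:local_disc_norm_is_finer} makes $E_B$ a Banach space with unit ball $B$. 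The inclusion $E_B\hookrightarrow E$ is continuous, so by Fact~\ref{f:adjoint_is_continuous} its adjoint $E^*\to E_B^*$ is weak-star continuous; consequently, for any $M\in\eqc(E^*)$ the image $M'\subseteq E_B^*$ is weak-star compact and the pairing of $B$ with $M$ factors through the pairing of $B$ with $M'$. The key point is to deduce that $B$, as the unit ball of the Banach space $E_B$, is DLP on $B_{E_B^*}$: this uses that $\BDLP$ is polarly compatible (the corollary after Lemma~\ref{lemma:converse_of_locally_convex_co_b_small}), i.e. that DLP over all equicontinuous $M$ upgrades to DLP over the full dual ball $B_{C(K)^*}$-type object — here over $B_{E_B^*}$ via Lemma~\ref{lemma:discrete_measures_are_dense} and the Equicontinuous Factor Lemma~\ref{lemma:equicontinuous_factor}. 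Then by the Banach-space fact (\cite[Thm.~A5]{BJM}), $B=B_{E_B}$ is relatively weakly compact in $E_B$, i.e. $E_B$ is reflexive, so $B$ is weakly compact in $E_B$; since $E_B\to E$ is continuous hence weakly continuous, $B$ is weakly compact in $E$. Therefore every bounded subset of $E$ is relatively weakly compact and $E$ is semi-reflexive.

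The main obstacle is the converse direction, specifically the bookkeeping needed to pass from ``DLP over every equicontinuous $M\in\eqc(E^*)$'' to ``$B_{E_B}$ is DLP over the full weak-star unit ball $B_{E_B^*}$'', since $\eqc(E^*)$ need not exhaust $E_B^*$. The honest route is: the topology $q_B$ on $E_B$ is a norm with unit ball $B$; the equicontinuous subsets of $E_B^*$ are exactly the norm-bounded ones, and every norm-bounded subset of $E_B^*$ is weak-star contained in $B_{E_B^*}=\overline{\acx}^{w^*}\Delta(M_0)$ type hulls coming from equicontinuous subsets of $E^*$ via Lemma~\ref{lemma:equicontinuous_factor}; combined with polar compatibility of $\BDLP$ and Young's theorem (Fact~\ref{f:young}) which shows DLP is preserved under weak-star closed absolutely convex hulls, one gets DLP of $B$ over $B_{E_B^*}$. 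Once this is in place, everything else is a routine application of the already-established Banach-space dictionary and the elementary facts about $E_B$.
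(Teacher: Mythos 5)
Your forward direction is fine and is exactly the paper's argument (Fact \ref{f:semi_reflexive_heine_borel} plus Proposition \ref{p:w-compDLP}, with bounded completeness coming from completeness of weakly compact sets). The converse, however, has a genuine gap: the ``key point'' of upgrading ``$B$ is DLP over every $M\in\eqc(E^*)$'' to ``$B=B_{E_B}$ is DLP over the full dual ball $B_{E_B^*}$'', and hence concluding that the Banach space $E_B$ is reflexive, is false in general. Take $E=\R^{\N}$ (boundedly-complete, semi-reflexive, DLP) and $B=[-1,1]^{\N}$, a closed bounded disk. Then $E_B=\spn B$ with the gauge norm $q_B$ is $l^{\infty}$ with the sup norm, which is not reflexive, and its unit ball $B$ is not DLP over $B_{(l^{\infty})^*}$ (otherwise it would be relatively weakly compact in $l^{\infty}$ by \cite[Thm.~A5]{BJM}). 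The reason your upgrade fails is that $\eqc(E^*)$ restricted to $E_B$ does not generate $B_{E_B^*}$: in this example every $M\in\eqc(E^*)$ lives in the span of finitely many coordinate functionals, whereas norm-bounded subsets of $(l^{\infty})^*$ such as $\{e_n^*\}_{n\in\N}$ (whose weak-star closed absolutely convex hull is all of $B_{(l^{\infty})^*}$) are not obtained from equicontinuous subsets of $E^*$; polar compatibility and Young's theorem only give DLP over $B_{C(M)^*}$ for a fixed equicontinuous compact $M$, not over the much larger ball $B_{E_B^*}$. So the reduction to the single Banach space $E_B$ proves a statement that is strictly stronger than the theorem and actually false; note that in the example $B$ is weakly compact in $E$ even though it is not weakly compact in $E_B$, so weak compactness must be established in $E$ directly, not pulled back from $E_B$.

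The paper avoids this by a different reduction (Proposition \ref{p:DLP in boundedly-complete}): embed $E$ into a product $V=\prod_{\lambda}V_\lambda$ of Banach spaces, observe that each projection $B_\lambda=\pi_\lambda(T(B))$ is a DLP subset of the Banach space $V_\lambda$ (Theorem \ref{thm:properties_of_dlp_class}), apply \cite[Thm.~A5]{BJM} and Krein--\v{S}mulian in each factor to get weak compactness of $\overline{co(B_\lambda)}$, and then use Tychonoff together with the fact that the weak topology of a product is the product of the weak topologies. Bounded completeness enters only to guarantee that $C=\overline{co(B)}$ is complete, hence $T(C)$ is closed, convex and therefore weakly closed in $V$, so that $T(C)\subseteq\prod_\lambda\overline{co(B_\lambda)}$ is weakly compact. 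If you want to salvage your approach, you would have to replace the single space $E_B$ by such a family of Banach quotient/factor spaces attached to the members of $\eqc(E^*)$ (as in Lemma \ref{lemma:equicontinuous_factor}), which essentially reproduces the paper's product argument.
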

\begin{proof}
	First suppose that $E$ is semi-reflexive.
	Then our claim is a consequence of Proposition \ref{p:w-compDLP} and Fact \ref{f:semi_reflexive_heine_borel}. 
		The converse is a conclusion of Proposition \ref{p:DLP in boundedly-complete} below. 
\end{proof}

%

\begin{prop} \label{p:DLP in boundedly-complete} 
	\emph{(version of Grothendieck's result \cite[Thm.~17.12]{KN})}  
	
	Let $E$ be a boundedly-complete lcs. Then the following are equivalent for a subset $B \subseteq E$:
	\ben
		\item $B$ is bounded and DLP.
		\item $B$ is relatively weakly compact.
		\item The closed convex hull $C := \overline{co(B)}$ is weakly compact.
	\een
\end{prop}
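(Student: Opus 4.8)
Here is the plan. The three conditions are equivalent, and the natural cycle is $(2)\Rightarrow(1)$, $(1)\Rightarrow(2)$, $(2)\Rightarrow(3)$, with $(3)\Rightarrow(2)$ trivial since $B\subseteq C$.

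The implication $(2)\Rightarrow(1)$ is immediate: a relatively weakly compact set is weakly bounded, hence bounded, and it is DLP by Proposition \ref{p:w-compDLP}. The implication $(3)\Rightarrow(2)$ holds because $B\subseteq C=\overline{\co}(B)$, so relative weak compactness of $C$ forces relative weak compactness of $B$.

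The substantial step is $(1)\Rightarrow(2)$, which is essentially Grothendieck's double-limit criterion; I would argue as follows. Embed $E$ canonically into the algebraic dual $(E^{*})'$ of $E^{*}$ and equip the latter with the topology $\sigma((E^{*})',E^{*})$ of pointwise convergence on $E^{*}$; since $B$ is bounded it is weakly bounded, so $f(B)$ is a bounded subset of $\R$ for each $f\in E^{*}$, and hence the closure $\widetilde{B}$ of $B$ in $(E^{*})'$ is compact (a closed subset of $\prod_{f\in E^{*}}\overline{f(B)}$, by Tychonoff, intersected with the closed subspace of linear functionals). The goal is to show $\widetilde{B}\subseteq E$, for then $\widetilde B=\overline{B}^{\,\sigma(E,E^{*})}$ is $\sigma(E,E^{*})$-compact and $(2)$ follows. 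So fix $\xi\in\widetilde B$, a linear functional on $E^{*}$, and fix $M\in\eqc(E^{*})$ which we may assume weak-star compact by Alaoglu–Bourbaki (Fact \ref{l:Al-Bo}). Equicontinuity of $M$ together with boundedness of $B$ shows that $H:=r_{M}(B)\subseteq C(M)$ is norm bounded, where $r_{M}\colon E\to C(M)$ is the restriction operator; and by hypothesis $(1)$ the family $H$ is DLP on $M$. By the Banach-space equivalence between the DLP and relative weak compactness (\cite[Thm.~A5]{BJM}), $H$ is relatively weakly compact in $C(M)$; since on a weakly compact subset of $C(M)$ the weak topology coincides with the pointwise one, the pointwise closure of $H$ in $\R^{M}$ is already contained in $C(M)$. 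As $\xi|_{M}$ is a pointwise limit of a net in $H$, we get $\xi|_{M}\in C(M)$, i.e. $\xi$ is weak-star continuous on $M$. Since this holds for every $M\in\eqc(E^{*})$, Grothendieck's completeness theorem (see e.g. \cite{Schaefer,Jarchow}) gives $\xi\in\widehat E$. Finally, $\xi$ lies in the $\sigma(\widehat E,E^{*})$-closure of $B$, hence, by Mazur's theorem (Fact \ref{f:Mazur}) applied in $\widehat E$, in the closed convex hull of $B$ computed in $\widehat E$; but that closed convex hull equals $\overline{\co}(B)$ computed in $E$, which is closed and bounded and therefore complete (here we use bounded-completeness of $E$), hence closed in $\widehat E$. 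So $\xi\in\overline{\co}(B)\subseteq E$, proving $\widetilde B\subseteq E$.

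For $(2)\Rightarrow(3)$ I would bootstrap from the step just proved rather than invoke Krein's theorem directly. By $(2)\Rightarrow(1)$, $B$ is bounded and DLP, i.e. $B\in\bsmall[\BDLP]{E}$; since $\bsmall[\BDLP]{E}$ is a saturated convex vector bornology (Lemma \ref{lemma:class_induces_bornology}) and $\overline{\co}(B)\subseteq\overline{\acx}^{w}(B)\in\bsmall[\BDLP]{E}$, the set $\overline{\co}(B)$ is again bounded and DLP. Applying $(1)\Rightarrow(2)$ to $\overline{\co}(B)$ shows it is relatively weakly compact; but $\overline{\co}(B)$ is already weakly closed by Mazur's theorem, so $C=\overline{\co}(B)$ is weakly compact. (Equivalently, this is Krein's theorem in the boundedly-complete setting.) The main obstacle is the middle of $(1)\Rightarrow(2)$: deducing weak-star continuity of $\xi$ on each equicontinuous $M$ from the DLP of $B$ on $M$, which is exactly Grothendieck's interchange-of-limits phenomenon and which I am packaging through \cite[Thm.~A5]{BJM} applied to $C(M)$; bounded-completeness of $E$ is used only for the final descent from $\widehat E$ back to $E$.
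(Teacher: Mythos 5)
Your proof is correct, but it follows a genuinely different route from the paper's. The paper proves $(1)\Rightarrow(3)$ directly: it embeds $E$ into a product $\prod_{\lambda}V_{\lambda}$ of Banach spaces, uses bounded-completeness to see that the image of $\overline{\co(B)}$ is complete, hence (being convex) weakly closed in the product, and then traps it inside $\prod_{\lambda}\overline{\co(B_{\lambda})}$, which is weakly compact by the Banach-space equivalence of DLP with relative weak compactness (\cite[Thm.~A5]{BJM}), Krein--\v{S}mulian in each $V_{\lambda}$, and Tychonoff. You instead prove $(1)\Rightarrow(2)$ directly in the classical Kelley--Namioka/Grothendieck style: compactness of the pointwise closure of $B$ in the algebraic dual, weak-star continuity of any limit functional on each $M\in\eqc(E^{*})$ via the double-limit criterion applied in $C(M)$, Grothendieck's completeness theorem to place that functional in $\widehat{E}$, and bounded-completeness (completeness of $\overline{\co(B)}$) only for the final descent to $E$; you then get $(3)$ by bootstrapping through the saturated convex DLP bornology (Lemma \ref{lemma:class_induces_bornology}) applied to $\overline{\co(B)}$ rather than by quoting Krein--\v{S}mulian. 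Both arguments rest on \cite[Thm.~A5]{BJM} and use bounded-completeness at the same conceptual spot; yours avoids the embedding into a product of Banach spaces at the cost of invoking Grothendieck's completeness theorem, and it yields relative weak compactness of $B$ itself without first passing to the convex hull, with the Krein-type statement $(3)$ falling out of the bornology stability (ultimately Young's theorem, Fact \ref{f:young}). One small point of bookkeeping: you apply \cite[Thm.~A5]{BJM} to $H=r_{M}(B)$ using only the DLP of $H$ over $M$; if that theorem is read as requiring the DLP over $B_{C(M)^{*}}$, the gap is closed by Lemma \ref{lemma:DLP_is_locally_convex} together with Lemma \ref{lemma:discrete_measures_are_dense} (or directly by Grothendieck's classical double-limit criterion), so this is a formality rather than a gap.
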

\begin{proof} \
	$(3) \Rightarrow (2)$  Obvious.
	
	$(2) \Rightarrow (1)$ Let $B$ be a 
	relatively
	weakly compact subset in $E$. Then it is well-known that $B$ is bounded. 
	Also $B$ is DLP by Proposition \ref{p:w-compDLP}. 

	$(1) \Rightarrow (3)$ Let $B \subseteq E$ be a 
	bounded DLP subset. We have to show that $C := \overline{co(B)}$ is weakly compact. 
	First, $C$ is uniformly complete being bounded and closed in a boundedly-complete space $E$.  
	By \cite[II, 5.4, Corollary 2]{Schaefer}, 
	there exists an embedding $T \colon E \hookrightarrow V$ 
	where $V := \prod\limits_{\lambda \in \Lambda} V_{\lambda}$ and $\{V_{\lambda}\}_{\lambda \in \Lambda}$ are Banach spaces.
	By Fact \ref{f:adjoint_is_continuous}, it is enough to show that $T(C)$ is weakly compact.
	
	The subset $T(C)$ is complete in $T(E)$ and also in $V$. Therefore, $T(C)$ is closed in $V$. Moreover, $T(C)$ is convex. So, it is even weakly closed in $V$ (\cite[p.~131, Corollary 6]{Jarchow}). 
	As a consequence, it is enough to show that $T(C)$ is
	relatively weakly compact. 
	
	By Theorem \ref{thm:properties_of_dlp_class}, the projection $B_{\lambda} := \pi_{\lambda}(T(B)) \subseteq V_{\lambda}$ is DLP for every $\lambda \in \Lambda$.
	Since $B_{\lambda} \subseteq V_{\lambda}$ is a bounded DLP subset in a Banach space $V_{\lambda}$, we can apply \cite[Thm.~A.5]{BJM} to conclude that it is weakly relatively compact. 
	Moreover, $\overline{co(B_{\lambda})}$ is weakly compact in the Banach space $V_{\lambda}$ (Krein-Smulian theorem). 
	
	Now observe that 
	$T(C) \subseteq \widehat{C}:=\prod_{\lambda \in \Lambda} \overline{co(B_{\lambda})}.$ 
	By \cite[p.~137, Thm~4.3]{Schaefer}, the weak topology of the product is the product of the weak topologies.
	Using Tychonoff's Theorem, we conclude that $\widehat{C}$ is also weakly compact in $V= \prod\limits_{\lambda \in \Lambda} V_{\lambda}$.
	Finally, note that $T(C) \subseteq \widehat{C}$ is 
	weakly compact by definition, as required.
	\end{proof} 

	
	Proposition \ref{p:DLP in boundedly-complete} implies that in the boundedly-complete space $E$, the closed convex hull of every weakly compact subset is weakly compact (generalized Krein-Smulian theorem).  
	This need not be true in arbitrary lcs if $E$ is not boundedly-complete. 
	For example, let $E$ be the normed subspace of $l^2$ which consists of all sequences of finite support.
	Then the set $\left\{\frac{1}{n} e_n\right\}_{n \in \N} \cup \{0_{l^2}\}$ is compact but the closure of its convex hull is not compact (and even not complete). Note that $E$ is DLP because it is a subspace of $l^2$. 
	By Theorem \ref{thm:properties_of_dlp_class}, the DLP is hereditary.  
	Since $E$ is not semi-reflexive, this example shows that boundedly-completeness is essential also in Theorem \ref{thm:dlp_iff_semireflexive}. 
	Moreover, 
	in this space every bounded neighborhood of the origin is DLP but not relatively weakly compact.

\begin{remark} 
	We mention some interesting subclasses in DLP. 
	Among others:
	\begin{enumerate}
		\item Semi-reflexive lcs (Theorem \ref{thm:dlp_iff_semireflexive}); 
		\item Schwartz lcs (as a subspace of a reflexive lcs); 
		\item Quasi-Montel lcs (every bounded subset is uniformly precompact). Every nuclear lcs is quasi-Montel. 
		 Important examples in Analysis: the spaces  $C^{\infty}(\Omega)$ and $D(\Omega)$ (for an open subset $\Omega$ in $\R^n$). 
		Also, the space of analytic functions $H(\Omega)$ over a domain; 
		\item For every locally convex space $E$, the lcs $(E,w)$ with its weak topology is $\DLP$. Indeed, $(E,w)$ is a subspace of $\R^E$; 
		\item Every space $C_p(X)$, in its pointwise topology (for every topological space $X$), is $\DLP$. Indeed, $C_p(X)$ is a subspace of $\R^X$. 
	\end{enumerate} 
\end{remark}

\sk 
\subsection*{Namioka--Phelps $\NP$ locally convex spaces}

\sk 
Recall the following well-known characterization of Asplund Banach spaces. 

\begin{f} \label{f:Asp-charact}  
A Banach space $(V,||\cdot||)$ is Asplund (the dual of every separable (Banach) subspace is separable) iff very bounded weakly-star compact subset $K \subset E^*$ is 
(weak$^*$,norm)-fragmented.  
\end{f}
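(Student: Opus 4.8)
The plan is to reduce the statement to a characterization of the dual ball $B_{V^{*}}$ and then to read it off the classical Asplund/Radon--Nikod\'ym circle. First I would record two soft facts. (i) $(\tau,\mu)$-fragmentability passes to arbitrary subsets: if $f$ fragments $X$ and $Y\subseteq X$, then $f_{|Y}$ fragments $Y$, since for nonempty $A\subseteq Y$ an open $O\subseteq X$ with $O\cap A\neq\emptyset$ and $f(O\cap A)$ being $\eps$-small gives the relatively open set $O\cap Y$. Since every bounded $K\subseteq V^{*}$ lies in $rB_{V^{*}}$ for some $r>0$, and $rB_{V^{*}}$ is $(w^{*},\|\cdot\|)$-fragmented exactly when $B_{V^{*}}$ is (rescale the metric), the asserted equivalence is the same as: $V$ is Asplund $\iff$ $(B_{V^{*}},w^{*})$ is $(w^{*},\|\cdot\|)$-fragmented. (ii) The standard ``covering lemma'': a compact space $(K,\tau)$ that is hereditarily Lindel\"of (e.g.\ compact metrizable) and is $(\tau,d)$-fragmented by a metric $d$ is $d$-separable --- for each $n$, use fragmentability together with a Zorn/transfinite exhaustion to cover $K$ by relatively $\tau$-open sets of $d$-diameter $<1/n$, extract a countable subcover by Lindel\"ofness, and pick one point from each; the union over $n$ is $d$-dense.

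For the implication ``$\Leftarrow$'' I would argue by contraposition together with a reduction to the separable case. If $V$ is not Asplund there is a separable closed subspace $W\subseteq V$ with $W^{*}$ non-separable. Then $(B_{W^{*}},w^{*})$ is compact metrizable but norm-non-separable, hence by the covering lemma it is \emph{not} $(w^{*},\|\cdot\|)$-fragmented; fix a nonempty weak-star closed $F\subseteq B_{W^{*}}$ and $\eps>0$ such that every nonempty relatively weak-star open subset of $F$ has norm-diameter $\geq\eps$. The restriction map $q\colon V^{*}\to W^{*}$ is weak-star continuous, norm-nonexpansive, maps $B_{V^{*}}$ onto $B_{W^{*}}$, and --- being the adjoint of the isometric inclusion $W\hookrightarrow V$, i.e.\ a Banach-space quotient map --- restricts to a weak-star \emph{open} surjection $B_{V^{*}}\to B_{W^{*}}$ (a Helly-type selection argument). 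Then $q^{-1}(F)\cap B_{V^{*}}$ is a bounded weak-star compact subset of $V^{*}$, and pushing any nonempty relatively weak-star open piece of it forward along $q$ produces a nonempty relatively weak-star open piece of $F$ of the same (up to nonexpansiveness, no larger) norm-diameter; hence $q^{-1}(F)\cap B_{V^{*}}$ has no such piece of norm-diameter $<\eps$ and is not $(w^{*},\|\cdot\|)$-fragmented, contradicting the hypothesis.

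For ``$\Rightarrow$'' I would invoke the classical structure theory rather than reprove it: if $V$ is Asplund then $V^{*}$ has the Radon--Nikod\'ym property (Namioka--Phelps / Stegall), so every nonempty bounded subset of $V^{*}$ is dentable, and in fact every nonempty weak-star closed $A\subseteq B_{V^{*}}$ admits weak-star slices --- slices defined by elements of $V$, not merely of $V^{**}$ --- of arbitrarily small norm-diameter. A weak-star slice of $A$ is in particular a nonempty relatively weak-star open subset of $A$, so this is precisely $(w^{*},\|\cdot\|)$-fragmentability of $(B_{V^{*}},w^{*})$. Here the Asplund hypothesis (beyond mere RNP of $V^{*}$) is what yields \emph{weak-star} slices, typically by localizing to a separable subspace of $V$, where the relevant part of $(V^{*},w^{*})$ is metrizable and the Bishop--Phelps/dentability argument can be run with functionals from $V$.

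The main obstacle is that the content of the statement \emph{is} this Asplund--RNP--fragmentability equivalence: the chain ``$V$ Asplund $\Rightarrow$ $V^{*}$ has RNP $\Rightarrow$ bounded subsets of $V^{*}$ are dentable $\Rightarrow$ weak-star closed subsets of $B_{V^{*}}$ have small weak-star slices'' (Stegall's theorem and its refinements), and, on the other side, converting non-separability of a separable subspace's dual into a concrete non-fragmented weak-star compact set, are exactly the hard classical parts. In a full write-up I would isolate the delicate ``transfer of fragmentability along $q$'' (equivalently, the weak-star openness of $B_{V^{*}}\to B_{W^{*}}$) as a lemma, and for the remainder simply cite the standard sources --- Namioka--Phelps, Stegall, and the monographs of Bourgin, Phelps, and Fabian --- since in this paper the statement is used only as a known fact.
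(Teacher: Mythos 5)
Your overall architecture (reduction to $B_{V^*}$ by rescaling and passing to subsets, the exhaustion/covering lemma for fragmented compacta, and quoting Namioka--Phelps/Stegall for ``Asplund $\Rightarrow$ weak$^*$ slices of small norm-diameter'') is reasonable, and indeed the paper offers no proof of this Fact at all --- it is stated as classical and implicitly referred to the literature, so citing those sources is the intended treatment. The genuine gap is the pivot of your converse direction: the claim that the restriction map $q\colon B_{V^{*}}\to B_{W^{*}}$ is weak-star \emph{open} (``a Helly-type selection argument''). This is false in general. Take $K=\{0\}\cup\{1/n:\ n\in\N\}$, $V=C(K)$, $W=\{f\in V:\ f(0)=0\}\cong c_0$, so $V^{*}=M(K)$ and $W^{*}\cong l^{1}$. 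Let $U=\{\varphi\in B_{V^{*}}:\ \varphi(\mathbf{1})>0\}$, a relatively weak-star open neighborhood of $\delta_{0}$, so that $0=q(\delta_{0})\in q(U)$. The functionals $\psi_{k}(f):=-f(1/k)$ lie in $B_{W^{*}}$ and converge weak-star to $0$, yet each $\psi_{k}$ has a \emph{unique} extension in $B_{V^{*}}$, namely $-\delta_{1/k}$ (the sets $\{1/n\}$ are clopen, so the atoms are forced and there is no room left for mass at $0$), and $(-\delta_{1/k})(\mathbf{1})=-1$; hence $\psi_{k}\notin q(U)$ and $q(U)$ is not a weak-star neighborhood of $0$ in $B_{W^{*}}$. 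Consequently the step ``push a relatively weak-star open piece of $q^{-1}(F)\cap B_{V^{*}}$ forward to a relatively weak-star open piece of $F$'' does not go through as written.

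The gap is repairable by standard devices, either of which leaves the rest of your sketch intact. (a) Replace openness by irreducibility: by Zorn's lemma choose a weak-star compact $Z\subseteq q^{-1}(F)\cap B_{V^{*}}$ minimal with $q(Z)=F$; then for every weak-star open $O$ meeting $Z$ the set $F\setminus q(Z\setminus O)$ is a nonempty relatively weak-star open subset of $F$ contained in $q(O\cap Z)$, and since $q$ is norm-nonexpansive, a piece $O\cap Z$ of norm-diameter $<\eps$ would yield a relatively open piece of $F$ of norm-diameter $<\eps$, contradicting the choice of $F$; thus $Z$ itself is a bounded weak-star compact set which is not $(w^{*},\text{norm})$-fragmented. (b) Alternatively, argue directly instead of by contraposition: if $B_{V^{*}}$ is $(w^{*},\text{norm})$-fragmented then $B_{V}$ is a fragmented family on $B_{V^{*}}$, so by Lemma \ref{t:countDetermined} the pseudometric space $(B_{V^{*}},\rho_{C})$ is separable for every countable $C\subset B_{V}$; choosing $C$ dense in $B_{W}$ for a separable subspace $W$ gives $\rho_{C}(\varphi,\psi)=\lVert(\varphi-\psi)_{\mid W}\rVert$, whence $B_{W^{*}}=q(B_{V^{*}})$ is norm-separable and $W^{*}$ is separable, i.e., $V$ is Asplund. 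With either repair, and with the forward direction cited from Namioka--Phelps/Stegall as you propose, the argument is complete.
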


The second assertion can be reformulated 
 as follows: the unit ball in $E$ is a fragmented family of functions on the unit ball of the dual space.

\sk

The following locally convex version of Asplund spaces was introduced in \cite{Me-fr}.

\begin{defin} \label{d:NP} \cite{Me-fr} 
	Let $E$ be a lcs. Say that $E$ is a Namioka--Phelps $\NP$ space if every 
	$K  \in \eqc(E^*)$ is (weak$^*$,strong)-fragmented. 
\end{defin}

\begin{defin} \label{d:AspSet} 
	We say that a bounded subset $A$ of a lcs $E$ is an \textit{Asplund set} in $E$ if 
	$A$ is fragmented on every 
	$K  \in \eqc(E^*)$.  
\end{defin}

Like in Definition \ref{d:DLPlcs}, both of these definitions can be reformulated in terms of a bornological class $\BNP$.
We can also formulate the following theorem as a consequence of Theorem \ref{thm:properties_of_b_small_classes}.

\begin{thm} \label{thm:properties_of_np_class}
	The class $\NP$ is closed under
	taking:
	\ben
	\item subspaces
	\item bound covering maps
	\item 
	products
	\item 
	direct sums
	\item inverse limits.
	\een
	Moreover, if $F$ is a large, dense subspace of the locally convex space $E$, and $F \in \NP$, then $E \in \NP$.
\end{thm}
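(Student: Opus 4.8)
The plan is to deduce the statement directly from the general theory of bornological classes developed in Section~\ref{s:Born}. By Proposition~\ref{prop:examples_of_bornological_categories}, $\BNP$ is a bornological class, and by Theorem~\ref{thm:properties_of_b_small_classes} the class of $\mathfrak{B}$-small locally convex spaces is, for \emph{any} bornological class $\mathfrak{B}$, closed under subspaces, bound covering maps, products, direct sums and inverse limits, and moreover a lcs is $\mathfrak{B}$-small whenever it has a large dense $\mathfrak{B}$-small subspace. Hence the entire content of the theorem reduces to one identification: $E \in \NP$ if and only if $E$ is $\BNP$-small (and, on the level of bounded sets, $A$ is an Asplund set in the sense of Definition~\ref{d:AspSet} iff $A$ is $\BNP$-small).

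To establish this identification I would unwind both sides. A bounded $B \subseteq E$ is $\BNP$-small precisely when, for every $M \in \eqc(E^{*})$, the family $r_{M}(B) \subseteq C(M)$ obtained by restricting the elements of $B$ to $M$ is a fragmented family of functions on $M$. By Lemma~\ref{lemma:fragmented_family_as_simple_fragmentation}, this is equivalent to the evaluation map $M \to \R^{r_{M}(B)}$ being $(w^{*},\mu_{U})$-fragmented; that is, for every nonempty $A \subseteq M$ and every $\eps > 0$ there is a weak-star open $O$ with $O \cap A \neq \emptyset$ and $\sup_{b \in B} \sup_{\varphi_{1},\varphi_{2} \in O \cap A} \lvert \varphi_{1}(b) - \varphi_{2}(b) \rvert \le \eps$, i.e. $O \cap A$ is $U[B,\eps]$-small for the basic entourage $U[B,\eps]$ of the strong uniformity on $E^{*}$ (Definition~\ref{d:strongTOP}). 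Letting $B$ range over all bounded subsets of $E$ and $M$ over $\eqc(E^{*})$, and using that the entourages $U[B,\eps]$ form a basis of the strong uniformity while $\eqc(E^{*})$ is cofinal among the equicontinuous subsets (Fact~\ref{l:Al-Bo}), this says exactly that every $K \in \eqc(E^{*})$ is (weak${}^{*}$, strong)-fragmented, which is Definition~\ref{d:NP}.

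With the identification in hand, the theorem is a direct instance of Theorem~\ref{thm:properties_of_b_small_classes} applied to $\mathfrak{B} = \BNP$: no further argument is required for any of the five closure operations nor for the large dense subspace transfer. The one point that genuinely demands care --- and which I would record as a short remark rather than leave implicit --- is the quantifier bookkeeping in the middle paragraph: the definition of fragmentability handles a single strong-uniformity entourage at a time, so one must check that letting this entourage run over the basis $\{U[B,\eps]\}$ really reconstitutes ``$\BNP$-smallness of $B$ over every $M$'' without losing uniformity in $b \in B$. Lemma~\ref{lemma:fragmented_family_as_simple_fragmentation} is exactly the tool that makes this passage automatic, so in the end even this point is routine; the substantive work has all been done upstream in Theorem~\ref{thm:properties_of_b_small_classes}.
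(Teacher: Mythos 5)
Your proposal is correct and follows essentially the same route as the paper: the paper proves this theorem by observing that $E \in \NP$ iff $E$ is $\BNP$-small (the content of Lemma \ref{lem:NP_equivalence}, which is exactly your quantifier-bookkeeping paragraph) and then invoking Theorem \ref{thm:properties_of_b_small_classes} with $\mathfrak{B} = \BNP$, using Proposition \ref{prop:examples_of_bornological_categories} to know $\BNP$ is a bornological class. No substantive difference in approach.
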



\begin{remark} \label{r:Asp} 
	Recall that a bounded subset $A$ in a Banach space $V$ is said to be an \textit{Asplund subset} (Fabian \cite[p.~22]{Fabian}), or, a \textit{Stegall subset} in terms of Bourgin \cite[p.~121]{Bourgin} if the pseudometric space $(V^*,\rho_C)$ is separable for every 
	countable 
	$C \subset A$, where 
	$$
	\rho_C(\phi,\psi):=\sup_{x\in C} |\phi(x)-\psi(x)|. 
	$$	
Equivalently, $(B_{V^*},\rho_C)$ is separable. 

\sk 
This definition is compatible with Definition \ref{d:AspSet} as it follows from Lemma \ref{t:countDetermined}. 
\end{remark}

\begin{lem} \label{lem:NP_equivalence}
	The following conditions are equivalent:
	\begin{enumerate}
		\item $E$ is $\NP$;  
		
		\item every 
		(countable) 
		bounded $B \subset E$ subset is an Asplund subset in $E$. 
	\end{enumerate}
\end{lem}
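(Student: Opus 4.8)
The plan is to show that the two conditions are equivalent by reducing everything to the countable-determinacy of fragmentability for families of continuous functions, as encoded in Lemma \ref{t:countDetermined}. The implication $(1)\Rightarrow(2)$ is essentially immediate from the definitions together with Proposition \ref{prop:b_small_subspace}: if $E\in\NP$, then by Definition \ref{d:NP} every $K\in\eqc(E^*)$ is $(w^*,\text{strong})$-fragmented, which by the reformulation following Fact \ref{f:Asp-charact} (and Lemma \ref{lemma:fragmented_family_as_simple_fragmentation}) says exactly that every bounded $B\subseteq E$ is fragmented on each such $K$, i.e. $B$ is an Asplund subset in the sense of Definition \ref{d:AspSet}; a fortiori every \emph{countable} bounded $B$ is Asplund. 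So the real content is $(2)\Rightarrow(1)$, and the role of the word ``countable'' in the parenthetical is exactly what Lemma \ref{t:countDetermined} is designed to handle.

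For $(2)\Rightarrow(1)$, fix $K\in\eqc(E^*)$; I must show $K$ is $(w^*,\text{strong})$-fragmented. Let $r_K\colon E\to C(K)$ be the restriction operator. The strong uniformity on $E^*$ restricted to $K$ is, by Definition \ref{d:strongTOP}, generated by the bounded subsets $B$ of $E$ viewed as families of functions on $K$; hence $K$ is $(w^*,\text{strong})$-fragmented iff for every bounded $B\subseteq E$ the family $r_K(B)\subseteq C(K)$ is a fragmented family on $K$ (using Lemma \ref{lemma:fragmented_family_as_simple_fragmentation} to pass between ``$K$ fragmented by the map into $\R^{r_K(B)}$'' and ``$r_K(B)$ a fragmented family''). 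Now apply Lemma \ref{t:countDetermined} to the bounded family $F:=r_K(B)$ of continuous functions on the compact space $K$: $F$ is a fragmented family on $K$ iff every countable subfamily $C$ of $F$ is fragmented on $K$. Each such countable $C$ has the form $r_K(B_0)$ for a countable subset $B_0\subseteq B$, which is itself bounded; by hypothesis $(2)$, $B_0$ is an Asplund subset of $E$, hence fragmented on $K$, i.e. $C=r_K(B_0)$ is a fragmented family on $K$. Therefore $F=r_K(B)$ is fragmented on $K$, and since $B$ and $K$ were arbitrary, $E\in\NP$.

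The only mild subtlety — and the step I would be most careful about — is the identification of ``$K$ is $(w^*,\text{strong})$-fragmented'' with ``$r_K(B)$ is a fragmented family on $K$ for every bounded $B\subseteq E$.'' This requires noting that the strong uniformity on $E^*$ has a basis of entourages $U[B,\eps]$ indexed by bounded $B$, so that $\eps$-smallness in the strong uniformity of a subset of $K$ amounts to $\eps$-smallness of the values $\{\langle b,\varphi\rangle : \varphi\in (\,\cdot\,)\}$ simultaneously over all $b$ in each fixed bounded $B$; this is precisely the condition that each $r_K(B)$ be a fragmented family (Remark \ref{r:reform}), and it suffices to test it on a single bounded $B$ at a time since fragmentation for $U[B,\eps]$ only involves that one $B$. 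Everything else is a routine unwinding of Definitions \ref{d:NP}, \ref{d:AspSet} and a direct appeal to Lemma \ref{t:countDetermined}, so no genuine obstacle remains.
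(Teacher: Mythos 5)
Your proposal is correct and follows essentially the same route as the paper: both unwind Definition \ref{d:NP} so that $(w^*,\text{strong})$-fragmentability of each $K\in\eqc(E^*)$ is identified, via the entourages $U[B,\eps]$ of the strong uniformity, with every bounded $B$ being a fragmented family on $K$ (i.e.\ an Asplund subset), and then invoke Lemma \ref{t:countDetermined} to reduce to countable bounded subsets. Your write-up is merely more explicit than the paper's about the uniformity bookkeeping; no substantive difference.
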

\begin{proof}
	$E$ is $\NP$ means that every 
	subset $K \in \eqc(E^{*})$ is (weak*, strong)-fragmented. 
	That is, for every subset $A \subseteq K$, $\eps > 0$ and bounded set $B \subseteq V$, there exists a weak-star open set $O \subseteq E^{*}$ such that $A \cap O$ is nonempty and 
	$(\eps, B)$-small.
	This is equivalent to $(A \cap O)(x)$ being $\eps$-small for every $x \in B$. 
	This just means that $B$ is a fragmented family of functions on $K$.
	By Lemma \ref{t:countDetermined}, it is equivalent that for every countable subset $C$ of $B$ is fragmented on $K$.

%
%
%
%
\end{proof} 

Note that $\NP$ lcs has several remarkable properties. For the continuity of dual actions, see \cite{Me-fr} and for the fixed point theorems, \cite{GM-fpt} and \cite{Wis}. 

\sk 
\begin{exs} \cite{Me-fr}  
	\label{ex:fromNZ} 
	\begin{enumerate}
		\item A Banach space is $\NP$ iff it is Asplund. 
		\item Every Frechet differentiable lcs is $\NP$. 
		\item If the dual $V^*$ 
		 is a linear subspace in a product of separable lcs, then $V$ is $\NP$. 
		\item 
		Every DLP subset in a lcs is Asplund and hence $\DLP \subset \NP$. 
	\end{enumerate}
\end{exs}

\sk  
\section{Locally convex analogue of Rosenthal Banach spaces} 

\sk 

\begin{defin} \label{d:TameSet} 
	We say that a bounded subset $B$ of a lcs $E$ is \textit{tame} in $E$ if one of the following equivalent conditions 
	(by Lemma \ref{f:sub-fr}) 
	is satisfied:
	\begin{itemize} 
		\item [(i)] $B$ is tame 
		(Definition \ref{d:TameFamily}) 
		over every 
		$K \in \eqc(E^{*})$.  
		In other words, $B \in \bsmall[\BTame]{E}$.
		\item [(ii)] $B$ is eventually fragmented over every 
		$K \in \eqc(E^{*})$. 
	\end{itemize}

	In the spirit of Definition \ref{def:strong_b_small}, we will say that a
	bounded subset is \emph{Mackey} tame if it is tame over \textit{every} weak-star compact
	(not necessarily equicontinuous) 
	subset $K \subseteq E^{*}$.
\end{defin}

\begin{remark} \label{r:preserv} 
	By Lemma \ref{lemma:class_induces_bornology} and Proposition \ref{prop:examples_of_bornological_categories}, the family of tame 
	 (Asplund, DLP) subsets in a given lcs is a
	 \textit{convex bornology} in the sense of \cite{Born} and \textit{a saturated bornology} in the sense of \cite[p.~153]{Jarchow}. 
	 
\end{remark}


\sk 
By Lemma \ref{l:RosBanSpCharact}, a Banach space $V$ is Rosenthal iff 
every bounded subset $F \subset V$ is tame. 
Motivated by this reformulation,
we introduce here the following locally convex 
analogue of Rosenthal Banach spaces.

\begin{defin} \label{def:tame_lcs}
	A locally convex space $E$ is said to be \textit{tame} if every bounded subset $B$ of $E$ is tame.
	In other words, $E$ is tame if and only if $E$ is $\BTame$-small.
	In this case we  write $E \in \Tame$.
	
	\sk 
	Similarly, 
	a space is \textit{Mackey tame} if every bounded subset is Mackey tame. 
	In this case we will write $E \in \sTame$.
\end{defin}

\sk 
	As in Lemma \ref{lem:NP_equivalence}, we may assume without loss of generality that $B$ is countable. 
	We have the inclusions (note that s. stands for ``subsets"): 
 
\sk
\nt \{weakly compact s.\} $\subset$ (DLP s.) $\subset$ \{Asplund s.\} $\subset$ \{tame s.\} $\subset$  \{bounded s.\}
\sk
\nt \{semi-reflexive lcs\} $\subset \DLP \subset \NP \subset \Tame \subset \{lcs\} $



\begin{prop} \label{p:tameness_in_barrelled_spaces} \ 
	\begin{enumerate}
		\item For Mackey spaces (e.g., barreled or metrizable), strong tameness is equivalent to tameness.
		\item 	A Banach space is a tame lcs iff it is a Rosenthal Banach space. 
	\end{enumerate}
\end{prop}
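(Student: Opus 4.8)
The plan is to deduce both parts from facts already in hand: part (1) from Corollary \ref{cor:mackey_spaces_and_b_smallness} (equivalently Proposition \ref{prop:strong_b_small_and_mackey}) applied to the bornological class $\mathfrak{B}=\BTame$, and part (2) from the Banach-space characterization in Lemma \ref{l:RosBanSpCharact}.

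For (1), recall that by Definition \ref{def:tame_lcs} ``$E$ is tame'' means ``$E$ is $\BTame$-small'' and ``$E$ is Mackey tame'' ($E\in\sTame$) means ``$E$ is Mackey $\BTame$-small''; the only subtlety is that Definition \ref{d:TameSet} phrases Mackey tameness using \emph{all} weak-star compact subsets of $E^{*}$, whereas Definition \ref{def:strong_b_small} uses \emph{absolutely convex} ones. I would dispose of this first: one implication is trivial, and for the other I use that tameness is inherited by subsets (Lemma \ref{lem:surjective_image_of_independent_sequence} applied to an inclusion), together with the observation that an equicontinuous weak-star compact $K\subseteq E^{*}$ lies in the absolutely convex weak-star compact set $r_{K}^{*}\bigl(B_{C(K)^{*}}\bigr)$, where $r_{K}\colon E\to C(K)$ is the evaluation map, $i\colon K\hookrightarrow C(K)^{*}$ is the canonical embedding with $r_{K}^{*}\circ i=\mathrm{id}_{K}$, and one invokes Lemma \ref{lemma:discrete_measures_are_dense} and Fact \ref{f:adjoint_is_continuous}; for barrelled $E$ every weak-star bounded subset of $E^{*}$ is already equicontinuous, so nothing is lost there. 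With the two notions identified, Corollary \ref{cor:mackey_spaces_and_b_smallness} yields $E\in\Tame\iff E$ is $\BTame$-small $\iff E$ is Mackey $\BTame$-small $\iff E\in\sTame$.

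For (2), let $V$ be a Banach space. By Definition \ref{def:tame_lcs}, ``$V$ is a tame lcs'' means every bounded subset of $V$ is a tame family over every $K\in\eqc(V^{*})$. Since in a normed space a subset of $V^{*}$ is equicontinuous exactly when it is norm-bounded, $\eqc(V^{*})$ consists of the norm-bounded weak-star closed subsets of $V^{*}$, and in particular $B_{V^{*}}=B_{V}^{\circ}$ itself lies in $\eqc(V^{*})$. If $V$ is a tame lcs, then specializing the definition to the bounded set $B_{V}$ and the equicontinuous set $B_{V^{*}}$ shows that $B_{V}$ is a tame subset of $V$ in the sense of Definition \ref{d:tameF}, i.e.\ condition (3) of Lemma \ref{l:RosBanSpCharact} holds, so $V$ is Rosenthal. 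Conversely, if $V$ is Rosenthal, then by condition (4) of Lemma \ref{l:RosBanSpCharact} every bounded subset of $V$ is tame for every bounded subset of $V^{*}$; as every $K\in\eqc(V^{*})$ is norm-bounded, this says exactly that $V$ is a tame lcs.

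I do not expect a genuine obstacle here, since all the relevant machinery has already been assembled; the one place I would be careful is, in part (1), the reconciliation of the two versions of ``Mackey small'' (all weak-star compact versus absolutely convex weak-star compact subsets), which is handled by the construction $K\mapsto r_{K}^{*}(B_{C(K)^{*}})$ together with Lemma \ref{lem:surjective_image_of_independent_sequence}. Part (2) is purely a matter of correctly unwinding the definitions and noticing that $B_{V^{*}}$ is itself an admissible equicontinuous set.
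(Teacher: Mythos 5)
Your proposal is correct and takes essentially the same route as the paper, whose entire proof of this proposition is the citation ``Corollary \ref{cor:mackey_spaces_and_b_smallness} and Lemma \ref{l:RosBanSpCharact} respectively'' --- i.e.\ part (1) is the specialization of the Mackey/$\mathfrak{B}$-small equivalence to the class $\BTame$ and part (2) is exactly your unwinding of conditions (3) and (4) of Lemma \ref{l:RosBanSpCharact} with $B_{V^{*}}\in\eqc(V^{*})$. The definitional mismatch you flag (Definition \ref{d:TameSet} quantifies over all weak-star compact subsets, Definition \ref{def:strong_b_small} only over absolutely convex ones) is real but is silently glossed over by the paper as well; just note that your patch via $K\mapsto r_{K}^{*}\bigl(B_{C(K)^{*}}\bigr)$, resp.\ barrelledness, only covers equicontinuous $K$, so for a non-barrelled Mackey space the literal form of Definition \ref{d:TameSet} is no more fully reconciled by your argument than by the paper's.
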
 
\begin{proof} 
	Corollary \ref{cor:mackey_spaces_and_b_smallness} and Lemma \ref{l:RosBanSpCharact} respectively.
\end{proof}
The following is a direct consequence of Theorem \ref{thm:properties_of_b_small_classes}.
\begin{thm} \label{thm:properties_of_tame_class}
	The class $\Tame$ is closed under
	taking:
	\ben
	\item subspaces
	\item bound covering maps
	\item 
	products
	\item 
	direct sums
	\item inverse limits.
	\een
	Moreover, if $F$ is a large, dense subspace of the locally convex space $E$, and $F \in \Tame$, then $E \in \Tame$.
	In particular, if $V$ is a normed tame space, then so is its completion.
\end{thm}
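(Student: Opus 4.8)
The plan is simply to specialize the general machinery of Section~\ref{s:Born} to the bornological class $\mathfrak{B}=\BTame$. By Proposition~\ref{prop:examples_of_bornological_categories} the assignment $\BTame$ is a bornological class, and by Definition~\ref{def:tame_lcs} a lcs $E$ belongs to $\Tame$ precisely when it is $\BTame$-small. Consequently each of the five stability assertions (closure under subspaces, bound covering maps, products, direct sums and inverse limits) is exactly the corresponding clause of Theorem~\ref{thm:properties_of_b_small_classes} read off for $\mathfrak{B}=\BTame$; similarly the assertion ``$F$ large and dense in $E$ and $F\in\Tame$ imply $E\in\Tame$'' is the final displayed statement of that theorem.

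For the last sentence I would argue as follows. If $V$ is a normed tame space, then $V$ is dense in its completion $\widehat V$, and every dense subspace of a normed space is large (as recalled in the text just before Lemma~\ref{lemma:large_subspace_strong_isomorphism}). Hence $V$ is a large dense subspace of $\widehat V$, and the large-dense-subspace clause already quoted yields $\widehat V\in\Tame$.

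I do not expect any genuine obstacle here: the entire content has been front-loaded into Theorem~\ref{thm:properties_of_b_small_classes}, whose proof in turn only uses that $\BTame$ satisfies the three axioms of a bornological class, verified in Proposition~\ref{prop:examples_of_bornological_categories} (boundedness by definition, consistency via Lemma~\ref{lem:surjective_image_of_independent_sequence}, bipolarity via Corollary~\ref{cor:fragmented_bipolarity} together with Lemma~\ref{f:sub-fr}). The only point that is not purely formal --- that an inverse limit of $\BTame$-small spaces is again $\BTame$-small --- is likewise absorbed into Theorem~\ref{thm:properties_of_b_small_classes}, since an inverse limit embeds as a subspace of the product of the spaces of the system and the class is stable under products and subspaces.
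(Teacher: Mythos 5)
Your proposal is correct and is exactly the paper's argument: the theorem is stated there as a direct consequence of Theorem~\ref{thm:properties_of_b_small_classes} applied to the bornological class $\BTame$ (Proposition~\ref{prop:examples_of_bornological_categories} and Definition~\ref{def:tame_lcs}), with the completion statement covered by the large-dense-subspace clause, since a dense subspace of a normed space is large. No gaps.
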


\begin{q} \label{q:completion} 
	Is it true that the completion of a tame lcs is always tame ? 
\end{q}

\sk 

 \sk 
A nonempty class of lcs is said to be a \textit{variety} \cite{DMS}
if it is closed under the operations of taking subspaces, quotients, arbitrary products and isomorphisms.  In particular, it is closed also under the inverse limits. For every subclass $K$ of locally convex spaces, the intersection of all varieties containing $K$ is a variety \textit{generated} by $K$. Notation: ${\mathcal V}(K)$.

\begin{prop} \label{p:variety} 
	The variety ${\mathcal V}(R)$ generated by the class of all Banach tame (i.e., Rosenthal) spaces is \textbf{properly} contained in $\Tame$.  In particular, not every tame lcs can be embedded into a product of tame Banach  spaces. Similar assertion is true for $\NP$. 
\end{prop}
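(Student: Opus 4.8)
The plan is to sandwich the variety between two classes: first to show that $\mathcal{V}(R)$ is contained in $\Tame$, and then to exhibit a tame locally convex space lying outside it. For the inclusion I would not argue by hand but would pin down a usable description of $\mathcal{V}(R)$ via Diestel--Morris--Saxon \cite{DMS}. Since $\Tame$ is closed under subspaces and products (Theorem \ref{thm:properties_of_tame_class}), the class $\mathbf{SP}(R)$ of all lcs embeddable into a product of Rosenthal Banach spaces is already contained in $\Tame$; so it suffices to know that $\mathcal{V}(R)$ adds nothing beyond $\mathbf{SP}(R)$, i.e.\ that the quotients appearing in the construction of the variety stay inside $\mathbf{SP}(R)$. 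This is precisely the sort of statement the Diestel--Morris--Saxon machinery supplies for varieties generated by a class of Banach spaces, the inputs being that $R$ is closed under closed subspaces and finite products and that a quotient of a Rosenthal Banach space is again Rosenthal (lift a bounded sequence, extract a weak Cauchy subsequence and push it forward). The $\NP$ analogue is parallel, with ``Rosenthal'' replaced by ``Asplund'' and the classical facts that subspaces, quotients and finite products of Asplund Banach spaces are Asplund.

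For strictness we need a concrete tame (resp.\ $\NP$) space that embeds into no product of Rosenthal (resp.\ Asplund) Banach spaces. Here one must be careful: the naive candidate, the countable-dimensional space $\R^{(\N)}$ with its finest locally convex topology, does \emph{not} work --- its topology is already generated by the Hilbertian seminorms $x \mapsto \big(\sum_n (n/w_n)^2 x_n^2\big)^{1/2}$, which dominate the weighted $l^1$-norms $\sum_n |x_n|/w_n$ by Cauchy--Schwarz, so $\R^{(\N)}$ embeds into a product of Hilbert spaces and hence lies in $\mathcal{V}(R)$. A genuine witness has to possess a continuous seminorm $p$ for which \emph{every} continuous seminorm $q \ge p$ has non-Rosenthal (resp.\ non-Asplund) completion, while at the same time all its bounded subsets remain tame (resp.\ Asplund) on every equicontinuous weak-star compact set. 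Natural places to look are the spaces $C_k(X)$, whose local Banach spaces are the $C(C)$ with $C \subseteq X$ compact (cf.\ Proposition \ref{p:C(X)}), and the free locally convex spaces $L(X)$: in particular $L(\N^{\N})$ is $\DLP$ --- hence $\NP$ and tame --- yet is known (Leiderman--Uspenskij \cite{LU}) to fail multi-reflexivity, and the task would be to strengthen this to failure of embeddability into a product of Rosenthal (resp.\ Asplund) Banach spaces.

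The step I expect to be the crux is exactly the use of \cite{DMS} to control the quotients in $\mathcal{V}(R)$: the class $\Tame$ itself is \emph{not} obviously closed under quotients, since bounded subsets of a quotient need not lift to bounded subsets, so the inclusion $\mathcal{V}(R) \subseteq \Tame$ cannot be obtained by a soft ``$\mathbf{SPQ}$'' argument and genuinely requires the Diestel--Morris--Saxon analysis of varieties generated by Banach spaces. The second delicate point is the construction and verification of the witness: one has to certify tameness at the level of bounded sets on equicontinuous sets while simultaneously ruling out any presentation of the topology by Rosenthal-completion (resp.\ Asplund-completion) seminorms. Note that the ``in particular'' assertion of the Proposition --- that not every tame lcs embeds into a product of tame Banach spaces --- is exactly this last failure, and is therefore a byproduct of the same witness.
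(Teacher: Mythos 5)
Your first half is essentially the paper's argument: you invoke Diestel--Morris--Saxon exactly as the paper does, using that the class $R$ of Rosenthal Banach spaces is closed under quotients and finite products to conclude that ${\mathcal V}(R)$ is just the class of subspaces of products of Rosenthal Banach spaces, which lies in $\Tame$ by Theorem \ref{thm:properties_of_tame_class}. That part is fine and matches the paper.

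The genuine gap is the strictness. You never produce a working witness: you explicitly defer ``the task'' of showing that $L(\N^{\N})$ (or some $C_k(X)$) embeds into no product of Rosenthal, resp.\ Asplund, Banach spaces, and in fact whether $L(\N^{\N})$ is multi-Rosenthal or multi-Asplund is posed as an open question in the paper (Question \ref{q:multi}); failure of multi-reflexivity from \cite{LU} does not give what you need. So your route, as written, does not prove properness. The paper sidesteps any direct non-embeddability verification by exploiting quotient-stability of varieties: take the Fr\'echet--Montel space $E$ from \cite[IV, Ex.~20]{Schaefer} admitting a closed subspace $M$ with $E/M \cong l^1$. Being Montel, $E$ is $\DLP$, hence $\NP$ and tame; but if $E$ belonged to ${\mathcal V}(R)$, then so would its quotient $l^1$, and since ${\mathcal V}(R)\subseteq \Tame$ this would make $l^1$ tame (i.e.\ Rosenthal), a contradiction. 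The same example settles the $\NP$ case. This ``tame space with a non-tame Banach quotient'' device is the missing idea; without it (or an actual proof that some concrete tame lcs is not multi-Rosenthal), your argument establishes only the inclusion, not that it is proper.
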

\begin{proof} 
	Use results of this section and also the following facts: 
	\begin{itemize} 
		\item [(a)] 
		The classes 
		$\DLP$, 
		$\NP$ and $\Tame$ are not closed under quotients. As it was mentioned in \cite[IV, Ex. 20]{Schaefer}, there exists a Frechet Montel 
		(hence, $\DLP$)
		space $E$ and a closed subspace $M$ such that the quotient space $E/M$ is the Banach space $l^1$ (which, of course, is not tame). 
		\item [(b)] 
		The class $R$ of all Banach tame (i.e., Rosenthal) spaces 
		is closed under finite products and quotients, Hence by 
		\cite[Thm.~1.4]{DMS},
		the variety ${\mathcal V}(R)$ of all lcs generated by R is just the class of all subspaces in products of Rosenthal Banach spaces. This implies that the Montel Frechet space $E$ from (a), which is 
		$\DLP$ 
		and hence tame, cannot be embedded into a product of tame Banach spaces because a quotient of $E$ is $l^1$ (and 
		variety is closed under quotients). 
	\end{itemize}  
\end{proof}

\begin{f} \label{fact:scattered_compact_spaces} \cite[Main Theorem]{CKIII}
	Let $K$ be a compact space. The following are equivalent:
	\ben
		\item $l^{1}$ cannot be embedded in $C(K)$.
		\item The dual of every separable Banach subspace of $C(K)$ is separable.
		\item $K$ is scattered.
	\een
	In other words, $C(K)$ as a Banach space is Rosenthal, iff it is Asplund iff $K$ is scattered.
\end{f}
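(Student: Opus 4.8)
The plan is to establish the cycle $(2)\Rightarrow(1)\Rightarrow(3)\Rightarrow(2)$, after observing that $(1)$ says precisely that $C(K)$ is a Rosenthal Banach space (Rosenthal's $l^{1}$-theorem \cite{Ros0}, or Lemma \ref{l:RosBanSpCharact}), and $(2)$ says precisely that $C(K)$ is an Asplund space. For $(2)\Rightarrow(1)$ I would simply invoke the general implication ``Asplund $\Rightarrow$ Rosenthal'' for Banach spaces, which is contained in the inclusion $\NP\subseteq\Tame$ of Remark \ref{r:repres}: were $l^{1}$ to embed in the Asplund space $C(K)$, then $l^{1}$ itself would be Asplund, which is impossible since $l^{1}$ is separable while $(l^{1})^{*}=l^{\infty}$ is not.

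For $(1)\Rightarrow(3)$ I would argue by contraposition. If $K$ is not scattered, then iterating the Cantor--Bendixson derivative yields a nonempty closed $P\subseteq K$ with no isolated points, and a classical fact (see e.g.\ Semadeni's book on spaces of continuous functions) gives a continuous surjection $\varphi\colon P\to\{0,1\}^{\N}$ onto the Cantor cube. The coordinate projections $\{p_{n}\}_{n\in\N}$ on $\{0,1\}^{\N}$ form an independent sequence of continuous functions, hence an $l^{1}$-sequence in $C(\{0,1\}^{\N})$ (\cite[Prop.~4]{Ros0}, as recalled after Definition \ref{d:TameFamily}); composing, $g_{n}:=p_{n}\circ\varphi$ is an $l^{1}$-sequence in $C(P)$. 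Extending each $g_{n}$ to $\tilde g_{n}\in C(K)$ by the Tietze theorem and using $\bigl\|\sum_{i}c_{i}\tilde g_{i}\bigr\|_{C(K)}\ge\bigl\|\sum_{i}c_{i}g_{i}\bigr\|_{C(P)}\ge\delta\sum_{i}|c_{i}|$, we conclude that $\{\tilde g_{n}\}$ is an $l^{1}$-sequence in $C(K)$, so $l^{1}$ embeds in $C(K)$ and $(1)$ fails.

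For $(3)\Rightarrow(2)$ I would use the standard measure-theoretic argument: if $K$ is scattered then every Radon measure on $K$ is purely atomic, since a nonzero non-atomic $\mu$ would have $\mathrm{supp}(\mu)$ a nonempty closed subset of $K$ with no isolated points (a point isolated in $\mathrm{supp}(\mu)$ carries positive mass), contradicting scatteredness. Hence $(a_{k})\mapsto\sum_{k}a_{k}\delta_{k}$ is an isometric isomorphism $l^{1}(K)\to C(K)^{*}$. Since $l^{1}(\Gamma)$ has the Radon--Nikod\'ym property for every index set $\Gamma$, the dual $C(K)^{*}$ has the RNP, and a Banach space whose dual has the RNP is Asplund; thus $(2)$ holds. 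In the language of the present paper one may instead verify directly that the ball $B_{l^{1}(K)}$ is weak$^{*}$-to-norm fragmented --- given a nonempty $A$ and $\eps>0$, fix $\mu\in A$, choose a finite $F\subseteq K$ with $\sum_{k\notin F}|\mu(\{k\})|<\eps/3$, and cut $A$ down by a weak$^{*}$-neighbourhood controlling the coordinates in $F$ --- and then apply Fact \ref{f:Asp-charact}.

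The step I expect to be the real obstacle is $(3)\Rightarrow(2)$: isolating ``scattered $\Rightarrow$ all Radon measures atomic'' rests on the Cantor--Bendixson analysis, and concluding Asplundness from it rests on the nontrivial characterization of Asplund spaces via the Radon--Nikod\'ym property of the dual; the other implications are soft. Of course this equivalence is exactly the Main Theorem of \cite{CKIII}, so in the present paper it is quoted rather than reproved.
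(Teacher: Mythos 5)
The paper itself offers no proof here: the statement is quoted as a Fact from \cite{CKIII}, so your sketch is reproving a classical theorem rather than paralleling anything in the text. Within your sketch there is one genuine gap, in the step $(1)\Rightarrow(3)$. The ``classical fact'' you invoke --- that a nonempty closed subset $P\subseteq K$ without isolated points admits a continuous surjection onto the Cantor cube $\{0,1\}^{\N}$ --- is false as stated: $P=[0,1]$ is compact, perfect and connected, and a continuous image of a connected space is connected, so no surjection onto $\{0,1\}^{\N}$ exists. What is true (and what Pe\l{}czy\'nski--Semadeni actually use) is either that a non-scattered compact space admits a continuous surjection onto $[0,1]$, after which $C[0,1]$ embeds isometrically into $C(K)$ and $l^{1}\subseteq C[0,1]$ finishes the argument, or that the standard dyadic system of open sets inside $P$ produces a \emph{closed subset} $F\subseteq P$ together with a continuous surjection $F\to\{0,1\}^{\N}$; in the latter case your independence/Tietze extension argument goes through verbatim with $F$ in place of $P$. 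So the step is repairable, but as written it rests on a false statement.

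A secondary caveat: in $(3)\Rightarrow(2)$ your main route (all Radon measures atomic, hence $C(K)^{*}\cong l^{1}(K)$, which has RNP, hence $C(K)$ is Asplund by Stegall's duality) is correct, but the proposed ``direct'' fragmentability verification is not sound as sketched. Fixing $\mu\in A$, a finite $F\subseteq K$ carrying most of its mass, and a weak-star neighbourhood ``controlling the coordinates in $F$'' does not bound the norm diameter of the slice: point evaluations $\nu\mapsto\nu(\{k\})$ are not weak-star continuous at non-isolated $k$, and already for $K=\{0\}\cup\{1/n\}_{n\in\N}$ the measures $\delta_{1/n}$ are weak-star close to $\delta_{0}$ while $\lVert\delta_{1/n}-\delta_{0}\rVert=2$. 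A correct direct proof has to exploit the Cantor--Bendixson hierarchy (or a dentability/maximal-atom argument), not an arbitrary choice of $\mu$ and $F$; otherwise stick with the RNP route.
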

This result motivates a generalization to the locally convex space $C_{k}(X)$ with the compact open topology when $X$ is not compact.
In \cite[Lemma 6.3]{Networks}, Gabriyelyan--Kakol--Kubi\'{s}--Marciszewski 
gave a natural generalization showing 
that the same statement remains valid 
for the lcs $C_k(X)$ where $X$ is a Tychonoff space $X$ which is not necessarily compact,
if we require that every compact subset of $X$ is scattered.
We give now another generalization of Fact \ref{fact:scattered_compact_spaces} 
involving tame and NP lcs.


\begin{prop} \label{p:C(X)} For every topological space $X$ 
	the following are equivalent:
	\ben
	\item $C_{k}(X)$ is a tame lcs.
	\item $C_{k}(X)$ is $\NP$.
	\item Every compact subset of $X$ is scattered.
	\een	
\end{prop}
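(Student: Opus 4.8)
The plan is to prove the chain of implications $(1) \Rightarrow (3) \Rightarrow (2) \Rightarrow (1)$, using Fact \ref{fact:scattered_compact_spaces} (the Cembranos--Kalton--Saab--Saab type result) as the bridge to the compact case, together with the standard facts relating $\eqc(C_k(X)^*)$ to the compact subsets of $X$.

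First I would unwind what the equicontinuous weak-star compact subsets of $C_k(X)^*$ look like. For the compact-open topology, a neighborhood basis of zero is given by the sets $W(Q,\eps) = \{f : \sup_{x\in Q}|f(x)| < \eps\}$ where $Q \subseteq X$ is compact. Hence every equicontinuous subset of $C_k(X)^*$ is contained in the polar $W(Q,\eps)^\circ$ for some compact $Q$ and some $\eps > 0$, and by Alaoglu--Bourbaki (Fact \ref{l:Al-Bo}) its weak-star closure stays equicontinuous and weak-star compact. So, up to scaling, a cofinal family in $\eqc(C_k(X)^*)$ is $\{W(Q,1)^\circ : Q \subseteq X \text{ compact}\}$. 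The key linear-algebra observation is that the restriction map $r_Q \colon C_k(X) \to C(Q)$ is continuous, onto a dense (in fact, by Tietze, all of $C(Q)$ when $X$ is normal, but density is enough) subspace, and that $W(Q,1)^\circ$ is precisely $r_Q^*(B_{C(Q)^*})$ -- i.e., the functionals in $C_k(X)^*$ supported on $Q$ and bounded by $1$ are exactly pullbacks of the unit ball of $C(Q)^*$ under $r_Q^*$. This identifies the ``tango'' of a bounded $B \subseteq C_k(X)$ against $W(Q,1)^\circ$ with the tango of $r_Q(B)$ against $B_{C(Q)^*}$, and by Lemma \ref{p:from X to B*} (resp.\ its NP analogue mentioned in the Remark after Cor.\ \ref{cor:tame_is_polarly_compatible}) the latter is equivalent to $r_Q(B)$ being a tame (resp.\ fragmented) family \emph{on $Q$ itself}.

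With this reduction in hand the proof proceeds as follows. For $(3) \Rightarrow (1)$ and $(3) \Rightarrow (2)$: assume every compact $Q \subseteq X$ is scattered. Let $B \subseteq C_k(X)$ be bounded; I must show $r_Q(B)$ is tame (resp.\ fragmented) on $Q$ for each compact $Q$. Since $B$ is bounded in $C_k(X)$, $r_Q(B)$ is a norm-bounded subset of $C(Q)$. By Fact \ref{fact:scattered_compact_spaces}, $Q$ scattered forces $C(Q)$ to be Rosenthal and Asplund as a Banach space, so by Lemma \ref{l:RosBanSpCharact} every norm-bounded subset of $C(Q)$ is tame, and every bounded subset is a fragmented family on $B_{C(Q)^*}$ hence (by consistency/restriction) on $Q$. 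Thus $r_Q(B)$ is tame (resp.\ fragmented), giving $C_k(X) \in \Tame$ (resp.\ $\in \NP$). The implications $(1) \Rightarrow (3)$ and $(2) \Rightarrow (3)$ go by contraposition: if some compact $Q \subseteq X$ is \emph{not} scattered, then by Fact \ref{fact:scattered_compact_spaces}, $C(Q)$ contains an isomorphic copy of $l^1$, so there is a norm-bounded sequence $\{g_n\}$ in $C(Q)$ that is an $l^1$-sequence, hence (Lemma \ref{f:sub-fr}) an independent sequence with some bounds of independence $a<b$. Since $r_Q$ has dense image in $C(Q)$ (or: extend $g_n$ by Tietze if $X$ is normal; more robustly, pick $f_n \in C_k(X)$ with $\sup_Q|f_n - g_n|$ small enough to preserve the strict inequalities in the definition of independence -- a perturbation argument as in Lemma \ref{lemma:closure_of_eventually_fragmented}), we get a bounded sequence $\{f_n\}$ in $C_k(X)$ whose restrictions to $Q$ are independent, so $\{f_n\}$ is a bounded subset of $C_k(X)$ that is not tame over $W(Q,1)^\circ \in \eqc(C_k(X)^*)$; hence $C_k(X)$ is not tame, and not $\NP$ (since $\Tame \supseteq \NP$, non-tame implies non-NP as well; alternatively argue the NP failure directly since an independent sequence is not a fragmented family). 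Finally $(1) \Leftrightarrow (2)$ is now automatic from the two directions through $(3)$.

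The main obstacle I anticipate is the density/extension issue: the restriction map $C_k(X) \to C(Q)$ need not be surjective for general Tychonoff $X$, so I cannot literally pull an $l^1$-sequence of $C(Q)$ back into $C_k(X)$, nor literally push $r_Q(B)$ down to ``all of $C(Q)$.'' This is handled on the positive side by noting boundedness and tameness/fragmentedness pass to subfamilies and that Lemma \ref{p:from X to B*} applies to any bounded subfamily of $C(Q)$, not just the unit ball; and on the negative side by the perturbation trick (an independent sequence stays independent under uniform perturbations strictly smaller than $\tfrac12(b-a)$, and one can approximate each $g_n$ on $Q$ within any prescribed error by an element of $C_k(X)$ since restrictions are dense in $C(Q)$ by complete regularity of $X$ -- indeed $r_Q(C(X))$ is a subalgebra separating points of $Q$ and containing constants, so dense by Stone--Weierstrass). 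I would also double-check that $W(Q,1)^\circ$ is genuinely weak-star compact and equicontinuous (immediate from Alaoglu--Bourbaki) and that it is cofinal in $\eqc(C_k(X)^*)$, which is where the definition of the compact-open topology enters.
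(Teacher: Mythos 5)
Your positive direction $(3)\Rightarrow(1),(2)$ is sound and is a legitimate alternative to the paper's route: where the paper embeds $C_k(X)$ into $\varprojlim_{K} C(K)$ and invokes closure of $\Tame$ and $\NP$ under subspaces and inverse limits, you reduce directly to $C(Q)$ via the identification $W(Q,1)^{\circ}=r_Q^{*}(B_{C(Q)^{*}})$ and Lemma \ref{lemma:b_small_and_adjoint_maps}; both hinge on Fact \ref{fact:scattered_compact_spaces}. The genuine gap is in your contrapositive $(1)\Rightarrow(3)$. To contradict tameness of $C_k(X)$ you must exhibit a \emph{bounded} subset of $C_k(X)$ that is not tame, and boundedness in $C_k(X)$ means uniform boundedness on \emph{every} compact subset of $X$, not just on $Q$. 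Your construction chooses $f_n\in C(X)$ with $\sup_{Q}|f_n-g_n|$ small (by Stone--Weierstrass density of $r_Q(C(X))$ in $C(Q)$), which controls the $f_n$ only on $Q$; nothing prevents $\sup_n\sup_{Q'}|f_n|=\infty$ on some other compact $Q'\subseteq X$, in which case $\{f_n\}$ is not a bounded subset of $C_k(X)$ and the non-tameness of $C_k(X)$ does not follow. The assertion ``we get a bounded sequence $\{f_n\}$ in $C_k(X)$'' is exactly the unjustified step; the perturbation estimate $\delta<\tfrac12(b-a)$ preserves independence but says nothing about boundedness off $Q$.

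This is fixable, and the fix is precisely where the paper's proof differs from yours: the paper shows the restriction map $r\colon C_k(X)\to C(K)$ is \emph{bound covering} by extending $f\in C(K)$ over $\beta X$ via Tietze (where $K$ is closed and the extension can be taken with the same sup norm), so the extension is globally bounded by $\lVert f\rVert$ and hence bounded in $C_k(X)$; then closure of $\Tame$ under bound covering images transfers tameness to $C(K)$ and Fact \ref{fact:scattered_compact_spaces} finishes. Within your own scheme the cheapest repair is a truncation: having chosen $f_n$ with $\sup_Q|f_n-g_n|<\delta$, replace it by $\tilde f_n:=\max\bigl(\min(f_n,M),-M\bigr)$ with $M:=\sup_n\lVert g_n\rVert_{\infty}+\delta$; then $\tilde f_n$ is continuous, agrees with $f_n$ on $Q$ (since $|f_n|\le M$ there), and $\{\tilde f_n\}$ is uniformly bounded on all of $X$, hence bounded in $C_k(X)$ and still independent over $\{\delta_x: x\in Q\}\in\eqc(C_k(X)^{*})$. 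Either patch closes the argument; without one of them the step as written fails.
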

\begin{proof}
	For every compact $K \subseteq X$, write $E_{K}:=C(K)$ for 
	the Banach space of continuous functions over $K$.
	Let $D$ be the directed family of all compact subsets in $X$.
	By \cite[p.~70, Proposition 3]{Jarchow}, $E = C_{k}(X)$ can be embedded in the inverse limit $\varprojlim\limits_{K \in D} E_{K}$.
	To make notation easier, we will assume without loss of generality that $E \subseteq \varprojlim\limits_{K \in D} E_{K}$.

	$(3) \Rightarrow (2):$ Suppose that every compact $K \subseteq X$ is scattered.
	We will show that $C_{k}(X)$ is $\NP$.
	Applying Fact \ref{fact:scattered_compact_spaces}, we know that $E_{K}$ is $\NP$ if and only if $K$ is scattered.
	Using Theorem \ref{thm:properties_of_tame_class}, we conclude that
	$\varprojlim\limits_{K \in D} E_{K}$ is also $\NP$, and therefore so is $E = C_{k}(X) \subseteq \varprojlim\limits_{K \in D} E_{K}$.
	
	$(2) \Rightarrow (1):$ Obvious.
	
	$(1) \Rightarrow (3):$ We will show that if $E$ is tame, then every compact subset of $X$ is scattered.
	Let $K \subseteq X$ be a compact subset.
	Consider the restriction map $r\colon C_{k}(X) \to C(K)$.	
	It is easy to see that $r$ is continuous. 
	We claim that it is 
	bound covering.
	Then, applying Theorem \ref{thm:properties_of_tame_class} we will conclude that $C(K)$ is also tame.
	Finally, we will use Fact \ref{fact:scattered_compact_spaces} to show that $K$ is indeed scattered.
	
	Now, to see that $r$ is bound covering, consider the Stone–\u{C}ech compactification $\beta X$.
	Suppose that $f \in C(K)$.
	Since $K$ is compact, $f$ is bounded.
	Also, $K$ is closed in $\beta X$ as a compact subset in a compact Hausdorff space.
	We can therefore apply the Tietze extension theorem to find $\widehat{f} \in C(\beta X)$ such that $\widehat{f}_{\mid K} = f$ and
	$$
	  \sup\limits_{x \in \beta X} \lvert\widehat{f}(x)\rvert =
	  \sup\limits_{x \in K} \lvert f(x)\rvert.
	$$
	Let us write $e \colon C(K) \to C_{k}(X)$ for the map sending every $f \in C(K)$ to $\widehat{f}_{\mid X}$.
	Clearly, $r(e(f)) = f$ and therefore $r(e(B)) = B$ for every $B \subseteq C(K)$.
	Moreover, for every $f \in C(K)$ and compact $K' \subseteq X$, we have
	$$
	  \sup\limits_{x \in K'} \lvert (e(f))(x) \rvert \leq
	  \sup\limits_{x \in \beta X} \lvert \widehat{f}(x) \rvert =
	  \lVert f \rVert.
	$$
	As a consequence, if $B \subseteq C(K)$ is bounded by $M > 0$, then so is $e(B)$.
	By definition, $r$ is bound covering.
\end{proof}



\begin{remark}
A Banach space $V$ is Asplund iff the dual of every separable subspace is separable. In contrast, there exists a separable NP space $E$ with nonseparable dual (Remark \ref{r:SepNP} below). However, it is unclear for us if a lcs is NP iff the dual of every separable \textit{Banach} subspace is separable. This question is interesting also in order to compare our Proposition \ref{p:C(X)} and \cite[Lemma 6.3]{Networks}. 
The ``only if part" is clear because the class $\NP$ is closed under subspaces.
\end{remark}

\begin{remark} \label{r:SepNP} 
	Let $V$ be a $\NP$ lcs. In contrast, to the case of Banach spaces (recall that $\NP$ Banach spaces are exactly Asplund Banach spaces),  
	the dual of a separable linear subspace $E$ of $V$ is not necessarily separable. Indeed, consider the power space $V=\R^{\mathfrak{c}}$. 
	Then $\R^{\mathfrak{c}}$ is a reflexive lcs (in particular, $\NP$). $\R^{\mathfrak{c}}$ is 
	separable (because of the 
	Pondiceri Theorem \cite[Thm.~16.4.c]{Willard}).  
	However its dual is not separable. Indeed, by \cite{Schaefer}, its dual $V^*$ in its weak-star topology can be identified with the locally convex direct sum $\oplus_{i \in I} \R_i$ of continuum many copies of $\R$. It is easy to see that $V^*$ in its weak-star topology is not separable. Therefore, $V^*$ in its strong topology cannot be separable. 
	
\end{remark}


%

\sk 
\section{Generalized $l^{1}$-sequences} 
\label{section:generalized_l1_sequence}
\begin{defin} \label{defin:equivalent_to_usual_l1}
	Let $E$ be a locally convex space.
	A bounded sequence $\{x_{n}\}_{n \in \N} \subseteq E$ is said to be 
	\emph{equivalent to the $l^{1}$-basis} 
	(or simply an $l^{1}$-\textit{sequence}) if there exist: a continuous seminorm $\rho$ on $E$ and $\delta > 0$, such that for every $c_{1}, \dotsc, c_{n} \in \R$  
	$$
	\delta \sum\limits_{i=1}^{n} \lvert c_{i} \rvert \leq
	\rho\left( \sum\limits_{i=1}^{n} c_{i} x_{i} \right).
	$$

	Generalizing 
	Definition \ref{d:AllDef1}, 
	we say that a subset $A$ in a locally convex space $E$ satisfies $\Rl$  if it has no bounded $l^{1}$-sequences.
	The space $E$ satisfies $\Rl$ 
	iff 
	every bounded subset satisfies $\Rl$.
	If there is no embedding of $l^{1}$ into $E$, then we will say that $E$ satisfies $\ORl$.	
\end{defin}

\sk 
\begin{lemma} \label{lemma:open_linear_map_radious}
	Let $(X,\lVert \cdot \rVert)$ be a Banach space and let $Y$ be a locally convex space.
	Suppose that $\varphi\colon X \to Y$ is a continuous linear
	map.
	Then $\varphi$ is an embedding if and only if there is a continuous seminorm $\rho$ on $Y$ and $\delta > 0$ such that $
	\rho(\varphi(x)) \geq \delta \lVert x \rVert 
	$ for every $x \in X$. 
	
\end{lemma}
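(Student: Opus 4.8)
The plan is to prove both directions separately, noting that the reverse direction is essentially the definition of an $l^1$-sequence unpacked, so the real content is the forward direction together with showing the seminorm can be taken on all of $X$.

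First I would handle the easy direction. Suppose there is a continuous seminorm $\rho$ on $Y$ and $\delta > 0$ with $\rho(\varphi(x)) \geq \delta \lVert x \rVert$ for every $x \in X$. Then $\varphi$ is injective since $\varphi(x) = 0$ forces $\rho(\varphi(x)) = 0$, hence $\lVert x \rVert = 0$, hence $x = 0$. To see that $\varphi$ is a topological embedding onto its image, I need the inverse map $\varphi(X) \to X$ to be continuous. Given a basic neighborhood $\{x : \lVert x \rVert < r\}$ of $0$ in $X$, its image under $\varphi$ contains $\varphi(X) \cap \{y \in Y : \rho(y) < \delta r\}$, which is a neighborhood of $0$ in the subspace topology on $\varphi(X)$ (since $\rho$ is a continuous seminorm on $Y$). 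Combined with continuity of $\varphi$, this gives that $\varphi$ is an embedding.

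For the forward direction, suppose $\varphi$ is an embedding. Then $\varphi^{-1}\colon \varphi(X) \to X$ is continuous, so the neighborhood $U := \{x : \lVert x \rVert < 1\}$ of $0$ in $X$ has the property that $\varphi(U)$ contains a relatively open neighborhood of $0$ in $\varphi(X)$; that is, there is a continuous seminorm $\rho_0$ on $Y$ such that $\{y \in \varphi(X) : \rho_0(y) < 1\} \subseteq \varphi(U)$. Unwinding: if $\rho_0(\varphi(x)) < 1$ then $\lVert x \rVert < 1$, and by homogeneity (both $\rho_0 \circ \varphi$ and $\lVert \cdot \rVert$ are seminorms on $X$) this upgrades to $\lVert x \rVert \leq \rho_0(\varphi(x))$ for all $x \in X$. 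So $\rho := \rho_0$ and $\delta := 1$ work. The one subtlety is that $\rho_0$ is a seminorm on $Y$ already (it comes from a continuous seminorm on $Y$ restricted to the subspace), so there is no extension problem here — a continuous seminorm realizing a given basic neighborhood of $0$ in $\varphi(X)$ extends to $Y$ by definition of the subspace topology, since the subspace topology is generated by restrictions of continuous seminorms on $Y$.

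The main obstacle, such as it is, is bookkeeping the homogeneity argument cleanly: one must pass from the inclusion $\{\rho_0 \circ \varphi < 1\} \subseteq \{\lVert \cdot \rVert < 1\}$ of unit-ball-type sets to the pointwise seminorm inequality $\lVert x \rVert \leq \rho_0(\varphi(x))$, which requires the standard "gauge" observation that for seminorms $p, q$ on a vector space, $\{p < 1\} \subseteq \{q < 1\}$ implies $q \leq p$. This is the same principle used in Lemma~\ref{lemma:disk_hahn_banach}. Everything else is a direct transcription of the definitions of "embedding" and "continuous seminorm," and I expect the write-up to be short.
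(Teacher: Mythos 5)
Your proposal is correct and follows essentially the same route as the paper: the easy direction is the same scaling argument showing $\varphi(X)\cap B_\rho(\delta)\subseteq\varphi(B_X)$, and the forward direction likewise extracts a (max of finitely many) continuous seminorm ball inside the relative neighborhood $\varphi(B_X)$ and upgrades the unit-ball inclusion to the pointwise inequality by homogeneity, where the paper carries out your ``$\{p<1\}\subseteq\{q<1\}\Rightarrow q\leq p$'' step by an explicit rescaling of $x$ by $2/\lVert x\rVert$ and a contradiction using injectivity. The only detail worth making explicit in your write-up is that unwinding $\rho_0(\varphi(x))<1\Rightarrow\lVert x\rVert<1$ uses the injectivity of $\varphi$, exactly as the paper does.
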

\begin{proof}
	Suppose that this condition is indeed satisfied.
	Clearly $\varphi$ is injective, because 
	for every $0 \neq x \in X$ we have 
	$
	\rho(\varphi(x)) \geq \delta \lVert x \rVert > 0.
	$
	 Hence, $\varphi(x) \neq 0$.  
	
	Since $\varphi$ is linear, it is enough to show that 
	$\varphi(B_{X})$ is 
	a neighborhood of $0$ 
	in $\varphi(X)$.
	We claim that 
	$$
	\varphi(X) \cap B_{\rho}(\delta) \subseteq \varphi(B_{X}).
	$$
	Indeed, if $y \in \varphi(X) \cap B_{\rho}(\delta)$ then there is $x \in X$ such that $\varphi(x) = y$.
	Note that 
	$$
	\delta \geq 
	\rho(y) = 
	\rho(\varphi(x)) \geq \delta \lVert x \rVert,
	$$
	and therefore $\lVert x \rVert \leq 1$. By definition, $x \in B_{X}$ and $y \in \varphi(B_{X})$.
	
	\sk 
	Conversely, suppose that $\varphi$ is an embedding.
	By definition, $\varphi(B_{X})$ is 
	a neighborhood of $0$ 
	in $\varphi(X)$.
	So there are continuous seminorms $\rho_{1}, \dotsc, \rho_{n}$ on $Y$  and $\delta' > 0$ such that
	$$
	\varphi(X) \cap B_{\rho_{1}, \dotsc, \rho_{n}}(\delta') \subseteq
	\varphi(B_{X}),
	$$
	where
	$$
	B_{\rho_{1}, \dotsc, \rho_{n}}(\delta') := 
	\{y \in Y \mid \forall 1 \leq i \leq n\colon \rho_{i}(y) \leq \delta' \} .
	$$
	First, define $\rho := \max\limits_{1 \leq i \leq n} \rho_{i}$ which is clearly another continuous seminorm on 
	$Y$, 
	and note that 
	$$
	B_{\rho_{1}, \dotsc, \rho_{n}}(\delta') = B_{\rho}(\delta').
	$$
	Next, suppose that $x \in X$.
	If $x = 0$ then our condition is clearly satisfied.
	Otherwise, $\lVert x \rVert > 0$.
	Write $\widetilde{x} := \frac{2}{\lVert x \rVert} x$
	and note that $\left\rVert \widetilde{x} \right\rVert =2> 1$.
	Thus, $\widetilde{x} \notin B_{X}$.
	We claim that $\varphi(\widetilde{x}) \notin B_{\rho}(\delta')$ as a consequence.
	Indeed, suppose by contradiction that $\varphi(\widetilde{x}) \in B_{\rho}(\delta')$.
	Thus, 
	$$
	\varphi(\widetilde{x}) \in 
	B_{\rho}(\delta') \cap \varphi(X) \subseteq \varphi(B_{X}).
	$$
	By definition, there exists some $y \in B_{X}$ such that $\varphi(y) = \varphi(\widetilde{x})$.
	However, $\varphi$ is injective so 
	${\widetilde{x} = y \in B_{X}}$, which is impossible since $\widetilde{x} \notin B_{X}$.
	This contradiction shows that 
	$\varphi(\widetilde{x}) \notin B_{\rho}(\delta')$.
	By definition, $\rho(\varphi(\widetilde{x})) \geq \delta'$ and therefore
	$$
	\rho\left(\varphi\left( \frac{2}{\lVert x \rVert} x \right)\right) = 
	\frac{2}{\lVert x \rVert} \rho(\varphi(x)) \geq \delta'
	$$
	$$
	\Downarrow
	$$
	$$
	\rho(\varphi(x)) \geq \lVert x \rVert \frac{\delta'}{2}.
	$$
	Finally, define $\delta := \frac{\delta'}{2}$.
\end{proof}

The following is a direct consequence of Lemma
\ref{lemma:open_linear_map_radious}.
\begin{lemma} \label{lemma:overline_R_implies_R} 
	$\Rl \Rightarrow \ORl$. 
	
	\noindent More precisely, 
	let $\varphi \colon l^{1} \to X$ be an embedding into a lcs $X$.
	Then 
	$\{\varphi(e_n)\}_{n \in \N} \subseteq X$ is an $l^{1}$-sequence. 
\end{lemma}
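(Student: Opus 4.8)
The statement $\Rl \Rightarrow \ORl$ is immediate once we prove the ``more precise'' assertion, so I would focus on showing that if $\varphi\colon l^1 \hookrightarrow X$ is a topological embedding into a lcs $X$, then the sequence $\{\varphi(e_n)\}_{n\in\N}$ is an $l^1$-sequence in the sense of Definition \ref{defin:equivalent_to_usual_l1}. The whole point is to invoke Lemma \ref{lemma:open_linear_map_radious} with $Y := X$ and the Banach space $(X,\lVert\cdot\rVert) := l^1$ with its usual norm.

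\textbf{Key steps.}
First I would record that $\varphi$ is a continuous linear map between a Banach space and a lcs, and that it is an embedding by hypothesis; hence Lemma \ref{lemma:open_linear_map_radious} applies and yields a continuous seminorm $\rho$ on $X$ and a constant $\delta>0$ with $\rho(\varphi(x)) \geq \delta \lVert x\rVert_{l^1}$ for all $x\in l^1$. Second, I would check that $\{\varphi(e_n)\}_{n\in\N}$ is bounded in $X$: since $\varphi$ is continuous linear and $\{e_n\}$ is bounded in $l^1$ (it lies in the unit ball), its image is bounded in $X$. Third, for arbitrary scalars $c_1,\dots,c_n\in\R$, apply the inequality to the vector $x = \sum_{i=1}^n c_i e_i \in l^1$, using linearity of $\varphi$ and the formula $\lVert \sum_{i=1}^n c_i e_i\rVert_{l^1} = \sum_{i=1}^n \lvert c_i\rvert$, to obtain
$$
\rho\left(\sum_{i=1}^n c_i \varphi(e_i)\right) = \rho\left(\varphi\left(\sum_{i=1}^n c_i e_i\right)\right) \geq \delta \left\lVert \sum_{i=1}^n c_i e_i \right\rVert_{l^1} = \delta \sum_{i=1}^n \lvert c_i\rvert.
$$
This is exactly the defining estimate for an $l^1$-sequence. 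Finally, the implication $\Rl \Rightarrow \ORl$ follows: if $E$ failed $\ORl$, i.e. $l^1$ embeds into $E$, then $E$ would contain the bounded $l^1$-sequence just produced, contradicting $\Rl$; contrapositively, $\Rl$ implies $\ORl$.

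\textbf{Main obstacle.}
There is essentially no obstacle here — the content has been front-loaded into Lemma \ref{lemma:open_linear_map_radious}, and the proof is a direct specialization. The only mild point of care is the bookkeeping: making sure the seminorm $\rho$ supplied by that lemma is the same one used in the $l^1$-sequence definition (it is, since Definition \ref{defin:equivalent_to_usual_l1} asks precisely for \emph{some} continuous seminorm and \emph{some} $\delta>0$), and confirming boundedness of the image sequence, which is automatic from continuity of $\varphi$ and boundedness of the standard basis in the unit ball of $l^1$.
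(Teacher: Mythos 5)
Your proposal is correct and matches the paper's approach: the paper presents Lemma \ref{lemma:overline_R_implies_R} as a direct consequence of Lemma \ref{lemma:open_linear_map_radious}, which is exactly the specialization you carry out. Your write-up simply makes explicit the routine details (boundedness of $\{\varphi(e_n)\}$, linearity, and the $l^1$-norm computation) that the paper leaves to the reader.
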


\begin{ex} \label{ex:RR}
	The converse to Lemma \ref{lemma:overline_R_implies_R}  need not be true,  $\ORl \nRightarrow \Rl$.
\end{ex}
\begin{proof}
	Indeed consider the normed subspace $X$ of $l^{1}$ consisting of finitely supported sequences:
	$$
	X := \left\{\alpha \in l^{1} \mid \exists n_{0} \in \N \ \forall n\geq n_{0}:  \alpha_{n} = 0 \right\}.
	$$
	It is easy to see that the standard basis $\{e_{n}\} \subseteq X$ remains an $l^{1}$-sequence.
	However, there is no embedding $\varphi\colon l^{1} \to X$. 
	By contradiction, suppose that $\varphi \colon l^{1} \to X$ is an embedding.
	By $\dim X$, we mean the Hamel dimension of $X$.
	It is easy to see that $\dim X = \aleph_{0}$ while ${\dim Y = \dim l^{1} = \aleph}$, a contradiction.
\end{proof}

\begin{lemma} \label{lemma:preimage_of_l1_sequence}
	Suppose that $X$ and $Y$ are locally convex spaces and $\varphi \colon X \to Y$ is a continuous linear map.
	If $A \subseteq X$ is bounded and satisfies $\Rl$, then so does $\varphi(A)$.
\end{lemma}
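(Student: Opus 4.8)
The plan is to argue by contradiction, exploiting that $\varphi$ pulls continuous seminorms back to continuous seminorms. Suppose $\varphi(A)$ fails $\Rl$; then by Definition \ref{defin:equivalent_to_usual_l1} there is a bounded sequence $\{y_{n}\}_{n \in \N} \subseteq \varphi(A)$, a continuous seminorm $\rho$ on $Y$, and a constant $\delta > 0$ such that
\[
  \delta \sum_{i=1}^{n} \lvert c_{i} \rvert \leq \rho\!\left( \sum_{i=1}^{n} c_{i} y_{i} \right)
\]
for every $n \in \N$ and all scalars $c_{1}, \dotsc, c_{n}$.

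Next I would lift this sequence back to $A$. Since each $y_{n}$ lies in $\varphi(A)$ (not merely in its closure), I may choose $x_{n} \in A$ with $\varphi(x_{n}) = y_{n}$. The resulting sequence $\{x_{n}\}_{n \in \N} \subseteq A$ is bounded because $A$ is bounded. Now set $\rho' := \rho \circ \varphi$; this is a seminorm on $X$, being the composition of a seminorm with a linear map, and it is continuous because both $\varphi$ and $\rho$ are. By linearity of $\varphi$,
\[
  \rho'\!\left( \sum_{i=1}^{n} c_{i} x_{i} \right) = \rho\!\left( \varphi\!\left( \sum_{i=1}^{n} c_{i} x_{i} \right) \right) = \rho\!\left( \sum_{i=1}^{n} c_{i} y_{i} \right) \geq \delta \sum_{i=1}^{n} \lvert c_{i} \rvert,
\]
so $\{x_{n}\}_{n \in \N}$ is a bounded $l^{1}$-sequence contained in $A$. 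This contradicts the hypothesis that $A$ satisfies $\Rl$, and hence $\varphi(A)$ satisfies $\Rl$.

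I do not expect a genuine obstacle here; the argument is essentially a one-line observation that the witnessing seminorm can be transported along $\varphi$. The only points requiring a moment of care are (i) recording that $\rho \circ \varphi$ really is a \emph{continuous} seminorm on $X$, and (ii) that the chosen preimages $x_{n}$ genuinely lie in $A$ — this is exactly why the statement is phrased for the image $\varphi(A)$ rather than its closure, and is the step where the boundedness of $A$ is used to guarantee boundedness of $\{x_{n}\}$.
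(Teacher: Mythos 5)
Your argument is correct and follows essentially the same route as the paper: argue by contradiction, lift the $l^{1}$-sequence $\{y_n\}$ to preimages $x_n \in A$, and transport the witnessing seminorm back along $\varphi$. The only (inessential) difference is that the paper chooses a continuous seminorm $\sigma$ on $X$ dominating $\rho \circ \varphi$ and works with the inequality $\rho(\varphi(x)) \leq \sigma(x)$, whereas you use $\rho \circ \varphi$ itself, which is indeed a continuous seminorm and gives the same estimate (with equality), so both versions are fine.
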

\begin{proof}
	By contradiction, suppose that $\varphi(A) \subseteq Y$ does not satisfy $\Rl$.
	By definition, there exist: a sequence 
	${\{y_{n}\}_{n \in \N} \subseteq \varphi(A) \subseteq Y}$, a continuous seminorm $\rho$ on $Y$ and $\delta > 0$ such that for every $c_{1}, \dotsc, c_{n}\in \R$ 
	$$
	\delta\sum_{i=1}^{n} \lvert c_{i} \rvert \leq 
	\rho\left(\sum_{i=1}^{n} c_{i} y_{i} \right) .
	$$
	Since $\varphi$ is continuous and linear, there is a continuous seminorm $\sigma$ on $X$ such that 
	$$
	\rho(\varphi(x)) \leq \sigma(x)
	$$
	for every $x \in X$. 
	Choose $x_{n} \in A \subseteq X$ such that $\varphi(x_{n}) = y_{n}$. 
	The sequence $\{x_{n}\}$ is bounded because $A$ is bounded. 
	We claim that $\{x_{n}\} \subseteq A$ is an $l^{1}$-sequence with respect to $\sigma$ and $\delta$, 
	which contradicts $A$ satisfying $\Rl$.
	Indeed, for every $c_{1},\dotsc, c_{n} \in \R$ 
	\begin{align*}
		\sigma\left(\sum_{i=1}^{n} c_{i} x_{i} \right) & \geq 
		\rho\left(\varphi\left(\sum_{i=1}^{n} c_{i} x_{i} \right)\right) = 
		\rho\left(\sum_{i=1}^{n} c_{i} \varphi(x_{i}) \right)  \\
		& = \rho\left(\sum_{i=1}^{n} c_{i} y_{i} \right) \geq
		\delta\sum_{i=1}^{n} \lvert c_{i} \rvert.
	\end{align*}
	
\end{proof}



The following lemma can be extrapolated from \cite[Propositions 3.1 and 3.3]{Ruess}.
\begin{lemma} \label{lemma:equivalen_to_the_usual_l1_basis}
	Let $X$ be a locally complete lcs.
	If $\{x_{n}\}_{n \in \N} \subseteq X$ is a bounded $l^{1}$-sequence, then there is a linear topological embedding $\varphi \colon  l^{1} \to X$ such that for every $n \in \N$,  $\varphi(e_{n}) = x_{n}$.
	
	Moreover, if $\rho$ is a continuous seminorm on $X$ and $\delta > 0$ make $\{x_{n}\}_{n \in \N}$ into an $l^{1}$-sequence, then:
	$$
	  \rho(\varphi(\alpha)) \geq \delta \lVert \alpha \rVert
	$$
	for every $\alpha \in l^{1}$.
\end{lemma}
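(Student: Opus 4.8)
The plan is to realize $\varphi$ as the summation map $\alpha \mapsto \sum_{n\in\N}\alpha_n x_n$, the crucial point being that this series converges in $X$ thanks to local completeness. First I would note that, since $\{x_n\}_{n\in\N}$ is bounded, its absolutely convex hull $\acx\{x_n : n\in\N\}$ is already bounded: for any continuous seminorm $p$ and any finite choice of scalars with $\sum|c_n|\le 1$ one has $p\!\left(\sum c_n x_n\right)\le \sum|c_n|\,p(x_n)\le \sup_m p(x_m)<\infty$. Hence its closure $B := \overline{\acx}\{x_n : n\in\N\}$ is a closed bounded disk. By local completeness, $(E_B,q_B)$ is a Banach space whose topology is finer than the one induced by $X$ (Fact \ref{f:local_disc_norm_is_finer}); moreover $B = B_{E_B}$ by Lemma \ref{lemma:ball_of_gauge}, so $q_B(x_n)\le 1$ for every $n$.

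Next I would define $\varphi$ on $E_B$. For $\alpha=(\alpha_n)\in l^1$ the partial sums $s_N := \sum_{n=1}^{N}\alpha_n x_n$ satisfy $q_B(s_N-s_M)\le \sum_{n=M+1}^{N}|\alpha_n|$ for $M<N$, so $\{s_N\}$ is Cauchy in the Banach space $E_B$ and converges to an element $\varphi(\alpha)$; since $\sum_n q_B(\alpha_n x_n)\le\sum_n|\alpha_n|<\infty$, the convergence is absolute and $\varphi(\alpha)$ does not depend on the enumeration. Then $\varphi$ is linear, $\varphi(e_n)=x_n$, and $q_B(\varphi(\alpha))\le\lVert\alpha\rVert$, so composing with the continuous inclusion $E_B\hookrightarrow X$ yields a continuous linear map $\varphi\colon l^1\to X$ with $\varphi(e_n)=x_n$.

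It remains to establish the lower estimate, which simultaneously gives the ``Moreover'' assertion and, via Lemma \ref{lemma:open_linear_map_radious}, the fact that $\varphi$ is a topological embedding. Let $\rho$ be a continuous seminorm on $X$ and $\delta>0$ witnessing that $\{x_n\}$ is an $l^1$-sequence. For $\alpha\in l^1$ with finite support, $\varphi(\alpha)=\sum_i\alpha_i x_i$ is a genuine finite sum, so $\rho(\varphi(\alpha))\ge\delta\sum_i|\alpha_i|=\delta\lVert\alpha\rVert$ by hypothesis. For arbitrary $\alpha\in l^1$, let $\alpha^{(N)}$ be its truncation to the first $N$ coordinates; then $\alpha^{(N)}\to\alpha$ in $l^1$, hence $\varphi(\alpha^{(N)})\to\varphi(\alpha)$ in $X$ by continuity of $\varphi$, hence $\rho(\varphi(\alpha^{(N)}))\to\rho(\varphi(\alpha))$ by continuity of $\rho$, while $\lVert\alpha^{(N)}\rVert\to\lVert\alpha\rVert$; passing to the limit in $\rho(\varphi(\alpha^{(N)}))\ge\delta\lVert\alpha^{(N)}\rVert$ gives $\rho(\varphi(\alpha))\ge\delta\lVert\alpha\rVert$. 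Applying Lemma \ref{lemma:open_linear_map_radious} with $l^1$ in the role of the Banach space and $X$ in the role of the target lcs, we conclude that $\varphi$ is a linear topological embedding. The only genuinely delicate step is the convergence argument, that is, correctly extracting the Banach disk $B$ and invoking local completeness; everything after that is routine.
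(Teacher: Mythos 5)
Your proposal is correct and follows essentially the same route as the paper: define $\varphi(\alpha)=\sum_n \alpha_n x_n$, use the closed bounded disk $B$ and local completeness to get convergence in the Banach space $(E_B,q_B)$, extend the lower estimate $\rho(\varphi(\alpha))\geq\delta\lVert\alpha\rVert$ from finitely supported $\alpha$ by density and continuity, and conclude via Lemma \ref{lemma:open_linear_map_radious}. The only cosmetic difference is that you obtain continuity of $\varphi$ by factoring through $E_B$ (using $q_B(\varphi(\alpha))\leq\lVert\alpha\rVert$ and Fact \ref{f:local_disc_norm_is_finer}), while the paper estimates $\rho(\varphi(\alpha))\leq M\lVert\alpha\rVert$ directly for each continuous seminorm.
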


\begin{proof}
	Let $\{x_{n}\}_{n \in \N}$ be equivalent to the $l^{1}$-basis.
	Define ${\varphi\colon l^{1} \to X}$ by
	$
	\varphi(\alpha) :=
	\sum\limits_{n \in \N} \alpha_{n} x_{n},
	$  
	where $\alpha = (\alpha_{n})_{n \in \N} \in l^{1}$.
	Before we continue, we verify the convergence of this sum.
	Write $B$ for the 
	smallest 
	closed disc containing $\{x_{n}\}_{n \in \N}$.
	Since $\{x_{n}\}_{n \in \N}$ is bounded, so is $B$. 
	By our assumption, $X$ is locally complete. Hence, 
	 $(X_{B}, q_{B})$ is complete.
	Moreover, $\sum\limits_{n=0}^{N} \alpha_{n} x_{n}  \in X_{B}$, so we can use the Cauchy criterion for convergence.
	Let $\eps > 0$.
	By the definition of $l^{1}$, there exists some $n_{0} \in \N$ such that $\sum\limits_{n=n_{0}}^{\infty} \lvert \alpha_{n} \rvert \leq \eps$.
	Note that for every $n \in \N$, $q_{B}(x_{n}) \leq 1$ because $x_{n} \in 1 \cdot B$.
	Thus, for every $m \geq n_{0}$:
	$$
	q_{B}\left(\sum\limits_{n=n_{0}}^{m} \alpha_{n} x_{n} \right) \leq
	\sum\limits_{n=n_{0}}^{m} \lvert \alpha_{n} \rvert q_{B}(x_{n}) \leq
	\sum\limits_{n=n_{0}}^{m} \lvert \alpha_{n} \rvert \leq 
	\sum\limits_{n=n_{0}}^{\infty} \lvert \alpha_{n} \rvert
	\leq \eps.
	$$
	We have therefore shown that $\sum\limits_{n \in \N} \alpha_{n} x_{n} \xrightarrow{q_{B}} x \in X_{B}$.
	Now, apply Fact \ref{f:local_disc_norm_is_finer} to conclude that $\lim\limits_{n \in \N} \sum\limits_{n \in \N} \alpha_{n} x_{n} = x \in X$ with respect to the original topology of $X$.
	
	Note that $\varphi$ is clearly linear.	
	Next we show that $\varphi$ is continuous.
	Let $\rho$ be a continuous seminorm on $X$, $\eps > 0$ and  $M \in \R$ be a bound for $\{\rho(x_{n}) \}_{n \in \N}$.
	A similar calculation shows that 
	$$
	\rho(\varphi(\alpha)) \leq
	M \sum\limits_{n \in \N} \lvert \alpha_{n} \rvert =
	M \lVert \overline{\alpha} \rVert \leq
	M \frac{\eps}{M} = \eps
	$$
	for every $\alpha \in \frac{\eps}{M}B_{l^{1}}$. 
	Note that 
	for every $\alpha \in l^{1}$ such that $\alpha_{n} = 0$ for every $n > n_{0}$, we have 
	$$
	\rho(\varphi(\alpha)) = 
	\rho\left( \sum\limits_{i=1}^{n_{0}} \alpha_{i} x_{i} \right) \geq
	\delta \sum\limits_{i=1}^{n_{0}} \lvert \alpha_{i} \rvert =
	\delta \lVert \alpha \rVert.
	$$
	Since the finitely supported elements in $l^{1}$ are dense, and by virtue of the continuity of $\varphi$, this inequality also applies for every $\alpha \in l^{1}$.
	Applying Lemma \ref{lemma:open_linear_map_radious}, we conclude that $\varphi$
	is indeed an embedding.
\end{proof}

Recall that we gave definitions of $\rho_M$ and $\coRl$ in Definition \ref{d:rho_M}. 
The following is a direct consequence of Lemma \ref{lemma:overline_R_implies_R} and Lemma \ref{lemma:equivalen_to_the_usual_l1_basis}.

\begin{lemma} \label{lemma:overline_R2_equivalence}
	For locally complete lcs, the conditions $\Rl$ and $\ORl$ are equivalent.
\end{lemma}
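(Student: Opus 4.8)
The plan is to establish the two implications separately, observing that one holds in every lcs while the other is exactly where local completeness is used.

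The implication $\Rl \Rightarrow \ORl$ is already available in full generality: it is the content of Lemma \ref{lemma:overline_R_implies_R}, which in fact tells us that an embedding $\varphi\colon l^{1}\hookrightarrow E$ produces the bounded $l^{1}$-sequence $\{\varphi(e_{n})\}_{n\in\N}$. Internally this rests on Lemma \ref{lemma:open_linear_map_radious}: being an embedding, $\varphi$ yields a continuous seminorm $\rho$ on $E$ and a $\delta>0$ with $\rho(\varphi(\alpha))\ge\delta\lVert\alpha\rVert$ for all $\alpha\in l^{1}$, and restricting to finite linear combinations of the $e_{n}$ gives precisely the defining $l^{1}$-estimate. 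So no hypothesis on $E$ is needed in this direction.

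For the reverse implication I would argue by contraposition. Assume $E$ is locally complete and fails $\Rl$; then there is a bounded sequence $\{x_{n}\}_{n\in\N}\subseteq E$ equivalent to the $l^{1}$-basis, witnessed by a continuous seminorm $\rho$ and a constant $\delta>0$. The key step is to manufacture a topological embedding $\varphi\colon l^{1}\to E$ with $\varphi(e_{n})=x_{n}$ — and this is exactly Lemma \ref{lemma:equivalen_to_the_usual_l1_basis}. Once $\varphi$ is in hand, $l^{1}$ embeds into $E$, so $E$ fails $\ORl$; this completes the contrapositive and hence the equivalence.

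The only delicate point — and the reason local completeness is indispensable — lives inside Lemma \ref{lemma:equivalen_to_the_usual_l1_basis}, namely checking that the series $\sum_{n}\alpha_{n}x_{n}$ defining $\varphi(\alpha)$ converges for every $\alpha\in l^{1}$: passing to the normed space $E_{B}$ generated by a closed bounded disc $B\supseteq\{x_{n}\}_{n\in\N}$, local completeness makes $(E_{B},q_{B})$ complete, the tail bound $q_{B}\bigl(\sum_{n\ge n_{0}}\alpha_{n}x_{n}\bigr)\le\sum_{n\ge n_{0}}\lvert\alpha_{n}\rvert$ gives the Cauchy property, and Fact \ref{f:local_disc_norm_is_finer} transfers convergence back to the original topology of $E$. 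Since that lemma is already proven, the present statement carries no genuine obstacle of its own: it is a purely formal combination of Lemma \ref{lemma:overline_R_implies_R} and Lemma \ref{lemma:equivalen_to_the_usual_l1_basis}.
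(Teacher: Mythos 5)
Your proposal is correct and follows exactly the paper's route: the paper derives the lemma as a direct consequence of Lemma \ref{lemma:overline_R_implies_R} (giving $\Rl \Rightarrow \ORl$ in any lcs) and Lemma \ref{lemma:equivalen_to_the_usual_l1_basis} (turning a bounded $l^{1}$-sequence into an embedding of $l^{1}$, where local completeness is used). Your additional remarks on where completeness enters are accurate but belong to the already-proven auxiliary lemma rather than to this statement.
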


\begin{lem} \label{lemma:coRl_l1_embedding}
	Let $E$ be a locally convex space and let $M \subseteq E^{*}$ be an equicontinuous, weak-star compact, disked subset.
	If $M$ does not satisfy $\coRl$, then there exist an embedding $T\colon V \to E$ where $V$ is a dense normed subspace of $l^{1}$ and $\delta > 0$ such that 
	$$
	\delta B_{V^{*}} \subseteq T^{*}(M).
	$$
\end{lem}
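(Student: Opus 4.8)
The plan is to make the failure of $\coRl$ over $M$ explicit as an $l^{1}$-sequence with respect to the seminorm $\rho_{M}$, use that sequence to build $T$ on the largest subspace of $l^{1}$ on which the obvious formula converges, and then realize every functional in $\delta B_{V^{*}}$ as $T^{*}(\varphi)$, $\varphi\in M$, via a finite-dimensional bipolar computation glued together by weak-star compactness. (Compared with Lemma \ref{lemma:equivalen_to_the_usual_l1_basis}, which needs local completeness to take $V=l^{1}$, here we settle for a dense normed subspace.)

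\textbf{Construction of $V$ and $T$.} By Definition \ref{d:rho_M}, since $M$ is not $\coRl$ there are a bounded sequence $\{x_{n}\}_{n\in\N}\subseteq E$ and $\delta>0$ with $\delta\sum_{i=1}^{n}|c_{i}|\le\rho_{M}\!\left(\sum_{i=1}^{n}c_{i}x_{i}\right)$ for all finite scalar choices, where $\rho_{M}(x):=\sup_{\varphi\in M}|\varphi(x)|$ is a continuous seminorm (continuity from equicontinuity of $M$). Put $V:=\{\alpha\in l^{1}\mid \sum_{n}\alpha_{n}x_{n}\text{ converges in }E\}$; it is a linear subspace of $l^{1}$ containing every finitely supported sequence, hence a dense normed subspace of $l^{1}$ with the inherited norm. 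Define $T\colon V\to E$ by $T(\alpha):=\sum_{n}\alpha_{n}x_{n}$, so $T(e_{n})=x_{n}$. For any continuous seminorm $p$ on $E$, $M_{p}:=\sup_{n}p(x_{n})<\infty$ by boundedness of $\{x_{n}\}$, and $p(T\alpha)\le M_{p}\|\alpha\|_{1}$, first for finitely supported $\alpha$ and then for all $\alpha\in V$ by continuity of $p$; thus $T$ is continuous. Applying this to $\rho_{M}$ together with the $l^{1}$-estimate (extended from finitely supported $\alpha$ to all of $V$ by density and continuity) gives $\rho_{M}(T\alpha)\ge\delta\|\alpha\|_{1}$ for every $\alpha\in V$; by the ``if'' direction of Lemma \ref{lemma:open_linear_map_radious} (whose proof does not use completeness of the domain), $T$ is a topological embedding.

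\textbf{The inclusion $\delta B_{V^{*}}\subseteq T^{*}(M)$.} Fix $\psi\in V^{*}$ with $\|\psi\|\le\delta$ and set $b_{n}:=\psi(e_{n})$, so $b=(b_{n})\in l^{\infty}$ with $\|b\|_{\infty}\le\|\psi\|\le\delta$. It is enough to produce $\varphi\in M$ with $\varphi(x_{n})=b_{n}$ for all $n$: then $\varphi\circ T$ and $\psi$ are both continuous on $V$ and agree on the dense subspace of finitely supported sequences, so $T^{*}(\varphi)=\varphi\circ T=\psi$. For each $N\in\N$ the map $\ell_{N}\colon E^{*}\to\R^{N}$, $\varphi\mapsto(\varphi(x_{1}),\dots,\varphi(x_{N}))$, is linear and weak-star continuous, so $\Phi_{N}:=\ell_{N}(M)$ is a compact, absolutely convex (hence closed) subset of $\R^{N}$, using that $M$ is weak-star compact and disked. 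For $c\in\R^{N}$ one has $\sup_{\varphi\in M}|\langle c,\ell_{N}(\varphi)\rangle|=\rho_{M}\!\left(\sum_{i=1}^{N}c_{i}x_{i}\right)\ge\delta\|c\|_{1}$, whence the polar of $\Phi_{N}$ in $\R^{N}$ satisfies $\Phi_{N}^{\circ}\subseteq\frac{1}{\delta}B_{1}^{N}$, where $B_{1}^{N}$ is the closed $\ell^{1}$-ball of $\R^{N}$. Taking polars again and invoking the bipolar theorem, $\Phi_{N}=\Phi_{N}^{\circ\circ}\supseteq\delta\,(B_{1}^{N})^{\circ}=\delta\,[-1,1]^{N}\ni(b_{1},\dots,b_{N})$, so there is $\varphi_{N}\in M$ with $\varphi_{N}(x_{i})=b_{i}$ for $i\le N$. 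Let $\varphi\in M$ be a weak-star cluster point of $\{\varphi_{N}\}_{N}$ (it exists since $M$ is weak-star compact). For each fixed $i$, the sequence $\varphi_{N}(x_{i})$ is eventually equal to $b_{i}$, and $\varphi\mapsto\varphi(x_{i})$ is weak-star continuous, so $\varphi(x_{i})=b_{i}$. Thus $\varphi$ is the required functional and $\psi\in T^{*}(M)$.

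\textbf{Main obstacle.} The only nonroutine point is the inclusion $\delta B_{V^{*}}\subseteq T^{*}(M)$: the mechanism is the finite-dimensional polar computation, which turns the $l^{1}$-estimate into the cube containment $\delta\,[-1,1]^{N}\subseteq\ell_{N}(M)$, followed by the weak-star compactness argument that assembles the finite-level solutions $\varphi_{N}$ into a single $\varphi\in M$ realizing all the values $b_{n}$ simultaneously. Everything else (convergence and continuity of $T$, and that $T$ is an embedding) is routine.
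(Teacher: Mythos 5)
Your proof is correct, but it takes a genuinely different route from the paper's. For the construction of $V$ and $T$, the paper passes to the completion $\widehat{E}$ and invokes Lemma \ref{lemma:equivalen_to_the_usual_l1_basis} (where local completeness is available) to obtain an embedding $T'\colon l^{1}\to\widehat{E}$ with $T'(e_{n})=x_{n}$, and then sets $V:=(T')^{-1}(E)$; you instead define the summation map directly on the subspace of $l^{1}$ where the series converges in $E$, re-deriving the two seminorm estimates by hand. This yields essentially the same $V$ without the completion, and your observation that the ``if'' half of Lemma \ref{lemma:open_linear_map_radious} uses only that the domain is normed is the right justification for $T$ being an embedding. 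The real divergence is in the inclusion $\delta B_{V^{*}}\subseteq T^{*}(M)$: the paper argues by contradiction, noting that $T^{*}(M)$ is weak-star compact and absolutely convex, that by the Banach--Grothendieck theorem the dual of $(V^{*},w^{*})$ is $V$, and then separating a putative $\varphi\in\delta B_{V^{*}}\setminus T^{*}(M)$ from $T^{*}(M)$ by some $\alpha\in V$ (Fact \ref{fact:absolutely_convex_separation}), which contradicts $\rho_{M}(T\alpha)\ge\delta\lVert\alpha\rVert_{1}$. You instead reduce to interpolating the values $\psi(e_{n})$ by a single $\varphi\in M$, prove the cube containment $\delta[-1,1]^{N}\subseteq\ell_{N}(M)$ by a finite-dimensional bipolar computation (this is where the disked hypothesis on $M$ enters for you, just as it enters the paper through absolute convexity and weak-star closedness of $T^{*}(M)$), and then glue the finite-level interpolants by a weak-star cluster point in $M$. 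Both arguments ultimately rest on the same ingredients -- a bipolar/Hahn--Banach step, compactness of $M$, and the lower bound $\rho_{M}(T\alpha)\ge\delta\lVert\alpha\rVert_{1}$ -- but yours is more elementary and constructive, avoiding the completion, Lemma \ref{lemma:equivalen_to_the_usual_l1_basis}, the Banach--Grothendieck theorem and the separation fact, at the cost of a somewhat longer argument; the paper's version is shorter granted those cited facts.
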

\begin{proof}
	Let $\widehat{E}$ be the completion of $E$.
	By definition, there is a bounded sequence $\{x_{n}\}_{n \in \N} \subseteq E$ which is equivalent to the usual $l^{1}$-basis with respect to $\rho_{M}$.
	By Lemma \ref{lemma:equivalen_to_the_usual_l1_basis}, there exists an embedding $T': l^{1} \to \widehat{E}$ with respect to $\rho_{M}$ such that $T'(e_{m}) = x_{m}$.
	Moreover, there exists $\delta > 0$ such that
	$$
	\forall \alpha \in l^{1}: \rho_{M}(T(\alpha)) \geq \delta \lVert \alpha \rVert_{1}.
	$$
	Write $V := (T')^{-1}(E)$ and $T := T'_{\mid V}$.
	
	By contradiction, suppose that $\delta B_{V^{*}} \nsubseteq T^{*}(M)$.
	In this case, there exists $\varphi \in \delta B_{V^{*}} \setminus T^{*}(M)$.
	Recall that 
	by Banach-Grothendieck theorem 
	the dual of $(V^{*},w^*)$ 
	is simply $V$.
	Also, $T^{*}(M)$ is clearly absolutely convex and closed.
	Using Fact \ref{fact:absolutely_convex_separation}, we can find $\alpha \in V$ such that
	$$
	\sup_{\theta \in M} \lvert (T^{*}(\theta))(\alpha) \rvert < 
	\lvert \varphi(\alpha) \rvert.
	$$
	Thus we get:
	\begin{align*}
		\delta \lVert \alpha \rVert_{1} \leq 
		&\rho_{M}(T(\alpha)) =
		\sup_{\theta \in M} \lvert \theta(T(\alpha)) \rvert \\
		= & \sup_{\theta \in M} \lvert (T^{*}(\theta))(\alpha) \rvert <
		\lvert \varphi(\alpha) \rvert \\
		\leq & \lVert \varphi \rVert_{1} \lVert \alpha \rVert_{1} \leq
		\delta \lVert \alpha \rVert_{1},
	\end{align*}
	a contradiction.
\end{proof}

\sk 
\section{Rosenthal type properties} 
\label{s:RosType} 


For the definitions of $\Rl$ and $\ORl$, see Definitions \ref{d:AllDef1} and \ref{defin:equivalent_to_usual_l1}. 
%
%

\begin{thm} \label{thm:tame_iff_R} 
	$\Tame=\Rl$. 
	
\noindent	More precisely,  
a bounded 
subset $B \subseteq X$ of a lcs is tame \emph{if and only if} $B$ satisfies $\Rl$. 
\end{thm}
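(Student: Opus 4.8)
The plan is to prove the pointwise statement; the equality $\Tame=\Rl$ at the level of spaces is then immediate, since a lcs belongs to $\Tame$ iff all its bounded subsets are tame, iff (by the claim) all its bounded subsets satisfy $\Rl$, iff the space satisfies $\Rl$. Both implications of the pointwise statement will be reduced to Lemma \ref{f:sub-fr} --- the Rosenthal--Talagrand synthesis identifying, for a norm-bounded $F\subseteq C(K)$ with $K$ compact, ``$F$ is a tame family on $K$'' with ``$F$ contains no $l^{1}$-sequence'' --- together with the elementary dictionary between continuous seminorms on $E$ and members of $\eqc(E^{*})$ furnished by polars.

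\emph{If $B$ is not tame, then $B$ fails $\Rl$.} By Definition \ref{d:TameSet} there is $K\in\eqc(E^{*})$ for which $r_{K}(B)\subseteq C(K)$ is not a tame family on $K$. Since $B$ is bounded and $K$ is equicontinuous, $r_{K}(B)$ is norm-bounded in $C(K)$ (choose a zero neighbourhood $O$ with $|\varphi(x)|<1$ on $O\times K$ and $c\in\R$ with $B\subseteq cO$), so Lemma \ref{f:sub-fr} yields an $l^{1}$-sequence $\{r_{K}(x_{n})\}_{n\in\N}$ inside $r_{K}(B)$, with representatives $x_{n}\in B$ and some $\delta>0$. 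Writing $\rho_{K}(x):=\sup_{\varphi\in K}|\varphi(x)|$, which is a continuous seminorm because $K$ is equicontinuous (Definition \ref{d:rho_M}), we obtain $\delta\sum_{i=1}^{n}|c_{i}|\le\bigl\lVert\sum_{i=1}^{n}c_{i}r_{K}(x_{i})\bigr\rVert_{C(K)}=\rho_{K}\bigl(\sum_{i=1}^{n}c_{i}x_{i}\bigr)$ for all scalars $c_{1},\dots,c_{n}$. As $B$ is bounded, $\{x_{n}\}$ is a bounded $l^{1}$-sequence with respect to $\rho_{K}$ in the sense of Definition \ref{defin:equivalent_to_usual_l1}, so $B$ fails $\Rl$.

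\emph{If $B$ fails $\Rl$, then $B$ is not tame.} Pick $\{x_{n}\}_{n\in\N}\subseteq B$, a continuous seminorm $\rho$ on $E$, and $\delta>0$ with $\delta\sum|c_{i}|\le\rho(\sum c_{i}x_{i})$ for all scalars. Put $U:=\{x\in E:\rho(x)\le1\}$, a closed disk which is a neighbourhood of $0$, and $M:=U^{\circ}\subseteq E^{*}$. Being the polar of a zero neighbourhood, $M\in\eqc(E^{*})$ by Alaoglu--Bourbaki (Fact \ref{l:Al-Bo}), and by the Bipolar Theorem (Fact \ref{fact:bipolar_theorem}) $M^{\circ}=U^{\circ\circ}=\overline{\acx}^{w}U=U$. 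Combining this with the general identity $\rho_{M}=q_{M^{\circ}}$ gives $\rho_{M}=q_{U}=\rho$ (here $\rho_{M}$ is as in Definition \ref{d:rho_M}). Therefore $\delta\sum|c_{i}|\le\rho(\sum c_{i}x_{i})=\rho_{M}(\sum c_{i}x_{i})=\bigl\lVert\sum c_{i}r_{M}(x_{i})\bigr\rVert_{C(M)}$, so $\{r_{M}(x_{n})\}$ is an $l^{1}$-sequence contained in the norm-bounded set $r_{M}(B)\subseteq C(M)$ ($M$ equicontinuous, $B$ bounded). By Lemma \ref{f:sub-fr}, $r_{M}(B)$ is not a tame family on $M$, hence $B$ is not tame by Definition \ref{d:TameSet}.

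Once Lemma \ref{f:sub-fr} and the bornological formalism of the earlier sections are available, the argument is essentially routine; the single point requiring care is the identity $\rho_{M}=\rho$ in the second implication, i.e.\ that a continuous seminorm is recovered as $\rho_{M}$ from $M=\{\rho\le1\}^{\circ}$, which I would justify via $q_{N^{\circ}}(x)=\inf\{r>0:|\varphi(x)|\le r\ \forall\varphi\in N\}=\sup_{\varphi\in N}|\varphi(x)|$ applied with $N=M$, together with $U^{\circ\circ}=U$. A harmless bookkeeping step is the passage from the $l^{1}$-sequence $\{r_{K}(x_{n})\}$ in $C(K)$ to representatives $x_{n}\in B$. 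I would also remark that the coincidence is ``local'' (per bounded set), which is exactly why no local completeness hypothesis enters here --- in contrast with $\Rl=\ORl$, which does need it (Lemma \ref{lemma:overline_R2_equivalence}).
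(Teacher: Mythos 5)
Your proof is correct, and it takes a genuinely different route from the paper's. Both of your implications are funnelled directly through Lemma \ref{f:sub-fr} applied to the restriction family $r_{M}(B)\subseteq C(M)$: for ``not tame $\Rightarrow$ fails $\Rl$'' you read off the $C(K)$-sup-norm $l^{1}$-estimate as an estimate for the continuous seminorm $\rho_{K}$, and for ``fails $\Rl$ $\Rightarrow$ not tame'' you convert a seminorm $\rho$ into the equicontinuous weak-star compact set $M=\{\rho\le 1\}^{\circ}$ and verify $\rho=\rho_{M}$ via the gauge/polar dictionary (your computation $q_{M^{\circ}}=\rho_{M}$ together with $U^{\circ\circ}=U$ is the one point needing care, and it is fine since $U$ is closed, absolutely convex, hence weakly closed by Mazur). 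The paper instead proves ``not $\Rl$ $\Rightarrow$ not tame'' by passing to the completion $\widehat{X}$, invoking Lemma \ref{lemma:equivalen_to_the_usual_l1_basis} to embed $l^{1}$ with $\varphi(e_{n})=x_{n}$, using that the $l^{1}$-basis is not tame, and then heredity of tameness under subspaces (Theorem \ref{thm:properties_of_tame_class}); and it proves ``$\Rl$ $\Rightarrow$ tame'' by factoring through a Banach space via the Equicontinuous Factor Lemma \ref{lemma:equicontinuous_factor}, combined with Lemmas \ref{lemma:preimage_of_l1_sequence}, \ref{lemma:specific_l1_when_not_tame} and \ref{lemma:b_small_and_adjoint_maps}. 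Your argument is shorter and avoids the completion, local completeness, and the factorization machinery altogether, which also makes transparent why the statement is purely local (per bounded set) and needs no completeness hypothesis; the paper's heavier route has the side benefit of exercising tools ($l^{1}$-embeddings, the Factor Lemma, stability of $\Tame$) that are reused elsewhere (e.g., in Lemma \ref{lemma:coRl_l1_embedding} and the generalized Haydon theorem) and of exhibiting an actual copy of $l^{1}$ in the locally complete case, which is what links $\Rl$ to $\ORl$.
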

\begin{proof}
	First suppose that $B \subseteq X$ does not satisfy $\Rl$.  We will show that $B$ is not tame.
	Without loss of generality, we can assume that $B = \{x_{n}\}_{n \in \N} \subseteq X$ is a bounded sequence equivalent to the $l^{1}$-basis. 
	Write $\widehat{X}$ for the completion of $X$. 
	It is easy to see that $\{x_{n}\}_{n \in \N}$ remains an $l^{1}$-sequence as a subset of $\widehat{X}$ (directly applying Definition \ref{defin:equivalent_to_usual_l1} and extending the seminorm to $\widehat{X}$).
	By Lemma \ref{lemma:equivalen_to_the_usual_l1_basis}, there is an embedding $\varphi \colon l^{1} \to \widehat{X}$ such that $\varphi(e_{n}) = x_{n} \in X$.
	Write $B' := \{e_{n}\}_{n \in \N} \subseteq l^{1}$.
	Clearly, $B'$ is not tame in $l^1$.
	Since $\varphi$ is an embedding, $B = \varphi(B')$ is also not tame 
	in $\widehat{X}$.
	However, by Theorem~\ref{thm:properties_of_tame_class}, this implies that $B$ is not tame in $X$, which is a contradiction.

	Conversely, suppose that $B \subseteq X$ satisfies $\Rl$, we will show that $B$ is tame.
	 Let $M \in \eqc(X^{*})$
	and let $\pi\colon X \to V$ and $\Delta\colon M \to V^{*}$ be as in Lemma \ref{lemma:equicontinuous_factor}.
	Since $\Delta$ is weak-star continuous, $\Delta(M)$ is weak-star compact and therefore equicontinuous by the Banach-Steinhaus Theorem.
	Since $B$ satisfies $\Rl$ and considering Lemma \ref{lemma:preimage_of_l1_sequence}, so does $\pi(B)$.
	By Lemma \ref{lemma:specific_l1_when_not_tame}, $\pi(B)$ is tame over $\Delta(M)$.
	Applying 
	Lemma \ref{lemma:b_small_and_adjoint_maps}, we conclude that $B$ is tame over $M = \pi^{*}(\Delta(M))$.
\end{proof}


%

The following corollary of Theorem \ref{thm:tame_iff_R} gives a generalization of Rosenthals's dichotomy to all locally convex spaces in terms of the tameness. 

\begin{thm} \label{t:tame_dichotomy} 
	\emph{\textbf{[Tame dichotomy in lcs]}} 
	Let $E$ be a locally convex space.
	Then every bounded 
	subset in $E$ is either tame, or has a subsequence equivalent to the $l^1$-sequence. 

\end{thm}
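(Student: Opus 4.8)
The plan is to read the dichotomy off Theorem~\ref{thm:tame_iff_R}, which already establishes the equivalence $\Tame=\Rl$ at the level of bounded subsets. Fix a bounded subset $B\subseteq E$. If $B$ is tame we are in the first alternative and there is nothing to prove, so assume $B$ is not tame. By Theorem~\ref{thm:tame_iff_R}, $B$ then fails property $\Rl$; unwinding Definition~\ref{defin:equivalent_to_usual_l1}, this means exactly that there is a bounded sequence $\{x_n\}_{n\in\N}\subseteq B$ equivalent to the $l^1$-basis, i.e.\ a continuous seminorm $\rho$ on $E$ and a constant $\delta>0$ with $\delta\sum_{i=1}^n|c_i|\le\rho(\sum_{i=1}^n c_i x_i)$ for all finite tuples of scalars. (Boundedness of $\{x_n\}$ is automatic since $B$ is bounded; taking $c_1=1$, $c_2=-1$ gives $\rho(x_1-x_2)\ge 2\delta>0$, and more generally all the $x_n$ are distinct, so $\{x_n\}$ is a genuine subsequence extracted from $B$.) This puts $B$ into the second alternative, which completes the dichotomy.

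I would then add a sentence locating the actual content: the statement is, formally, just a reformulation of Theorem~\ref{thm:tame_iff_R}, and the nontrivial direction there (``$B$ satisfies $\Rl$ $\Rightarrow$ $B$ tame'') is what carries the weight --- it uses the Equicontinuous Factor Lemma~\ref{lemma:equicontinuous_factor} to reduce tameness over an arbitrary $K\in\eqc(E^*)$ to a Banach-space situation, together with Lemma~\ref{lemma:specific_l1_when_not_tame}, Lemma~\ref{lemma:equivalen_to_the_usual_l1_basis}, and the permanence properties from Theorem~\ref{thm:properties_of_tame_class} (used to pass between $E$ and its completion). None of this needs to be repeated here.

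The only point requiring a little care --- and the nearest thing to an obstacle --- is making sure the $l^1$-sequence produced in the non-tame case actually lies inside $B$, and not merely in some larger set; but this is immediate from the way $\Rl$ is phrased for subsets in Definition~\ref{defin:equivalent_to_usual_l1}, once one observes that every sequence drawn from the bounded set $B$ is automatically bounded.
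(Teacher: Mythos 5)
Your proposal is correct and coincides with the paper's treatment: Theorem~\ref{t:tame_dichotomy} is stated there precisely as a direct corollary of Theorem~\ref{thm:tame_iff_R}, with no further argument, and your unwinding of Definition~\ref{defin:equivalent_to_usual_l1} (including the observation that the $l^1$-sequence is drawn from $B$ itself and is automatically bounded) fills in exactly the routine step the paper leaves implicit.
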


\begin{defin} \label{d:AllDef2}
	A subset $A \subseteq X$ of a lcs is said to be \textit{Rosenthal}  
	(write $A \in \Ros$) 
	if every bounded sequence in $A$ has a weak Cauchy subsequence.
	We say that $X$ is \textit{Rosenthal} ($X \in \Ros$) if every bounded subset of $X$ is Rosenthal.
	%
	%
	%
\end{defin}

\begin{prop} \label{prop:rosenthal_is_tame}
	$\Ros \Longrightarrow \sTame$. 
	
	\noindent 
	More precisely, every bounded Rosenthal subset $B \subseteq X$ 
	(e.g., every bounded weak Cauchy subsequence) 
	in a lcs $X$ is Mackey tame.
\end{prop}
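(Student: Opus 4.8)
The plan is to use the characterisation of tameness through the absence of independent sequences (Definition \ref{d:TameFamily}) together with the fact that a weak Cauchy sequence is automatically pointwise convergent on any weak-star compact subset of the dual. It suffices to prove the ``more precisely'' clause: if $B \subseteq X$ is a bounded Rosenthal subset, then $B$ is tame over every weak-star compact $K \subseteq X^{*}$. Granting this, $\Ros \Rightarrow \sTame$ follows at once, since every bounded subset of a space in $\Ros$ is Rosenthal and hence, by the clause, Mackey tame.

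So fix a weak-star compact $K \subseteq X^{*}$ and argue by contradiction: assume $B$ is not tame over $K$. By Definition \ref{d:TameFamily} there is a sequence $\{x_{n}\}_{n\in\N}\subseteq B$ which, viewed as the family of weak-star continuous functions $\tilde x_{n}\colon K \to \R$, $\tilde x_{n}(\varphi):=\langle x_{n},\varphi\rangle$, is independent; note that every subsequence of an independent sequence is again independent. Since $B$ is bounded and Rosenthal, $\{x_{n}\}$ has a weak Cauchy subsequence $\{x_{n_{k}}\}_{k}$, meaning $\langle x_{n_{k}},\varphi\rangle$ converges for every $\varphi\in X^{*}$; in particular $\{\tilde x_{n_{k}}\}_{k}$ converges pointwise on $K$.

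Now I would invoke Fact \ref{lemma:independence_over_compact_sets}: since $\{\tilde x_{n_{k}}\}_{k}$ is an independent family of continuous functions on the compact space $K$, there are $a<b$ in $\R$ such that $\bigcap_{k\in P}\tilde x_{n_{k}}^{-1}(-\infty,a)\cap\bigcap_{k\in M}\tilde x_{n_{k}}^{-1}(b,\infty)\neq\emptyset$ for all disjoint $P,M\subseteq\N$, now allowed to be infinite. Choosing $P$ to be the even and $M$ the odd positive integers produces a point $\varphi_{0}\in K$ with $\langle x_{n_{2j}},\varphi_{0}\rangle<a$ and $\langle x_{n_{2j+1}},\varphi_{0}\rangle>b$ for all $j$, so the scalar sequence $\{\langle x_{n_{k}},\varphi_{0}\rangle\}_{k}$ does not converge --- contradicting the weak Cauchy property. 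Hence $B$ is tame over $K$, and since $K$ was arbitrary, $B$ is Mackey tame.

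The one step that carries the real content is the application of Fact \ref{lemma:independence_over_compact_sets}, which promotes the finite combinatorial condition defining independence to an infinite one by a finite-intersection-property argument on the compact $K$; the remaining steps are routine bookkeeping. I also note that $K$ need not be equicontinuous and $B$ need not be uniformly bounded as a family on $K$, but this causes no difficulty, since the argument uses only the weak-star continuity of the evaluations $\tilde x_{n}$ and the compactness of $K$.
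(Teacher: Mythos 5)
Your proof is correct and follows essentially the same route as the paper: pass to a weak Cauchy subsequence of a putative independent sequence (the paper phrases this as a "without loss of generality" step), then apply Fact \ref{lemma:independence_over_compact_sets} on the weak-star compact set to produce a functional separating evens from odds, contradicting the Cauchy property. The only difference is expository, namely that you make explicit the observation that subsequences of independent sequences remain independent.
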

\begin{proof}
	Suppose that $B \subseteq X$ is a bounded Rosenthal 
	set and let $M \subseteq X^{*}$ be weak-star compact.
	We claim that $B$ is tame over $M$.
	Assuming the contrary, suppose that $\{x_{n}\}_{n \in \N} \subseteq B$ is independent over $M$. We can assume without loss of generality that $\{x_{n}\}_{n \in \N}$ is both weak-Cauchy and independent over $M$.  
	Let $a < b \in \R$ be the bounds of independence of $\{x_{n}\}_{n \in \N}$ over $M$.
	Since $M$ is weak-star compact, we can use Fact \ref{lemma:independence_over_compact_sets} to find $\varphi \in M$ such that:
	$$
	\varphi(x_{n}) \in \begin{cases}
		(b, \infty) & n \in 2\N \\
		(-\infty, a) & \text{otherwise} \\
	\end{cases}.
	$$
	This implies that $\{\varphi(x_{n}) \}_{n \in \N}$ is not a Cauchy sequence, which is a contradiction.
\end{proof}

\begin{thm}\label{t:coinc}
	In every locally convex space $X$, the following holds:
	$$
	\Ros \Longrightarrow \sTame   \Longrightarrow \Tame = \Rl \Longrightarrow \ORl.
	$$
\end{thm}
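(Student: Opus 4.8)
The plan is to prove the chain link by link, since each arrow is an instance of a result already in hand; the whole statement is essentially a repackaging of Proposition \ref{prop:rosenthal_is_tame}, Theorem \ref{thm:tame_iff_R}, and Lemma \ref{lemma:overline_R_implies_R}, together with one trivial inclusion of compactologies.

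First I would dispatch $\Ros \Rightarrow \sTame$: this is exactly Proposition \ref{prop:rosenthal_is_tame}, which shows that every bounded Rosenthal subset of a lcs is Mackey tame, hence if $X \in \Ros$ then every bounded subset of $X$ is Mackey tame, i.e. $X \in \sTame$. Next, for $\sTame \Rightarrow \Tame$, I would just unwind the definitions: by Definition \ref{def:tame_lcs} a space in $\sTame$ has every bounded subset tame as a family of functions over \emph{every} weak-star compact subset of $X^{*}$; since each $K \in \eqc(X^{*})$ is in particular weak-star compact (by Alaouglu--Bourbaki, Fact \ref{l:Al-Bo}), tameness over all weak-star compact subsets forces tameness over all equicontinuous weak-star compact ones, which is precisely $X \in \Tame$ by Definition \ref{d:TameSet}. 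This step is where one has to be careful only about the direction: Mackey tameness quantifies over the \emph{larger} family of compact sets, so it is the stronger property, and the inclusion goes the way claimed.

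The equality $\Tame = \Rl$ is then Theorem \ref{thm:tame_iff_R}, which in fact gives the stronger statement that a bounded subset $B \subseteq X$ is tame if and only if it satisfies $\Rl$; summing over all bounded subsets yields the equality of the two classes. Finally, $\Rl \Rightarrow \ORl$ is Lemma \ref{lemma:overline_R_implies_R}: contrapositively, if $l^{1}$ embeds into $X$ via $\varphi$, then $\{\varphi(e_{n})\}_{n \in \N}$ is a bounded $l^{1}$-sequence in $X$, so $X$ fails $\Rl$; equivalently $\Rl$ implies $\ORl$. Chaining the four implications completes the proof.

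I do not expect a genuine obstacle here: every arrow has already been proved in an earlier section, and the only non-citation point — that $\sTame \subseteq \Tame$ — is immediate from the fact that $\eqc(E^{*})$ is a subfamily of the weak-star compact subsets of $E^{*}$. The mild ``trap'' to watch is simply keeping the inclusions oriented correctly ($\sTame$ is stronger than $\Tame$, and $\ORl$ is weaker than $\Rl$), but nothing deeper is involved.
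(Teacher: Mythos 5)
Your proposal is correct and matches the paper's proof, which likewise simply combines Proposition \ref{prop:rosenthal_is_tame}, Theorem \ref{thm:tame_iff_R} and Lemma \ref{lemma:overline_R_implies_R}; your explicit remark that $\sTame \Rightarrow \Tame$ follows because $\eqc(X^{*})$ is a subfamily of the weak-star compact subsets is the same (tacit) observation the paper relies on.
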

\begin{proof}
	Combine Theorem \ref{thm:tame_iff_R}, Proposition \ref{prop:rosenthal_is_tame} and Lemma \ref{lemma:overline_R_implies_R}.  
\end{proof} 

\sk 
\begin{thm} \label{thm:bounded_metrizable_tameness}   
	Let $X$ be a locally convex space whose 
	bounded subsets are metrizable. Then ${\Ros=\Tame}$.
	More precisely, 
	If $B \subseteq X$ is a bounded subset then it is Rosenthal if and only if it is tame.
	
\end{thm}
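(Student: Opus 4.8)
The plan is to read off the two easy halves from what is already proved and then do real work only on the remaining implication. By Theorem \ref{thm:tame_iff_R} a bounded subset $B\subseteq X$ is tame \emph{iff} it satisfies $\Rl$, and by Theorem \ref{t:coinc} (via Proposition \ref{prop:rosenthal_is_tame}) every bounded Rosenthal subset is tame. So I only need: \emph{if $X$ has metrizable bounded sets and $B$ is a bounded tame (equivalently, $\Rl$) subset, then $B$ is Rosenthal.} Arguing by contradiction, it suffices to show that in an lcs with metrizable bounded sets, a bounded sequence $(x_n)$ with no weak Cauchy subsequence contains a subsequence which is an $l^1$-sequence \emph{with respect to a continuous seminorm} (Definition \ref{defin:equivalent_to_usual_l1}), contradicting $\Rl$.

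So fix $(x_n)\subseteq B$ with no weak Cauchy subsequence and put $C:=\overline{\acx}^{X}\{x_n:n\in\N\}$. The absolutely convex hull of a bounded set is bounded (it sits inside any absolutely convex neighborhood absorbing $\{x_n\}$), so $C$ is a bounded, closed, absolutely convex — hence metrizable — subset of $X$. Metrizability of $C$ yields an increasing sequence $(p_j)_{j\in\N}$ of continuous seminorms on $X$ whose sets $\{x\in C:p_j(x)<1/m\}$ form a neighborhood base of $0$ in $C$; in particular $\bigcap_j\ker p_j\cap\spn(C)=\{0\}$. For each $j$ let $N_j:=\spn\{x_n\}/\ker(p_j)$ with the norm induced by $p_j$, and $\widehat{N_j}$ its completion; the images of $(x_n)$ form a bounded sequence in $\widehat{N_j}$, and note $\widehat{N_j}^{*}$ consists precisely of (restrictions to $\spn\{x_n\}$ of) functionals in $X^{*}$ that are dominated by a multiple of $p_j$. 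Apply the classical Rosenthal dichotomy inside each $\widehat{N_j}$: if for some $j$ some subsequence of $(x_n)$ is $l^1$-equivalent in $\widehat{N_j}$, then since $\|[x]_j\|_{\widehat{N_j}}=p_j(x)$ that subsequence is an $l^1$-sequence in $X$ with respect to the continuous seminorm $p_j$, and we are done. Otherwise, for every $j$ every subsequence of $(x_n)$ has a further subsequence that is weak Cauchy in $\widehat{N_j}$, and a diagonal argument produces one subsequence $(z_m)$ of $(x_n)$ that is weak Cauchy in $\widehat{N_j}$ for every $j$; equivalently $\varphi(z_m)$ converges for every $\varphi\in\bigcup_j\widehat{N_j}^{*}$.

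It remains to upgrade this to "$(z_m)$ is weak Cauchy in $X$", which contradicts the choice of $(x_n)$. Here is where metrizability of the bounded set $C\supseteq\{z_m\}$ is essential: for $u\in X^{*}$ the restriction $u|_C$ is continuous, hence (by linearity of $u$ together with absolute convexity of $C$) uniformly continuous with respect to the metric generated by $(p_j)$; consequently, for each $\eps>0$ there is a $j$ on whose $p_j$-fibres inside $C$ the functional $u$ is $\eps$-constant, and a Hahn--Banach/``approximate-functional'' argument then produces $\psi\in\widehat{N_j}^{*}$ with $\sup_m|u(z_m)-\psi(z_m)|\le C\eps$. Feeding this uniform-in-$m$ approximation into the already established convergence of $\varphi(z_m)$ for $\varphi\in\bigcup_j\widehat{N_j}^{*}$ via a Moore--Osgood type interchange of limits shows that $\lim_m u(z_m)$ exists for every $u\in X^{*}$, i.e. $(z_m)$ is weak Cauchy in $X$ — the desired contradiction. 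The main obstacle is precisely this last transfer: converting "local" weak-Cauchyness, measured only against the countably many seminorms $p_j$ (equivalently, in each $\widehat{N_j}$), into weak-Cauchyness against all of $X^{*}$; this must be handled with care because the norm of $\widehat{N_j}$ — equivalently the gauge $q_C$ — is not a continuous seminorm on $X$, so one cannot simply pass to a completion of $X$ as in the locally complete case (Fact \ref{fact:Ruess}).
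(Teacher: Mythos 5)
Your reduction and overall architecture are sound, and they follow a genuinely different route from the paper's. The paper never passes through $\Rl$ or the classical Banach-space dichotomy at this point: it uses tameness of $B$ over the polars $\eps_m^{\circ}$ of a countable base $\{\eps_m\cap\acx B\}$ of neighborhoods of $0$ in $\acx B$, extracts via Lemma \ref{f:sub-fr} and a diagonal argument one subsequence converging pointwise on every $\eps_m^{\circ}$, and then transfers convergence to an arbitrary $\varphi\in X^{*}$ by replacing $\varphi$ with a functional in some $\eps_{m_0}^{\circ}$ agreeing with it on $\acx B$ (Lemma \ref{lemma:disk_hahn_banach}). You instead work in the local normed spaces $\widehat{N_j}$ attached to the seminorms $p_j$, apply Rosenthal's Banach-space dichotomy there and diagonalize; the branch producing an $l^{1}$-subsequence with respect to $p_j$ does contradict $\Rl$ (correct, since the $\widehat{N_j}$-norm of $[x]_j$ is $p_j(x)$), and the two easy implications via Theorem \ref{thm:tame_iff_R} and Proposition \ref{prop:rosenthal_is_tame} are fine. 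Both proofs then meet the same crux: upgrading convergence of $\varphi(z_m)$ for all $\varphi$ dominated by a multiple of some $p_j$ to weak Cauchyness against all of $X^{*}$.

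The gap is that this crux is only asserted. The sentence producing $\psi\in\widehat{N_j}^{*}$ with $\sup_m|u(z_m)-\psi(z_m)|\le C\eps$ is where all the work lies, and it does not follow from $u$ being uniformly continuous on $C$ or ``$\eps$-constant on $p_j$-fibres''; in particular you cannot take $\psi:=u|_{\spn C}$, because $u$ need not be dominated by any multiple of any $p_j$ on $\spn C$. For example, take $X=(l^{2},w)$, $C=B_{l^{2}}$, $p_j(x)=\max_{i\le j}|\langle x,y_i\rangle|$ for a norm-dense sequence $(y_i)$, and $u=\langle\cdot,y\rangle$ with $y$ outside the linear span of the $y_i$: then $u$ is $p_j$-unbounded on $\spn C=l^{2}$ for every $j$, and no $p_j$-bounded functional agrees with $u$ even on $\acx\{e_n\}$ — which also shows that an exact-agreement statement of the type of Lemma \ref{lemma:disk_hahn_banach} (used at the corresponding step of the paper's proof) is not available here, so the approximate form you aim at is really what must be proved. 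It is true, but needs an argument, for instance: after rescaling $p_j$, uniform continuity gives $|u|\le\eps$ on $C\cap\{p_j\le 1\}$, hence by the scaling trick $|u|\le\eps\max(p_j,q_C)\le\eps\,(p_j+q_C)$ on $\spn C$, where $q_C$ is the gauge of $C$; now split $u|_{\spn C}=u_1+u_2$ with $|u_1|\le\eps\,p_j$ and $|u_2|\le\eps\,q_C$ by applying Hahn--Banach on $\spn C\times\spn C$, equipped with the seminorm $(x,y)\mapsto\eps\,p_j(x)+\eps\,q_C(y)$, to the functional $(x,x)\mapsto u(x)$ on the diagonal. Then $\psi:=u_1$ vanishes on $\ker p_j$, so it defines an element of $\widehat{N_j}^{*}$, and $|u-\psi|=|u_2|\le\eps$ on $C\supseteq\{z_m\}$, after which your $3\eps$ (Moore--Osgood) step closes the argument. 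With this step written out, your proof is complete.
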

\begin{proof}
	We already established in Proposition \ref{prop:rosenthal_is_tame} that every Rosenthal bounded subset is tame.
	We will see the converse applied in this setting.
	Let $B \subseteq X$ be tame, we will show that it is also Rosenthal.
	Without loss of generality, we can assume that $B = \{x_{n}\}_{n \in \N} \subseteq X$ is a bounded sequence.
	Note that $\acx B$ is also bounded, and therefore metrizable.
	
	
	Let $\{\eps_{m} \cap \acx B \}_{m \in \N}$ be a descending basis of disks for the neighborhoods of $0 \in \acx B$, and let $\{\eps_{m}^{\circ}\}_{m \in \N}$ be the corresponding polars.
	Every such polar is equicontinuous by definition and also weak-star compact by the Banach–Alaoglu Theorem.	
	Since $B$ is tame, it must be tame over each $\eps_{m}^{\circ}$.
	
	Define $n_{k}^{(0)} := k$.
	Using induction, we construct subsequences $\left\{n_{k}^{(m)}\right\}_{k \in \N} \subseteq \N$ such that 
	\ben
		\item $\left\{n_{k}^{(m + 1)} \right\}_{k \in \N} \subseteq \left\{n_{k}^{(m)}\right\}_{k \in \N}$.
		\item $\left\{\varphi\left(x_{n_{k}^{(m)}}\right) \right\}_{k \in \N}$ converges for every $\varphi \in \eps_{m}^{\circ}$.
	\een
	The induction step is done by applying Lemma \ref{f:sub-fr} on $\left\{n_{k}^{(m)}\right\}_{k \in \N}$ to find a subsequence $\left\{n_{k}^{(m + 1)} \right\}_{k \in \N}$ that converges on $\eps_{m + 1}^{\circ}$.
	Now, defining $n_{k} := n_{k}^{(k)}$ we get that $\{\varphi(x_{n_{k}}) \}_{k \in \N}$ converges for every $\varphi \in \bigcup\limits_{m \in \N} \eps_{m}^{\circ}$.
	
	We will now show that $\{x_{n_{k}}\}_{k \in \N}$ is a weak Cauchy sequence.
	Let $\varphi \in X^{*}$ and let $\eps$ be a neighborhood of zero such that $\lvert \varphi(\eps) \rvert \leq 1$.
	By the construction of $\{\eps_{m} \}_{m \in \N}$, there is some $m_{0} \in \N$ such that $\eps_{m_{0}} \cap \acx B \subseteq \eps \cap \acx B$.
	
	Using Lemma \ref{lemma:disk_hahn_banach}, we can find $\widehat{\varphi} \in \eps_{m_{0}}^{\circ}$ such that $\varphi_{\mid \acx B} = \widehat{\varphi}_{\mid \acx B}$.
	However, ${\{x_{n_{k}}\}_{k \in \N} \subseteq \acx B}$ and therefore $\{\varphi(x_{n_{k}}) \}_{k \in \N} = \{\widehat{\varphi}(x_{n_{k}}) \}_{k \in \N}$ converges.
	By definition, $\{x_{n_{k}}\}_{k \in \N}$ is weak Cauchy.
\end{proof}


\begin{thm} \label{t:BoundMetr} 
If all bounded sets of a lcs $X$ are metrizable, then 
$$
\Ros = \sTame   = \Tame = \Rl \Longrightarrow \ORl 
$$
and the following dichotomy holds: 
any bounded sequence in $X$ either has a weak Cauchy subsequence or an $l^1$-subsequence. 
\end{thm}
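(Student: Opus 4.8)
The plan is to deduce this statement by assembling results already established in the paper; the metrizability hypothesis is exactly what turns the one-directional chain of Theorem~\ref{t:coinc} into a chain of equalities, and the dichotomy then follows at once from the tame dichotomy.

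\emph{Step 1 (the equalities).} Theorem~\ref{t:coinc} holds in \emph{every} lcs and already gives $\Ros \Rightarrow \sTame \Rightarrow \Tame = \Rl \Rightarrow \ORl$; moreover, at the level of a single bounded subset, Theorems~\ref{thm:tame_iff_R} and~\ref{prop:rosenthal_is_tame} and Lemma~\ref{lemma:overline_R_implies_R} provide the corresponding implications (a bounded Rosenthal subset is Mackey tame, a Mackey tame subset is tame, a tame subset satisfies $\Rl$). So the only thing left to check is the reverse implication $\Tame \Rightarrow \Ros$, and this is precisely where the hypothesis enters: by Theorem~\ref{thm:bounded_metrizable_tameness}, if all bounded subsets of $X$ are metrizable, then a bounded subset $B \subseteq X$ is Rosenthal \emph{if and only if} it is tame. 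Hence every tame bounded subset is Rosenthal, a tame lcs is Rosenthal, and combining with the above, the four classes coincide: $\Ros = \sTame = \Tame = \Rl$. They still imply $\ORl$ by Theorem~\ref{t:coinc}; we keep this as an implication only, since the converse $\ORl \Rightarrow \Rl$ would require local completeness (cf. Lemma~\ref{lemma:overline_R2_equivalence}).

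\emph{Step 2 (the dichotomy).} Fix a bounded sequence $\{x_n\}_{n\in\N}$ in $X$ and set $B := \{x_n : n \in \N\}$. By the tame dichotomy, Theorem~\ref{t:tame_dichotomy}, either $B$ is tame, or some subsequence of $\{x_n\}_{n\in\N}$ is equivalent to the $l^1$-sequence; in the latter case we are done. In the former case $B$ is a metrizable bounded subset, so by Theorem~\ref{thm:bounded_metrizable_tameness} it is Rosenthal, and therefore, by the very definition of a Rosenthal subset, $\{x_n\}_{n\in\N}$ has a weak Cauchy subsequence. This proves the dichotomy.

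\emph{Main obstacle.} Honestly, no step here is hard: the whole argument is a synthesis of Theorems~\ref{t:coinc}, \ref{thm:tame_iff_R}, \ref{thm:bounded_metrizable_tameness} and~\ref{t:tame_dichotomy}. The only point requiring care is bookkeeping --- keeping the subset-level assertions (a given bounded set being Rosenthal, tame, or $\Rl$) separate from the space-level ones, and noting that the metrizability of bounded sets is used only through Theorem~\ref{thm:bounded_metrizable_tameness}, which already internalizes the passage to auxiliary bounded sets such as $\acx B$.
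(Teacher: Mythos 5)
Your proposal is correct and follows essentially the same route as the paper, whose proof is simply to combine Theorem~\ref{t:coinc} with Theorem~\ref{thm:bounded_metrizable_tameness}; you merely make explicit the use of the tame dichotomy (Theorem~\ref{t:tame_dichotomy}) for the final assertion, which the paper leaves implicit. The bookkeeping between subset-level and space-level statements is handled correctly, so no gaps remain.
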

\begin{proof}
	Combine Theorem \ref{t:coinc} and Theorem \ref{thm:bounded_metrizable_tameness}.
%
\end{proof}

Theorem \ref{t:BoundMetr} and Lemma \ref{lemma:overline_R2_equivalence} directly imply a well-known result of Ruess
(Fact \ref{fact:Ruess})
 which extends Rosenthal's non-containtment of $l^1$-criteria   
to a quite large class of lcs. 



\sk 
\begin{thm} \label{p:1And3AreNotReversible} There exists a 
	strongly tame complete (even reflexive) lcs which: 
	\begin{enumerate}
		\item [(i)] is not a Rosenthal lcs;
%
		\item [(ii)] does not contain any $l^1$-subsequence;  
		
		\item [(iii)] contains a dense, Rosenthal subspace.
	\end{enumerate}  
As a corollary: 
 Rosenthal's dichotomy 
 does not hold for such locally convex spaces. 
\end{thm}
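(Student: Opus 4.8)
The plan is to exhibit a single concrete example: the power space $E := \R^{I}$ with the product topology, where $I := \{0,1\}^{\N}$ (any index set of cardinality $\mathfrak{c}$ will do). First I would record the structural properties. Products of complete (resp. barrelled) spaces are complete (resp. barrelled), so $E$ is complete and barrelled. Every bounded subset of $E$ lies in a box $\prod_{i\in I}[-c_{i},c_{i}]$, which is compact by Tychonoff; moreover $E^{*}=\bigoplus_{i\in I}\R$, so the weak topology $\sigma(E,E^{*})$ is exactly the product topology. Hence every bounded subset of $E$ is relatively weakly compact, i.e. $E$ is semi-reflexive (Fact \ref{f:semi_reflexive_heine_borel}), and being also barrelled it is reflexive. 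Since $E$ carries its own weak topology, Corollary \ref{cor:weak_topology_is_b_small} gives that $E$ is $\BTame$-small, i.e. tame; and being barrelled $E$ is a Mackey space, so by Proposition \ref{p:tameness_in_barrelled_spaces} it is in fact \emph{strongly} tame. (Alternatively: reflexive $\Rightarrow$ semi-reflexive $\Rightarrow \DLP \Rightarrow \Tame$ via Theorem \ref{thm:dlp_iff_semireflexive} and the inclusion chain.)

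Next I would handle (i) and the corollary with one bounded sequence. For $n\in\N$ define $x_{n}\in E$ by $x_{n}(f):=f(n)$; then the $x_{n}$ take values in $\{0,1\}$, so $\{x_{n}\}_{n\in\N}$ is bounded. Because $E^{*}=\bigoplus_{i\in I}\R$, a subsequence of $\{x_{n}\}$ is weak Cauchy in $E$ if and only if it converges pointwise on $I$. Given any subsequence $\{x_{n_{k}}\}_{k\in\N}$, let $f\in\{0,1\}^{\N}$ be the indicator function of $\{n_{2k}:k\in\N\}$; then $f(n_{2k})=1$ and $f(n_{2k+1})=0$, so $\{x_{n_{k}}(f)\}_{k}$ is not convergent. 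Thus $\{x_{n}\}$ has no weak Cauchy subsequence, so $E\notin\Ros$, proving (i). For (ii): since $E$ is tame, Theorem \ref{thm:tame_iff_R} gives $E\in\Rl$, i.e. $E$ contains no bounded sequence equivalent to the $l^{1}$-basis, so in particular no bounded sequence admits an $l^{1}$-subsequence. Combining, the sequence $\{x_{n}\}$ above has neither a weak Cauchy subsequence nor an $l^{1}$-subsequence, so Rosenthal's dichotomy fails in $E$.

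For (iii) I would take $F:=\bigoplus_{i\in I}\R\subseteq E$, the finitely supported elements, with the subspace topology (which is the topology of pointwise convergence on $I$); clearly $F$ is dense in $E$. By Hahn--Banach every continuous functional on $F$ extends to one on $E$, and by density the restriction $E^{*}\to F^{*}$ is injective, so $F^{*}=\bigoplus_{i\in I}\R$ as well. I claim $F\in\Ros$. Let $\{y_{n}\}_{n\in\N}\subseteq F$ be bounded. The union $S:=\bigcup_{n\in\N}\mathrm{supp}(y_{n})$ is \emph{countable}, and each real sequence $\{y_{n}(i)\}_{n}$, $i\in S$, is bounded; a diagonal extraction yields a subsequence $\{y_{n_{k}}\}_{k}$ with $\{y_{n_{k}}(i)\}_{k}$ convergent for every $i\in S$, hence for every $i\in I$ (the other coordinates being identically $0$). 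So $\{y_{n_{k}}\}$ converges pointwise on $I$, which, since $F^{*}=\bigoplus_{i\in I}\R$, means it is weak Cauchy in $F$. Hence $F$ is a dense Rosenthal subspace of $E$, which is (iii). (Note $x_{n}\notin F$ for each $n$, as $\mathrm{supp}(x_{n})=\{f:f(n)=1\}$ is uncountable; and since $\overline{F}=E$, this simultaneously shows that $\Ros$ is not preserved by completion.)

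The one genuinely delicate point is (iii): one must use that an individual bounded \emph{sequence} in $F$ — not the whole bounded set — has countable support-union, so that a diagonal argument applies, and then observe that pointwise convergence on $I$ coincides with weak Cauchyness relative to the correct dual $F^{*}=\bigoplus_{i\in I}\R$. Everything else is a direct application of results already in place (Fact \ref{f:semi_reflexive_heine_borel}, Corollary \ref{cor:weak_topology_is_b_small}, Proposition \ref{p:tameness_in_barrelled_spaces}, Theorem \ref{thm:tame_iff_R}) together with the elementary combinatorics of the evaluation sequence on $\{0,1\}^{\N}$.
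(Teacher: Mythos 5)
Your proposal is correct and follows essentially the same route as the paper: the same space $\R^{\{0,1\}^{\N}}$, the same evaluation/projection sequence for (i) and the failure of the dichotomy, and the same finitely supported dense subspace with a diagonal argument for (iii). The only (immaterial) difference is that you justify tameness via Corollary \ref{cor:weak_topology_is_b_small} and reflexivity from scratch, whereas the paper cites Theorem \ref{thm:properties_of_tame_class} and the known reflexivity of $\R^{\mathfrak{c}}$ directly.
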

\begin{proof} 
	Write $C := \{0, 1\}^{\mathbb{N}}$ and consider the space $X := \R^{C}$ with the product topology.
	It is known that $X$ is complete and reflexive.
	By Theorem \ref{thm:properties_of_tame_class}, $X$ is tame.
	Also, every reflexive space is barrelled \cite[Proposition 11.4.2]{Jarchow} so by Proposition  \ref{p:tameness_in_barrelled_spaces}, $X$ is strongly tame.
	We will see now that it is not Rosenthal.
	Consider the $n$-th projection $\pi_{n}\colon C \to \{0, 1\}$.
	We claim that $\{\pi_{n}\}_{n \in \N} \subseteq X$ has no weak Cauchy subsequence.
	Indeed, for every subsequence $\{n_{k}\}_{k \in \N} \subseteq \N$ define $\alpha \in C$ whose $n$-th entry is
	$$
	  \alpha_{n} := \begin{cases}
	  	1 & n = n_{k}, k \in 2\N \\
	  	0 & \text{else}
	  \end{cases}
	$$
	and let $e_{\alpha} \colon X \to \R$ be the evaluation functional at $\alpha$.
	It is easy to see that $\{e_{\alpha}(\pi_{n_{k}})\}_{k \in \N}$ does not converge and therefore $\{\pi_{n_{k}}\}_{k \in \N}$ is not weak Cauchy.
	
	Finally, consider the dense subspace $Y \subseteq X$ of functions with finite support
	$$
	Y := 
	\{f \in \R^{C} \mid \lvert \support(f) \rvert < \aleph_{0} \} \subseteq \R^{C}.
	$$
	We will show that $Y$ is a Rosenthal space.
	Let $\{x_{n}\}_{n \in \N} \subseteq Y$ be a bounded sequence.
	By definition, 
	$$
	S := \bigcup\limits_{n \in \N} \support(x_{n}) \subseteq C
	$$
	is countable so we can enumerate it as $S = \{\alpha_{m}\}_{m \in \N}$.
	Using induction and a diagonal argument, it is easy to find a subsequence $\{n_{k}\}_{k \in \N} \subseteq \N$ such that $\{x_{n_{k}}\}_{k \in \N}$ converges pointwise on $S$.
%
	Moreover, for every $\alpha \in C \setminus S, k\in \N$ we have $x_{n_{k}}(\alpha) = 0$.
	Therefore, $\{x_{n_{k}}\}$ converges pointwise on $C$ and is therefore weak-Cauchy.
%
	\end{proof}

\begin{remark}
	Theorem \ref{t:tame_dichotomy} can be considered as a locally convex version of Rosenthal's classical dichotomy (in Banach spaces). As shown in Theorem \ref{thm:bounded_metrizable_tameness}, ``Tame dichotomy" of Theorem \ref{t:tame_dichotomy} implies that Rosenthal's classical dichotomy from Banach spaces can be extended to a much larger class of lcs with metrizable bounded subsets. This strengthens a well-known  result of Ruess \cite{Ruess} (Fact \ref{fact:Ruess}). On the other hand, Rosenthal's dichotomy is not true for general lcs as Theorem \ref{p:1And3AreNotReversible} demonstrates.  
\end{remark}

\sk
\subsection*{Strongest tame image of a lcs}
\label{s:strongestTame} 
In this subsection, we apply the results of Subsection \ref{subsection:co_bornology} to the 
bornological class of tame functions.
By \ref{cor:tame_is_polarly_compatible}, this is indeed a polarly compatible bornology.
\begin{defin}
	An equicontinuous subset $M \subseteq E^{*}$ is said to be \textit{co-tame} if $M \in \cobsmall[\BTame]{E}$.
	Explicitly, $M$ is co-tame if and only if every bounded $A \subseteq E$ is tame over $M$. 
	Similarly can be defined \textit{co-Asplund} subsets. 
\end{defin}

The following remark provides a useful class of co-Asplund (hence, co-tame) subsets.  
\begin{remark} \label{r:separ} 
	Let $E$ be a lcs and $K \in \eqc (E^*)$. Suppose that $K$ is separable (or, more generally, \textit{uniformly Lindelof} \cite{Me-fr}) in the standard strong topology on $E^*$. Then 
	$K$ is (weak$^*$, strong)-fragmented by \cite[Prop. 3.10]{Me-fr}. It follows that 
	every bounded subset $B \subset E$ is fragmented on $K$. 
	Hence $K$ is co-Asplund (so, also co-tame). 
	This explains also assertion (3) in Examples \ref{ex:fromNZ}.  
\end{remark}  

\begin{lemma} \label{lemma:co_tame_is_locally_convex_bornology}
	The family of all co-tame subsets $\cobsmall[\BTame]{E}$ is a locally convex bornology.
	In particular, the weak-star closed absolutely convex hull of a co-tame subset is co-tame.
	It is also true relative to a given bounded subset $B$.
	This means that if $B$ is tame over $M$, it is also tame over $\overline{\acx}^{w^{*}}(M)$.
\end{lemma}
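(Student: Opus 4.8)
The plan is to reduce this statement to the general machinery of Subsection \ref{subsection:co_bornology}, which was set up precisely for this purpose. By Corollary \ref{cor:tame_is_polarly_compatible}, the class $\BTame$ is polarly compatible. Hence Lemma \ref{lemma:co_b_small_is_locally_convex} applies and tells us directly that, for every bounded $A \subseteq E$, the family $\localcobsmall[\BTame]{E}{A}$ of co-$\BTame$-small subsets relative to $A$ is a weak-star saturated, convex bornology on $E^{*}$, and that the intersection $\cobsmall[\BTame]{E} = \bigcap_{A} \localcobsmall[\BTame]{E}{A}$ is likewise a weak-star saturated, locally convex (i.e.\ convex vector) bornology. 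This is exactly the first assertion, after unwinding the definition: a subset $M \subseteq E^{*}$ is co-tame (Definition of co-tame) iff every bounded $A \subseteq E$ is tame over $M$, iff $M \in \cobsmall[\BTame]{E}$.

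For the ``in particular'' clause, I would argue as follows. Since $\cobsmall[\BTame]{E}$ is a weak-star saturated, convex \emph{vector} bornology, it is closed under scalar multiples, balanced hulls, convex hulls, and weak-star closure; chaining these gives closure under $\overline{\acx}^{w^{*}}$. Concretely: given co-tame $M$, the balanced hull $\bo M$ is co-tame (vector bornology property 3 of Definition \ref{def:bornological_class}'s ambient notion, resp.\ the convexity requirement of Lemma \ref{lemma:sufficient_for_locally_convex_bornology}), the convex hull $\co(\bo M) = \acx M$ is co-tame by convexity of the bornology, and finally the weak-star closure $\overline{\acx}^{w^{*}} M = \overline{\acx M}^{w^{*}}$ stays co-tame by weak-star saturation. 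Alternatively, and more cleanly, I would invoke property (6) in the proof of Lemma \ref{lemma:co_b_small_is_locally_convex} verbatim, since that step already proves $T := \overline{\acx}^{w^{*}} M \in \localcobsmall{E}{A}$ whenever $M \in \localcobsmall{E}{A}$ — and this is precisely the relative statement claimed in the last sentence of the lemma.

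For the final sentence — that if a \emph{fixed} bounded $B$ is tame over $M$, then $B$ is tame over $\overline{\acx}^{w^{*}}(M)$ — the point is that this is the content of step (6) of the proof of Lemma \ref{lemma:co_b_small_is_locally_convex} applied with $A := B$: using the Equicontinuous Factor / surjection Lemma \ref{lemma:functional_dual_surjection}, one gets a continuous surjection $j\colon B_{C(M)^{*}} \to T$ with $r_{T}(x)\circ j = r_{B_{C(M)^{*}}}(r_{M}(x))$ for all $x \in E$, whence $r_{T}(B)\circ j = r_{B_{C(M)^{*}}}(r_{M}(B))$; polar compatibility of $\BTame$ (Corollary \ref{cor:tame_is_polarly_compatible}) gives $r_{B_{C(M)^{*}}}(r_{M}(B)) \in \BTame_{B_{C(M)^{*}}}$, and then the consistency property for the surjection $j$ yields $r_{T}(B) \in \BTame_{T}$, i.e.\ $B$ is tame over $T$. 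I do not expect a genuine obstacle here: essentially everything is bookkeeping, and the only mildly delicate ingredient is making sure the relative (single-$B$) version is stated and used correctly — the proof of Lemma \ref{lemma:co_b_small_is_locally_convex} is already organized relative to a fixed $A$, so this is immediate.
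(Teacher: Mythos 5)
Your proposal is correct and is essentially the paper's own proof: the paper simply invokes Lemma \ref{lemma:co_b_small_is_locally_convex} together with the polar compatibility of $\BTame$ (Corollary \ref{cor:tame_is_polarly_compatible}), exactly as you do, with the relative statement for a fixed $B$ coming from the fact that Lemma \ref{lemma:co_b_small_is_locally_convex} is already organized per bounded set $A$. Your extra unwinding of step (6) via Lemma \ref{lemma:functional_dual_surjection} is just a more explicit rendering of the same argument.
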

\begin{proof}
	Application of Lemma \ref{lemma:co_b_small_is_locally_convex} in light of $\BTame$ being polarly compatible.
\end{proof}

\begin{thm} \label{t:str-t} 
	Let $(E, \tau)$ be a locally convex space and $\cobsmall[\BTame]{E}$ 
	be the locally convex bornology of co-tame subsets.
	Then the polar topology $\tau_{tame}$ is the strongest tame locally convex topology, coarser than $\tau$, and
	$$
	\tau_{w} \subseteq \tau_{tame}  \subseteq \tau \subseteq \tau_{\mu}
	$$
	where $\tau_{\mu}$ is the Mackey topology.
\end{thm}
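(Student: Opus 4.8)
The plan is to deduce this theorem as a direct specialization, to the choice $\mathfrak{B} = \BTame$, of the general machinery for polarly compatible bornological classes developed in Subsection \ref{subsection:co_bornology}. So essentially nothing new has to be proved: the content has been front-loaded into the abstract results of Section \ref{s:Born}.

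The first step is to record that $\BTame$ is a polarly compatible bornological class — this is precisely Corollary \ref{cor:tame_is_polarly_compatible}, which in turn rests on Lemma \ref{p:from X to B*} (the fact from \cite{GM-rose} that a bounded $F \subseteq C(K)$ is a tame family on $K$ if and only if it is a tame family on the weak-star unit ball $B_{C(K)^{*}}$). Granted polar compatibility, Lemma \ref{lemma:co_b_small_is_locally_convex} (equivalently Lemma \ref{lemma:co_tame_is_locally_convex_bornology}) applies, so $\cobsmall[\BTame]{E}$ is a weak-star saturated, locally convex bornology on $E^{*}$ all of whose members are equicontinuous. Hence the polar topology it generates, which we denote $\tau_{tame} := \tau_{\BTame}$, is a well-defined Hausdorff locally convex topology on $E$ satisfying $\tau_{tame} \subseteq \tau$.

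Next I would invoke Theorem \ref{thm:strongest_b_topology} verbatim with $\mathfrak{B} = \BTame$: it asserts that $\tau_{\mathfrak{B}}$ is the strongest locally convex $\mathfrak{B}$-small topology coarser than $\tau$. Since ``$\BTame$-small'' is, by Definition \ref{def:tame_lcs}, the same as ``tame'', this yields exactly the first claim — that $\tau_{tame}$ is the strongest tame locally convex topology coarser than $\tau$. The chain $\tau_{w} \subseteq \tau_{tame} \subseteq \tau \subseteq \tau_{\mu}$ is then immediate from Lemma \ref{lemma:b_topology_relations}, again specialized to $\mathfrak{B} = \BTame$; in particular $\tau_{w} \subseteq \tau_{tame}$ reflects the fact (Remark \ref{remark:extensibility_induction}) that $\cobsmall[\BTame]{E}$ contains all finite subsets of $E^{*}$.

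I do not expect any genuine obstacle. The only substantive input is the polar compatibility of $\BTame$, and that is already settled by Corollary \ref{cor:tame_is_polarly_compatible}; everything else is a citation of the general theorems of Section \ref{s:Born}. If one wanted a slightly more self-contained presentation one could restate, rather than cite, the identification of $\tau_{tame}$ with the strongest tame topology, but the argument would just reproduce the proof of Theorem \ref{thm:strongest_b_topology} (bounded sets of $\tau_{tame}$, $\sigma$ and $\tau$ coincide by \cite[p.~132, Cor.~2]{Schaefer}; a $\sigma$-neighborhood of zero contains a closed absolutely convex $\overline{\acx}\,\delta = \delta^{\circ\circ}$ with $\delta^{\circ}$ co-tame, hence $\delta^{\circ\circ} \in \tau_{tame}$), so there is no reason to duplicate it.
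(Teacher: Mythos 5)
Your proposal is correct and follows exactly the paper's own route: the paper's proof of Theorem \ref{t:str-t} is precisely the citation of Theorem \ref{thm:strongest_b_topology} and Lemma \ref{lemma:b_topology_relations}, specialized to $\mathfrak{B} = \BTame$, with polar compatibility supplied by Corollary \ref{cor:tame_is_polarly_compatible}. Your extra remarks (that $\BTame$-smallness coincides with tameness and that the abstract machinery applies verbatim) merely make explicit what the paper leaves implicit.
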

\begin{proof}
	Theorem \ref{thm:strongest_b_topology} and Lemma \ref{lemma:b_topology_relations}.
\end{proof}

Similar results are valid also for the class $\NP$. 

\begin{prob} \label{p:TameInj}
	Study $\tau_{tame}$ and $\tau_{NP}$ for remarkable (e.g., classical) locally convex (or, Banach) spaces $(E,\tau)$. 
	
\end{prob}

\begin{conj} \label{conj:strong_tame_topology}
	Let $(l^{1}, \tau^{1})$ be the usual $l^{1}$ space.
	Consider the inclusion map $i_{p}\colon l^{1} \to l^{p}$ and $i_{w}\colon l^{1} \to l^{1}_{w}$.
	We claim that the weak topology induced by $\{i_{p}\}_{p > 1} \cup \{i_{w}\}$ is equal to $\tau^{1}_{tame}$.
\end{conj}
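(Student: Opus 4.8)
We outline the natural strategy, pinpoint the hard step, and record an obstruction showing that the statement is most likely false as it stands. Write $\sigma$ for the weak topology on $l^{1}$ induced by $\{i_{p}\}_{p>1}\cup\{i_{w}\}$. By Theorem~\ref{t:str-t} a basis of neighbourhoods of $0$ for $\tau^{1}_{tame}$ is $\{M^{\circ}\colon M\subseteq l^{\infty}\text{ equicontinuous and co-tame}\}$, whereas a basis for $\sigma$ consists of the sets $\{x\colon \|x\|_{p}\le\delta,\ |\langle x,\varphi_{i}\rangle|\le\varepsilon\ (1\le i\le m)\}$ with $p\in(1,\infty)$, $\delta,\varepsilon>0$ and $\varphi_{1},\dots,\varphi_{m}\in l^{\infty}$ (a single $l^{p}$-constraint suffices since $\|\cdot\|_{p}$ decreases in $p$ on $l^{1}$); such a set equals $\big(\overline{\acx}^{w^{*}}(\delta^{-1}B_{l^{q}}\cup\varepsilon^{-1}\{\varphi_{i}\})\big)^{\circ}$ for $1/p+1/q=1$. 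Thus $\tau^{1}_{tame}=\sigma$ splits into two containments.

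The containment $\sigma\subseteq\tau^{1}_{tame}$ is routine: the diagonal map embeds $(l^{1},\sigma)$ into $l^{1}_{w}\times\prod_{p>1}l^{p}$, where $l^{1}_{w}$ is tame by Corollary~\ref{cor:weak_topology_is_b_small} and each $l^{p}$ with $p>1$ is reflexive, hence Rosenthal, hence tame (Proposition~\ref{p:tameness_in_barrelled_spaces}); since the class $\Tame$ is closed under products and subspaces (Theorem~\ref{thm:properties_of_tame_class}), $(l^{1},\sigma)$ is a tame lcs, and $\sigma\subseteq\tau^{1}$, so $\sigma\subseteq\tau^{1}_{tame}$ by maximality. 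In particular $\tau_{w}\subseteq\sigma\subseteq\tau^{1}_{tame}\subseteq\tau^{1}$, all compatible with the duality $\langle l^{1},l^{\infty}\rangle$.

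The reverse containment $\tau^{1}_{tame}\subseteq\sigma$ would follow from the structural claim that \emph{every equicontinuous co-tame $M\subseteq l^{\infty}$ satisfies $M\subseteq C\,\overline{\acx}^{w^{*}}(B_{l^{q}}\cup F)$ for some $q<\infty$, some finite $F\subseteq l^{\infty}$ and some $C>0$} (then $M^{\circ}$ contains the $\sigma$-neighbourhood $C^{-1}(B_{l^{p}}\cap F^{\circ})$). Producing such a description of the co-tame subsets of $l^{\infty}$ is the heart of the matter and is the step I expect to be the main obstacle; in fact it fails, as the following example shows. Put $\psi^{(k)}:=e_{1}+\dots+e_{k}\in l^{\infty}$ and $M:=\overline{\acx}^{w^{*}}\{\psi^{(k)}\colon k\in\N\}\subseteq B_{l^{\infty}}$. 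Since $\psi^{(k)}\to\mathbf{1}:=(1,1,\dots)$ in the weak-star topology, $M$ is the weak-star closed absolutely convex hull of (the image of) the convergent sequence $\{\psi^{(k)}\}\cup\{\mathbf{1}\}$, which is homeomorphic to $\omega+1$. For bounded $A\subseteq l^{1}$ the restriction of $A$ to this sequence is carried, via that homeomorphism, onto $\{(\sum_{i\le k}x_{i})_{k}\colon x\in A\}$, a norm-bounded subset of $C(\omega+1)$; as $\omega+1$ is scattered, $C(\omega+1)$ is a Rosenthal Banach space (Fact~\ref{fact:scattered_compact_spaces}), so this subset is a tame family on $\omega+1$ (Lemma~\ref{f:sub-fr}), whence $A$ is tame over the sequence and, by Lemma~\ref{lemma:co_tame_is_locally_convex_bornology}, over $M$. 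Thus $M$ is co-tame, and $M^{\circ}=\{x\in l^{1}\colon \sup_{k}|\sum_{i\le k}x_{i}|\le 1\}$ is a $\tau^{1}_{tame}$-neighbourhood of $0$. It is not a $\sigma$-neighbourhood: given $p>1$ and $\varphi_{1},\dots,\varphi_{m}\in l^{\infty}$, choose by Bolzano--Weierstrass indices $j_{1}<j_{2}<\cdots$ along which $j\mapsto((\varphi_{i})_{j})_{i\le m}$ converges, and set $x^{(N)}:=\tfrac1N\big(e_{j_{1}}+\dots+e_{j_{N}}-e_{j_{N+1}}-\dots-e_{j_{2N}}\big)$; then $\sup_{k}|\sum_{i\le k}x^{(N)}_{i}|=1$ for every $N$, while $\|x^{(N)}\|_{p}\to0$ and $\langle x^{(N)},\varphi_{i}\rangle\to0$, so $2x^{(N)}$ eventually lies in the given basic $\sigma$-neighbourhood but outside $M^{\circ}$.

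Hence $\sigma\subsetneq\tau^{1}_{tame}$, and a genuine proof must first repair the conjecture. What is true, and provable by the same scheme together with the fact that $l^{1}$ admits a quotient map onto every separable Banach space, is that $\tau^{1}_{tame}$ is the weak topology induced by \emph{all} continuous linear maps of $l^{1}$ into Rosenthal Banach spaces; a corrected conjecture would isolate a concrete sufficient subfamily, which by the example must contain (beyond $\{i_{p}\}_{p>1}$ and $i_{w}$) a map of $l^{1}$ onto $C(\omega+1)$, for instance $x\mapsto(\sum_{i\le k}x_{i})_{k}$. Proving such a reformulation is where the real work remains.
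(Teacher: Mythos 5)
This item is Conjecture \ref{conj:strong_tame_topology}: the paper states it without proof (it is listed among the open questions), so there is no argument of the authors to compare yours with. What you submitted is not a proof but a proposed refutation, and, checked against the paper's own machinery, the refutation looks correct. The easy half, $\sigma\subseteq\tau^{1}_{tame}$, follows as you say from Corollary \ref{cor:weak_topology_is_b_small}, Theorem \ref{thm:properties_of_tame_class} and the maximality in Theorem \ref{t:str-t}. For the failure of the reverse inclusion, your set $K:=\{\psi^{(k)}\}_{k\in\N}\cup\{\mathbf{1}\}$ with $\psi^{(k)}=e_{1}+\dots+e_{k}\in l^{\infty}$ is weak-star compact, countable, hence scattered; every bounded $A\subseteq l^{1}$ restricts to a bounded subset of $C(K)$, which is tame on $K$ because $C(K)$ is Rosenthal (Fact \ref{fact:scattered_compact_spaces} with Lemma \ref{f:sub-fr}), and Lemma \ref{lemma:co_tame_is_locally_convex_bornology} upgrades this to tameness over $M=\overline{\acx}^{w^{*}}\{\psi^{(k)}\}$; equivalently, $\rho_{M}(x)=\sup_{k}\lvert\sum_{i\le k}x_{i}\rvert=\lVert Sx\rVert_{\infty}$ for the summation operator $S\colon l^{1}\to c$, and since $c\cong C(\omega+1)$ is Rosenthal, $M$ satisfies $\coRl$ and is co-tame by Theorem \ref{thm:Haydon}. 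Hence $M^{\circ}=\{x\colon \sup_{k}\lvert\sum_{i\le k}x_{i}\rvert\le 1\}$ is a $\tau^{1}_{tame}$-neighbourhood of zero, and your vectors $x^{(N)}$, with indices chosen so that the finitely many coordinate sequences $((\varphi_{i})_{j_{k}})_{k}$ converge, do show that no basic $\sigma$-neighbourhood (one $l^{p}$-constraint plus finitely many weak constraints suffices, as you note) is contained in $M^{\circ}$: $\lVert 2x^{(N)}\rVert_{p}\to 0$ and $\langle 2x^{(N)},\varphi_{i}\rangle\to 0$ while the partial sums of $2x^{(N)}$ reach $2$. So $\sigma\subsetneq\tau^{1}_{tame}$, and the conjecture as stated should be repaired, e.g.\ by adjoining the map $S$ (or, more generally, maps into $C(K)$ for countable compact $K$).

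One caveat: your closing assertion that $\tau^{1}_{tame}$ equals the initial topology from \emph{all} continuous linear maps of $l^{1}$ into Rosenthal Banach spaces is asserted, not proved, and it is less routine than you suggest. The inclusion of that topology into $\tau^{1}_{tame}$ is clear, but for the converse the natural candidate is the factorization of Lemma \ref{lemma:equicontinuous_factor}, which for a co-tame $M$ produces a Banach space $V$ with $B_{V^{*}}=\overline{\acx}^{w^{*}}\Delta(M)$; co-tameness only controls images of $l^{1}$-bounded sets over $B_{V^{*}}$, whereas the $\rho_{M}$-unit ball of $l^{1}$ need not be norm-bounded, so Rosenthality of $V$ (i.e.\ tameness of all of $B_{V}$) requires a separate argument. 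Present that remark, and the aside about quotients onto separable Banach spaces, as further problems rather than facts; they play no role in your counterexample.
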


\sk 
\section{Free locally convex spaces revisited} 
\label{s:free_lcs}

Given a class $P$ of Banach spaces, a locally convex space 
$E$ is
called \textit{multi}-$P$ (see \cite{LU}) 
if E can be isomorphically embedded into a product of spaces that
belong to $P$.  



For every compact space $K$ its free lcs  
$L(K)$ is \textit{multi-reflexive}, 
as it was proved in a recent paper by Leiderman and Uspenskij \cite{LU}. Since multi-reflexive lcs is DLP 
(by Theorem
\ref{thm:dlp_iff_semireflexive}), we obtain that $L(K)$ is 
DLP. In Theorem \ref{t:L(X)isNP}, we give a stronger result using the following two facts.

\begin{f}\cite[Proposition 2.7]{GLX} \label{fact:bounded_subsets_in_free_lcs}
	For a subset $A$ of $L(X)$, the following assertions are equivalent:
	\ben
		\item $A$ is bounded;
		\item $\support(A)$ has compact closure in the Dieudonn\'{e} completion $\mu X$ 
		and $C_{A} := \sup_{\chi \in A \cup \{0\}}  \lVert \chi \rVert$ is finite;
		\item $\support(A)$ is functionally bounded in $X$ and $C_{A}$ is finite.
	\een
\end{f}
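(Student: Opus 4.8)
The plan is to run everything through the canonical duality $L(X)^{*}=C(X)$, where $f\in C(X)$ acts on $\chi=\sum_{i}a_{i}x_{i}\in L(X)$ by $\langle\chi,f\rangle=\sum_{i}a_{i}f(x_{i})$. Since a subset of a lcs is bounded iff it is weakly bounded (\cite[Thm.~8.3.4]{Jarchow}), condition $(1)$ is equivalent to: for every $f\in C(X)$ the set $\{\langle\chi,f\rangle:\chi\in A\}$ is bounded in $\R$. The equivalence $(2)\Leftrightarrow(3)$ is then purely topological, the clause ``$C_{A}<\infty$'' being common to both: one only needs that for a subset $B$ of a Tychonoff space, ``$B$ is functionally bounded in $X$'' is equivalent to ``$\overline{B}^{\,\mu X}$ is compact'', which holds because $\mu X$ is the completion of $X$ for its universal (fine) uniformity, every $f\in C(X)$ extends continuously to $\mu X$, a subset of a uniform space is totally bounded iff its closure in the completion is compact, and total boundedness for the fine uniformity is precisely functional boundedness. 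The direction $(3)\Rightarrow(1)$ is then immediate: for $f\in C(X)$, $|\langle\chi,f\rangle|\le\|\chi\|\cdot\sup_{x\in\support(A)}|f(x)|\le C_{A}\sup_{\support(A)}|f|<\infty$, so $A$ is bounded by the weak-boundedness criterion. So the real content is the implication $(1)\Rightarrow(3)$.

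I would first deduce $C_{A}<\infty$ from $(1)$. Writing $\chi=\sum_{i}a_{i}x_{i}$ with the $x_{i}$ distinct, view $\chi$ as the discrete measure $\sum_{i}a_{i}\delta_{x_{i}}$, hence as a continuous linear functional on the Banach space $C_{b}(X)$ of bounded continuous functions on $X$ (with the sup norm); its norm there equals exactly $\|\chi\|=\sum_{i}|a_{i}|$, the nontrivial inequality being realized by an $f\in C_{b}(X)$ with $\|f\|_{\infty}\le1$ and $f(x_{i})=\operatorname{sign}(a_{i})$, obtained by interpolating over the finite set $\{x_{i}\}$ with Urysohn-type functions (complete regularity) and truncating into $[-1,1]$. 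If $C_{A}=\infty$, then $A$, regarded as a family of functionals on $C_{b}(X)$, is not uniformly bounded, so by the uniform boundedness principle there is $f\in C_{b}(X)\subseteq C(X)$ with $\sup_{\chi\in A}|\langle\chi,f\rangle|=\infty$, contradicting $(1)$.

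It remains, assuming $(1)$ together with $C_{A}<\infty$, to show $\support(A)$ is functionally bounded; this is the main obstacle. Suppose not: there is $g\in C(X)$, which we may take nonnegative (replace $g$ by $g^{2}$), and points $x_{n}\in\support(A)$ with $g(x_{n})\to\infty$; for each $n$ pick $\chi_{n}\in A$ with $x_{n}\in\support(\chi_{n})$ and let $a_{n}\ne0$ be the coefficient of $x_{n}$ in $\chi_{n}$. Passing to a subsequence I would arrange that the $x_{n}$ are distinct, $g(x_{n})$ strictly increasing, and $x_{n}\notin\support(\chi_{m})$ for all $m<n$ (possible since each $\support(\chi_{m})$ is finite and $g(x_{n})\to\infty$). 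Choose closed intervals $\overline{J_{n}}\ni g(x_{n})$ in $\R$, pairwise disjoint and escaping to $+\infty$; then the open sets $V_{n}:=g^{-1}(J_{n})$ are pairwise disjoint and $\{V_{n}\}$ is locally finite. Using regularity and complete regularity, construct bump functions $f_{n}\in C(X)$ with $0\le f_{n}\le1$, $f_{n}(x_{n})=1$, $\{f_{n}\ne0\}\subseteq V_{n}$, and $f_{n}$ vanishing on $\support(\chi_{1})\cup\dots\cup\support(\chi_{n})$ away from $x_{n}$. By local finiteness $f:=\sum_{n}c_{n}f_{n}$ is continuous for every choice of scalars $c_{n}$. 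From these properties $f_{k}$ vanishes on $\support(\chi_{n})$ for all $k>n$ and $f(x_{n})=c_{n}$, so $\langle\chi_{n},f\rangle=a_{n}c_{n}+\sum_{k<n}c_{k}\langle\chi_{n},f_{k}\rangle$ with $|\langle\chi_{n},f_{k}\rangle|\le\|\chi_{n}\|\le C_{A}$; choosing $|c_{n}|$ recursively so large (relative to $|a_{n}|^{-1}$ and to $c_{1},\dots,c_{n-1}$) that $|a_{n}c_{n}|-C_{A}\sum_{k<n}|c_{k}|\ge n$ forces $|\langle\chi_{n},f\rangle|\ge n\to\infty$, contradicting $(1)$. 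The delicate point, where I expect to spend real care, is exactly this construction: separating each chosen point from all the finite but $n$-dependent supports simultaneously, keeping the bumps in a locally finite family so that $\sum c_{n}f_{n}$ remains continuous, and running the recursion so that the small-coefficient case (when $|a_{n}|$ is tiny) is absorbed by letting $f$ grow fast enough; the reduction steps, $(3)\Rightarrow(1)$, and $(2)\Leftrightarrow(3)$ are routine. This essentially reconstructs the argument of \cite[Prop.~2.7]{GLX}.
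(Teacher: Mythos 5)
The paper does not prove this statement: it is quoted as a Fact, citing \cite[Proposition 2.7]{GLX}, so there is no in-paper argument to compare yours against. Your reconstruction is correct as written. The reduction to weak boundedness via $L(X)^{*}=C(X)$, the uniform boundedness argument on $C_{b}(X)$ giving $C_{A}<\infty$ (the norm of $\chi$ in $C_{b}(X)^{*}$ is indeed $\lVert\chi\rVert$, by a truncated Urysohn interpolation on the finite support), and the identification of functional boundedness with total boundedness in the fine uniformity, hence with relative compactness in $\mu X$, are all sound. The delicate step you flag also goes through: since the closed intervals $\overline{J_{n}}$ escape to $+\infty$, the sets $g^{-1}(J_{n})$ form a pairwise disjoint locally finite family, so $f=\sum_{n}c_{n}f_{n}$ is continuous; the disjointness gives $f(x_{n})=c_{n}$, the vanishing of $f_{k}$ ($k>n$) on $\support(\chi_{n})$ uses exactly your arrangement $x_{k}\notin\support(\chi_{m})$ for $m<k$, and the recursive choice of $c_{n}$ (possible because $a_{n}\neq 0$ and $\lvert\langle\chi_{n},f_{k}\rangle\rvert\le C_{A}$, which is why $C_{A}<\infty$ must be established first, as you do) yields $\lvert\langle\chi_{n},f\rangle\rvert\ge n$, contradicting weak boundedness.
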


\begin{f} \label{f:inclusion_of_free_compact_generated}  
	\cite[Thm.~2]{UspFree} 
	If $K \subseteq X$ is a compact subset of a Tychonoff space $X$, 
	 then $L(K)$ naturally can be viewed as a subspace of $L(X)$.
\end{f}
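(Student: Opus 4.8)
The plan is to obtain the inclusion $L(K)\hookrightarrow L(X)$ functorially from the inclusion $K\hookrightarrow X$, to observe that it is automatically an injective continuous linear map, and then to reduce the assertion that it is a \emph{topological} embedding to an extension problem for equicontinuous, pointwise bounded families of continuous functions (equivalently, for continuous pseudometrics) from the compact set $K$ to the ambient Tychonoff space $X$. Compactness of $K$ is precisely what makes this extension possible.

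First I would note that the inclusion $j\colon K\hookrightarrow X$ is continuous, so by the universal property of the free locally convex space functor $L(\cdot)$ it induces a continuous linear map $L(j)\colon L(K)\to L(X)$. Since the underlying vector space of $L(Y)$ is the free real vector space on the \emph{set} $Y$, with $Y$ sitting inside $L(Y)$ as a linearly independent set, $L(j)$ is injective, and $L(j)\circ\delta_{K}=\delta_{X}\circ j$ shows it restricts on the copy of $K$ in $L(K)$ to the inclusion $K\subseteq X\subseteq L(X)$ (here $\delta_{Y}\colon Y\to L(Y)$ is the canonical map, a homeomorphic embedding since $Y$ is Tychonoff); this is the claimed ``natural'' identification. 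As $L(j)$ is already continuous, all that remains is to show it is open onto its image, i.e.\ that every continuous seminorm on $L(K)$ is dominated by a seminorm of the form $q\circ L(j)$ with $q$ a continuous seminorm on $L(X)$.

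Here I would use the standard description of the topology of a free lcs: for any Tychonoff space $Y$, the topology of $L(Y)$ is the topology of uniform convergence on the equicontinuous, pointwise bounded subsets $\mathcal F\subseteq C(Y)=L(Y)^{*}$, the corresponding seminorms being $p_{\mathcal F}(\mu)=\sup_{f\in\mathcal F}|\widehat f(\mu)|$, where $\widehat f(\sum_i a_i y_i)=\sum_i a_i f(y_i)$. Thus it suffices to prove: every equicontinuous, pointwise bounded $\mathcal F\subseteq C(K)$ is the restriction of an equicontinuous, pointwise bounded $\mathcal G\subseteq C(X)$; indeed then $p_{\mathcal G}\circ L(j)=p_{\mathcal F}$ on $L(K)$ and we are done. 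To construct $\mathcal G$: the equicontinuity of $\mathcal F$ means $d_{\mathcal F}(x,y):=\sup_{f\in\mathcal F}|f(x)-f(y)|$ is a continuous pseudometric on $K$; because $K$ is compact and $X$ Tychonoff, $K$ is $C^{*}$-embedded in $X$ (embed $X$ in a Tychonoff cube, which is normal, and apply Tietze), and from this $d_{\mathcal F}$ extends to a continuous pseudometric $\widetilde d$ on $X$ --- realise the (compact, metric) Kolmogorov quotient of $(K,d_{\mathcal F})$ inside the Hilbert cube $[0,1]^{\N}$, extend its countably many coordinate functions from $K$ to $X$, and extend the metric off the closed image inside the metrizable space $[0,1]^{\N}$ by Hausdorff's pseudometric extension theorem. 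Finally, McShane-extend each $f\in\mathcal F$ by $g_{f}(x):=\inf_{k\in K}\bigl(f(k)+\widetilde d(x,k)\bigr)$, which is finite ($K$ compact), $\widetilde d$-Lipschitz hence continuous on $X$, agrees with $f$ on $K$ (as $f$ is $1$-Lipschitz for $d_{\mathcal F}$ and $\widetilde d|_{K\times K}=d_{\mathcal F}$), and satisfies $|g_{f}(x)|\le |f(k_{0})|+\widetilde d(x,k_{0})$ for a fixed $k_{0}\in K$; hence $\mathcal G:=\{g_{f}:f\in\mathcal F\}$ is an equicontinuous, pointwise bounded family in $C(X)$ with $\mathcal G|_{K}=\mathcal F$.

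The step I expect to be the main obstacle is exactly this extension of pseudometrics (equivalently, of equicontinuous families) from $K$ to $X$, together with pinning down the correct description of the topology of $L(Y)$ --- rather than of some other topology compatible with the dual pair $\langle L(Y),C(Y)\rangle$. These two ingredients are the substance of \cite[Thm.~2]{UspFree}, so in the write-up I would either carry out the argument along the lines above or simply cite that theorem, since the remainder of the paper uses only the statement.
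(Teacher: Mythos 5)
Your argument is correct, and it is worth noting how it differs from the paper's own treatment. The paper disposes of this fact in two lines: it observes that the compact set $K$ is $C$-embedded in $X$ (every $f \in C(K)$ extends continuously to $X$, via the Stone--\v{C}ech compactification and Tietze, as carried out inside the proof of Proposition \ref{p:C(X)}) and then invokes Uspenskij's embedding criterion \cite[Thm.~2]{UspFree} directly. You instead unfold what that citation contains: the identification of the continuous seminorms of $L(Y)$ with the sup-seminorms $p_{\mathcal F}$ over equicontinuous, pointwise bounded families $\mathcal F \subseteq C(Y)$ (which is indeed the correct description -- the equicontinuous subsets of $L(Y)^{*}=C(Y)$ in the functional-analytic sense are exactly the topologically equicontinuous, pointwise bounded families, by Hahn--Banach and the universal property), followed by the extension of such a family from $K$ to $X$ via extension of the associated continuous pseudometric (Kolmogorov quotient into the Hilbert cube, Tietze/$C^{*}$-embedding of the compact $K$, Hausdorff's metric extension theorem, McShane's Lipschitz extension). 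All the individual steps check out, including the verification that $\mathcal G=\{g_f\}$ is $\widetilde d$-equi-Lipschitz hence equicontinuous, pointwise bounded, and restricts to $\mathcal F$, so that $p_{\mathcal F}=p_{\mathcal G}\circ L(j)$ and $L(j)$ is open onto its image. The trade-off is clear: your route is self-contained and in effect reproves the relevant compact case of Uspenskij's theorem, at the cost of importing the description of the topology of $L(Y)$ and Hausdorff's extension theorem; the paper's route is shorter but rests entirely on the external citation, needing only the (same) Tietze-type extension argument to verify its hypothesis. Your closing remark that one could simply cite \cite[Thm.~2]{UspFree} is exactly what the paper does.
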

\begin{proof}
	It is easy to see that 
	$K$ is C-imbedded into $X$, that is, 
	every continuous function on $K$ can be extended to $X$ (a proof can be found in Proposition \ref{p:C(X)}).
	Moreover, $K$ is compact and therefore, by \cite[Thm.~2]{UspFree}, the inclusion $L(K) \subseteq L(X)$ is an embedding.
\end{proof}

\begin{lemma} \label{lemma:dieudonne_complete_dlp}
The free lcs $L(X)$ is 
	DLP (in particular, is $\NP$) for every Dieudonn\'{e} complete space $X$.
\end{lemma}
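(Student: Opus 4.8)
The plan is to reduce the general Dieudonné complete case to the compact case, where multi-reflexivity of $L(K)$ (hence its being $\DLP$) is already available. First I would take an arbitrary bounded subset $B \subseteq L(X)$ and apply Fact \ref{fact:bounded_subsets_in_free_lcs}: since $X$ is Dieudonné complete we have $\mu X = X$, so $\support(B) = \bigcup_{\chi \in B}\support(\chi)$ has compact closure $K := \overline{\support(B)}$ inside $X$ itself (not merely in a completion), and the coefficient bound $C_B = \sup_{\chi \in B \cup \{0\}} \lVert \chi \rVert$ is finite. Consequently every $\chi \in B$ is a formal linear combination of points of the compact set $K$, so $B$ lies inside the canonical copy of $L(K)$.

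Next I would invoke Fact \ref{f:inclusion_of_free_compact_generated}: since $K$ is a compact subset of the Tychonoff space $X$, the inclusion $L(K) \hookrightarrow L(X)$ is a topological embedding. In particular $B$, being bounded in $L(X)$ and contained in $L(K)$, is bounded in $L(K)$ as well. Because $K$ is compact, $L(K)$ is multi-reflexive by \cite{LU}, and therefore $\DLP$ — a product of reflexive Banach spaces is semi-reflexive, hence $\DLP$ by Theorem \ref{thm:dlp_iff_semireflexive}, and $\DLP$ is hereditary by Theorem \ref{thm:properties_of_dlp_class}. Thus $B$ is a $\DLP$ subset of $L(K)$, i.e. $B \in \bsmall[\BDLP]{L(K)}$.

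Finally I would transport this back along the subspace inclusion: Proposition \ref{prop:b_small_subspace} applied to $L(K) \subseteq L(X)$ gives $B \in \bsmall[\BDLP]{L(K)} \iff B \in \bsmall[\BDLP]{L(X)}$, so $B$ is $\DLP$ in $L(X)$. Since $B$ was an arbitrary bounded subset, $L(X) \in \DLP$, and then $L(X) \in \NP$ because $\DLP \subseteq \NP$ (Examples \ref{ex:fromNZ}).

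The only points requiring care are bookkeeping: that Dieudonné completeness is precisely what yields $\mu X = X$, so Fact \ref{fact:bounded_subsets_in_free_lcs}(2) produces a genuine compact subset of $X$; and that the embedding $L(K) \hookrightarrow L(X)$ identifies $B$ with a bounded subset of $L(K)$ on the nose. I do not expect a real obstacle here — the substantive work has been front-loaded into Fact \ref{fact:bounded_subsets_in_free_lcs}, Fact \ref{f:inclusion_of_free_compact_generated} and the Leiderman--Uspenskij theorem, and this lemma is essentially their assembly together with the subspace-stability of the $\DLP$ bornology.
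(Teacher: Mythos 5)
Your proposal is correct and follows essentially the same route as the paper: reduce via Fact \ref{fact:bounded_subsets_in_free_lcs} to a compactly supported bounded set, embed $L(K)$ into $L(X)$ by Fact \ref{f:inclusion_of_free_compact_generated}, use the Leiderman--Uspenskij multi-reflexivity of $L(K)$ together with the product/subspace stability of $\DLP$, and transfer back along the subspace inclusion. The paper phrases the last step by restricting a given $M \in \eqc(L(X)^*)$ to $L(K)^*$ rather than citing Proposition \ref{prop:b_small_subspace}, but this is the same argument.
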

\begin{proof}
	Let $B \subseteq L(X)$ be bounded and let $M \subseteq C(X) = L(X)^{*}$ be a weak-star compact, equicontinuous subset.
	We will show that $B$ has the DLP over $M$.
	 
	Since $X$ is Dieudonn\'{e} complete and by Fact \ref{fact:bounded_subsets_in_free_lcs}, $\support(B) \subseteq X$ has compact closure.
	Let 
	$$
	K := \overline{\support(B)} \subseteq X.
	$$
	By Fact \ref{f:inclusion_of_free_compact_generated}, the inclusion $L(K) \subseteq L(X)$ is an embedding.
	Thus, $B \subseteq L(K) \subseteq L(X)$.
	
	Moreover, $M$ can be viewed as an equicontinuous subset of $L(K)^{*}$.
	We already 
	mentioned 
	that $L(K)$ is multi-reflexive by \cite{LU}. Then $L(K)$ is DLP because the class is closed under products and subspaces 
	(see Theorem \ref{thm:properties_of_dlp_class}).
	Therefore $B$ is DLP on $M$. 
	This is true for every bounded $B$ and weak-star compact, equicontinuous $M \subseteq L(X)^{*}$.
	By definition, $L(X)$ is DLP. 
\end{proof}

\begin{thm} \label{t:L(X)isNP} 
$L(X)$ is DLP for every Tychonoff space $X$. 
\end{thm}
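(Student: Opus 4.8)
The plan is to realize $L(X)$ as a \emph{large dense subspace} of $L(\mu X)$, where $\mu X$ denotes the Dieudonn\'e completion of $X$, and then invoke Lemma \ref{lemma:dieudonne_complete_dlp} together with the last assertion of Theorem \ref{thm:properties_of_dlp_class}. Since $\mu X$ is Dieudonn\'e complete (the Dieudonn\'e completion is idempotent, so $\mu(\mu X)=\mu X$), Lemma \ref{lemma:dieudonne_complete_dlp} gives $L(\mu X)\in\DLP$; hence once $L(X)$ is identified with a large dense subspace of $L(\mu X)$, the ``moreover'' part of Theorem \ref{thm:properties_of_dlp_class} yields $L(X)\in\DLP$ at once.

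The first step is to record that $X\hookrightarrow\mu X$ is a $C$-embedding, i.e.\ restriction induces an isomorphism $C(\mu X)\xrightarrow{\ \cong\ }C(X)$ (this is one of the defining features of $\mu X$). From this I would deduce that the canonical continuous linear map $L(X)\to L(\mu X)$ induced by $X\hookrightarrow\mu X\hookrightarrow L(\mu X)$ is a topological embedding; this is Uspenskij's embedding theorem \cite[Thm.~2]{UspFree} applied to the $C$-embedded subspace $X\subseteq\mu X$, exactly as in the proof of Fact \ref{f:inclusion_of_free_compact_generated}. Density of $L(X)$ in $L(\mu X)$ is then immediate: $X$ is dense in $\mu X$, the evaluation $\delta\colon\mu X\to L(\mu X)$ is a homeomorphic embedding, and a finite formal combination $\sum_{i=1}^{n} a_i\delta_{y_i}$ depends continuously on the points $y_i$; so $\spn\,\delta(X)=L(X)$ is dense in $\spn\,\delta(\mu X)=L(\mu X)$.

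The crux is showing that $L(X)$ is \emph{large} in $L(\mu X)$: every bounded $A\subseteq L(\mu X)$ is contained in the closure of some bounded $B\subseteq L(X)$. Here Fact \ref{fact:bounded_subsets_in_free_lcs} is the main tool. Boundedness of $A$ in $L(\mu X)$ gives (since $\mu(\mu X)=\mu X$) that $K:=\overline{\support(A)}^{\,\mu X}$ is compact and $c:=\sup_{\chi\in A\cup\{0\}}\lVert\chi\rVert<\infty$. I would then produce a functionally bounded subset $F\subseteq X$ whose closure in $\mu X$ contains $K$ (using density of $X$ in $\mu X$ together with the fact that, $\mu X$ being Dieudonn\'e complete, functionally bounded subsets of $X$ are precisely those with compact closure in $\mu X$), and set
$$
B:=\Bigl\{\chi\in L(X)\ \big|\ \support(\chi)\subseteq F,\ \lVert\chi\rVert\le c\Bigr\}.
$$
By Fact \ref{fact:bounded_subsets_in_free_lcs} this $B$ is bounded in $L(X)$, and by continuity of the formal-sum map $(\mu X)^{n}\times\R^{n}\to L(\mu X)$, $\bigl((z_i),(b_i)\bigr)\mapsto\sum b_i\delta_{z_i}$, every element $\sum a_i\delta_{y_i}\in A$ with $y_i\in K$ and $\sum|a_i|\le c$ is a limit of elements of $B$; hence $A\subseteq\overline{B}$. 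With largeness in hand, the theorem follows as indicated above.

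I expect the main obstacle to be the largeness step, and within it the precise choice of the functionally bounded set $F\subseteq X$ dominating $K$: one must be careful because $\support(A)$ a priori lives in $\mu X$, not in $X$, so the naive choice $F=K\cap X$ need not be dense in $K$ (consider $X=[0,\omega_1)$, where $\mu X=\beta X=[0,\omega_1]$ and $K=\{\omega_1\}$). The structural property of $\mu X$ that rescues the argument is that each compact $K\subseteq\mu X$ is contained in the $\mu X$-closure of a functionally bounded subset of $X$ — this, together with the topological-embedding fact of Step 1, is the real content to be verified.
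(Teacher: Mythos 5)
Your overall route is the paper's: embed $L(X)$ into $L(\mu X)$ via Uspenskij, get $L(\mu X)\in\DLP$ from Lemma \ref{lemma:dieudonne_complete_dlp}, and transfer the property to $L(X)$. But the transfer step as you state it is logically backwards, and this is where the proposal goes astray. The ``moreover'' clause of Theorem \ref{thm:properties_of_dlp_class} says: if $F$ is a large dense subspace of $E$ and $F\in\DLP$, then $E\in\DLP$ --- it passes the property \emph{up} from the subspace to the ambient space. You want to pass it \emph{down}, from $L(\mu X)$ to $L(X)$, and for that you need nothing more than item (1) of the same theorem: $\DLP$ is closed under taking subspaces. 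Density and largeness are irrelevant here. Consequently the entire ``crux'' of your proposal --- showing $L(X)$ is large in $L(\mu X)$, i.e.\ that every compact subset of $\mu X$ sits inside the closure of a functionally bounded subset of $X$ --- is both unnecessary and, by your own admission, unverified; as written it is a genuine gap in your argument, since you never establish that structural property of $\mu X$ (and it is not obviously true in general), yet your stated proof needs it. The paper's proof is exactly your first step plus the trivial hereditarity: $L(X)$ embeds in $L(\mu X)$ (the paper cites the first sentence of the proof of \cite[Thm.~5]{UspFree}), $L(\mu X)\in\DLP$ by Lemma \ref{lemma:dieudonne_complete_dlp}, hence $L(X)\in\DLP$ as a subspace.

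Two smaller remarks. First, your justification of the embedding $L(X)\hookrightarrow L(\mu X)$ by ``\cite[Thm.~2]{UspFree} applied to the $C$-embedded subspace $X\subseteq\mu X$'' is looser than what that theorem requires for non-compact subspaces; the relevant hypothesis is extendability of continuous pseudometrics (which does hold for $X\subseteq\mu X$, since $\mu X$ is the completion of $X$ in its universal uniformity), so the claim is true but the citation should match --- the paper simply quotes the statement from the proof of Uspenskij's Theorem 5. Second, once you drop the largeness detour, your proof collapses to the paper's, so there is no genuinely different method being offered; the fix is to replace the appeal to the ``moreover'' clause by the closure of $\DLP$ under subspaces and delete the largeness step.
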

\begin{proof}
	Write 
	$\mu X$
	for the Dieudonn\'{e} completion of $X$.
	From the first sentence of the proof of \cite[Thm.~5]{UspFree}, we know that $L(X)$ is embedded in $L(\mu X)$. 
	By Lemma \ref{lemma:dieudonne_complete_dlp}, $L(\mu X)$ is DLP.
	Now, by Theorem \ref{thm:properties_of_dlp_class}, so is $L(X)$ as its subspace.
\end{proof}

\begin{remark}
	We are indebted to S. Gabriyelyan for suggesting  
	Theorem \ref{t:L(X)isNP} and its present proof as a consequence of Lemma \ref{lemma:dieudonne_complete_dlp}.
\end{remark}


\begin{remark} \label{r:2} \ 
	\begin{enumerate}
		\item 
		By Theorem \ref{t:L(X)isNP},  
		$L(P)$ is DLP (in particular, is NP) for the Polish space $P = \N^{\N}$ of all irrationals. In contrast, 
		another result from \cite{LU} shows that $L(P)$ is not multi-reflexive. 
		
		\item 
		Note that every lcs $X$ is a linear topological quotient of some lcs which is DLP. Indeed, the identity map $id \colon X \to X$ can be canonically extended to the linear onto map $L(X) \to X$, which is a quotient (open) map (because its restriction $id \colon X \to X$ is an onto factor map).  
	\end{enumerate}
\end{remark}


\begin{q} \label{q:multi}
	Examine if $L(\N^{\N})$ is multi-Asplund or, at least, multi-Rosenthal. 
\end{q}

In \cite[p.~679, Corollary]{UspFree}, Uspenski shows that for a Dieudonn\'{e} complete space, $L(X)$ is complete if and only if there are no infinite compact subsets.
The following is a similar result that shows the ``scarcity" of semi-reflexive in the realm of free locally convex spaces.

\begin{thm} \label{t:LX_is_almost_never_semireflexive}
	Let $X$ be a Dieudonn\'{e} complete space.
	Then $L(X)$ is semi-reflexive if and only if $X$ has no infinite compact subset.
\end{thm}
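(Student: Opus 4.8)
The plan is to reduce the statement, via the structural results already available, to Uspenskij's completeness criterion for free locally convex spaces. Since $L(X)$ is DLP for every Tychonoff space (Theorem~\ref{t:L(X)isNP}), Theorem~\ref{thm:dlp_iff_semireflexive} shows that $L(X)$ is semi-reflexive if and only if it is boundedly-complete. The forward implication of the theorem is then immediate: if $X$ is Dieudonn\'e complete and has no infinite compact subset, then by the Corollary on p.~679 of \cite{UspFree} the space $L(X)$ is complete, hence boundedly-complete, hence semi-reflexive by the remark above.

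For the converse I would argue contrapositively. Assume $X$ contains an infinite compact subset $K$; by Fact~\ref{f:semi_reflexive_heine_borel} it suffices to exhibit a bounded subset of $L(X)$ that is not relatively weakly compact. Choose a sequence $(x_i)_{i\in\N}$ of distinct points of $K$ and scalars $c_i>0$ with $\sum_i c_i<\infty$, and set $\mu_n:=\sum_{i=1}^{n}c_i\delta_{x_i}\in L(X)$. By Fact~\ref{fact:bounded_subsets_in_free_lcs} the set $\{\mu_n:n\in\N\}$ is bounded, since its support lies in the compact set $K$ and $\|\mu_n\|=\sum_{i\le n}c_i\le\sum_i c_i<\infty$. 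Using $L(X)^{*}=C(X)$ and $\langle\mu_n,f\rangle=\sum_{i\le n}c_i f(x_i)\to\sum_i c_i f(x_i)$ for every $f\in C(X)$ (the series converges absolutely because $|f(x_i)|\le\|f\|_K$), the sequence $(\mu_n)$ is weak Cauchy with pointwise limit the functional $\ell\colon f\mapsto\sum_i c_i f(x_i)$ on $C(X)$.

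It then remains to show $(\mu_n)$ has no weak cluster point in $L(X)$; as $\{\mu_n\}$ is an infinite set (the supports strictly increase) this means its weak closure is not compact, so $L(X)$ is not semi-reflexive. A weak cluster point $\nu\in L(X)$ would satisfy $\langle\nu,f\rangle=\ell(f)$ for all $f\in C(X)$, i.e.\ the functional $\ell$ would itself be an element of $L(X)$, a finitely supported combination $\sum_{j=1}^{m}a_j\delta_{y_j}$. I would rule this out through the intrinsic notion of the support of a linear functional $\phi$ on $C(X)$ (well defined since $X$ is Tychonoff: for $x\in U$ open there is $f\in C(X)$ with $f(x)=1$, $0\le f\le 1$, vanishing off $U$). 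Every element of $L(X)$ has finite support, while for each $i_0$ and each open $U\ni x_{i_0}$, choosing such an $f$ gives $\ell(f)=\sum_i c_i f(x_i)\ge c_{i_0}f(x_{i_0})=c_{i_0}>0$ because every term $c_i f(x_i)$ is nonnegative; hence each $x_{i_0}$ belongs to the support of $\ell$, which is therefore infinite — a contradiction.

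The main obstacle is precisely this last step: showing that the weak limit functional $\ell$ cannot live in $L(X)$. The naive idea of separating a single point $x_{i_0}$ from the rest of the $x_i$ by a continuous function fails when the $x_i$ accumulate, and the positivity trick (test with $f\ge 0$ and read off $\ell(f)\ge c_{i_0}f(x_{i_0})$) is what makes the argument go through cleanly. The remaining ingredients — boundedness via Fact~\ref{fact:bounded_subsets_in_free_lcs}, the reduction via Theorems~\ref{thm:dlp_iff_semireflexive} and \ref{t:L(X)isNP}, and the appeal to Uspenskij's completeness criterion — are routine.
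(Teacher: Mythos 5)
Your proof is correct, but it takes a partly different route from the paper's. For the forward direction the paper argues directly and elementarily: by Fact \ref{fact:bounded_subsets_in_free_lcs} every bounded subset of $L(X)$ has finite support when all compact subsets of $X$ are finite, hence lies in a finite-dimensional (closed) subspace, where bounded sets are relatively weakly compact; you instead route through Uspenskij's completeness criterion together with Theorem \ref{t:L(X)isNP} and Theorem \ref{thm:dlp_iff_semireflexive} (complete $\Rightarrow$ boundedly-complete, plus DLP $\Rightarrow$ semi-reflexive). This is legitimate and not circular, but it invokes much heavier machinery (ultimately the Leiderman--Uspenskij multi-reflexivity of $L(K)$) for a statement the paper settles in three lines. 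For the converse both arguments are in the same spirit -- exhibit a bounded set of partial sums of point masses on an infinite compact $K$ whose weak cluster point would have to be an ``infinitely supported'' element of $L(X)$ -- but the execution differs: the paper extracts a relatively discrete countable subset $\{x_n\}$ and separates one $x_{n_0}$ from the remaining points and from the finite support of the putative accumulation point by a continuous function, whereas you take arbitrary distinct $x_i$, weight them by summable $c_i>0$, observe that $(\mu_n)$ is weak Cauchy with limit functional $\ell(f)=\sum_i c_i f(x_i)$, and rule out $\ell\in L(X)$ by the positivity/support argument. Two concrete payoffs of your version: the summable weights are exactly what make the set bounded under Fact \ref{fact:bounded_subsets_in_free_lcs} (the unweighted sums $\chi_n=\sum_{m\le n}x_m$ used in the paper have $\lVert\chi_n\rVert=n$, so the paper's boundedness claim needs precisely this normalization), and the positivity trick dispenses with the need to choose a relatively discrete subsequence. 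The price is that you must make explicit the (easy) fact that an element $\sum_{j\le m}a_j\delta_{y_j}$ of $L(X)$ has functional support contained in $\{y_1,\dotsc,y_m\}$, which you only assert; one line (test with functions vanishing off $X\setminus\{y_1,\dotsc,y_m\}$) closes it.
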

\begin{proof}
	By Fact \ref{f:semi_reflexive_heine_borel}, semi-reflexivity is equivalent to the Heine-Borel property for the weak topology.
	
	First, suppose that $X$ has no infinite compact subset. 
	We will show that every bounded subset is weakly relatively compact.
	Let $B \subseteq L(X)$ be bounded.
	By Fact \ref{fact:bounded_subsets_in_free_lcs}, $B$ has compact support $K \subseteq X$.
	Since every compact subset of $X$ is finite, we conclude that $\support(B) \subseteq K$ is also finite.
	As a consequence, $B \subseteq \spn(\support(B))$ is a bounded subset in a finite dimensional topological space.
	In this case, the weak topology of $\spn(\support(B))$ coincides with the original topology, and the Heine-Borel property is satisfied.
	
	Conversely, assume that $L(X)$ is semi-reflexive.
	By contradiction, assume that $X$ has an infinite compact subset $K \subseteq X$.
	Since $K$ is infinite, we can find a discrete countable subset $\{x_{n}\}_{n \in \N} \subseteq K$.
	For every $n \in \N$ define:
	$$
	  \chi_{n} := \sum_{m=1}^{n} x_{m},
	$$
	and consider the set $B := \{\chi_{n} \}_{n \in \N}$.
	Clearly, $B$ is bounded in virtue of Fact \ref{fact:bounded_subsets_in_free_lcs}.
	By our assumption, $B$ is weakly relatively compact, and therefore has an accumulation point $\chi \in L(X)$.
	
	Write $\chi = \sum\limits_{i=1}^{k} \alpha_{i} y_{i}$ and
	choose some $n_{0} \in \N$ such that $x_{n_{0}} \neq y_{i}$ for every $1 \leq i \leq k$.
	By the choice of $\{x_{n}\}_{n \in \N}$, there is some neighborhood $x_{n_{0}} \in U \subseteq X$ such that 
	$$
	  A := \{x_{n}\}_{n_{0} \neq n \in \N} \cup \{y_{i}\}_{i=1}^{k} \subseteq 
	  X \setminus U.
	$$	
	Since $X$ is completely regular, we can find 
	${f \in C(X) = (L(X))^{*}}$ such that
	$$
		f(x_{n_{0}}) = 1 \text{ and }
		\forall x \in A: f(x) = 0.
	$$
	By the definition of accumulation point, there is some $n_{0} \leq N \in \N$ such that
	$$
	\lVert f(\chi) - f(\chi_{N}) \rVert < \frac{1}{2}.
	$$
	However, $f(\chi) = 0$ and $f(\chi_{N}) = 1$, which is a contradiction.
\end{proof}





\sk 
We were informed recently by S. Gabriyelyan that Proposition \ref{t:LX_is_almost_never_semireflexive} is a partial case of a more general result which was proved in his (accepted) joint work with T. Banakh, \cite{Ban-Gabr}.  



\sk 
\section{DFJP factorization for lcs}

Recall that a continuous linear operator $T \colon E \to F$ between two lcs is said to be \textit{(weakly) compact} if there exists a neighborhood $O$ of zero in $E$ such that $T(O)$ is relatively (weakly) compact in $F$, \cite[p.~94]{Groth}. 
A natural possibility to generalize these definitions is to require that $T(O)$ is small in some other sense, i.e., $T(O)$ belongs to some bornology of small subsets. 
In particular, we have the following definitions. 

\begin{defin} \label{d:TameOperat}  
	We say that a linear continuous map $T \colon E \to F$ between lcs is  \emph{tame} (\textit{NP, DLP}) if there exists a zero neighborhood $U \subseteq E$ such that $T(U) \subseteq F$ is a tame (NP, DLP) subset in $F$.
\end{defin}

Note that by our definitions $T(U)$ is bounded. Hence, the identity map of a tame not normable lcs is not tame. 

The following fundamental result is a part of a 
classical work about DFJP factorization, \cite{DFJP}. 
In \cite{GM-rose,GM-tame,Me-seminW,Me-b}, some simplifications and adaptations were added.  



\begin{f} \cite[Lemma~1]{DFJP} \label{fact:DJFP}
	Let $X$ be a Banach space and $W \subseteq X$ be an absolutely convex bounded subset.
	Write $U_{n} := 2^{n} W + 2^{-n}B_{X}$ and $\lVert \cdot \rVert_{n}$ for the gauge of $U_{n}$. 
	Also, consider $[ x ] := (\sum_{n\in \N} \lVert x \rVert_{n}^{2})^{1/2}$ and $Y = \{x \in X \mid [x] < \infty\}$.
	Finally, let $j\colon Y \to X$ be the identity map.
	Then:
	\ben
		\item $W \subseteq B_{Y}$.
		\item $(Y, [\cdot])$ is a Banach space and $j$ is continuous.
		\item $j^{**}\colon Y^{**} \to X^{**}$ is one to one and $(j^{**})^{-1}(X) = Y$.
		\item $Y$ is reflexive if and only if $W$ is weakly relatively compact.
	\een
\end{f}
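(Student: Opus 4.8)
The plan is to reproduce the Davis--Figiel--Johnson--Pe{\l}czy\'nski construction \cite{DFJP}, establishing (1)--(4) in order, all the work taking place inside the Banach space $X$ and its bidual $X^{**}$.

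\textbf{Items (1) and (2).} First I would fix $M>0$ with $W\subseteq MB_X$ (possible since $W$ is bounded) and record the sandwich $2^{-n}B_X\subseteq U_n\subseteq(2^nM+2^{-n})B_X$, valid for every $n$. The right-hand inclusion shows that each $\|\cdot\|_n$ is a norm on $X$ equivalent to $\|\cdot\|_X$; in particular $[\cdot]$ is a genuine norm, and since $\|x\|_1\le[x]$ we get $\|x\|_X\le(2M+\tfrac12)[x]$, so $j$ is continuous. For $x\in W$, absolute convexity of $W$ gives $2^nx\in 2^nW\subseteq U_n$, hence $\|x\|_n\le2^{-n}$ and $[x]^2\le\sum_{n\ge1}4^{-n}<1$; this is (1). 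For completeness, a $[\cdot]$-Cauchy sequence $(x_k)$ is $\|\cdot\|_X$-Cauchy by the continuity of $j$, so $x_k\to x$ in $X$; each gauge $\|\cdot\|_n$ is $\|\cdot\|_X$-continuous (being the gauge of a $\|\cdot\|_X$-neighbourhood of $0$), so letting $\ell\to\infty$ in $\sum_{n\le N}\|x_k-x_\ell\|_n^2\le\varepsilon$ and then $N\to\infty$ yields $x\in Y$ and $[x_k-x]\to0$; this is (2).

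\textbf{Item (3), which I expect to be the main obstacle.} Put $X_n:=(X,\|\cdot\|_n)$; the norm equivalences make $\iota_n\colon X_n\to X$ a topological isomorphism, so $X_n^{**}=X^{**}$ as a set with equivalent norms, with $\iota_n^{**}=\operatorname{id}_{X^{**}}$, and $\|\cdot\|_n^{**}$ agrees with $\|\cdot\|_n$ on $X$. Let $j_n\colon Y\to X_n$ be the formal identity; since $\|x\|_n\le[x]$ we have $\|j_n\|\le1$, and the diagonal map $\Delta:=(j_n)_n\colon Y\to Z:=\bigl(\bigoplus_{n\ge1}X_n\bigr)_{\ell_2}$ is an isometry by the very definition of $[\cdot]$. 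Hence $\Delta^{**}\colon Y^{**}\to Z^{**}=\bigl(\bigoplus_n X_n^{**}\bigr)_{\ell_2}$ is again an isometry, with $\Delta^{**}(y^{**})=\bigl(j_n^{**}(y^{**})\bigr)_n$. Because $j=\iota_n\circ j_n$ and $\iota_n^{**}=\operatorname{id}_{X^{**}}$, each coordinate $j_n^{**}(y^{**})$ equals $j^{**}(y^{**})$ in $X^{**}$, so $\sum_n\bigl(\|j^{**}(y^{**})\|_n^{**}\bigr)^2=[y^{**}]^2<\infty$. Injectivity of $j^{**}$ follows from injectivity of $j$ (then $\operatorname{ran}j^*$ is weak-$*$ dense in $Y^*$). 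Finally, if $j^{**}(y^{**})=x\in X$, then $\|x\|_n^{**}=\|x\|_n$, so $\sum_n\|x\|_n^2<\infty$, i.e.\ $x\in Y$, and $j^{**}(x)=x=j^{**}(y^{**})$ together with injectivity of $j^{**}$ forces $y^{**}=x\in Y$; this gives $(j^{**})^{-1}(X)=Y$. The delicate points are the identification $Z^{**}=\bigl(\bigoplus_n X_n^{**}\bigr)_{\ell_2}$ (duality of $\ell_2$-direct sums) and the commutation $j_n^{**}(y^{**})=j^{**}(y^{**})$ under $X_n^{**}=X^{**}$.

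\textbf{Item (4).} If $Y$ is reflexive then $B_Y$ is weakly compact; $j$ is weak--weak continuous (Fact~\ref{f:adjoint_is_continuous}), so $j(B_Y)$ is weakly compact in $X$, and $W\subseteq j(B_Y)$ is relatively weakly compact. Conversely, assume $W$ is relatively weakly compact. Since $[x]\le1$ forces $\|x\|_n\le1$, hence $x\in\overline{U_n}\subseteq 2^n\overline{W}^{w}+2^{-n}B_X$, for all $n$, we get $j(B_Y)\subseteq\bigcap_n\bigl(2^n\overline{W}^{w}+2^{-n}B_X\bigr)$. Taking weak-$*$ closures in $X^{**}$ and using that $2^n\overline{W}^{w}$ is weakly compact in $X$, hence weak-$*$ compact (so weak-$*$ closed) in $X^{**}$, together with $\overline{2^{-n}B_X}^{w^*}=2^{-n}B_{X^{**}}$ (Goldstine) and the fact that the sum of two weak-$*$ compact sets is weak-$*$ closed, we obtain $\overline{j(B_Y)}^{w^*}\subseteq\bigcap_n\bigl(2^n\overline{W}^{w}+2^{-n}B_{X^{**}}\bigr)$; any $\xi$ in this intersection satisfies $\operatorname{dist}(\xi,X)\le2^{-n}$ for every $n$, and $X$ is norm-closed in $X^{**}$, so $\xi\in X$. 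By Goldstine $B_{Y^{**}}=\overline{B_Y}^{w^*}$, and $j^{**}$ is weak-$*$--weak-$*$ continuous with $j^{**}|_{B_Y}=j|_{B_Y}$, so $j^{**}(B_{Y^{**}})\subseteq\overline{j(B_Y)}^{w^*}\subseteq X$. Therefore $B_{Y^{**}}\subseteq(j^{**})^{-1}(X)=Y$ by (3), i.e.\ $Y^{**}=Y$, so $Y$ is reflexive.
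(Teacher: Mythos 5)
The paper does not prove this statement at all: it is quoted as a Fact with the citation \cite[Lemma~1]{DFJP}, so your proposal is effectively a reconstruction of the original Davis--Figiel--Johnson--Pe{\l}czy\'nski argument, and for the most part it is the correct one. Items (1) and (2), the isometric embedding $\Delta\colon Y\to Z=\bigl(\bigoplus_n X_n\bigr)_{\ell_2}$ with $Z^{**}=\bigl(\bigoplus_n X_n^{**}\bigr)_{\ell_2}$, the coordinate identity $j_n^{**}=j^{**}$ under $X_n^{**}=X^{**}$, the deduction $(j^{**})^{-1}(X)=Y$, and the weak-star compactness argument in (4) all match the classical proof (in (4) you use implicitly that $2^n\overline{W}^{w}+2^{-n}B_X$ is weakly closed, being the sum of a weakly compact and a weakly closed set; that is fine but worth a half-sentence).

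The genuine flaw is your justification of the injectivity of $j^{**}$: ``follows from injectivity of $j$ (then $\operatorname{ran}j^{*}$ is weak-$*$ dense in $Y^{*}$)''. Weak-$*$ density of $\operatorname{ran}j^{*}$, i.e.\ $\sigma(Y^{*},Y)$-density, is indeed equivalent to injectivity of $j$, but it does not give injectivity of $j^{**}$: one has $\ker j^{**}=\{y^{**}\in Y^{**}\mid \langle y^{**},j^{*}x^{*}\rangle=0 \ \forall x^{*}\}$, the annihilator of $\operatorname{ran}j^{*}$ computed in $Y^{**}$, and this vanishes iff $\operatorname{ran}j^{*}$ is \emph{norm} dense (equivalently $\sigma(Y^{*},Y^{**})$-dense) in $Y^{*}$. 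The inclusion $\ell^{1}\hookrightarrow\ell^{2}$ is injective with dense range, yet its bitranspose has a large kernel, because $\ell^{2}$ is not norm dense in $\ell^{\infty}$. Note that the paper's Remark \ref{r:j^* is dense}, via Lemma \ref{lemma:DJFP_double_dual_to_dual} applied to $j^{*}$, argues in exactly the opposite direction: density of $j^{*}(X^{*})$ is \emph{deduced from} the injectivity of $j^{**}$ asserted in the Fact, so it cannot be invoked here. Fortunately the correct argument is already contained in your own setup, and it is how \cite{DFJP} proceed: if $j^{**}(y^{**})=0$, then every coordinate $j_n^{**}(y^{**})=j^{**}(y^{**})$ of $\Delta^{**}(y^{**})$ vanishes, so $\Delta^{**}(y^{**})=0$, and since $\Delta^{**}$ is an isometry, $y^{**}=0$. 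With that one-line replacement (injectivity of $j^{**}$ is also what makes your conclusion $y^{**}=x\in Y$ in (3), and hence (4), legitimate), the proof is complete.
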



\begin{remark} \label{r:j^* is dense}    
	Note that in Fact \ref{fact:DJFP}, $j^* \colon X^* \to Y^*$ is dense as a consequence of Lemma \ref{lemma:DJFP_double_dual_to_dual}. 
\end{remark}

\begin{lemma} \label{lemma:DJFP_double_dual_to_dual}
	Let $f \colon E_1 \to E_2$ be a continuous linear map between locally convex spaces. Then ${f^{*}\colon E_2^{*} \to E_1^{*}}$ is one to one if and only if $f \colon E_1 \to E_2$ has a dense image. 
\end{lemma}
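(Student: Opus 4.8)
The statement is an immediate duality fact, and I would prove it by establishing the two implications separately, using only the definition of the adjoint ($f^{*}(\varphi) = \varphi \circ f$) together with a Hahn--Banach separation argument.

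For the easy direction, suppose $f$ has dense image and let $\varphi \in E_2^{*}$ satisfy $f^{*}(\varphi) = 0$. By definition this means $\varphi \circ f = 0$, so $\varphi$ vanishes on $f(E_1)$; since $\varphi$ is continuous and $f(E_1)$ is dense in $E_2$, it vanishes on $\overline{f(E_1)} = E_2$, whence $\varphi = 0$. Thus $f^{*}$ is one to one.

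For the converse I would argue contrapositively: assume $f$ does not have dense image, i.e. $M := \overline{f(E_1)}$ is a proper closed linear subspace of $E_2$, and pick $x_0 \in E_2 \setminus M$. Now $M$ is a closed absolutely convex set and $\{x_0\}$ is compact with $M \cap \{x_0\} = \emptyset$, so by Fact \ref{fact:absolutely_convex_separation} there is $\varphi \in E_2^{*}$ with $\sup_{x \in M} |\varphi(x)| < |\varphi(x_0)|$. Since $M$ is a subspace, $\sup_{x \in M}|\varphi(x)|$ is either $0$ or $+\infty$, hence it must equal $0$, giving $\varphi|_M = 0$ and $\varphi(x_0) \neq 0$; in particular $\varphi \neq 0$. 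Then $f^{*}(\varphi) = \varphi \circ f = 0$ because $f(E_1) \subseteq M$, so $f^{*}$ fails to be injective. This completes both implications.

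There is no real obstacle here — the only point requiring a little care is to invoke a correct separation statement for a point outside a closed \emph{subspace} (rather than merely a convex set) and to note that the supremum over a subspace is automatically $0$; both are handled by Fact \ref{fact:absolutely_convex_separation} already available in the paper.
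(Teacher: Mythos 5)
Your proof is correct and follows essentially the same route as the paper, which simply observes that $f(E_1)$ fails to be dense in $E_2$ if and only if some nonzero $\varphi \in E_2^{*}$ satisfies $f^{*}(\varphi) = \varphi \circ f = 0$. You merely make explicit the Hahn--Banach separation step (via Fact \ref{fact:absolutely_convex_separation}) that the paper leaves implicit, which is fine.
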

\begin{proof} Observe that $f(E_1)$ is not dense in $E_2$ if and only if 
	there exists $0 \neq \varphi \in E_2^{*}$ such that $f^{*}(\varphi) = 0$. 
\end{proof}

\begin{lemma} \label{lemma:fragmented_hyperspace}
	Let $K$ be a compact topological space and let $\{A_{n}\}_{n \in \N}$ be a sequence of bounded functions on $K$.
	Write $B$ for the unit ball of $C(K)$ and:
	$$
	  A := \bigcap_{n \in \N} \left(A_{n} + \frac{1}{2^{n}}B\right).
	$$
	If all $\{A_{n}\}_{n \in \N}$ are (eventually) fragmented, so is $A$.
\end{lemma}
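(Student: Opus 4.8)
The plan is to exploit the defining inclusion $A \subseteq A_n + \frac{1}{2^n}B$, valid for every $n$, which says that each $f \in A$ can be written, for every $n$, as $f = g + h$ with $g \in A_n$ and $\|h\|_{\infty} \le 2^{-n}$; thus members of $A$ are uniformly (in the supremum norm) arbitrarily close to members of $A_n$ as $n$ grows. Since this approximation error costs nothing but passing to a larger index, a two-$\eps$ estimate should carry the (eventual) fragmentability of the $A_n$ over to $A$.

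\emph{Fragmented case.} Fix a nonempty $S \subseteq K$ and $\eps > 0$, and choose $n$ with $2^{-n} < \eps/4$. Since $A_n$ is a fragmented family on $K$, there is an open $O \subseteq K$ with $O \cap S \neq \emptyset$ such that $g(O \cap S)$ is $\frac{\eps}{2}$-small for every $g \in A_n$. For an arbitrary $f \in A$, write $f = g + h$ as above; then for $x, y \in O \cap S$ we get $|f(x) - f(y)| \le |g(x) - g(y)| + 2\|h\|_{\infty} < \frac{\eps}{2} + \frac{\eps}{2} = \eps$. Hence $f(O \cap S)$ is $\eps$-small for all $f \in A$ simultaneously, so $A$ is a fragmented family.

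\emph{Eventually fragmented case.} Here I would run a diagonal argument. Given a sequence $\{f_k\}_{k \in \N} \subseteq A$, for every $k$ and $n$ fix a decomposition $f_k = g_k^{(n)} + h_k^{(n)}$ with $g_k^{(n)} \in A_n$ and $\|h_k^{(n)}\|_{\infty} \le 2^{-n}$. Setting $N_0 := \N$ and applying eventual fragmentability of $A_n$ to the sequence $\{g_k^{(n)}\}_{k \in N_{n-1}}$, recursively choose infinite sets $N_0 \supseteq N_1 \supseteq N_2 \supseteq \cdots$ so that $\{g_k^{(n)} : k \in N_n\}$ is a fragmented family for each $n$. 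Pick $k_1 < k_2 < \cdots$ with $k_j \in N_j$. I claim $\{f_{k_j}\}_{j \in \N}$ is a fragmented family. Indeed, fix nonempty $S \subseteq K$ and $\eps > 0$, and pick $n$ with $2^{-n} < \eps/4$. The family $\{g_{k_j}^{(n)} : j \in \N\}$ is the union of the finite family $\{g_{k_j}^{(n)} : j < n\}$ and the family $\{g_{k_j}^{(n)} : j \ge n\}$, and the latter is contained in $\{g_k^{(n)} : k \in N_n\}$ (because $N_j \subseteq N_n$ for $j \ge n$), hence fragmented. The finite family is also fragmented (see the obstacle below), so the union $\{g_{k_j}^{(n)} : j \in \N\}$ is fragmented. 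Therefore there is an open $O$ with $O \cap S \neq \emptyset$ and $g_{k_j}^{(n)}(O \cap S)$ being $\frac{\eps}{2}$-small for all $j$, and the same triangle-inequality estimate as before (note $\|h_{k_j}^{(n)}\|_{\infty} \le 2^{-n} < \eps/4$ for all $j$) yields $|f_{k_j}(x) - f_{k_j}(y)| < \eps$ for all $x, y \in O \cap S$ and all $j$. This proves the claim, and hence $A$ is eventually fragmented.

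\emph{Main obstacle.} The two $\eps$-estimates are routine; the one genuinely fiddly point is the finite initial segment $\{g_{k_j}^{(n)} : j < n\}$ in the diagonal argument, since elements of an eventually fragmented family need not be continuous and a single arbitrary real function need not be fragmented. I would handle it with two small auxiliary observations. First, a singleton $\{a\}$ with $a$ lying in an eventually fragmented family is itself a fragmented family: apply Definition \ref{d:fr-family} to the constant sequence $(a, a, \dots)$, every subsequence of which realizes the family $\{a\}$. Second, a finite union of fragmented families is again a fragmented family: given $S$ and $\eps$, fragment the first family on $S$, then the second family on the resulting trace $O_1 \cap S$, intersect the open sets, and iterate. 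These are of the same flavor as Lemma~\ref{lemma:closure_of_eventually_fragmented}, and together with the inclusion-and-approximation idea they complete the proof.
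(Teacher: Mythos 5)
Your proof is correct and follows essentially the same route as the paper's: approximate each member of $A$ by elements of $A_n$ within $2^{-n}$, extract nested subsequences, diagonalize, and finish with a triangle-inequality estimate at a level $n$ with $2^{-n}$ small. The only differences are that you also write out the plain fragmented case (which the paper omits as "slightly easier") and you explicitly justify the finite initial segment of the diagonal sequence (singletons from an eventually fragmented family are fragmented, and finite unions of fragmented families are fragmented), a point the paper passes over with "it is easy to see".
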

\begin{proof} 
	We will only prove the case of eventually fragmented maps. 
	The other case is slightly easier.
	Suppose that $\{x_{m}\}_{m \in \N} \subseteq A$ is an infinite subset.
	We need to find a subsequence $\{m_{k}\} \subseteq \N$ such that $\{x_{m_{k}}\}_{k \in \N}$ is fragmented over $K$.
	By its definition, for every $n, m \in \N$, we can find $y(n, m) \in A_{n}$ such that $x_{m} \in y(n, m) + \frac{1}{2^{n}}B_{n}$ (we use function notation rather than subscript to make it easier to read).
	Using induction, we build a sequence $\left\{\left\{m_{k}^{(n)}\right\}_{k \in \N}\right\}_{n \in \N}$ of descending infinite subsequences.
	We define $m_{k}^{(-1)} := k$ (the trivial subsequence), and at each step choose a subsequence $\left\{m_{k}^{(n + 1)}\right\}_{k \in \N} \subseteq \left\{m_{k}^{(n)}\right\}_{k \in \N}$ such that
	$\left\{y\left(n + 1, m_{k}^{(n + 1)}\right)\right\}_{k \in \N}$ is fragmented.
	Now, define $m_{k} := m_{k}^{(k)}$.
	It is easy to see that $Y_{n} := \left\{y\left(n, m_{k}\right)\right\}_{k \in \N}$ remains fragmented.
	We will show that so does $A' := \{x_{m_{k}}\}_{k \in \N} \subseteq A$.
	
	Let $\eps > 0$ and $T \subseteq K$ be non-empty. 
	We need to find an open $O \subseteq K$ such that $T \cap O$ is not empty and $f(T \cap O)$ is $\eps$-small for every $f \in A'$.			
	There exists some $n_{0} \in \N$ such that $\frac{1}{2^{n_{0}}} < \frac{1}{2}\eps$.
	By our construction, $\left\{y\left(n_{0}, m_{k}\right)\right\}_{k \in \N}$ is fragmented so we can find an open $O \subseteq K$ such that $O \cap T$ is not empty and $f(O \cap T)$ is $\frac{1}{2}\eps$-small for every $f \in Y_{n}$.
	Since $A' \subseteq Y_{n} + \frac{1}{2^{n_{0}}}B$, it is easy to see that for every $f \in A'$, $f(O \cap T)$ is $\eps$-small, as required.
\end{proof}
	
\begin{f} \label{l:DenseDualImage}  \cite{Me-b} 
	Let $j \colon V_1 \to V_2$ be a continuous linear operator between Banach spaces 
	such that the adjoint $j^* \colon V_2^* \to V_1^*$ is norm-dense. Let $F \subset V_1$ be a bounded subset.
	\begin{enumerate} 
		\item If $j(F)$ is a tame subset in $V_2$, then $F$ is a tame subset in $V_1$. 
			\item If $j(F)$ is an Asplund subset in $V_2$, then $F$ is an Asplund subset in $V_1$. 
	\end{enumerate}
\end{f}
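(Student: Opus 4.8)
The plan is to handle both parts uniformly through the countable determination of fragmented families (Lemma~\ref{t:countDetermined}) and the separability of the associated seminorm pseudometrics. Recall that for a Banach space $V$ the ball $B_{V^{*}}$ is weak-$*$ compact, each $x\in V$ restricts to a weak-$*$ continuous function on it, and — by Lemma~\ref{l:RosBanSpCharact} together with Lemma~\ref{f:sub-fr} — a bounded $F\subseteq V$ is a tame (resp.\ Asplund) subset of $V$ exactly when $F$, viewed as a family of functions on $B_{V^{*}}$, is eventually fragmented (resp.\ fragmented). By Lemma~\ref{t:countDetermined} and Remark~\ref{r:Asp}, fragmentedness of a countable bounded family $C\subseteq C(B_{V^{*}})$ is in turn equivalent to separability of the pseudometric space $(B_{V^{*}},\rho_{C})$, where $\rho_{C}(\varphi,\psi):=\sup_{x\in C}|\varphi(x)-\psi(x)|$. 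So I would first reduce everything to: \emph{for each countable $C\subseteq F$, $(B_{V_{1}^{*}},\rho_{C})$ is separable.}

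The bridge between $V_{1}$ and $V_{2}$ is the identity $\langle j(x),\psi\rangle=\langle x,j^{*}(\psi)\rangle$, which for a countable $C\subseteq F$ gives
\[
\rho_{j(C)}(\psi_{1},\psi_{2})=\sup_{x\in C}\bigl|x(j^{*}\psi_{1})-x(j^{*}\psi_{2})\bigr|=\rho_{C}(j^{*}\psi_{1},j^{*}\psi_{2})\qquad(\psi_{1},\psi_{2}\in V_{2}^{*}),
\]
so $j^{*}$ is a pseudometric isometry of $(V_{2}^{*},\rho_{j(C)})$ onto $(j^{*}(V_{2}^{*}),\rho_{C})\subseteq(V_{1}^{*},\rho_{C})$. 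This is the one place the hypothesis enters: since $C$ is bounded, $\rho_{C}\le R_{C}\,\|\cdot\|$ on $V_{1}^{*}$ with $R_{C}:=\sup_{x\in C}\|x\|<\infty$, hence norm-density of $j^{*}(V_{2}^{*})$ in $V_{1}^{*}$ upgrades to $\rho_{C}$-density. Now, given that $j(F)$ is an Asplund subset of $V_{2}$, the space $(B_{V_{2}^{*}},\rho_{j(C)})$ is separable; scaling by $n$, $(V_{2}^{*},\rho_{j(C)})=\bigcup_{n}(nB_{V_{2}^{*}},\rho_{j(C)})$ is separable; so is its isometric image $(j^{*}(V_{2}^{*}),\rho_{C})$; being $\rho_{C}$-dense, it makes $(V_{1}^{*},\rho_{C})$, and therefore its subspace $(B_{V_{1}^{*}},\rho_{C})$, separable. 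By Lemma~\ref{t:countDetermined}/Remark~\ref{r:Asp} this proves (2).

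For (1) I would run the same computation downstream of a subsequence extraction. Given an arbitrary sequence $\{x_{n}\}\subseteq F$, the sequence $\{j(x_{n})\}\subseteq j(F)$ is eventually fragmented on $B_{V_{2}^{*}}$ since $j(F)$ is tame (Lemma~\ref{f:sub-fr}); pick a subsequence $\{n_{k}\}$ with $\{j(x_{n_{k}})\}$ a fragmented family on $B_{V_{2}^{*}}$, i.e.\ $(B_{V_{2}^{*}},\rho_{\{j(x_{n_{k}})\}})$ separable. Applying the argument of the previous paragraph with $C=\{x_{n_{k}}\}$ yields that $(B_{V_{1}^{*}},\rho_{C})$ is separable, hence $\{x_{n_{k}}\}$ is a fragmented family on $B_{V_{1}^{*}}$. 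As $\{x_{n}\}$ was arbitrary, $F$ is an eventually fragmented — hence tame — family on $B_{V_{1}^{*}}$, i.e.\ a tame subset of $V_{1}$.

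\textbf{Main obstacle.} The only genuinely delicate point is passing from $j^{*}(B_{V_{2}^{*}})$ to the whole ball $B_{V_{1}^{*}}$: a priori $j^{*}(B_{V_{2}^{*}})$ can be a very thin subset of $B_{V_{1}^{*}}$, and norm-density of $j^{*}$ supplies nothing uniform on balls. I expect this to be the crux of the write-up, and it is resolved precisely because we need density only in the \emph{coarse} pseudometric $\rho_{C}$ (not in the norm), and $\rho_{C}$ is dominated by the norm on the bounded set $C$; the remaining ingredients — separability of $(nB_{V^{*}},\rho_{C})$ by scaling, stability of separability under countable unions and under passage to subspaces of a pseudometric space — are routine.
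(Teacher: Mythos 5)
Your proposal is correct and follows essentially the same route as the paper's proof: reduce via Lemma \ref{f:sub-fr} and Lemma \ref{t:countDetermined} to separability of $(B_{V_1^*},\rho_C)$ for a (sub)sequence $C$, use the adjoint identity to transport separability from $(V_2^*,\rho_{j(C)})$ to $(j^*(V_2^*),\rho_C)$, and exploit that $\rho_C$ is dominated by the norm on the bounded set $C$ so that norm-density of $j^*(V_2^*)$ gives $\rho_C$-density, hence separability of $(V_1^*,\rho_C)$. The obstacle you flag is exactly the point the paper settles the same way, so nothing further is needed.
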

\begin{proof}
	(1) In order to show that $F$ is a tame subset in $V_1$ it is equivalent to check that $F$ is eventually fragmented on $B_{V_1^*}$
	(Lemma \ref{f:sub-fr}).  
	We have to prove that for every 
	sequence $\{f_n\}_{n \in \N}$ in $F$
	there exists a subsequence $\{f_{n_k}\}_{k \in \N}$ which is fragmented on $B_{V_1^*}$.  
	Equivalently, $(B_{V_1^*},\rho_C)$ is separable (Lemma \ref{t:countDetermined}), where $C:=\{f_{n_k}\}$ 
	and $\rho_{C}(x_1,x_2):=\sup_{f \in C} |f(x_1)-f(x_2)|.$ 
	
	By our assumption, $j(F)$ is a tame subset in $V_2$. Hence, there exists a subsequence $C:=\{f_{n_k}\}$ of $\{f_{n}\}$ such that $j(C)=\{j(f_{n_k})\}$ is a fragmented family on $B_{V_2^*}$. Equivalently, $(B_{V_2^*}, \rho_{j(C)})$ is separable (Lemma \ref{t:countDetermined}).  
	Then $(V_2^*, \rho_{j(C)})$ is also separable. 
	By the definition of the adjoint 
	operator, we have 
	$\lan j(x),v^* \ran= \lan x,j^*(v^*) \ran$ for every $x \in V_1, v^* \in V_2^*$. This implies that $(j^*(V_2^*), \rho_C)$ is separable.
	Then its $\rho_C$-closure $cl_{\rho_C}(j^*(V_2^*))$ is also $\rho_C$-separable. Clearly,
	$cl_{\rho_C}(j^*(V_2^*)) \supset cl_{norm}(j^*(V_2^*))=V_1^*$ (because $C$ is a bounded sequence in $V_1$). Therefore, $(V_1,^*, \rho_C)$ and also $(B_{V_1^*},\rho_C)$ are separable, as desired. 
	
	(2) The case of Asplund subsets is similar (and easier). 
	\end{proof}

\begin{lemma} \label{l:DFJP}
	Let $A$ be a bounded tame (Asplund, relatively weakly compact) subset in a Banach space $X$. Then there exist a Rosenthal (Asplund, reflexive) Banach space $Y$, a continuous linear injective map $j \colon Y \to X$ such that $A$ is a subset of $j(B_Y)$ and the adjoint map $j^* \colon X^* \to Y^*$ is dense.
\end{lemma}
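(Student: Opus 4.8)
The plan is to run the DFJP construction of Fact~\ref{fact:DJFP} on $X$ together with a suitable absolutely convex enlargement of $A$, and then to read off the class of the resulting factor space $Y$ by feeding the hyperspace stability Lemma~\ref{lemma:fragmented_hyperspace} into the ``descent'' Fact~\ref{l:DenseDualImage}.

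First I would reduce to the case in which $A$ is closed and absolutely convex, by replacing $A$ with $W:=\overline{\acx}(A)$. This $W$ is still bounded, and it remains tame (resp. Asplund, resp. relatively weakly compact): the tame and the Asplund subsets of a lcs form a saturated convex bornology (Remark~\ref{r:preserv}), so they are stable under absolutely convex hull and closure, while relative weak compactness is preserved by the Krein--Smulian theorem. Applying Fact~\ref{fact:DJFP} to $(X,W)$ produces the Banach space $(Y,[\cdot])$, the continuous linear inclusion $j\colon Y\hookrightarrow X$, and the sets $U_n:=2^nW+2^{-n}B_X$ with gauges $\|\cdot\|_n$. Item~(1) gives $A\subseteq W\subseteq B_Y$, hence $A\subseteq j(B_Y)$; item~(2) gives that $j$ is a continuous linear injection; and since $j^{**}=(j^{*})^{*}$ is injective by item~(3), Lemma~\ref{lemma:DJFP_double_dual_to_dual} shows that $j^{*}\colon X^{*}\to Y^{*}$ has dense image. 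In the relatively weakly compact case item~(4) already tells us $Y$ is reflexive, so that branch is finished.

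For the tame (and, mutatis mutandis, the Asplund) case the core step is to prove that $j(B_Y)=B_Y$ is a tame subset of $X$. I would embed $X$ isometrically into $C(K)$ with $K:=B_{X^{*}}$ carrying the weak${}^{*}$ topology (Hahn--Banach; $K$ is compact by Alaoglu--Bourbaki). Since $\|\cdot\|_n\le[\cdot]$ and each $U_n$ is closed, convex and a $0$-neighbourhood, one gets $B_Y\subseteq U_n$ for all $n$, and because $B_X\subseteq B_{C(K)}$ under the embedding,
$$
B_Y\ \subseteq\ \bigcap_{n\in\N}\bigl(2^nW+2^{-n}B_X\bigr)\ \subseteq\ \bigcap_{n\in\N}\Bigl(2^nW+\tfrac{1}{2^{n}}B_{C(K)}\Bigr).
$$
Now $W$ being a tame subset of $X$ is equivalent to $W$ being a tame family on $K=B_{X^{*}}$ (Definition~\ref{d:TameSet}; recall $B_{X^{*}}$ is the maximal member of $\eqc(X^{*})$), hence to $W$ being an eventually fragmented family on $K$ (Lemma~\ref{f:sub-fr}); multiplying by the constant $2^n$ preserves this, so each $A_n:=2^nW$ is eventually fragmented on $K$. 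Lemma~\ref{lemma:fragmented_hyperspace} then makes $\bigcap_n(A_n+2^{-n}B_{C(K)})$ eventually fragmented on $K$, hence so is the subfamily $B_Y$; thus $B_Y$ is a tame family on $B_{X^{*}}$, i.e. $j(B_Y)$ is a tame subset of $X$. As $j^{*}$ is norm-dense, Fact~\ref{l:DenseDualImage}(1) upgrades this to: $B_Y$ is a tame subset of $Y$, whence $Y$ is Rosenthal by Lemma~\ref{l:RosBanSpCharact}. The Asplund case is word-for-word the same, with ``eventually fragmented'' replaced by ``fragmented'', Lemma~\ref{f:sub-fr} replaced by Definition~\ref{d:AspSet}, the ``fragmented'' branch of Lemma~\ref{lemma:fragmented_hyperspace} and Fact~\ref{l:DenseDualImage}(2) used, and Lemma~\ref{l:RosBanSpCharact} replaced by Fact~\ref{f:Asp-charact}.

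I expect the bookkeeping --- that $U_n$ is closed (so $B_Y\subseteq U_n$), and that a constant multiple of an (eventually) fragmented family is again (eventually) fragmented --- to be routine. The step that genuinely uses the machinery of this paper, and the one I regard as the crux, is the transfer of tameness/Asplundness of $B_Y$ from ``inside $X$'' to ``inside $Y$'': this is precisely Fact~\ref{l:DenseDualImage}, and it is the reason the density of $j^{*}$ (Fact~\ref{fact:DJFP}(3) via Lemma~\ref{lemma:DJFP_double_dual_to_dual}) is load-bearing rather than decorative.
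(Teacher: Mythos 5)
Your proposal is correct and takes essentially the same route as the paper's own proof: form the absolutely convex hull $W$ of $A$ (tame/Asplund by the bornology stability, weakly compact by Krein--Smulian), run Fact~\ref{fact:DJFP}, use Lemma~\ref{f:sub-fr} together with the hyperspace Lemma~\ref{lemma:fragmented_hyperspace} to see that $j(B_Y)\subseteq\bigcap_n U_n$ is tame (Asplund) in $X$, and then descend to $Y$ via the dense adjoint $j^{*}$ (Lemma~\ref{lemma:DJFP_double_dual_to_dual}) and Fact~\ref{l:DenseDualImage}, the reflexive case being immediate from Fact~\ref{fact:DJFP}(4). The only point to tidy is your claim that each $U_n=2^nW+2^{-n}B_X$ is closed (it need not be when $W$ is merely tame), but since $[x]\le 1$ still gives $x\in\overline{U_n}\subseteq 2^nW+2^{-n+1}B_X$ and Lemma~\ref{lemma:fragmented_hyperspace} is insensitive to such a constant, this does not affect the argument.
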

\begin{proof}
	Define $W := \acx A$.
	The case of relatively weakly compact subsets is a consequence of Fact \ref{fact:DJFP}. 
	Let us consider the cases where $A$ is tame or Asplund.
	By Lemma \ref{lemma:class_induces_bornology}, $W$ remains tame (Asplund).
	Next, let $j\colon Y \to X$ and $\{U_{n}\}_{n \in \N}$ be as described in Fact \ref{fact:DJFP}. 
	It is 
	well known and 
	easy to see that 
	$$A \subseteq W \subseteq j(B_{Y})=B_Y \subseteq U := \bigcap_{n \in \N} U_{n}.$$
	Then $j^* \colon X^* \to Y^*$ is dense by Remark \ref{r:j^* is dense}. 
	By Lemma \ref{f:sub-fr}, the tameness (Asplundness) of bounded families of continuous functions on compact sets is equivalent to eventual fragmentability (fragmentability).
	By Lemma \ref{lemma:fragmented_hyperspace}, we conclude that $U$
	and $j(B_Y)$ 
	also are tame (Asplund) subsets.
	By Fact \ref{l:DenseDualImage}, $B_{Y}$ is tame (Asplund).
	Therefore, $Y$ is Rosenthal (Asplund).
\end{proof}

\sk 
\begin{thm} \label{t:Factoriz} 
		Every tame (NP, DLP) operator $T \colon E \to X$ between a lcs
		$E$ 
		 and a Banach space 
		 $X$ 
		 can be factored through a Rosenthal 
		(Asplund, reflexive) Banach space. 
\end{thm}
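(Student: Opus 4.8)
The plan is to reduce everything to the Banach-space factorization already packaged in Lemma \ref{l:DFJP}. By Definition \ref{d:TameOperat}, a tame operator $T\colon E\to X$ comes with a zero neighborhood $U\subseteq E$ such that $A:=T(U)$ is a bounded tame subset of the Banach space $X$. First I would feed $A$ into Lemma \ref{l:DFJP} to produce a Rosenthal Banach space $Y$ together with a continuous linear \emph{injective} map $j\colon Y\to X$ satisfying $A\subseteq j(B_Y)$. The injectivity is the feature that makes a genuine factorization possible.

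Next I would define the candidate factor $S:=j^{-1}\circ T\colon E\to Y$. Two things need checking. \emph{Well-definedness}: since $U$ is absorbing, every $x\in E$ lies in some $\lambda U$, so $T(x)\in\lambda A\subseteq j(\lambda B_Y)\subseteq j(Y)$, and injectivity of $j$ makes $j^{-1}(T(x))$ unambiguous; linearity of $S$ is then immediate and $T=j\circ S$ holds by construction. \emph{Continuity}: again by injectivity, $S(U)=j^{-1}(A)\subseteq B_Y$, so $\eps U\subseteq S^{-1}(\eps B_Y)$ for every $\eps>0$; since the sets $\eps B_Y$ form a neighborhood basis at $0$ in $Y$, this says exactly that $S$ is continuous. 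This yields the factorization of $T$ through the Rosenthal Banach space $Y$.

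The NP and DLP cases would be handled verbatim, simply invoking the corresponding outputs of Lemma \ref{l:DFJP}: for an NP operator take $U$ with $T(U)$ an Asplund subset and obtain an Asplund $Y$; for a DLP operator take $U$ with $T(U)$ a DLP subset of the Banach space $X$, recall (the remark following Proposition \ref{p:w-compDLP}) that this forces $T(U)$ to be relatively weakly compact, and obtain a reflexive $Y$. I do not expect a serious obstacle here: the entire technical weight --- the DFJP construction itself, the hyperspace stability of (eventual) fragmentability, and the dense-adjoint descent argument --- is already absorbed into Lemma \ref{l:DFJP}. The only points requiring a moment's care are the bookkeeping around ``$U$ absorbing'' (for well-definedness of $S$) and ``$j$ injective'' (so that $S(U)\subseteq B_Y$ and hence $S$ is continuous), both of which are guaranteed by the statement of Lemma \ref{l:DFJP}.
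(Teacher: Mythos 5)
Your proposal is correct and follows essentially the same route as the paper: apply Lemma \ref{l:DFJP} to $A:=T(U)$, set $S:=j^{-1}\circ T$, and deduce continuity from $S(U)\subseteq B_Y$ (the paper also handles the DLP case by reducing, as you do, to relative weak compactness of $T(U)$ in the Banach space $X$). Your extra remarks on well-definedness via absorbency of $U$ and injectivity of $j$ are just a more explicit spelling-out of steps the paper leaves implicit.
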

\begin{proof}
	By our assumption, there exists a zero neighborhood $O$ in $E$ such that $T(O)$ is tame (Asplund, weakly relatively compact) in $X$. We apply Lemma \ref{l:DFJP} to the subset $A:=T(O) \subset X$ in order to construct a \textit{Rosenthal (Asplund, reflexive) Banach space} $V$ and a continuous injective linear operator 
	$j \colon V \to X$ such that $A$ is a subset of $j(B_V)$. Now, consider the linear operator $u:=j^{-1} \circ T \colon E \to V$. 
	Since $u(O) \subset B_V$, we obtain that $u$ is continuous. Then $T= j \circ u$ is the required factorization. 
\end{proof}

It would be interesting to find some additional natural bornologies which are consistent with DFJP-factorization.

\sk 
\section{Generalization of Haydon's Theorem and tame spaces} 
\label{s:haydon}
In its original statement Haydon's theorem characterizes Rosenthal Banach spaces 
as follows.
\begin{thm} \emph{(Haydon \cite[Thm.~3.3]{Haydon})} \label{t:HaudonBan} 
	Let $V$ be a Banach space. The following are equivalent:
	\ben
		\item $V$ contains no $l^{1}$-sequence; 
		\item every weak-star compact convex subset of $V^{*}$ is the norm closed convex hull of its extreme points;
		\item for every weak-star compact subset $T$ of $V^{*}$, 
		$$
		  \overline{\co}^{w^{*}}(T)  = \overline{\co}(T).
		$$
	\een
\end{thm}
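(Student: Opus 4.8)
My plan is to deduce Haydon's theorem from the characterization of Rosenthal Banach spaces already recorded in Lemma~\ref{l:RosBanSpCharact} --- that (1) holds iff $B_{V^{**}} \subseteq \F(B_{V^*})$, i.e.\ every $x^{**}$ restricts to a fragmented function on the $w^*$-compact ball $B_{V^*}$. The heart of the matter is the equivalence $(1) \Leftrightarrow (3)$; I would extract $(1) \Leftrightarrow (2)$ afterwards from classical convexity. For $(1) \Rightarrow (3)$: fix a $w^*$-compact $T \subseteq V^*$; after rescaling take $T \subseteq B_{V^*}$ and set $K := \overline{\co}^{w^*}(T)$, a $w^*$-compact convex subset of a ball. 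Since the norm topology is finer than the $w^*$-topology, $w^*$-compact sets are norm closed, so $\overline{\co}(T) \subseteq K$ automatically, and it suffices to show $\co(T)$ is norm-dense in $K$. If not, separate a point $\phi_0 \in K$ from the norm-closed convex set $\overline{\co}(T)$ by Hahn--Banach in the Banach space $(V^*, \|\cdot\|)$, obtaining $x^{**} \in V^{**}$ with $\langle \phi_0, x^{**} \rangle > \sup_{\psi \in T} \langle \psi, x^{**}\rangle =: s$. Then I would represent $\phi_0$ as the $w^*$-barycenter of a Radon probability measure $\mu$ on $T$ (every point of $\overline{\co}^{w^*}(T)$ is such a barycenter, $K$ being $w^*$-compact convex).

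The crucial step is the \emph{barycentric calculus}: because (1) holds, $x^{**}\vert_{B_{V^*}}$ is fragmented, hence --- on a separable reduction, via the Odell--Rosenthal description and Lemma~\ref{f:sub-fr} --- a $w^*$-pointwise limit of a bounded sequence $(v_k) \subseteq B_V$, so bounded convergence gives
$$
\langle \phi_0, x^{**} \rangle = \lim_k \langle \phi_0, v_k \rangle = \lim_k \int_T \langle \psi, v_k \rangle \, d\mu(\psi) = \int_T \langle \psi, x^{**} \rangle \, d\mu(\psi) \leq s,
$$
a contradiction. The hard part will be exactly this barycentric identity, in particular its validity when $B_{V^*}$ is not $w^*$-metrizable, i.e.\ for non-separable $V$, where one must replace $V$ by a separable subspace carrying the measure $\mu$ before invoking Odell--Rosenthal; I expect this to be the main technical obstacle.

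The converse $(3) \Rightarrow (1)$ is the contrapositive of the same computation. If $V$ is not Rosenthal, Lemma~\ref{l:RosBanSpCharact} produces $x^{**} \in B_{V^{**}}$ that is not fragmented on $B_{V^*}$; by Lemma~\ref{f:sub-fr} (failure of item (4)) the family $B_V$ is not tame, so it contains an independent sequence $\{v_n\}$ with bounds $a < b$, and by Fact~\ref{lemma:independence_over_compact_sets} there are functionals $\psi_M \in B_{V^*}$ (for $M \subseteq \N$) with $\langle v_n, \psi_M\rangle > b$ when $n \in M$ and $< a$ when $n \notin M$. Letting $T$ be the $w^*$-closure of $\{\psi_M : M \subseteq \N\}$, I would push the symmetric product measure on $\{0,1\}^{\N}$ forward along $M \mapsto \psi_M$ to get a probability measure on $T$ whose barycenter $\phi_0$ lies in $\overline{\co}^{w^*}(T)$; using a $w^*$-cluster point of $\{v_n\}$ in $B_{V^{**}}$ as a separating functional and running the barycenter obstruction backwards then places $\phi_0$ in $\overline{\co}^{w^*}(T) \setminus \overline{\co}(T)$, so $(3)$ fails. (This is essentially Haydon's construction of the failure; the explicit combinatorics here is the counterpart of the obstacle above.)

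Finally, $(2) \Leftrightarrow (3)$ is classical. For $(2) \Rightarrow (3)$: given $w^*$-compact $T$, put $K := \overline{\co}^{w^*}(T)$; by (2), $K = \overline{\co}(\ext K)$, while Milman's theorem gives $\ext K \subseteq \overline{T}^{w^*} = T$, whence $K = \overline{\co}(\ext K) \subseteq \overline{\co}(T) \subseteq K$ (using again that $K$ is norm closed), so $\overline{\co}^{w^*}(T) = \overline{\co}(T)$. For $(3) \Rightarrow (2)$: Krein--Milman gives $K = \overline{\co}^{w^*}(\ext K) = \overline{\co}^{w^*}(\overline{\ext K}^{w^*})$, which (3) turns into $K = \overline{\co}(\overline{\ext K}^{w^*})$; descending from $\overline{\ext K}^{w^*}$ to $\ext K$ requires a Choquet-type argument (maximal measures are carried by $\ext K$ in the metrizable case, otherwise one again reduces to separable subspaces), and this is the one place where a little extra care is needed. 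Assembling $(1)\Rightarrow(3)$, $(3)\Rightarrow(1)$ and $(2)\Leftrightarrow(3)$ completes the proof.
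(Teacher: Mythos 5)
You should first note that the paper does not prove this statement at all: it is quoted as Haydon's theorem with a citation to \cite[Thm.~3.3]{Haydon}, so the only meaningful comparison is with Haydon's original argument, whose broad architecture (separation by an element of $V^{**}$ plus a ``barycentric calculus'' for such elements) your outline for $(1)\Rightarrow(3)$ does resemble. But as a proof your proposal has genuine gaps, and they sit exactly where the theorem's content lies. In $(1)\Rightarrow(3)$ the whole weight rests on the identity $\langle \phi_0, x^{**}\rangle=\int_T x^{**}\,d\mu$ for the barycenter $\phi_0$ of $\mu$. Your justification is Odell--Rosenthal plus bounded convergence, but Odell--Rosenthal is a theorem about \emph{separable} spaces, and for nonseparable $V$ an element of $V^{**}$ need not be the pointwise limit on $B_{V^*}$ of any sequence from $V$ (take $V=c_0(\Gamma)$, $\Gamma$ uncountable, and $x^{**}$ the indicator of an uncountable set). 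The ``separable reduction'' you defer is also not routine: for a subspace $W\subseteq V$ the restriction operator goes $V^*\to W^*$, and an $x^{**}\in V^{**}$ does not induce an element of $W^{**}$ whose pairing with the restricted $\phi_0$ and the pushed-forward $\mu$ reproduces the inequality you want to contradict. Haydon's own proof handles this by a direct construction (a failure of the barycentric calculus is converted into an $\ell^1$/independent sequence via Goldstine and an inductive selection), and that construction is precisely what is missing here. Likewise, your $(3)\Rightarrow(2)$ is not formal: Krein--Milman and $(3)$ only give $K=\overline{\co}\bigl(\overline{\ext K}^{w^*}\bigr)$, and descending from $\overline{\ext K}^{w^*}$ to $\ext K$ is a Choquet-type step you merely gesture at.

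The converse direction is the more serious problem: your sketch of $\neg(1)\Rightarrow\neg(3)$ would fail if carried out literally. First, $M\mapsto\psi_M$ is an arbitrary choice function, so ``pushing forward the symmetric product measure'' along it is not defined (no measurability). Second, and fatally, the separating functional you propose cannot separate: if $x^{**}$ is a weak-star cluster point of the independent sequence $\{v_n\}$ along an ultrafilter, then for the coin-flip barycenter $\phi_0$ one has $\langle\phi_0,x^{**}\rangle\approx\frac{a+b}{2}$, while $\sup_{\psi\in T}\langle\psi,x^{**}\rangle\ge\langle\psi_{\N},x^{**}\rangle\ge b$, so the inequality needed to place $\phi_0$ outside $\overline{\co}(T)$ goes the wrong way; indeed for $V=\ell^1$, $v_n=e_n$, the natural choice of the $\psi_M$ yields a set whose weak-star and norm closed convex hulls coincide, so no counterexample arises from this construction at all. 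The standard proofs of this implication use a genuinely different device: one first checks that property $(3)$ passes to closed subspaces, reduces to a (separable) subspace containing $\ell^1$, produces a quotient map onto $C(\Delta)$, and then uses that its adjoint embeds $M(\Delta)$ isometrically and weak-star homeomorphically into $V^*$, where a diffuse probability measure lies in the weak-star closed convex hull of the point masses but at total-variation distance $2$ from their norm closed convex hull. Without either this route or Haydon's barycentric-calculus dichotomy carried out in full, the proposal does not constitute a proof.
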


Our generalized version, in a brief summary, can be expressed as follows.
\begin{prop}
	For a locally convex space $E$, the following are equivalent:
	\ben
		\item $E$ is tame (in virtue of Theorem \ref{thm:tame_iff_R},
		equivalent to not containing of an $l^{1}$-sequence);
		\item every equicontinuous, weak-star compact convex subset of $E^{*}$ is the 
		strong closed convex hull of its extreme points. 
		That is, 
		$\overline{\co}^{w^{*}}(\ext M)= 
		\overline{\co}(\ext M)$ for every 
		convex 
		$M \in \eqc(E^*)$;
		\item for every equicontinuous, weak-star compact subset $T$ of $E^{*}$, 
		$$
		\overline{\co}^{w^{*}}(T)  = \overline{\co}(T).
		$$
	\een
\end{prop}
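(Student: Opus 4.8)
The strategy is to pass from the locally convex setting to the Banach setting via the Equicontinuous Factor Lemma (Lemma~\ref{lemma:equicontinuous_factor}) and then invoke Haydon's classical theorem (Theorem~\ref{t:HaudonBan}), keeping careful track of how the strong topology on $E^{*}$ interacts with the norm topology of the factor dual. Throughout, ``tame'' may be replaced by ``$E$ contains no $l^{1}$-sequence'' by Theorem~\ref{thm:tame_iff_R} (i.e.\ $\Tame = \Rl$), and by Lemma~\ref{lemma:overline_R2_equivalence} this is the same as $\ORl$ when $E$ is locally complete; but we will not need local completeness here, arguing directly with the seminorms $\rho_{M}$.

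\textbf{Step 1: $(1)\Rightarrow(3)$.} Fix $T \in \eqc(E^{*})$ and set $M := \overline{\acx}^{w^{*}}(T)$, which is again in $\eqc(E^{*})$ by Lemma~\ref{lemma:weak_star_convex_hull_of_equicontinuous}. Apply Lemma~\ref{lemma:equicontinuous_factor} to $M$: we obtain a Banach space $V$, a continuous linear map $\pi\colon E\to V$ with dense range, and a linear operator $\Delta\colon \spn M\to V^{*}$, weak-star continuous on $M$, with $B_{V^{*}}=\overline{\acx}^{*}(\Delta(M))$ and $\id_{M}=\pi^{*}\circ\Delta$. Since $E$ is tame, $\pi(B)$ is tame in $V$ for every bounded $B$ (Lemma~\ref{lemma:image_of_a_B_small_set}), so $V$ is a Rosenthal Banach space — the point is that $B_{V}$ is tame over $B_{V^{*}}=\overline{\acx}^{*}\Delta(M)$, which follows because $\pi(\text{bounded set})$ is tame over $\Delta(M)$ and one uses Lemma~\ref{lemma:b_small_and_adjoint_maps} together with density; alternatively one runs the argument of the converse direction of Theorem~\ref{thm:tame_iff_R} verbatim. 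Now $V$ contains no $l^{1}$-sequence, so Haydon's Theorem~\ref{t:HaudonBan} applies: for the weak-star compact set $\Delta(T)\subseteq V^{*}$ we get $\overline{\co}^{w^{*}}(\Delta(T))=\overline{\co}^{\lVert\cdot\rVert}(\Delta(T))$. The plan is to transport this equality back through $\pi^{*}$. The $w^{*}$-to-$w^{*}$ side is automatic since $\pi^{*}$ is weak-star continuous and $\id_{M}=\pi^{*}\circ\Delta$, giving $\overline{\co}^{w^{*}}(T) = \pi^{*}(\overline{\co}^{w^{*}}\Delta(T))$ up to the usual compactness bookkeeping. For the strong side one uses that $\pi^{*}\colon V^{*}\to E^{*}$ is strongly continuous (Fact~\ref{f:adjoint_is_continuous}) and that the seminorm $\rho_{M}$ on $E$ is, via $\pi$, exactly the norm pulled back from $V$ (this is built into Lemma~\ref{lemma:equicontinuous_factor_normed}), so norm-closure in $V^{*}$ maps into $\rho_{M}$-closure, hence strong closure, in $E^{*}$. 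Combining, $\overline{\co}^{w^{*}}(T)\subseteq\overline{\co}(T)$; the reverse inclusion is Mazur's theorem (Fact~\ref{f:Mazur}) together with $\overline{\co}(T)\subseteq\overline{\co}^{w^{*}}(T)$ being trivial.

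\textbf{Step 2: $(3)\Rightarrow(2)$.} Let $M\in\eqc(E^{*})$ be convex. Then $M$ is weak-star compact and convex, so by the Krein--Milman theorem $M=\overline{\co}^{w^{*}}(\ext M)$. The set $T:=\overline{(\ext M)}^{w^{*}}$ is weak-star compact, equicontinuous (a weak-star closed subset of $M$, hence contained in the same polar of a zero-neighborhood), so $(3)$ gives $\overline{\co}^{w^{*}}(T)=\overline{\co}(T)$. Since $\overline{\co}^{w^{*}}(T)=\overline{\co}^{w^{*}}(\ext M)=M$, we get $M=\overline{\co}(T)=\overline{\co}(\ext M)$ (the strong closure of $\co(\ext M)$); this is exactly statement~$(2)$.

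\textbf{Step 3: $(2)\Rightarrow(1)$.} I will prove the contrapositive. Suppose $E$ is not tame; by Theorem~\ref{thm:tame_iff_R} there is a bounded $l^{1}$-sequence $\{x_{n}\}$, i.e.\ a continuous seminorm $\rho$ and $\delta>0$ with $\delta\sum|c_{i}|\le\rho(\sum c_{i}x_{i})$. Pick $M\in\eqc(E^{*})$ with $\rho\le\rho_{M}$ (take $M$ the polar of the $\rho$-unit ball, intersected with a polar of a zero-neighborhood so that it is equicontinuous and weak-star compact; absorb a scalar). Replacing $M$ by $\overline{\acx}^{w^{*}}M$ (still in $\eqc(E^{*})$, and convex), $M$ violates $\coRl$ in the sense of Definition~\ref{d:rho_M}, so Lemma~\ref{lemma:coRl_l1_embedding} produces an embedding $T\colon V\to E$ with $V$ a dense normed subspace of $l^{1}$ and $\delta' B_{V^{*}}\subseteq T^{*}(M)$. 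The idea now is that the failure of Haydon's condition $(2)$ inside $l^{1}$ (whose dual ball $B_{l^{\infty}}$ is far from being the norm-closed convex hull of its extreme points in the relevant sense) gets pushed into $M$ through $T^{*}$. Concretely, one exhibits a point of $M$ — or of the convex equicontinuous set $M' := \overline{\acx}^{w^{*}}(T^{*-1}\text{-image arrangement})$ — that lies in the weak-star closed convex hull of $\ext M$ but not in the strong (here $\rho_{M}$-, equivalently via $T^{*}$ the $\lVert\cdot\rVert_{V^{*}}$-) closed convex hull, using that in $(l^{1})^{*}=l^{\infty}$ the vector $(1,1,1,\dots)$ is a weak-star limit of convex combinations of the extreme points $\pm e_{n}^{*}$-type functionals but is at $\lVert\cdot\rVert_{\infty}$-distance bounded below from every finite convex combination of them. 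Transporting along $T^{*}$ (which is a strong-to-strong, weak-star-to-weak-star map and, by $\delta' B_{V^{*}}\subseteq T^{*}(M)$, quantitatively nondegenerate) yields a convex equicontinuous weak-star compact subset of $E^{*}$ for which $\overline{\co}^{w^{*}}(\ext\,\cdot)\ne\overline{\co}(\ext\,\cdot)$, contradicting $(2)$.

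\textbf{Main obstacle.} The delicate point is Step~1: making precise that strong closures in $E^{*}$ correspond to norm closures in $V^{*}$ under $\pi^{*}$ — i.e.\ that it suffices to control the single seminorm $\rho_{M}$ rather than all of the strong topology. This works because the convex set $T$ sits inside the fixed equicontinuous set $M$, on whose span the strong uniformity is generated (up to equivalence) by $\rho_{M}$ alone, which is exactly the norm transported from $V$; but this requires unwinding the identifications in Lemmas~\ref{lemma:equicontinuous_factor_normed} and~\ref{lemma:equicontinuous_factor} carefully, especially the caveat (noted in the Remark after Lemma~\ref{lemma:equicontinuous_factor}) that $\Delta$ is weak-star continuous only on $M$, not on all of $\spn M$. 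A secondary obstacle is Step~3, where one must turn the abstract embedding of Lemma~\ref{lemma:coRl_l1_embedding} into an \emph{explicit} convex equicontinuous set witnessing the failure of~$(2)$; the cleanest route is probably to observe that $(2)$ is inherited by the image of embeddings up to the relevant closures (mirroring the stability of $\Tame$ under subspaces in Theorem~\ref{thm:properties_of_tame_class}) and reduce directly to the known failure of Haydon's property for $l^{1}$, i.e.\ to Theorem~\ref{t:HaudonBan} applied in reverse.
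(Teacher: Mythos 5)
Your overall strategy (factor through a Banach space via Lemma \ref{lemma:equicontinuous_factor}, invoke Haydon's Theorem \ref{t:HaudonBan}, and use Lemma \ref{lemma:coRl_l1_embedding} for the return trip) is in the spirit of the paper, but Step 1 contains a genuine gap: the claim that the factor space $V$ is Rosenthal whenever $E$ is tame is false. Tameness of $E$ only gives that $\pi(B)$ is tame in $V$ for each \emph{bounded} $B\subseteq E$; the unit ball $B_{V}$ is (the closure of) the image of a $\rho_{M}$-ball, which is typically unbounded in $E$, so no tameness is inherited by $B_V$. The paper itself supplies a counterexample: by Proposition \ref{p:variety}(a) there is a Fr\'echet--Montel (hence DLP, hence tame) space $E$ with a quotient map $q$ onto $l^{1}$; taking $M=q^{*}(B_{l^{\infty}})\in\eqc(E^{*})$ one gets $\rho_{M}(x)=\lVert q(x)\rVert_{1}$, so the associated factor $V$ is exactly $l^{1}$, which is not Rosenthal, and for suitable weak-star compact $T\subseteq M$ the equality $\overline{\co}^{w^{*}}=\overline{\co}^{\lVert\cdot\rVert}$ fails in $V^{*}$ even though the corresponding equality holds in $E^{*}$ (the strong topology of $E^{*}$ on equicontinuous sets is much coarser than the pulled-back norm topology). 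So Haydon's theorem is simply not available in $V$, and no refinement of the ``strong closure versus norm closure'' bookkeeping can repair this. The missing idea is the paper's localization: a failure of (2)/(3) is witnessed by a single bounded anti-H set $B$ (Definition \ref{d:anti-H}, Lemma \ref{lemma:application_of_anti_H}); $\pi(B)$ is tame in $V$, and the DFJP factorization (Lemma \ref{l:DFJP}) applied to $\pi(B)$ produces a genuinely Rosenthal Banach space $W$ with $\pi(B)\subseteq j(B_{W})$, into which the anti-H property transports (Lemma \ref{l:HaydImage}), contradicting Haydon in $W$ (Propositions \ref{prop:localization_of_haydon_with_DFJP} and \ref{prop:cotame_implies_haydon}). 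The DFJP step is not optional.

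Step 2 is also broken: applying (3) to $T:=\overline{\ext M}^{w^{*}}$ gives $M=\overline{\co}(T)$, but your final identification $\overline{\co}(T)=\overline{\co}(\ext M)$ is unjustified --- a weak-star limit of extreme points need not lie in the strong closed convex hull of $\ext M$, and that is essentially the statement (2) you are trying to prove, so the argument is circular. The paper runs the easy implication in the opposite direction, $(2)\Rightarrow(3)$, via Milman's theorem ($\ext\,\overline{\co}^{w^{*}}(N)\subseteq\overline{N}^{w^{*}}$), and closes the cycle with $(3)\Rightarrow(1)$ by transporting a failure of the \emph{closure} equality (the Haydon property) through the embedding of Lemma \ref{lemma:coRl_l1_embedding}, see Lemma \ref{lemma:haydon_implies_coRl}; deriving the extreme-point statement (2) from tameness is done only by the hard anti-H/DFJP route. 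Finally, the concrete witness in your Step 3 is incorrect: the extreme points of $B_{l^{\infty}}$ are the sign sequences, $B_{l^{\infty}}$ \emph{is} the norm closed convex hull of its extreme points, and $(1,1,1,\dots)$ does not lie in the weak-star closed convex hull of $\{\pm e_{n}\}$; moreover a failure of the extreme-point statement does not push forward naively through $T^{*}$ (extreme points of $T^{*}(M)$ only satisfy $\ext T^{*}(M)\subseteq T^{*}(\ext M)$, Lemma \ref{lemma:extreme_points_of_compact_map}), which is precisely why the paper transports the closure equality instead.
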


\sk 
In fact, Theorem \ref{thm:Haydon} will show an even more 
localized  result for a given equicontinuous subset $M$.
There are some results of a similar nature for Banach spaces. Among others: \cite[Proposition 3.8]{LocalHaydon}, \cite[Thm.~9 and 11]{LocalHaydonRiddle83}, \cite[Thm.~1]{Saab82} and \cite[Thm.~4]{LocalHaydonRiddle82}.


\sk 
\subsection*{Application of the DFJP Construction}
By the Krein–Milman theorem, if $M \in \eqc(E^{*})$ is convex, then $M = \overline{\co}^{w^{*}}(\ext M)$.
In light of Haydon's theorem, we give the following definition.
\begin{defin} \label{d:anti-H} 
	Let $M \in \eqc (E^*)$ be convex. We say that a bounded set $B \subset E$ is \textit{anti-H} 
	for $M$ if there 
	exist a functional $\psi \in M$ and $\eps >0$ such that 
	$$
	U[B,\eps](\psi) \cap \co (\ext M) = \emptyset. 
	$$
\end{defin}

\sk 
The following is a direct consequence of the previous definition.
\begin{lemma} \label{lemma:application_of_anti_H}
$\overline{\co}^{w^{*}}(\ext M)= 
\overline{\co}(\ext M)$ if and only if there is no bounded $B \subseteq E$ which is anti-H for $M$.
\end{lemma}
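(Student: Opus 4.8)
~

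\textbf{The approach.} The statement to prove is Lemma~\ref{lemma:application_of_anti_H}: for convex $M \in \eqc(E^*)$,
$$
\overline{\co}^{w^*}(\ext M) = \overline{\co}(\ext M) \iff \text{there is no bounded } B \subseteq E \text{ which is anti-H for } M.
$$
Since both closures are taken inside $E^*$ and $\overline{\co}^{w^*}(\ext M) \supseteq \overline{\co}(\ext M)$ always holds (the weak-star closure is the largest of the two, because the strong topology is finer — here on $E^*$), the equivalence I would prove is really between the \emph{reverse} containment and the non-existence of an anti-H set. I would prove the contrapositive in each direction, keeping the quantifiers of Definition~\ref{d:anti-H} in mind: $B$ is anti-H for $M$ exactly when some $\psi \in M$ and some $\eps > 0$ satisfy $U[B,\eps](\psi) \cap \co(\ext M) = \emptyset$, where $U[B,\eps](\psi) = \{\varphi \in E^* : |\varphi(b) - \psi(b)| < \eps \ \forall b \in B\}$ is the basic strong neighborhood of $\psi$ (Definition~\ref{d:strongTOP}).

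\textbf{Direction 1: existence of an anti-H set $\Rightarrow$ the two closures differ.} Suppose $B$ is anti-H for $M$, witnessed by $\psi \in M$ and $\eps > 0$ with $U[B,\eps](\psi) \cap \co(\ext M) = \emptyset$. First note $\psi \in M = \overline{\co}^{w^*}(\ext M)$ by the Krein--Milman theorem (as $M$ is convex, weak-star compact). I claim $\psi \notin \overline{\co}(\ext M)$. Indeed, the set $U[B,\eps](\psi)$ is a strong-topology neighborhood of $\psi$ in $E^*$ that misses $\co(\ext M)$. Since the strong closure of $\co(\ext M)$ is contained in the strong closure of $\co(\ext M)$ — I mean: any $\varphi$ in the strong closure of $\co(\ext M)$ is a strong-limit of points of $\co(\ext M)$, so every strong neighborhood of $\varphi$ meets $\co(\ext M)$ — the disjointness of $U[B,\eps](\psi)$ from $\co(\ext M)$ forces $\psi \notin \overline{\co}(\ext M)$. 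Hence $\psi$ witnesses $\overline{\co}^{w^*}(\ext M) \supsetneq \overline{\co}(\ext M)$, i.e.\ the two are not equal.

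\textbf{Direction 2: the two closures differ $\Rightarrow$ an anti-H set exists.} Suppose $\overline{\co}^{w^*}(\ext M) \neq \overline{\co}(\ext M)$. Then there is $\psi \in \overline{\co}^{w^*}(\ext M) \setminus \overline{\co}(\ext M)$; by Krein--Milman $\overline{\co}^{w^*}(\ext M) = M$, so $\psi \in M$. Since $\psi$ is not in the strong closure $\overline{\co}(\ext M)$, by definition of the strong topology there is a basic strong neighborhood of $\psi$ disjoint from $\co(\ext M)$: that is, there exist a bounded $B \subseteq E$ and $\eps > 0$ with $U[B,\eps](\psi) \cap \co(\ext M) = \emptyset$ (here I use that $\overline{\co}(\ext M)$, being a strong closure, already contains all strong-limit points of $\co(\ext M)$, so if $\psi$ escaped it, some basic strong neighborhood must avoid $\co(\ext M)$ entirely). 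This $B$, $\psi$, $\eps$ are precisely the data making $B$ anti-H for $M$.

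\textbf{Main obstacle.} The only subtlety — and it is a genuine but small one — is bookkeeping about which set's closure is "strong" versus "weak-star", and making sure that "$\psi$ is not in the strong closure of $\co(\ext M)$" is correctly unpacked into "some \emph{basic} strong neighborhood $U[B,\eps](\psi)$ misses $\co(\ext M)$" using the explicit uniform basis of Definition~\ref{d:strongTOP}; there is nothing deep here, just care that $B$ may be taken bounded (which is automatic since strong neighborhoods are indexed by bounded sets). No use of tameness, Rosenthal's dichotomy, or the DFJP machinery is needed for this lemma — it is purely a reformulation, and the proof is essentially immediate once the definitions are aligned.
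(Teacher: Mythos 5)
Your proof is correct and is exactly the intended argument: the paper records this lemma as a direct consequence of Definition \ref{d:anti-H} (together with the Krein--Milman identification $M=\overline{\co}^{w^{*}}(\ext M)$ stated just before it), and your two directions are precisely the routine unpacking of ``$\psi$ lies outside the strong closure of $\co(\ext M)$'' via the basic strong neighborhoods $U[B,\eps](\psi)$ of Definition \ref{d:strongTOP}. No gap; nothing further is needed.
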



\begin{lem} \label{l:HaydImage} 
	Let $T \colon E_1 \to E_2$ be a continuous linear map between lcs,
	${T^*\colon E_2^* \to E_1^*}$ 
	be its adjoint, 
	$B$ be a bounded subset in $E_1$ and $M \in \eqc (E_2^*)$ be convex. 
	Then 
	\begin{enumerate} 
		\item $(\varphi_1,\varphi_2) \in U[T(B),\eps] \ \text{in} \ E_2^* \Longleftrightarrow (T^*(\varphi_1), T^*(\varphi_2)) \in U[B,\eps] \ \text{in} \ E_1^*$;
		\item for every $\varphi \in E_{2}$ we have 
		$$
		  U[T(B), \eps](\varphi) = (T^{*})^{-1}(U[B, \eps](T^{*}(\varphi)));
		$$
		\item if $T(B)$ is anti-$H$ over $M$ then $B$ is anti-$H$ over $T^{*}(M)$;
		\item if the image of $T$ is dense in $E_{2}$, then the converse is also true, namely, $B$ being anti-$H$ over $T^{*}(M)$ implies that $T(B)$ is anti-$H$ over $M$.
	\end{enumerate} 
\end{lem}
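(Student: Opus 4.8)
\textbf{Proof proposal for Lemma \ref{l:HaydImage}.}

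The plan is to prove the four items in the order given, since each later item relies on the earlier ones. For item (1), I would simply unwind the definition of $U[\cdot,\eps]$ together with the defining property of the adjoint. Indeed, $(\varphi_1,\varphi_2) \in U[T(B),\eps]$ means $|\varphi_1(T(b))-\varphi_2(T(b))|<\eps$ for all $b\in B$; using $\varphi_i(T(b))=(T^*(\varphi_i))(b)$ this is exactly $|(T^*(\varphi_1))(b)-(T^*(\varphi_2))(b)|<\eps$ for all $b\in B$, i.e. $(T^*(\varphi_1),T^*(\varphi_2))\in U[B,\eps]$. Item (2) is the ``slice'' version of (1): fixing the first coordinate to be (the image of) $\varphi$ and letting the second coordinate vary, $\psi\in U[T(B),\eps](\varphi)$ iff $(T^*(\varphi),T^*(\psi))\in U[B,\eps]$ iff $T^*(\psi)\in U[B,\eps](T^*(\varphi))$ iff $\psi\in (T^*)^{-1}(U[B,\eps](T^*(\varphi)))$. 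Note also that $T^*(M)\in\eqc(E_1^*)$ and is convex (being a continuous linear image of a convex equicontinuous weak-star compact set — equicontinuity of the image follows since $T$ is continuous), so ``anti-$H$ over $T^*(M)$'' makes sense.

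For item (3), suppose $T(B)$ is anti-$H$ over $M$: there are $\psi\in M$ and $\eps>0$ with $U[T(B),\eps](\psi)\cap\co(\ext M)=\emptyset$. Set $\psi':=T^*(\psi)\in T^*(M)$. I claim $U[B,\eps](\psi')\cap\co(\ext(T^*(M)))=\emptyset$. By Lemma \ref{lemma:extreme_points_of_compact_map}(1), $\ext(T^*(M))\subseteq T^*(\ext M)$, hence $\co(\ext(T^*(M)))\subseteq T^*(\co(\ext M))$ by linearity of $T^*$. Now if some $\theta\in U[B,\eps](\psi')$ lay in $\co(\ext(T^*(M)))$, write $\theta=T^*(\eta)$ with $\eta\in\co(\ext M)$. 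By item (2) applied at $\varphi=\psi$, $T^*(\eta)\in U[B,\eps](T^*(\psi))$ says precisely $\eta\in U[T(B),\eps](\psi)$, contradicting $U[T(B),\eps](\psi)\cap\co(\ext M)=\emptyset$. So $B$ is anti-$H$ over $T^*(M)$.

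For item (4), assume the image of $T$ is dense in $E_2$ and that $B$ is anti-$H$ over $T^*(M)$: there are $\psi'\in T^*(M)$, say $\psi'=T^*(\psi_0)$ with $\psi_0\in M$, and $\eps>0$ with $U[B,\eps](\psi')\cap\co(\ext(T^*(M)))=\emptyset$. The point I need is that with density of $\mathrm{im}\,T$, $T^*$ is injective (Lemma \ref{lemma:DJFP_double_dual_to_dual}), and moreover that $T^*(\co(\ext M))=\co(\ext(T^*(M)))$ — the inclusion $\supseteq$ follows from linearity together with $\ext(T^*(M))\subseteq T^*(\ext M)$; for $\subseteq$ I would need $T^*(\ext M)\subseteq\ext(T^*(M))$, which since $T^*$ is injective and weak-star continuous and $M$ is compact convex follows from Lemma \ref{lemma:extreme_points_of_compact_map}(2) applied to $T^*$. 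Then I claim $U[T(B),\eps](\psi_0)\cap\co(\ext M)=\emptyset$: if $\eta\in\co(\ext M)$ lay in $U[T(B),\eps](\psi_0)$, then by item (2) again $T^*(\eta)\in U[B,\eps](T^*(\psi_0))=U[B,\eps](\psi')$, while $T^*(\eta)\in T^*(\co(\ext M))=\co(\ext(T^*(M)))$, contradicting anti-$H$-ness over $T^*(M)$. Hence $T(B)$ is anti-$H$ over $M$.

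The main obstacle I anticipate is the bookkeeping around extreme points: confirming that $T^*(M)$ is again convex, equicontinuous, and weak-star compact so that the notion ``anti-$H$ over $T^*(M)$'' is legitimate, and carefully invoking Lemma \ref{lemma:extreme_points_of_compact_map} in both directions — the easy direction $\ext(T^*(M))\subseteq T^*(\ext M)$ for item (3), and the injective case $T^*(\ext M)=\ext(T^*(M))$ for item (4), which is where the density hypothesis on $\mathrm{im}\,T$ (hence injectivity of $T^*$) is genuinely used. Everything else is a mechanical translation through the adjoint identity $\varphi(Tx)=(T^*\varphi)(x)$.
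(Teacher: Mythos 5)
Your proposal is correct and follows essentially the same route as the paper's proof: items (1)--(2) by unwinding the adjoint identity $\varphi(Tb)=(T^*\varphi)(b)$, item (3) via $\ext T^{*}(M)\subseteq T^{*}(\ext M)$ (Lemma \ref{lemma:extreme_points_of_compact_map}.1), and item (4) via injectivity of $T^{*}$ coming from density of $\im T$ (Lemma \ref{lemma:DJFP_double_dual_to_dual}) together with the equality case $T^{*}(\ext M)=\ext T^{*}(M)$ (Lemma \ref{lemma:extreme_points_of_compact_map}.2). The only cosmetic difference is that you phrase (3) directly and (4) by contradiction, where the paper uses a contradiction for (3) and a chain of implications/equivalences for (4); the key lemmas and the use of the density hypothesis are identical.
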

\begin{proof} \ 
	\ben
	\item By definition of the adjoint $T^* \colon E_2^* \to E_1^*$, we have  
	$$
	\lan b, T^*(\varphi) \ran=\lan T(b), \varphi \ran \ \ \forall b \in B.
	$$
	Now apply the descriptions of $U[B,\eps], U[T(B),\eps]$ according to Definition \ref{d:strongTOP}. 
	\item Using (1) we get:
	\begin{align*}
		\varphi' \in U[T(B), \eps](\varphi) & \iff
		(\varphi, \varphi') \in U[T(B), \eps] \\
		& \iff (T^{*}(\varphi), T^{*}(\varphi')) \in U[B, \eps] \\
		& \iff T^{*}(\varphi') \in U[B, \eps](T^{*}(\varphi)) \\
		& \iff \varphi' \in (T^{*})^{-1}(U[B, \eps](T^{*}(\varphi))).
	\end{align*}
	\item By definition, there exists $\varphi \in M$ such that:
	$$
	U[T(B), \eps](\varphi) \cap \co(\ext M) = \emptyset.
	$$
	Write $\psi := T^{*}(\varphi)$.
	Suppose by contradiction that $B$ is not anti-$H$ for $T^{*}(M)$.
	Thus, we can find 
	$$
	\psi' \in U[B, \eps](\psi) \cap \co (\ext T^{*}(M)).
	$$
	By Lemma \ref{lemma:extreme_points_of_compact_map}.1, ${\co (\ext T^{*}(M)) \subseteq T^{*}(\co (\ext M))}$.
	We can therefore find $\varphi' \in \co(\ext M)$ such that ${\psi' = T^{*}(\varphi')}$.
	Also, by (2):
	$$
	  \psi' = T^{*}(\varphi') \in U[B, \eps](T^{*}(\varphi)) \Rightarrow
	  \varphi' \in (T^{*})^{-1}(U[B, \eps](T^{*}(\varphi))) = U[T(B), \eps](\varphi).
	$$
	Using all the information on $\varphi'$ so far, we get
	$$
	\varphi' \in U[T(B), \eps](\varphi) \cap \co(\ext M) = \emptyset,
	$$
	a contradiction.
	\item Since $T$ has a dense image, $T^{*}$ is injective and so Lemma \ref{lemma:extreme_points_of_compact_map}.2 is
	 applicable:
	\begin{align*}
		B \text{ is anti-}H \text{ for } T^{*}(M) & \iff
		\exists\ \varphi \in M: U[B, \eps](T^{*}(\varphi)) \cap \co(\ext T^{*}(M)) = \emptyset \\
		& \underset{(2)}{\Longrightarrow} \exists\ \varphi \in M: T^{*}(U[T(B), \eps](\varphi)) \cap \co(\ext T^{*}(M)) = \emptyset \\
		& \underset{\text{\ref{lemma:extreme_points_of_compact_map}.2}}{\iff} \exists\ \varphi \in M:  T^{*}(U[T(B), \eps](\varphi)) \cap T^{*}(\co(\ext M)) = \emptyset \\
		& \underset{\star}{\iff} \exists\ \varphi \in M:  U[T(B), \eps](\varphi) \cap \co(\ext M) = \emptyset \\
		& \iff T(B) \text{ is anti-}H \text{ for } M. 
	\end{align*}
	Note that the equivalence marked by $\star$ is true in virtue of another application of $T^{*}$ being injective.
	\een
\end{proof}

\begin{prop} \label{prop:localization_of_haydon_with_DFJP}
	Let $V$ be a Banach space, $A \subseteq V$ be bounded and ${M \in \eqc(V^{*})}$ be convex.
	If $A$ is anti-H with respect to $M$, then 
	$A$ is not tame in $V$.
\end{prop}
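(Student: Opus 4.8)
The plan is to argue by contradiction, reducing to the classical Haydon theorem via the DFJP factorization. So I would begin by assuming that $A$ \emph{is} a tame subset of $V$. Then $A$ is a bounded tame subset of a Banach space, and Lemma \ref{l:DFJP} produces a Rosenthal Banach space $Y$ together with a continuous injective linear map $j\colon Y \to V$ such that $A \subseteq j(B_Y)$ (its density of $j^{*}$ will play no role here). I would then put $B := j^{-1}(A)$; since $j$ is injective and $A \subseteq j(B_Y)$, this is a bounded subset of $Y$ contained in $B_Y$, and one checks immediately that $j(B) = A$.

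Next I would transport the anti-H hypothesis across $j$. The adjoint $j^{*}\colon V^{*} \to Y^{*}$ is weak-star continuous by Fact \ref{f:adjoint_is_continuous}, so $j^{*}(M)$ is weak-star compact; it is convex as a linear image of $M$, and it is equicontinuous (if $M \subseteq O^{\circ}$ for a zero neighborhood $O$ of $V$, then $j^{*}(M) \subseteq (j^{-1}(O))^{\circ}$ and $j^{-1}(O)$ is a zero neighborhood of $Y$). Hence $j^{*}(M) \in \eqc(Y^{*})$ is convex. Since $A = j(B)$ is anti-H for $M$, Lemma \ref{l:HaydImage}.3 (applied to $T := j$) gives that $B$ is anti-H for $j^{*}(M)$; I would fix a functional $\psi \in j^{*}(M)$ and $\eps > 0$ witnessing this, i.e.\ with $U[B,\eps](\psi) \cap \co(\ext j^{*}(M)) = \emptyset$.

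Finally I would derive the contradiction inside $Y$. Because $Y$ is Rosenthal it contains no $l^{1}$-sequence, so Haydon's Theorem \ref{t:HaudonBan} applies: the weak-star compact convex set $j^{*}(M)$ is the norm closed convex hull of its extreme points, and in particular $\psi$ lies in the norm closure of $\co(\ext j^{*}(M))$. Choosing $\varphi \in \co(\ext j^{*}(M))$ with $\lVert \psi - \varphi \rVert_{Y^{*}} < \eps$ and using $B \subseteq B_Y$, we get $\sup_{b \in B}\lvert \psi(b) - \varphi(b)\rvert \le \lVert\psi-\varphi\rVert_{Y^{*}} < \eps$, so $\varphi \in U[B,\eps](\psi) \cap \co(\ext j^{*}(M))$, contradicting the anti-H property. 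Hence $A$ cannot be tame in $V$.

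The substantive content is really furnished by the tools already in place (the DFJP construction of Lemma \ref{l:DFJP}, the adjoint transfer of anti-H in Lemma \ref{l:HaydImage}, and classical Haydon), so I do not expect a serious obstacle; the only delicate point is the bookkeeping: verifying that $j^{*}(M)$ remains a convex equicontinuous weak-star compact set, that $j^{-1}(A)$ is bounded with $j(j^{-1}(A)) = A$, and that "anti-H for a weak-star compact convex subset" is genuinely impossible in a Rosenthal Banach space precisely because there the strong dual topology is the norm topology, so norm approximation by $\co(\ext j^{*}(M))$ suffices to enter $U[B,\eps](\psi)$.
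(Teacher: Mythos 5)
Your proposal is correct and follows essentially the same route as the paper's proof: assume tameness, invoke the DFJP factorization (Lemma \ref{l:DFJP}) to land in a Rosenthal Banach space, transfer the anti-H property along $j$ via Lemma \ref{l:HaydImage}.3, and contradict Haydon's Theorem \ref{t:HaudonBan}. The only cosmetic differences are that the paper works with $B_{W}$ itself (using that a bounded superset of an anti-H set is anti-H) rather than $j^{-1}(A)$, and leaves the final norm-approximation contradiction implicit, whereas you spell it out.
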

\begin{proof}
	By contradiction, assume that $A$ is tame in $V$.
	Let $W$ and $j\colon W \to V$ be the Rosenthal Banach space and the map described in the factorization Lemma \ref{l:DFJP} 
	(caution: the notation was $V$ and not $W$ in Lemma \ref{l:DFJP}). 
	Thus, $A \subseteq j(B_{W})$ and therefore $j(B_{W})$ is also anti-H over $M$. 
	By Lemma 
	 \ref{l:HaydImage}.3, 
	$B_{W}$ is anti-$H$ over $j^{*}(M)$.
	However, $j^{*}(M)$ is a weak-star compact subset of a Rosenthal Banach space - a contradiction to Haydon's Theorem \ref{t:HaudonBan}.
\end{proof}

\begin{prop} \label{prop:cotame_implies_haydon}
	Let $E$ be a lcs. 
	If $M \in \eqc(E^{*})$ is co-tame and convex, then 
	$M = \overline{\co} (\ext M)$.
\end{prop}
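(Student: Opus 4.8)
The plan is to prove the contrapositive: if $M = \overline{\co}(\ext M)$ fails, then $M$ is not co-tame, i.e., there is some bounded $A \subseteq E$ which is not tame over $M$. By the Krein--Milman theorem $M = \overline{\co}^{w^{*}}(\ext M)$ always holds (since $M \in \eqc(E^{*})$ is convex, hence weak-star compact), so the failure of $M = \overline{\co}(\ext M)$ is exactly the statement $\overline{\co}^{w^{*}}(\ext M) \neq \overline{\co}(\ext M)$. By Lemma \ref{lemma:application_of_anti_H}, this is equivalent to the existence of a bounded $A \subseteq E$ which is anti-H for $M$. So I am reduced to showing: \emph{if some bounded $A \subseteq E$ is anti-H for $M$, then $A$ is not tame over $M$}, which would contradict $M$ being co-tame.

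The main tool will be the Equicontinuous Factor Lemma \ref{lemma:equicontinuous_factor}: applied to the equicontinuous compact convex set $M$, it produces a continuous linear map $\pi \colon E \to V$ into a Banach space $V$ with dense image, together with $\Delta \colon \spn M \to V^{*}$ weak-star continuous over $M$, such that $\id_M = \pi^{*} \circ \Delta$ and $B_{V^{*}} = \overline{\acx}^{w^{*}}(\Delta(M))$. The key point is to transfer the anti-H property from $(E, M)$ to $(V, \Delta(M))$. First I would note $\Delta(M)$ is weak-star compact (continuous image of a compact set) and convex (image of a convex set under a linear map), hence equicontinuous by Banach--Steinhaus, so $\Delta(M) \in \eqc(V^{*})$ and is convex. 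Since $A$ is anti-H for $M = \pi^{*}(\Delta(M))$ and $\pi$ has dense image, Lemma \ref{l:HaydImage}.4 gives that $\pi(A)$ is anti-H for $\Delta(M)$. Now $\pi(A)$ is a bounded subset of the Banach space $V$, and $\Delta(M) \in \eqc(V^{*})$ is convex, so Proposition \ref{prop:localization_of_haydon_with_DFJP} applies: $\pi(A)$ is not tame in $V$, in particular $\pi(A)$ is not tame over $\Delta(M)$. Finally, I would invoke Lemma \ref{lemma:b_small_and_adjoint_maps} (with $T = \pi$, the bornological class $\BTame$, and $M' = \Delta(M) \in \eqc(V^{*})$, using $\pi^{*}(\Delta(M)) = M$): since $r_{\Delta(M)}(\pi(A)) = r_{M}(A) \circ \pi^{*}$ is not tame, $r_{M}(A)$ is not tame either, i.e., $A$ is not tame over $M$.

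There is a small technical point to check carefully in the last step: Lemma \ref{lemma:b_small_and_adjoint_maps} is stated for $M \in \eqc(F^{*})$ and yields an ``if and only if'' between $r_{T^{*}(M)}(B) \in \mathfrak{B}_{T^{*}(M)}$ and $r_{M}(T(B)) \in \mathfrak{B}_{M}$; here I want to run it in the direction ``$\pi(A)$ not tame over $\Delta(M)$ $\Rightarrow$ $A$ not tame over $\pi^{*}(\Delta(M)) = M$'', which is exactly the contrapositive of one implication of that lemma. This is fine, but I should make sure that $M$ coincides with $\pi^{*}(\Delta(M))$ on the nose (it does, by $\id_M = \pi^{*} \circ \Delta$) rather than just up to some identification, and that $\Delta(M)$ is genuinely in $\eqc(V^{*})$ so the lemma's hypotheses are met (handled above via Banach--Steinhaus). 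I expect this bookkeeping to be the only real obstacle; the conceptual content is entirely carried by Proposition \ref{prop:localization_of_haydon_with_DFJP} (hence ultimately by the DFJP factorization and Haydon's Banach-space theorem) and by Lemma \ref{l:HaydImage}, both already established.

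To summarize the write-up structure: (1) reduce, via Krein--Milman and Lemma \ref{lemma:application_of_anti_H}, the conclusion $M = \overline{\co}(\ext M)$ to the non-existence of a bounded anti-H set for $M$; (2) assume for contradiction such a bounded $A$ exists, apply Lemma \ref{lemma:equicontinuous_factor} to get $\pi \colon E \to V$ and $\Delta$; (3) verify $\Delta(M) \in \eqc(V^{*})$ is convex and use Lemma \ref{l:HaydImage}.4 to get that $\pi(A)$ is anti-H for $\Delta(M)$; (4) apply Proposition \ref{prop:localization_of_haydon_with_DFJP} to conclude $\pi(A)$ is not tame over $\Delta(M)$; (5) apply Lemma \ref{lemma:b_small_and_adjoint_maps} to deduce $A$ is not tame over $M$, contradicting co-tameness of $M$. \qed
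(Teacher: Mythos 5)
Your overall architecture is the same as the paper's (reduce to the existence of a bounded anti-H set via Lemma \ref{lemma:application_of_anti_H}, factor through a Banach space with Lemma \ref{lemma:equicontinuous_factor}, transfer anti-H by Lemma \ref{l:HaydImage}.4, invoke Proposition \ref{prop:localization_of_haydon_with_DFJP}, and transfer tameness with Lemma \ref{lemma:b_small_and_adjoint_maps}), but there is one step that does not hold as you justify it. In step (4) you say: ``$\pi(A)$ is not tame in $V$, \emph{in particular} $\pi(A)$ is not tame over $\Delta(M)$.'' The ``in particular'' runs the restriction argument backwards. Not being tame in $V$ means not tame over the whole ball $B_{V^{*}}$; an independent subsequence of $\pi(A)$ witnessed on $B_{V^{*}}$ need not have its witnesses in the smaller set $\Delta(M)$. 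Tameness passes from a larger test set to a subset, so the trivial implication is ``not tame over $\Delta(M)$ $\Rightarrow$ not tame over $B_{V^{*}}$'', which is the opposite of what you need. The implication you assert is in fact true here, but only because $B_{V^{*}} = \overline{\acx}^{w^{*}}(\Delta(M))$ and because of the nontrivial fact that tameness over a set passes to its weak-star closed absolutely convex hull (Lemma \ref{lemma:co_tame_is_locally_convex_bornology}, resting on polar compatibility / Lemma \ref{p:from X to B*}); it is precisely this ingredient that is missing from your write-up, and it would be false for a general subset of $B_{V^{*}}$.

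The paper avoids the issue by running the tameness transfer in the positive direction, which is also the cleanest repair of your argument: from co-tameness, $A$ is tame over $M = \pi^{*}(\Delta(M))$; by Lemma \ref{lemma:b_small_and_adjoint_maps}, $\pi(A)$ is tame over $\Delta(M)$; by Lemma \ref{lemma:co_tame_is_locally_convex_bornology} and $B_{V^{*}} = \overline{\acx}^{w^{*}}(\Delta(M))$, $\pi(A)$ is tame over $B_{V^{*}}$, i.e.\ tame in $V$; this contradicts Proposition \ref{prop:localization_of_haydon_with_DFJP} directly, with no need to pull not-tameness back down to $\Delta(M)$. With that adjustment (or with an explicit citation of Lemma \ref{lemma:co_tame_is_locally_convex_bornology} in place of your ``in particular''), your proof is correct and coincides with the paper's.
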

\begin{proof}
	Clearly, $M \supseteq \overline{\co} (\ext M)$.
	To show the converse, assume by contradiction that $M \supsetneq \overline{\co}(\ext M)$.
	By Lemma \ref{lemma:application_of_anti_H}, we can find a bounded $B \subseteq E$ which is anti-$H$ for $M$.
	Let $V$, $\pi\colon E \to V$ and $\Delta\colon \spn M \to V^{*}$ be the maps described in Lemma \ref{lemma:equicontinuous_factor}.
	Write $N := \Delta(M) \in \eqc(V^{*})$ and note that $M = \pi^{*}(N)$ is weak-star compact in $V^{*}$ and therefore equicontinuous by the Banach-Steinhaus theorem.
	Thus, we can say that $B$ is anti-$H$ over $\pi^{*}(N)$.
	The map $\pi$ is dense,  
	so by Lemma \ref{l:HaydImage}.4 we conclude that $\pi(B)$ is anti-$H$ over $N$.
	
	However, $M$ is co-tame so $B$ is tame over $M = \pi^{*}(N)$ by definition.
	By Lemma \ref{lemma:b_small_and_adjoint_maps}, $\pi(B)$ is tame over $N$.
	Moreover, by Lemma \ref{lemma:co_tame_is_locally_convex_bornology}, $\pi(B)$ is tame over 
	$$ 
	  \overline{\acx}^{w^{*}} N = \overline{\acx}^{w^{*}} \Delta(M) = B_{V^{*}}.
	$$
	Thus, $\pi(B)$ is tame in $V$.
	Proposition \ref{prop:localization_of_haydon_with_DFJP} gives the contradiction.
\end{proof}

\sk 
\subsection*{The Haydon Property and other results}
\begin{defin} \label{d:HaydProp} 
	Let $E$ be a locally convex space and let 
	$M \in \eqc(E^{*})$. 
	We say that $M$ has the \emph{Haydon property} if 
	$$
	\overline{\co}^{w^{*}}(N) = \overline{\co}(N)
	$$ 
	for every weak-star closed $N \subseteq M$.
	If every 
	$M \in \eqc(E^{*})$
	has the Haydon property, then we say that so does $E$.
\end{defin}

\begin{remark}
	As in Lemma \ref{l:HaydImage}, one may check for the operator $T \colon E_1 \to E_2$, that is, if $M$ has the Haydon property in $E_2^*$ then $T^*(M)$ has the Haydon property  in $E_1^*$. Moreover, we can define ``anti-Haydon" subsets violating the condition from Definition \ref{d:HaydProp} (similar to ``anti-H" from Definition \ref{d:anti-H} which violates the equality $\overline{\co}^{w^{*}}(\ext M)= 
	\overline{\co}(\ext M)$). In these terms, $B$ is anti-Haydon for $T^*(M)$ if and only if $T(B)$ is anti-Haydon  for $M$. Every anti-Haydon subset is anti-H but the converse is not always true. As a counterexample, one may consider $E=C[0,1]$ and $M:=B_{E^*}$. 
\end{remark}

\sk 
\begin{lem} \label{lemma:dense_Haydon_subspace}
	Let $F \leq E$ be a dense large subspace of $E$.
	If $F$ has the Haydon property, then so does $E$.
\end{lem}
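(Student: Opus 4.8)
The plan is to pull the Haydon property back from $F^{*}$ to $E^{*}$ along the adjoint $i^{*}$ of the inclusion map $i\colon F\hookrightarrow E$, using two properties of $i^{*}$: it is weak-star continuous and injective (the latter since $F$ is dense, Lemma \ref{lemma:DJFP_double_dual_to_dual}), and it is a topological isomorphism for the strong topologies (since $F$ is large, Lemma \ref{lemma:large_subspace_strong_isomorphism}).

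First I would fix an arbitrary $M\in\eqc(E^{*})$ and a weak-star closed $N\subseteq M$, and reduce everything to the set $i^{*}(M)$. One checks that $i^{*}(M)\in\eqc(F^{*})$: if $M\subseteq U^{\circ}$ for a zero-neighbourhood $U$ of $E$, then $i^{*}(M)\subseteq(U\cap F)^{\circ}$, and $i^{*}(M)$ is weak-star compact as a weak-star continuous image (Fact \ref{f:adjoint_is_continuous}) of the weak-star compact set $M$. Likewise $i^{*}(N)$ is a weak-star compact, hence weak-star closed, subset of $i^{*}(M)$. Since $F$ has the Haydon property, $i^{*}(M)$ has it, so
$$
\overline{\co}^{w^{*}}(i^{*}(N))=\overline{\co}(i^{*}(N))\qquad\text{in }F^{*}.
$$

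The heart of the argument is then to transport the two closures under $i^{*}$. By linearity, $i^{*}(\co(N))=\co(i^{*}(N))$. For the weak-star closure: $\overline{\co}^{w^{*}}_{E^{*}}(N)$ is weak-star compact (it is a weak-star closed subset of $\overline{\acx}^{w^{*}}(M)\in\eqc(E^{*})$ by Lemma \ref{lemma:weak_star_convex_hull_of_equicontinuous}), so $i^{*}(\overline{\co}^{w^{*}}_{E^{*}}(N))$ is weak-star compact, convex, and contains $i^{*}(N)$, whence $i^{*}(\overline{\co}^{w^{*}}_{E^{*}}(N))\supseteq\overline{\co}^{w^{*}}_{F^{*}}(i^{*}(N))$; the reverse inclusion follows from weak-star continuity of $i^{*}$, giving $i^{*}(\overline{\co}^{w^{*}}_{E^{*}}(N))=\overline{\co}^{w^{*}}_{F^{*}}(i^{*}(N))$. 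For the strong closure: since $i^{*}\colon E^{*}\to F^{*}$ is a homeomorphism for the strong topologies, $i^{*}(\overline{\co}_{E^{*}}(N))=\overline{\co}_{F^{*}}(i^{*}(N))$. Substituting these two identities into the displayed equality yields $i^{*}(\overline{\co}^{w^{*}}_{E^{*}}(N))=i^{*}(\overline{\co}_{E^{*}}(N))$, and injectivity of $i^{*}$ forces $\overline{\co}^{w^{*}}_{E^{*}}(N)=\overline{\co}_{E^{*}}(N)$. As $M$ and $N$ were arbitrary, $E$ has the Haydon property.

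The routine parts (equicontinuity of $i^{*}(M)$, that weak-star continuous images of weak-star compact sets are weak-star closed, the set identity $i^{*}(\co(N))=\co(i^{*}(N))$) are immediate. The step requiring care — and the reason the hypothesis ``large'' cannot be dropped — is the matching of the \emph{strong} closures: it rests entirely on Lemma \ref{lemma:large_subspace_strong_isomorphism}, which fails for merely dense subspaces, whereas the weak-star part only uses density and compactness.
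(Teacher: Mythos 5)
Your proof is correct and follows essentially the same route as the paper's: both pull the property back along the adjoint $i^{*}$ of the inclusion, using the strong isomorphism from largeness (Lemma \ref{lemma:large_subspace_strong_isomorphism}) to match strong closures, weak-star continuity plus compactness to match weak-star closures, and injectivity of $i^{*}$ (density) to cancel it. Your version merely carries out the same chain of identities for an arbitrary weak-star closed $N\subseteq M$ rather than for $M$ itself, which is a cosmetic refinement, not a different argument.
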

\begin{proof}
	
	Suppose that $F$ has the Haydon property, and let $M \in \eqc(E^{*})$. 
	Consider the inclusion map $i \colon F \hookrightarrow E$ and its adjoint $i^{*} \colon E^{*} \to F^{*}$, the restriction map.
	By Lemma \ref{lemma:large_subspace_strong_isomorphism}, $i^{*}$ is a strong isomorphism. 
	Moreover, $i^{*}$ is weak-star continuous (Fact \ref{f:adjoint_is_continuous}).
	Since $M$ is weak-star compact, $i^{*}$ is a closed map on $M$.
	Therefore:
	$$
	i^{*}(\overline{co}^{w^{*}}(M)) = 
	\overline{co}^{w^{*}}(i^{*}(M)) = 
	\overline{co}(i^{*}(M)) = 
	i^{*}(\overline{co}(M)).
	$$
	Since $i^{*}$ is a bijection (and therefore injective), we conclude that $\overline{co}^{w^{*}}(M) = \overline{co}(M)$.
	Note that we used the fact that $F$ has the Haydon property.
\end{proof}

\begin{lem} \label{lemma:haydon_implies_coRl}
	Let $E$ be a locally convex space and let $M \subseteq E^{*}$ be a disked, equicontinuous, weak-star compact subset.
	If $M$ satisfies Haydon's property, then it also satisfies $\coRl$.
\end{lem}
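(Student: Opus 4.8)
The plan is to prove the contrapositive: assuming $M$ fails $\coRl$, I will show $M$ cannot have the Haydon property. The first step is to apply Lemma~\ref{lemma:coRl_l1_embedding} — legitimate since $M$ is disked, equicontinuous and weak-star compact — to obtain a topological embedding $T\colon V\hookrightarrow E$ with $V$ a dense normed subspace of $l^{1}$, and a constant $\delta>0$, such that $\delta B_{V^{*}}\subseteq T^{*}(M)$. Here $T^{*}(M)$ is weak-star compact (Fact~\ref{f:adjoint_is_continuous}) and equicontinuous (if $M\subseteq U^{\circ}$ for a zero-neighbourhood $U\subseteq E$, then $T^{*}(M)\subseteq (T^{-1}(U))^{\circ}$), hence $T^{*}(M)\in\eqc(V^{*})$.

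Next, assuming $M$ has the Haydon property, I would invoke the observation recorded in the remark after Definition~\ref{d:HaydProp} (the Haydon property passes to adjoint images, exactly as in Lemma~\ref{l:HaydImage}) to conclude that $T^{*}(M)$ has the Haydon property in $V^{*}$. Since $\delta B_{V^{*}}$ is weak-star compact by Banach--Alaoglu, hence weak-star closed in $V^{*}$, and is contained in $T^{*}(M)$, every weak-star closed $N\subseteq\delta B_{V^{*}}$ is a weak-star closed subset of $T^{*}(M)$; applying the Haydon property yields $\overline{\co}^{w^{*}}(N)=\overline{\co}(N)$, where both closures are taken in $V^{*}$ and the second refers to the strong topology of $V^{*}$, i.e. the dual-norm topology. (If one prefers not to cite the remark, the same conclusion follows directly by pushing forward a weak-star closed set $N'=M\cap (T^{*})^{-1}(N)\subseteq M$ along $T^{*}$, using that $T^{*}$ is weak-star and strongly continuous and commutes with the relevant convex-hull closures.)

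The remaining step — and the one I expect to be the main technical obstacle — is to transfer this statement from $V^{*}$ to $(l^{1})^{*}$ and then apply the Banach-space version of Haydon's theorem. Since $V$ is norm-dense in $l^{1}$, restriction of functionals gives an isometric isomorphism $V^{*}\cong (l^{1})^{*}=l^{\infty}$, and on the norm-bounded set $\delta B_{V^{*}}=\delta B_{l^{\infty}}$ the weak-star topology $\sigma(V^{*},V)$ coincides with $\sigma((l^{1})^{*},l^{1})$ — here one uses that on norm-bounded subsets of $l^{\infty}$ the $(l^{1})^{*}$-weak-star topology is the topology of coordinatewise convergence, together with the density of $V$ in $l^{1}$. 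One must check carefully that the convex hulls in question stay inside $\delta B_{l^{\infty}}$, where the two weak-star topologies literally agree, so that the two kinds of closures genuinely coincide; this is routine but is where the care is needed. Granting it, one gets $\overline{\co}^{w^{*}}(N)=\overline{\co}(N)$ in $(l^{1})^{*}$ for every weak-star compact $N\subseteq\delta B_{l^{\infty}}$. A dilation argument — every weak-star compact $T\subseteq l^{\infty}$ is norm-bounded, and both convex-hull closures commute with scalar multiplication — upgrades this to all weak-star compact subsets of $(l^{1})^{*}$, i.e. condition~(3) of Haydon's Theorem~\ref{t:HaudonBan} holds for the Banach space $l^{1}$. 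That theorem then forces $l^{1}$ to contain no $l^{1}$-sequence, contradicting the fact that its unit vector basis is an $l^{1}$-sequence (Definition~\ref{defin:equivalent_to_usual_l1}). Hence $M$ does not have the Haydon property, and the contrapositive is complete.
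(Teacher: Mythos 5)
Your argument is correct, and its skeleton is the same as the paper's — both proofs hinge on Lemma~\ref{lemma:coRl_l1_embedding} and, ultimately, on Haydon's Banach-space theorem applied to $l^{1}$ — but you run the transfer in the opposite direction. The paper starts from the failure of the Haydon property in $l^{1}$, passes it down to the dense subspace $V$ via Lemma~\ref{lemma:dense_Haydon_subspace}, obtains a weak-star closed $N'\subseteq \delta B_{V^{*}}$ with $\overline{\co}^{w^{*}}(N')\supsetneq\overline{\co}(N')$, and pulls it back through $T^{*}$ to the set $N=M\cap(T^{*})^{-1}(N')\subseteq M$, showing the strict inclusion survives; there, as in your parenthetical push-forward verification, the diskedness of $M$ is what keeps $\overline{\co}^{w^{*}}N$ inside the weak-star compact $M$, so that $T^{*}$ acts as a closed map on it. You instead push the Haydon property of $M$ forward to $T^{*}(M)\supseteq\delta B_{V^{*}}$ (the unproved remark after Definition~\ref{d:HaydProp}, for which your direct argument is an adequate substitute, and you correctly checked $T^{*}(M)\in\eqc(V^{*})$ so that Definition~\ref{d:HaydProp} applies), then lift the property from $V^{*}$ to $(l^{1})^{*}=l^{\infty}$ via the restriction isometry and the agreement of the two weak-star topologies on norm-bounded sets, and finally contradict the implication $(3)\Rightarrow(1)$ of Theorem~\ref{t:HaudonBan}, since the unit vector basis is an $l^{1}$-sequence. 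Your hand-rolled identification $V^{*}\cong(l^{1})^{*}$ is precisely what Lemmas~\ref{lemma:large_subspace_strong_isomorphism} and \ref{lemma:dense_Haydon_subspace} package (a dense subspace of a normed space is automatically large), so the ingredients coincide; what your version buys is that it avoids constructing an explicit anti-Haydon set inside $M$ and makes the appeal to condition (3) of Haydon's theorem explicit, at the cost of the bounded-set topology-matching step — which, incidentally, can be shortened: since $\sigma(V^{*},V)$ is coarser than $\sigma(l^{\infty},l^{1})$, the $\sigma(l^{\infty},l^{1})$-closed convex hull of $N$ is sandwiched between the norm-closed and the $\sigma(V^{*},V)$-closed convex hulls, and the outer two coincide by what you established, so the delicate ``hulls stay in the ball'' check is not really needed.
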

\begin{proof}
	By contradiction, suppose that $M$ does not satisfy $\coRl$.
	By Lemma \ref{lemma:coRl_l1_embedding},  
	there exist an embedding $T\colon V \to E$ where $V$ is a dense normed subspace of $l^{1}$ and $\delta > 0$ such that 
	$
	\delta B_{V^{*}} \subseteq T^{*}(M).
	$
	Recall that $l^{1}$ does not satisfy the Haydon property.
	By virtue of Lemma \ref{lemma:dense_Haydon_subspace}, neither does $V$.
	By definition, there exists 
	$N' \in \eqc(B_{V^{*}})$ such that:
	$$
	\overline{\co}^{w^{*}} (N') \supsetneq \overline{\co} (N').
	$$
	Without loss of generality, we may assume that $N' \subseteq \delta B_{V^{*}}$.
	
	We know that $T^{*}(M) \supseteq \delta B_{V^{*}}$ and therefore
	$$
	T^{*}(M) \supseteq \delta B_{V^{*}} \supseteq N'.
	$$
	Define $N := M \cap \left((T^{*})^{-1}(N')\right) \subseteq M$.
	Then $T^{*}(N)=T^{*}(M) \cap N' = N'$. 
	The adjoint $T^{*}$ is strongly continuous 
	and therefore:
	$$
	T^{*}(\overline{\co} N) \subseteq 
	\overline{\co}(T^{*}(N)).
	$$
	Since $T^{*}$ is also weak-star continuous (Fact \ref{f:adjoint_is_continuous}), $N$ is a closed subspace of $M$, and therefore equicontinuous and weak-star compact.
	Since $T^{*}$ is a closed map over 
	weak-star compact $M$, 
	we get 
	$$
	T^{*}(\overline{\co} N) \subseteq 
	\overline{\co}(T^{*}(N)) =
	\overline{\co}(N') \subsetneq
	\overline{\co}^{w^{*}}(N') = 
	\overline{\co}^{w^{*}}(T^{*}(N)) = 
	T^{*}(\overline{\co}^{w^{*}}N).
	$$
	In particular, $T^{*}(\overline{\co} N) \subsetneq T^{*}(\overline{\co}^{w^{*}}N)$, and therefore $\overline{\co} N \subsetneq \overline{\co}^{w^{*}}N$.
	By definition, $M$ does not satisfy Haydon's property. This contradiction completes the proof. 
\end{proof}

The following is a locally convex analogue of the equivalence (1) $\Leftrightarrow$ (2) in Lemma \ref{l:RosBanSpCharact}.

\begin{prop} \label{prop:cotame_then_double_dual_is_fragmented}
	Let $E$ be a lcs. 
	\begin{itemize}
		\item 
	$E$ is tame if and only if every $x^{**} \in E^{**}$ is weak-star fragmented over every $M \in \eqc(E^{*})$.

	\item a weak-star compact, equicontinuous $M \in \eqc(E^{*})$ is co-tame if and only if every $x^{**} \in E^{**}$ is weak-star fragmented on $M$; 
	\item a bounded $B \subseteq E$ is tame on $M$ if and only if every $x^{**} \in \overline{B}^{w^{*}}$ is weak-star fragmented on~$M$ (the closure here is taken with respect to the weak star topology of $E \subseteq E^{**}$).
\end{itemize} 
\end{prop}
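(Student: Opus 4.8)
The plan is to reduce everything to the Banach space case via the Equicontinuous Factor Lemma (Lemma~\ref{lemma:equicontinuous_factor}), exactly as in the proof of Theorem~\ref{thm:tame_iff_R} and Proposition~\ref{prop:cotame_implies_haydon}. So I would first establish a ``local'' version in Banach spaces, using Lemma~\ref{l:RosBanSpCharact} and the characterization of fragmented families in terms of the simple map $\pi_F$ (Lemma~\ref{lemma:fragmented_family_as_simple_fragmentation}): namely, that for a Banach space $V$, a bounded $B \subseteq V$ and weak-star compact $M \subseteq V^{*}$, the family $B$ is tame on $M$ if and only if $\cls_{p}(B) \subseteq V^{**}$ consists of maps fragmented on $M$. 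The ``tame $\Rightarrow$ fragmented closure'' direction here is the content of Lemma~\ref{f:sub-fr} ((2)$\Rightarrow$(4)), applied after embedding $M$ into $B_{V^{*}}$ (so that the relevant functions are continuous on a compact space $M$, and one restricts Goldstine's description $\cls_p(B_V) = B_{V^{**}}$ of the bidual ball). The converse direction is essentially the implication (4)$\Rightarrow$(2) of Lemma~\ref{f:sub-fr} applied on $M$: if $\cls_p(B)$ consists of fragmented maps on $M$ then $B$ is tame on $M$.

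Next I would transfer this to a general lcs $E$. For the third bullet (the most ``localized'' one): fix $M \in \eqc(E^{*})$ and a bounded $B \subseteq E$. Apply Lemma~\ref{lemma:equicontinuous_factor} to get a Banach space $V$, a dense-range continuous open-onto-image map $\pi\colon E \to V$ and $\Delta\colon \spn M \to V^{*}$ weak-star continuous over $M$ with $\id_M = \pi^{*}\circ\Delta$ and $B_{V^{*}} = \overline{\acx}^{*}(\Delta(M))$. By Lemma~\ref{lemma:b_small_and_adjoint_maps}, $B$ is tame over $M = \pi^{*}(\Delta(M))$ iff $\pi(B)$ is tame over $\Delta(M)$, which by the Banach-space case holds iff $\cls_p(\pi(B)) \subseteq V^{**}$ is fragmented on $\Delta(M)$. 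On the other hand, I need to relate weak-star fragmentability of elements of $\overline{B}^{w^{*}} \subseteq E^{**}$ on $M$ to fragmentability of elements of $\cls_p(\pi(B))$ on $\Delta(M)$. The key point is that $\pi^{**}\colon E^{**} \to V^{**}$ carries $\overline{B}^{w^{*}}$ onto (a dense subset of, or exactly onto) $\cls_p(\pi(B))$, and for $\xi \in E^{**}$, $x \in E^*$, one has $\langle \pi^{**}(\xi), \Delta(x)\rangle = \langle \xi, \pi^{*}(\Delta(x))\rangle = \langle \xi, x\rangle$ for $x \in \spn M$; since $\Delta$ restricted to $M$ is a weak-star homeomorphism onto $\Delta(M)$ (it is injective weak-star continuous from a compact space — or one composes with the surjection $j$ of Lemma~\ref{lemma:functional_dual_surjection}), fragmentability of $\pi^{**}(\xi)$ on $\Delta(M)$ is equivalent to fragmentability of $\xi$ on $M$ (using Lemma~\ref{l:surjection_of_fragmented_family} / Lemma~\ref{lem:surjective_image_of_independent_sequence} for the transfer through a continuous surjection of compacta). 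Putting these equivalences together gives the third bullet.

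The first two bullets then follow formally. The second bullet is the third bullet quantified over all bounded $B \subseteq E$: $M$ is co-tame iff every bounded $B$ is tame on $M$ iff (for every bounded $B$) every $\xi \in \overline{B}^{w^{*}}$ is weak-star fragmented on $M$; since every $x^{**}\in E^{**}$ is a weak-star limit of a bounded net in $E$ (Goldstine), hence lies in $\overline{B}^{w^{*}}$ for a suitable bounded $B$, and conversely each $\overline{B}^{w^{*}}$ consists of such $x^{**}$, the conditions match up. The first bullet is the second bullet quantified over all $M \in \eqc(E^{*})$, combined with Definition~\ref{def:tame_lcs} (tameness of $E$ = every bounded subset is tame over every $M \in \eqc(E^{*})$, which unwinds to: every $M$ is co-tame).

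I expect the main obstacle to be bookkeeping around the bidual maps: making precise the claim that $\pi^{**}(\overline{B}^{w^{*}})$ is exactly (or cofinally) $\cls_p(\pi(B))$, and that the pairing identity $\langle \pi^{**}(\xi), \Delta(x)\rangle = \langle \xi, x\rangle$ lets one pass fragmentability back and forth through $\Delta|_M$. The subtlety is that $\Delta$ is only weak-star continuous over $M$ (not over $\spn M$), so one must work with the compact set $M$ itself and invoke the surjection $B_{C(M)^{*}} \to \overline{\acx}^{w^{*}} M$ from Lemma~\ref{lemma:functional_dual_surjection}, together with Lemma~\ref{l:surjection_of_fragmented_family}, rather than trying to make $\Delta$ a homeomorphism on a linear span. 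Everything else — Goldstine-type density, the reduction via Lemma~\ref{lemma:b_small_and_adjoint_maps}, and the quantifier juggling for the first two bullets — is routine given the machinery already developed in the paper.
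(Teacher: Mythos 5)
Your argument is correct, but it takes a genuinely different and considerably longer route than the paper. The paper's proof is essentially immediate: since each $M \in \eqc(E^{*})$ is itself a compact space and $r_{M}(B) \subseteq C(M)$ is a bounded family of continuous functions, Lemma \ref{f:sub-fr} ((2)$\Leftrightarrow$(4)) applies to $(M, r_M(B))$ directly — no Banach space is needed — and the second bullet follows from the third because every $x^{**} \in E^{**}$ lies in the weak-star closure of a bounded subset of $E$ (the Schaefer citation, your ``Goldstine-type'' fact); the first bullet is then just quantification over $M$. Your detour through the Equicontinuous Factor Lemma \ref{lemma:equicontinuous_factor}, Lemma \ref{lemma:b_small_and_adjoint_maps} and the bidual map $\pi^{**}$ is sound: $\Delta|_{M}$ is indeed a homeomorphism onto $\Delta(M)$ (injectivity follows from $\pi^{*}\circ\Delta = \id_{M}$), the pairing identity $\langle \pi^{**}(\xi),\Delta(\varphi)\rangle = \langle \xi,\varphi\rangle$ holds, and $\pi^{**}(\overline{B}^{w^{*}}) = \cls_p(\pi(B))$ because $\overline{B}^{w^{*}} \subseteq B^{\circ\circ}$ is $\sigma(E^{**},E^{*})$-compact ($B^{\circ}$ is a strong neighborhood of zero in $E^{*}$, so $B^{\circ\circ}$ is equicontinuous in $(E^{*}_{\beta})^{*}$ and Alaoglu--Bourbaki applies). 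But this machinery buys nothing here, since your ``Banach local version'' is itself nothing but Lemma \ref{f:sub-fr} applied to the compact set $M$, which works verbatim in the lcs $E$. What your write-up does do better than the paper is to make explicit the one genuine technical point both proofs need: identifying the restrictions to $M$ of elements of $\overline{B}^{w^{*}} \subseteq E^{**}$ with the pointwise closure of $B|_{M}$ in $\R^{M}$, which is exactly the weak-star compactness argument above; the paper passes over this silently in the phrase ``consequence of Lemma \ref{f:sub-fr}''.
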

\begin{proof}
	First note that the second assertion follows from the third since every $x^{**} \in E^{**}$ is contained in the weak-star closure of some bounded $B \subseteq E$ (\cite[Thm.~5.4 p.~143]{Schaefer}).
	
	The rest of the proposition is a consequence of Lemma \ref{f:sub-fr}.
\end{proof}

\sk 
\begin{thm} \label{thm:Haydon} 
	\textbf{\emph{(Generalized Haydon Theorem)}} 
	
	\nt 
	For a locally convex space $E$, the following are equivalent:
	\ben
	\item [(i)] $E$ satisfies $\Rl$.
	\item [(ii)]  $E$ is tame.
	\item [(iii)]  Every weak-star compact, equicontinuous convex subset of $E^{*}$ is the strong closed convex hull of its extreme points.
	\item [(iv)] For every weak-star compact, equicontinuous subset $T$ of $E^{*}$, we have:
	$$
	\overline{\co}^{w^{*}}(T) = \overline{\co}(T).
	$$
	\een
	
	\sk 
	Specifically, if $M \in \eqc(E^{*})$, 
	 then the following are equivalent:
	\ben
	\item $M$ satisfies $\coRl$.
	\item $M$ is co-tame.
	\item For every weak-star closed, convex $N \subseteq \overline{\acx}^{w^{*}}(M)$ we have:
	$$
	N = \overline{\co}(\ext N).
	$$
	\item $\overline{\acx}^{w^{*}}(M)$ has the Haydon property.
		  Explicitly, for every weak-star closed $N \subseteq \overline{\acx}^{w^{*}}(M)$, we have:
		  $$
		    \overline{\co}^{w^{*}}(N) = \overline{\co}(N).
		  $$
	\item Every $x^{**} \in E^{**}$ is a fragmented map over $M$.
	\een
\end{thm}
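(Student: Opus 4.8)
The plan is to establish the localized list $(1)$--$(5)$ for a fixed $M \in \eqc(E^{*})$ first, and then to read off the global statements $(i)$--$(iv)$ from it. At the global level, $(i) \Leftrightarrow (ii)$ is precisely Theorem~\ref{thm:tame_iff_R} applied to the whole space. Unwinding Definition~\ref{def:tame_lcs} together with the definition of a co-tame subset, $(ii)$ is equivalent to the assertion that every $K \in \eqc(E^{*})$ is co-tame, i.e. that local condition $(2)$ holds for every $K$. Granted the local equivalences $(2) \Leftrightarrow (3) \Leftrightarrow (4)$, condition $(iii)$ is obtained by applying local $(3)$ to a convex $K$ with $N = K$ (and conversely, by feeding a convex weak-star compact equicontinuous set back into local $(3)$), while $(iv)$ follows from local $(4)$ because every weak-star compact equicontinuous $T$ sits inside $\overline{\acx}^{w^{*}}(T)$ as a weak-star closed subset (and conversely, every weak-star closed $N \subseteq \overline{\acx}^{w^{*}}(K)$ is itself weak-star compact and equicontinuous by Lemma~\ref{lemma:weak_star_convex_hull_of_equicontinuous}).

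So everything reduces to the local list. First I would record $(2) \Leftrightarrow (5)$, which is exactly the second bullet of Proposition~\ref{prop:cotame_then_double_dual_is_fragmented}, and then run the cycle $(1) \Rightarrow (2) \Rightarrow (3) \Rightarrow (4) \Rightarrow (1)$. For $(1) \Leftrightarrow (2)$: the restriction operator $r_{M}\colon E \to C(M)$ pulls the supremum norm of $C(M)$ back to the seminorm $\rho_{M}$, so a bounded $A \subseteq E$ contains no $l^{1}$-sequence with respect to $\rho_{M}$ if and only if $r_{M}(A)$ contains no $l^{1}$-sequence in $C(M)$, which by Lemma~\ref{f:sub-fr} is the same as $r_{M}(A)$ being a tame family on $M$; ranging over all bounded $A$ gives exactly ``$\coRl$ $\Leftrightarrow$ co-tameness''. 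For $(2) \Rightarrow (3)$: by Lemma~\ref{lemma:co_tame_is_locally_convex_bornology} co-tameness passes to $\overline{\acx}^{w^{*}}(M)$, and since the co-tame subsets of $E^{*}$ form a bornology they are inherited by subsets; hence every weak-star closed convex $N \subseteq \overline{\acx}^{w^{*}}(M)$ is co-tame, convex, weak-star compact and equicontinuous, so Proposition~\ref{prop:cotame_implies_haydon} yields $N = \overline{\co}(\ext N)$.

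For $(3) \Rightarrow (4)$: given a weak-star closed $N \subseteq \overline{\acx}^{w^{*}}(M)$, put $N_{1} := \overline{\co}^{w^{*}}(N)$, a weak-star compact convex subset of $\overline{\acx}^{w^{*}}(M)$; applying $(3)$ gives $N_{1} = \overline{\co}(\ext N_{1})$, and Milman's theorem gives $\ext N_{1} \subseteq \overline{N}^{w^{*}} = N$, so that $\overline{\co}^{w^{*}}(N) = N_{1} = \overline{\co}(\ext N_{1}) \subseteq \overline{\co}(N) \subseteq \overline{\co}^{w^{*}}(N)$, forcing $\overline{\co}(N) = \overline{\co}^{w^{*}}(N)$; as $N$ was arbitrary this is the Haydon property of $\overline{\acx}^{w^{*}}(M)$. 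For $(4) \Rightarrow (1)$: the set $\overline{\acx}^{w^{*}}(M)$ is disked, equicontinuous and weak-star compact (Lemma~\ref{lemma:weak_star_convex_hull_of_equicontinuous}) and, by $(4)$, has the Haydon property, so Lemma~\ref{lemma:haydon_implies_coRl} shows it satisfies $\coRl$; since the polar $M^{\circ}$ is unchanged on passing to $\overline{\acx}^{w^{*}}(M)$, the seminorms $\rho_{M}$ and $\rho_{\overline{\acx}^{w^{*}}(M)}$ coincide, and therefore $M$ itself satisfies $\coRl$.

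The main obstacle I anticipate is the topological bookkeeping in $(3) \Rightarrow (4)$: one must correctly juggle the weak-star closed convex hull, the strong closed convex hull, and Milman's theorem so that the strong-closure and weak-star-closure inclusions chain up to an equality, rather than merely an inclusion. Secondarily, one must check throughout that the sets being produced ($\overline{\acx}^{w^{*}}(K)$, its weak-star closed convex subsets, the set $N_{1}$, and the members of the list $(3)$) remain equicontinuous and weak-star compact, so that Lemmas~\ref{lemma:weak_star_convex_hull_of_equicontinuous}, \ref{lemma:co_tame_is_locally_convex_bornology}, \ref{lemma:haydon_implies_coRl} and Proposition~\ref{prop:cotame_implies_haydon} genuinely apply. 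Once these points are handled, the proof is a direct assembly of results established in the preceding sections.
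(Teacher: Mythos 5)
Your proposal is correct and follows essentially the same route as the paper: the same cycle $(1)\Rightarrow(2)\Rightarrow(3)\Rightarrow(4)\Rightarrow(1)$ together with $(2)\Leftrightarrow(5)$, using Lemma \ref{f:sub-fr}, Proposition \ref{prop:cotame_implies_haydon} with Lemma \ref{lemma:co_tame_is_locally_convex_bornology}, Milman's theorem, Lemma \ref{lemma:haydon_implies_coRl}, and Proposition \ref{prop:cotame_then_double_dual_is_fragmented}. Your explicit reduction of the global list $(i)$--$(iv)$ to the localized list (left implicit in the paper) and the observation $\rho_{M}=\rho_{\overline{\acx}^{w^{*}}(M)}$ in $(4)\Rightarrow(1)$ are sound fillings of the same argument.
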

\begin{proof}
	\ 
	\ben
	\item[$(1) \Rightarrow (2)$] By definition, there is no $l^{1}$-sequence with respect to $\rho_{M}$.
	Suppose that $B \subseteq E$ is bounded, and let $r\colon E \to C(M)$ be the restriction map.
	As a consequence, $r(B)$ contains no $l^{1}$ sequences in $C(M)$.
	By Lemma \ref{f:sub-fr}, $r(B)$ is tame on $M$. 
	This is true for every bounded $B \subseteq E$ so $M$ is indeed co-tame.
	
	\item[$(2) \Rightarrow (3)$] Proposition \ref{prop:cotame_implies_haydon} and Lemma \ref{lemma:co_tame_is_locally_convex_bornology}.
	\item[$(3) \Rightarrow (4)$] Suppose that $N \subseteq \overline{\acx}^{w^{*}}(M)$ is a weak-star closed subset.
	As a consequence, it is also weak-star compact.
	Write $N' := \overline{\co}^{w^{*}}(N)$.
	By \cite[Lemma 9.4.5]{TVS}, the extreme points of the closed convex hull of a set lies in the closure of the original set.
	Thus:
	$$
	\ext N' \subseteq \overline{N}^{w^{*}} = N.
	$$
	Note that $N'$ is a closed convex subset of $\overline{\acx}^{w^{*}}(M)$ and therefore we can apply $(3)$:
	$$
	N' = 
	\overline{\co}(\ext N') \subseteq 
	\overline{\co}(N) \subseteq 
	\overline{\co}^{w^{*}}(N) =
	N'. 
	$$
	In particular, for every closed $N \subseteq \overline{\acx}^{w^{*}}(M)$, 
	$$
	\overline{\co}(N) = \overline{\co}^{w^{*}}(N), 
	$$
	as required.
	\item[$(4) \Rightarrow (1)$] By Lemma \ref{lemma:haydon_implies_coRl}, $\overline{\acx}^{w^{*}}(M)$ satisfies $\coRl$.
	It is easy 
	to see that so does ${N \subseteq \overline{\acx}^{w^{*}}(M)}$.
	\item[$(2) \Leftrightarrow (5)$] Proposition \ref{prop:cotame_then_double_dual_is_fragmented}.
	\een
\end{proof}

\sk 
\section{Representations of group actions} 
\label{s:repr} 

\subsection*{Representations of dynamical systems on lcs} 
\label{s:repres} 

We apply properties of tame locally convex spaces to the theory of representations of dynamical systems. 
Let $G$ be a topological 
group and $G \times X \to X$ be a continuous action of $G$ on a topological space $X$. Then we say that $X$ is a $G$-\textit{space}. If, in addition, $X$ is compact, then we say that $X$ is a \textit{dynamical $G$-system}. 
By the approach of A.~K\"{o}hler \cite{Ko}, a dynamical $G$-system $X$ is said to be \textit{tame} (\textit{regular}, in the original terms of K\"{o}hler) if the orbit 
$
fG=\{fg:g \in G\}, 
$
as a family of functions on $X$, is tame. 
Or, equivalently, if $fG$ is a tame subset in the Banach space $C(X)$. 

Theorems \ref{t:repres} and \ref{t:repres_is_tame} establish the close relation between tame dynamical systems and representations on tame locally convex spaces. 
A compact $G$-system $X$ is representable on a tame lcs 
iff $(G,X)$ is tame as a dynamical system.  
Similarly, by Theorem \ref{t:repres on NP}, a compact $G$-system $X$ is representable on an NP  
(reflexive) 
lcs $E$ iff $X$ is hereditarily nonsensitive (weakly almost periodic).   
Such results are well-known (see \cite{GM-rose},\cite{GM1}, \cite{Me-nz}) for \textit{metrizable} tame (hereditarily nonsensitive, weakly almost periodic) $G$-systems with Rosenthal (Asplund, reflexive) Banach spaces $V$.

\sk 


Recall that a 
(\emph{proper}) \emph{representation}
of a $G$-space $X$  on a Banach space $(V,||\cdot||)$ is a pair $(h,\al)$, 
where $h\colon G \to \Iso(V)$ is a strongly
continuous co-homomorphism and $\a \colon X \to
V^*$ is a weak star continuous bounded $G$-mapping (resp.
\emph{embedding}) with respect to the {\em dual action\/} 
$$G \times V^* \to V^*, \  (g \varphi)(v):=\varphi(h(g)(v))=\langle vg,\varphi \rangle= \langle v,g\varphi \rangle.$$


\begin{f} \label{f:ReprMetr} 
	Let $X$ be a compact metrizable $G$-space. $X$ admits a proper representation on 
	\begin{enumerate}
		\item \cite{Me-nz}  a reflexive Banach space iff $X$ is a WAP dynamical $G$-system;
		\item \cite{GM1}  an Asplund Banach space iff $X$ is a HNS dynamical $G$-system;
		\item \cite{GM-rose} a Rosenthal Banach space iff $X$ is a tame dynamical $G$-system.
	\end{enumerate}
\end{f}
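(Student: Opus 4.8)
The statement to prove is Fact \ref{f:ReprMetr}, which is a trilogy of characterizations (reflexive $\Leftrightarrow$ WAP, Asplund $\Leftrightarrow$ HNS, Rosenthal $\Leftrightarrow$ tame) for compact metrizable $G$-spaces. Since this is quoted as a ``Fact'' collecting three published results \cite{Me-nz,GM1,GM-rose}, the honest approach is to cite those sources for the heart of each equivalence and only indicate the common skeleton. The plan is to treat all three cases uniformly by exhibiting a single construction producing a proper representation, and then reading off which Banach-space class it lands in according to which dynamical hypothesis one imposes.

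\textbf{The forward direction (representation $\Rightarrow$ dynamical property).} First I would fix a proper representation $(h,\alpha)$ of $X$ on a Banach space $V$ in the relevant class (reflexive, Asplund, Rosenthal). The key observation is that the matrix coefficient construction sends $X$ into $C(X)$: for $v \in V$ put $f_v \colon X \to \R$, $f_v(x) := \langle v, \alpha(x)\rangle$; this is continuous since $\alpha$ is weak-star continuous. Because $\alpha$ is a $G$-map for the dual action, the orbit $f_v G$ corresponds under $\alpha$ to the image of the $G$-orbit of $v$ in $V$, so $f_v G$ is (the restriction to $\alpha(X)$ of) a bounded subset of $V$. Now invoke the Banach-space characterizations already available in the excerpt: $V$ reflexive $\Rightarrow$ $B_V$ is DLP on $B_{V^*}$ (hence $f_v G$ has DLP on $X$, i.e. WAP); $V$ Asplund $\Rightarrow$ $B_V$ is a fragmented family (Fact \ref{f:Asp-charact}), hence $f_v G$ is fragmented on $X$ (i.e. HNS); $V$ Rosenthal $\Rightarrow$ $B_V$ is a tame subset (Lemma \ref{l:RosBanSpCharact}), hence $f_v G$ is a tame family on $X$ (i.e. $X$ is tame). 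For this to give the dynamical property of $X$ itself (not merely of individual coefficients) one uses that $\alpha$ is an embedding, so the coefficient functions $\{f_v : v \in V\}$ separate points and the dynamical property of $X$ follows from that of the family of orbits --- this is the routine part I would not grind through.

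\textbf{The backward direction (dynamical property $\Rightarrow$ representation).} This is the substantive half and is exactly where I would lean on \cite{Me-nz,GM1,GM-rose}. The common mechanism: starting from a compact $G$-system $X$ with the appropriate property, one picks a suitable bounded, $G$-invariant, (eventually) fragmented / DLP family $F \subseteq C(X)$ generating the topology of $X$ --- in the metrizable case such an $F$ can be taken countable, which is what makes the argument work cleanly --- and applies the DFJP-type factorization (Lemma \ref{l:DFJP}, or rather its dynamical refinements in the cited papers) to the symmetrized, $G$-invariant absolutely convex hull $W := \overline{\acx}(FG)$. By Lemma \ref{l:convFr} and Corollary \ref{cor:fragmented_bipolarity}, $W$ stays in the same smallness class as $F$, and the DFJP construction produces a Banach space $Y$ in the target class (reflexive if $W$ is relatively weakly compact, Asplund if $W$ is an Asplund set, Rosenthal if $W$ is tame) together with an injection $j\colon Y \to C(X)$ with $W \subseteq j(B_Y)$; crucially the $G$-action on $C(X)$ restricts to an isometric $G$-action on $Y$ because $W$ is $G$-invariant. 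Dualizing $j$ and composing with the evaluation $X \hookrightarrow C(X)^*$ gives the weak-star continuous $G$-map $\alpha\colon X \to Y^*$, which is an embedding precisely because $F$ (hence $j(B_Y)$) separates points of $X$. Metrizability of $X$ is used to keep everything separable/countably-generated so that the fragmentability bookkeeping (Lemma \ref{t:countDetermined}, Lemma \ref{f:sub-fr}) goes through.

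\textbf{Main obstacle.} The genuinely delicate point is the backward direction for the tame/Rosenthal case: fragmentability is not obviously preserved under the infinite operations needed in DFJP, and one must instead work with \emph{eventual} fragmentability and the tame dichotomy, checking at each stage of the DFJP hyperspace construction that the tameness of $W$ survives --- this is precisely the content packaged in Lemma \ref{lemma:fragmented_hyperspace} and Fact \ref{l:DenseDualImage}, together with the equivalences of Lemma \ref{f:sub-fr}. Since all of this is already available in the excerpt and fully carried out in \cite{GM-rose}, the cleanest write-up simply assembles those ingredients and defers the verification of each equivalence to the cited references, remarking that the present locally convex machinery (Theorem \ref{t:Factoriz}) is exactly what is needed to lift these metrizable Banach-space statements to the locally convex representations of Theorems \ref{t:repres} and \ref{t:repres_is_tame}.
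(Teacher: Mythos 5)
The paper states Fact \ref{f:ReprMetr} exactly as you treat it: as a known result quoted from \cite{Me-nz,GM1,GM-rose} with no proof given, so there is nothing in the text for your argument to diverge from. Your sketch is consistent both with those sources and with the paper's own later reasoning (the matrix-coefficient/separation argument of Theorem \ref{t:repres_is_tame} for the forward direction, and the DFJP machinery of Lemma \ref{l:DFJP} for the converse), so it is an acceptable rendering of the same approach.
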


Many details about these results, as well as 
 the definitions of HNS (hereditarily nonsensitive) and WAP (weakly almost periodic) dynamical systems, can be found in \cite{GM-survey}. 
 These results, in the framework of a unified approach representing 
 ``small subsets", appear in \cite{Me-seminW,Me-b}. 
 
 In this section, we extend Fact \ref{f:ReprMetr} to nonmetrizable dynamical systems and suitable locally convex spaces. 
More definitions and remarks are in order. 
For every lcs $E$ denote by $GL(E)$ the group of all continuous linear automorphisms of $V$. 

\ben
\item The right action of $G$ (induced by $h\colon G \to \Iso(V)$) on $(V,||\cdot||)$ and the corresponding dual action on the dual Banach space $V^*$ are equicontinuous. Moreover, one may
attempt to give a slightly 
more general definition. Namely,  defining $h \colon G \to GL(V)$ as a  co-homomorphism  which is equicontinuous. 
Then one may modify the norm getting again the ``isometric version" but under the new norm. Namely, define 
$$
||v||_{new}:=\sup \{||vg||: g \in G\}.
$$
Since the action is (uniformly) equicontinuous, this norm generates the same topology.

\item If $X$ is compact then $\a(X)$ is weak-star compact in $V^*$. Since $V$ is a Banach space then $\a(X)$ is automatically bounded and equicontinuous.
\een

\sk 
\begin{defin}
By a (proper) \emph{representation}
	of a $G$-space $X$  on a lcs $E$ we mean a pair $(h,\al)$, 
	where $h\colon G \to GL(E)$ is a strongly continuous 
	co-homomorphism, $h(G)$ is equicontinuous and $\a \colon X \to V^*$ is a weak star continuous $G$-mapping (resp. embedding) 
	with respect to the {\em dual action\/} 
	$$G \times E^* \to E^*, \  (g \varphi)(v):=\varphi(h(g)(v))=\langle vg,\varphi \rangle= \langle v,g\varphi \rangle$$
	such that $\a(X)$ is equicontinuous. 
\end{defin}

\begin{remarks} \ 
	\begin{enumerate} 
		\item The right action $E \times G \to E$ is continuous. It is equivalent to strong continuity of $h$ (because the action is equicontinuous). 
		\item It is well-known that the dual action $G \times E^* \to E^*$, in general, is not continuous (where $E^*$ carries its standard strong dual topology), even for Banach spaces and linear isometric actions.   
		 A sufficient condition is that $E$ is an Asplund Banach space (see \cite{Me-fr}). The same is not true for Rosenthal Banach spaces.  
		 \item 
		 If $E$ is barreled and $X$ is compact, we can omit the condition that $\a(X)$ is equicontinuous.
		\item It is easy to see that if $X \in \eqc(E^*)$ is 
		 $G$-invariant, then the induced action $G \times X \to X$ is continuous.  For Banach spaces $E$ it is well-known; see, for example, the proof in \cite{Me-cs} (for the isometric representation). 
			\end{enumerate}
\end{remarks}

\begin{thm} \label{t:repres} 
	Every compact tame dynamical $G$-system admits a proper representation on a tame lcs. 
\end{thm}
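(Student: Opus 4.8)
Write $\Omega$ for the given compact tame $G$-system, and let $G$ act on the right on the Banach space $C(\Omega)$ by $(fg)(x):=f(gx)$; this action is by linear isometries and is strongly continuous (for compact $\Omega$ the orbit map $g\mapsto fg$ is $\|\cdot\|_\infty$-continuous, by a routine finite-cover argument), and the canonical evaluation $r\colon\Omega\to C(\Omega)^{*}$, $r(x)(f):=f(x)$, is a weak-star continuous $G$-embedding into $(C(\Omega)^{*},w^{*})$ for the dual action. The strategy is to treat one function $f\in C(\Omega)$ at a time, using the DFJP machinery (Fact \ref{fact:DJFP}, Lemma \ref{l:DFJP}) to manufacture a Rosenthal Banach space carrying the orbit $fG$, and then to glue these spaces by a locally convex direct sum, which stays tame by Theorem \ref{thm:properties_of_tame_class}.

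\emph{Step 1 (one function at a time).} Fix $f\in C(\Omega)$. By the definition of a tame $G$-system, $fG$ is a tame subset of $C(\Omega)$, and it is bounded since $\|fg\|_\infty=\|f\|_\infty$. By Lemma \ref{f:sub-fr}, Lemma \ref{p:from X to B*} and Corollary \ref{cor:fragmented_bipolarity}, the $G$-invariant closed disk $W_f:=\overline{\acx}(fG)\subseteq C(\Omega)$ is again a bounded tame subset of $C(\Omega)$ (it is $G$-invariant because $fG\cdot g_0=fG$). Apply the DFJP construction of Fact \ref{fact:DJFP} to $W_f$: with $U_n:=2^{n}W_f+2^{-n}B_{C(\Omega)}$, $\|\cdot\|_n$ the gauge of $U_n$, $[v]:=(\sum_n\|v\|_n^{2})^{1/2}$ and $V_f:=\{v:[v]<\infty\}$, Lemma \ref{l:DFJP} produces a Rosenthal Banach space $V_f$ (Rosenthal because $W_f$ is tame) and a continuous injective $j_f\colon V_f\to C(\Omega)$ with $W_f\subseteq j_f(B_{V_f})$ and $j_f^{*}$ norm-dense.

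\emph{Step 2 (the $G$-action on $V_f$).} Since $W_f$ and $B_{C(\Omega)}$ are $G$-invariant and the action on $C(\Omega)$ is isometric, each $U_n$ is $G$-invariant, so each $\|\cdot\|_n$, and hence $[\cdot]$, is $G$-invariant. Thus $G$ acts on $V_f$ by $[\cdot]$-isometries $h_f(g)\in GL(V_f)$, $j_f$ intertwines the actions, and $j_f^{*}\colon C(\Omega)^{*}\to V_f^{*}$ is $G$-equivariant. The action is strongly continuous: for $v\in V_f$, $[h_f(g)v-v]^{2}=\sum_n\|gv-v\|_n^{2}$, where the tail is controlled uniformly in $g$ by $\|gv-v\|_n\le\|gv\|_n+\|v\|_n=2\|v\|_n$ (using $G$-invariance of $\|\cdot\|_n$), so $\sum_{n\ge N}\|gv-v\|_n^{2}\le 4\sum_{n\ge N}\|v\|_n^{2}\to 0$, while for each fixed $n$, $\|gv-v\|_n\le 2^{n}\|gv-v\|_\infty\to 0$ as $g\to e$ since $g\mapsto gv$ is $\|\cdot\|_\infty$-continuous. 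Define $\alpha_f:=j_f^{*}\circ r\colon\Omega\to V_f^{*}$; it is weak-star continuous (Fact \ref{f:adjoint_is_continuous}), $G$-equivariant and bounded, so after dividing $\alpha_f$ by a positive constant we may assume $\alpha_f(\Omega)\subseteq B_{V_f^{*}}$. Moreover $f\in W_f\subseteq j_f(B_{V_f})$ gives a unique $v_f\in V_f$ with $j_f(v_f)=f$; after rescaling $v_f$ by the same constant, $\langle v_f,\alpha_f(x)\rangle=\langle j_f(v_f),r(x)\rangle=f(x)$ for every $x\in\Omega$.

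\emph{Step 3 (assembling).} Put $E:=\bigoplus_{f\in C(\Omega)}V_f$, the locally convex direct sum. Each $V_f$ is a tame lcs by Proposition \ref{p:tameness_in_barrelled_spaces}(2), so $E$ is tame by Theorem \ref{thm:properties_of_tame_class}. The diagonal map $h:=\bigoplus_f h_f\colon G\to GL(E)$ is a co-homomorphism; it is equicontinuous (check it on the basic neighborhood $\acx(\bigcup_f B_{V_f})$ of $0$, using linearity of $h(g)$ and the isometry of each $h_f(g)$) and strongly continuous (every $e\in E$ is supported on finitely many coordinates, and on a finite sub-sum the action is the product of the continuous coordinate orbit maps). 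Identify $E^{*}$ with $\prod_f V_f^{*}$ as in Remark \ref{r:dualities}(2). The diagonal $\alpha:=(\alpha_f)_f\colon\Omega\to\prod_f V_f^{*}=E^{*}$ then lands in $E^{*}$, is $G$-equivariant, and is weak-star continuous (a net converges weak-star in the dual of a direct sum iff it converges coordinatewise, as each $v\in E$ has finite support). Also $\alpha(\Omega)\subseteq\prod_f B_{V_f^{*}}$, which is the polar of $\acx(\bigcup_f B_{V_f})$, hence equicontinuous and weak-star compact. Finally $\alpha$ separates the points of $\Omega$: if $x\ne y$, pick $f$ with $f(x)\ne f(y)$, so $\langle v_f,\alpha_f(x)\rangle\ne\langle v_f,\alpha_f(y)\rangle$. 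A continuous injection from the compact space $\Omega$ into the Hausdorff space $(E^{*},w^{*})$ is a topological embedding, so $(h,\alpha)$ is a proper representation of $\Omega$ on the tame lcs $E$.

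\emph{Main obstacle.} The only delicate step is Step 2: making the DFJP interpolation compatible with the $G$-action, in particular establishing strong continuity of the action on $V_f$, where the DFJP norm $[\cdot]$ is strictly finer than $\|\cdot\|_\infty$. This is resolved by the $G$-invariance of every auxiliary norm $\|\cdot\|_n$, which bounds the tail of $[h_f(g)v-v]^{2}$ uniformly in $g$ by the convergent series $4\sum_n\|v\|_n^{2}$ and thereby reduces the matter to the finitely many indices $n$ where the crude estimate $\|\cdot\|_n\le 2^{n}\|\cdot\|_\infty$ suffices; everything else is bookkeeping for the locally convex direct sum and its dual.
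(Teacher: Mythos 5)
Your proof is correct, but it takes a genuinely different route from the paper. The paper's proof is a short reduction: it invokes the result of \cite{GM-rose} that a compact tame $G$-system is Rosenthal-approximable (it $G$-embeds into a product of systems, each properly representable on a Rosenthal Banach space $V_i$), and then only has to do the locally convex bookkeeping — form the direct sum $\bigoplus_i V_i$, which is tame by Theorem \ref{thm:properties_of_tame_class}, and use Remark \ref{r:dualities}.2 to see that $\prod_i X_i$ sits as an equicontinuous weak-star compact subset of the dual, with the diagonal action equicontinuous and strongly continuous. You instead re-derive the Banach-level ingredient inside the paper itself: for each $f\in C(\Omega)$ you take the $G$-invariant tame disk $W_f=\overline{\acx}(fG)$, run the explicit DFJP construction of Fact \ref{fact:DJFP} on it, use Lemma \ref{l:DFJP} to see the interpolation space $V_f$ is Rosenthal, and observe that $G$-invariance of $W_f$ and of $B_{C(\Omega)}$ makes every auxiliary norm $\|\cdot\|_n$, hence $[\cdot]$, $G$-invariant, with strong continuity obtained from the uniform tail bound $\|gv-v\|_n\le 2\|v\|_n$ plus $\|\cdot\|_n\le 2^n\|\cdot\|_\infty$ — this is exactly the equivariant DFJP argument underlying the cited theorem of \cite{GM-rose}, so at bottom the two proofs converge, but yours is self-contained relative to this paper while the paper's is shorter by outsourcing that step. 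Your gluing step (direct sum over all $f\in C(\Omega)$, dual identified with the product, $\prod_f B_{V_f^*}$ equicontinuous, coordinatewise weak-star continuity, separation of points via $\langle v_f,\alpha_f(x)\rangle=f(x)$) matches the paper's use of Remark \ref{r:dualities}.2; indexing by all of $C(\Omega)$ is wasteful but harmless. Two cosmetic points: equicontinuity of $h(G)$ should be verified on all basic neighborhoods $\acx\bigl(\bigcup_f \eps_f B_{V_f}\bigr)$, not just the one with $\eps_f=1$ (the same invariance argument applies verbatim), and the simultaneous rescaling of $\alpha_f$ and $v_f$ should be stated as multiplying one and dividing the other by the same constant so that $\langle v_f,\alpha_f(x)\rangle=f(x)$ is preserved.
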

\begin{proof} Let $X$ be a compact tame dynamical $G$-system. 
	As we already know by \cite{GM-rose}, $X$ is Rosenthal-approximable. That is,  there exists a G-embedding of $X$ into a $G$-product $\prod_{i \in I} X_i$ of Rosenthal-representable $G$-systems $X_i$. Let $(h_i,\a_i)$ be a proper representation of $(G,X_i)$ on a Rosenthal Banach space $V_i$. 
	Then the lcs direct sum $V:=\oplus_{i \in I} V_i$ is a tame lcs according to 
	Theorem \ref{thm:properties_of_tame_class}.4. 
	
	Indeed, first of all note that algebraically the dual $V^*$ is the product
	$\prod_{i \in I} V_{i}^{*}$ with the corresponding duality 
	$$
	V \times V^* \to \R, \ (v,u) \mapsto \sum_{i \in I} \langle v_i,u_i \rangle.
	$$
	Furthermore, the compact space $\prod_{i \in I} X_i$ naturally is embedded into $V^*$ with the weak-star topology. 
	One of the main steps here is to show that $\prod_{i \in I} X_i$ is an equicontinuous subset of $V^*$. 
	This follows by Remark \ref{r:dualities}.2. 
	
		We have a naturally defined coordinate-wise linear action of $G$ on $V=\oplus_{i \in I} V_i$. Using the above mentioned description of the topology on $V=\oplus_{i \in I} V_i$, it is easy to show that this action is equicontinuous. 
		So, we have an equicontinuous strongly continuous co-homomorphism $h\colon G \to GL(V)$. 
		Finally, observe that the embedding $\a\colon \prod_{i \in I} X_i  \hookrightarrow V^*$ is  weak-star continuous and equivariant.  
	\end{proof}

Note that if in Theorem \ref{t:repres} the compact $G$-space $X$ in addition  is metrizable, then we can suppose that $V$ is a Banach space. However, it is not true if $X$ is not metrizable (even for trivial $G$-actions). 

\begin{thm} \label{t:repres_is_tame}
	Every compact $G$-space $X$ which admits a proper representation on a tame lcs is tame as a dynamical system. 
\end{thm}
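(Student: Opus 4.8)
The plan is to show that if $X$ admits a proper representation $(h,\alpha)$ on a tame lcs $E$, then the orbit $fG$ of every $f \in C(X)$ is a tame subset of the Banach space $C(X)$, which is exactly the definition of $X$ being a tame dynamical system. First I would reduce to functions of a special form. Since $\alpha \colon X \to E^*$ is a weak-star continuous $G$-embedding, the functions $v \circ \alpha \colon X \to \R$ (for $v \in E$, where $(v\circ\alpha)(x) = \langle v, \alpha(x)\rangle$) are continuous on $X$, and they separate points of $X$; moreover this family is $G$-invariant in a precise sense because of equivariance: $(v\circ\alpha)(xg) = \langle v, \alpha(xg)\rangle = \langle v, g\alpha(x)\rangle = \langle h(g)v, \alpha(x)\rangle = (h(g)v \circ \alpha)(x)$, so the right translate $(v\circ\alpha)g$ equals $h(g)v \circ \alpha$. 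Thus the orbit of $v\circ\alpha$ corresponds, under pullback by $\alpha$, to the orbit $h(G)v$ in $E$.

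The key step is then: for a fixed $v \in E$, the set $B := h(G)v = \{h(g)v : g \in G\}$ is bounded in $E$ (because $h(G)$ is equicontinuous: given a zero neighborhood $O$ of $E$, equicontinuity gives a zero neighborhood $U$ with $h(g)(U) \subseteq O$ for all $g$, and $v \in cU$ for some scalar $c$, whence $B \subseteq cO$). Since $E$ is tame, $B$ is a tame subset of $E$, i.e.\ $B$ is a tame family of functions on every $K \in \eqc(E^*)$ by Definition \ref{d:TameSet}. In particular $\alpha(X) \in \eqc(E^*)$ (it is equicontinuous and weak-star compact by hypothesis and Fact \ref{l:Al-Bo}), so $B$ is a tame family of functions on $\alpha(X)$. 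Now apply Lemma \ref{lem:surjective_image_of_independent_sequence} (or Lemma \ref{f:sub-fr}) with the surjection $\alpha \colon X \to \alpha(X)$: $B$ tame on $\alpha(X)$ implies $B \circ \alpha$ tame on $X$. But $B \circ \alpha = \{h(g)v \circ \alpha : g \in G\} = \{(v\circ\alpha)g : g \in G\} = (v\circ\alpha)G$, so the orbit of $v\circ\alpha$ is a tame family on $X$, hence a tame subset of $C(X)$ by Lemma \ref{p:from X to B*}.

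To finish, I would pass from these special functions to arbitrary $f \in C(X)$. The standard argument (as in \cite{GM-rose}, \cite{Ko}) is that the algebra generated by $\{v\circ\alpha : v \in E\}$ together with constants is a $G$-invariant, point-separating, closed-under-the-relevant-operations subalgebra of $C(X)$, hence uniformly dense by Stone--Weierstrass; and the class of tame subsets of $C(X)$ is stable under the operations needed (finite sums, scalar multiples, products, and uniform limits of orbits) so that tameness of the orbit propagates from generators to the whole algebra and then to its closure. Concretely: the family of $f$ whose orbit $fG$ is tame in $C(X)$ contains all $v\circ\alpha$ and all constants, is closed under finite linear combinations and products (tame families are stable under these operations — for products one uses that an independent sequence in $f_ng_n$ would force one in $\{f_n\}$ or $\{g_n\}$ after passing to a subsequence, using Lemma \ref{f:sub-fr} and Lemma \ref{l:convFr}), and is uniformly closed by Lemma \ref{lemma:closure_of_eventually_fragmented}. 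Hence it is all of $C(X)$, so $X$ is tame.

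The main obstacle I expect is the last paragraph: verifying that tameness of the orbit is preserved under products and uniform limits in the exact form needed for a Stone--Weierstrass argument. The cleanest route is probably to bypass Stone--Weierstrass on $C(X)$ entirely and instead invoke the representation-theoretic characterization already available in the paper: a compact $G$-system representable on a Rosenthal Banach space is tame, combined with the fact (used in the proof of Theorem \ref{t:repres}) that a representation on the direct sum $\oplus_i V_i$ of Rosenthal Banach spaces corresponds to a $G$-embedding of $X$ into a product of Rosenthal-representable systems; but here $E$ is a general tame lcs, not a direct sum of Rosenthal Banach spaces, so one genuinely needs the tame-subset machinery of Sections on bornological classes rather than the Banach case verbatim. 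I would therefore lean on Lemma \ref{p:from X to B*}, Lemma \ref{lem:surjective_image_of_independent_sequence}, Lemma \ref{l:convFr} and Lemma \ref{lemma:closure_of_eventually_fragmented} as assembled above, which together suffice.
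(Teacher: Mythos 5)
Your core step coincides exactly with the paper's proof: for each $v\in E$ the orbit $h(G)v$ is bounded because $h(G)$ is equicontinuous, hence tame over every member of $\eqc(E^{*})$ since $E$ is a tame lcs, in particular over $\alpha(X)$, and pulling back along the surjection $\alpha\colon X\to\alpha(X)$ (Lemma \ref{lem:surjective_image_of_independent_sequence}) shows that the orbit of $f_{v}=v\circ\alpha$ is a tame family on $X$. Where you diverge is the endgame. The paper stops at this point: it observes that the functions $f_{v}$ separate the points of $X$ (because $\alpha$ is an embedding and $E$ separates points of $\alpha(X)$), so $\mathrm{Tame}(X)$ separates points, and then quotes \cite{GM-rose}, where $\mathrm{Tame}(X)$ is already known to be a closed $G$-invariant subalgebra of $C(X)$, so a point-separating family of tame functions forces the system to be tame. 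You instead re-run that Stone--Weierstrass argument inside the present paper's toolkit, and the two sub-steps you flag are indeed the ones that are only sketched. For products, the claim that an independent sequence in $\{f_{n}g_{n}\}$ forces one in $\{f_{n}\}$ or $\{g_{n}\}$ is loose as stated; the clean route is condition (3) of Lemma \ref{f:sub-fr}: given translates $(fs_{n})(gs_{n})$, pass to a subsequence along which $fs_{n}$ converges pointwise and a further one along which $gs_{n}$ converges pointwise, so the products converge pointwise and the orbit of $fg$ is tame. For uniform limits, Lemma \ref{lemma:closure_of_eventually_fragmented} closes a single family, not the set of functions with tame orbits: if $f_{k}\to f$ uniformly, $fG$ need not lie in the closure of any single orbit, and a countable union of tame orbits need not be tame; the correct argument is a diagonal one, choosing a subsequence of $\{s_{n}\}$ along which every $f_{k}s_{n_{j}}$ converges pointwise and then using $\lVert f_{k}s-fs\rVert_{\infty}=\lVert f_{k}-f\rVert_{\infty}$ to see that $fs_{n_{j}}$ is pointwise Cauchy. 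Both repairs are routine, so your route does work, but it amounts to re-proving the closure properties of $\mathrm{Tame}(X)$ that the paper deliberately cites from \cite{GM-rose} rather than reconstructs; you are also right that the Banach factorization behind Theorem \ref{t:repres} cannot be substituted here, since $E$ is an arbitrary tame lcs.
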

\begin{proof}
	Let $(h,\al)$ be a proper representation of the $G$-system $X$ on a tame lcs $E$, 
	where ${h\colon G \to GL(E)}$ is a strongly continuous 
	co-homomorphism, 
	$h(G)$ is equicontinuous and $\a \colon X \to
	V^*$ is a weak star continuous $G$-embedding 
	with respect to the {\em dual action\/} 
	$E^* \times G \to E^*$, 
	such that $\a(X)$ is equicontinuous.  
	For every $v \in V$, we have the induced continuous function 
	$$f_v\colon X \to \R, x \mapsto  \langle v,\alpha(x) \rangle.$$ Then the orbit $vG$ is bounded because $h(G)$ is an equicontinuous subset of $GL(E)$. Since $E$ is a tame lcs, its bounded subsets are tame. Hence, $vG$ is a tame family for every  equicontinuous compact subset in the dual. In particular, it is true for $\alpha(X)$.  This implies that the orbit $f_vG$ of $f_v$ is a tame family on $X$. Therefore, $f_v$ is a tame function on $X$ (in the sense of \cite{GM-rose}). Since $V$ separates the points of $\alpha(X)$, we obtain that $\mathrm{Tame}(X)$ separates points of $X$. This means that the $G$-system $X$ is tame in the sense of \cite{GM-rose}. 
	\end{proof}

Similarly, making use of Remark \ref{r:dualities}, the following theorem can be proved. 
\begin{thm} \label{t:repres on NP}
	A compact dynamical $G$-system $X$ is representable on 
	\begin{enumerate} 
		\item $E \in \NP$ if and only if $(G,X)$ is hereditarily nonsensitive;  
		\item $E \in (DLP)$ if and only if $(G,X)$ is weakly almost periodic.  
	\end{enumerate} 
\end{thm}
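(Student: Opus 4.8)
The proof will follow the same blueprint as Theorems \ref{t:repres} and \ref{t:repres_is_tame}, replacing the pair ``tame/Rosenthal'' by ``hereditarily nonsensitive/Asplund'' for item (1) and by ``weakly almost periodic/reflexive'' for item (2). The two structural ingredients that make this work are: the closure of the classes $\NP$ and $\DLP$ under locally convex direct sums (Theorems \ref{thm:properties_of_np_class}(4) and \ref{thm:properties_of_dlp_class}(4)); and the Glasner--Megrelishvili theory of HNS and WAP systems (\cite{GM1,Me-nz,GM-survey}), namely that a compact $G$-system is HNS (resp.\ WAP) iff it is Asplund-approximable (resp.\ reflexively-approximable), equivalently iff the family $\Asp(X)$ of Asplund functions (resp.\ $\WAP(X)$) separates the points of $X$, and that $f\in\Asp(X)$ (resp.\ $f\in\WAP(X)$) precisely when the orbit $fG$ is a fragmented (resp.\ DLP) family on $X$.

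\textbf{The ``if'' direction.} Suppose $(G,X)$ is HNS. By the non-metrizable version of Fact \ref{f:ReprMetr}(2), fix a $G$-embedding $X\hookrightarrow\prod_{i\in I}X_i$ into a product of $G$-systems $X_i$, each admitting a proper representation $(h_i,\alpha_i)$ on an Asplund Banach space $V_i$. Put $V:=\bigoplus_{i\in I}V_i$; then $V\in\NP$ by Theorem \ref{thm:properties_of_np_class}(4). Using Remark \ref{r:dualities}(2), $V^*$ is algebraically $\prod_{i\in I}V_i^*$ with the natural pairing and $\eqc(V^*)$ has a basis of sets $\prod_{i\in I}H_i$ with $H_i\in\eqc(V_i^*)$; since $X_i$ is compact and $V_i$ is Banach, each $\alpha_i(X_i)$ is equicontinuous, so $\prod_i\alpha_i(X_i)$, and hence the image of $X$, is an equicontinuous weak-star compact subset of $V^*$. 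The coordinatewise linear $G$-action on $V$ is equicontinuous with strongly continuous co-homomorphism $h\colon G\to GL(V)$, and $\alpha\colon X\hookrightarrow V^*$ is weak-star continuous and $G$-equivariant. Thus $(h,\alpha)$ is a proper representation of $X$ on the $\NP$ space $V$. For (2) one repeats this verbatim with ``Asplund'' replaced by ``reflexive'' (using Fact \ref{f:ReprMetr}(1) and Theorem \ref{thm:properties_of_dlp_class}(4)).

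\textbf{The ``only if'' direction.} Let $(h,\alpha)$ be a proper representation of $X$ on $E\in\NP$. The map $\alpha$ is a weak-star continuous injection of the compact space $X$ into the Hausdorff space $E^*$, hence a homeomorphism onto $K:=\alpha(X)\in\eqc(E^*)$. For $v\in V$ put $f_v:=r_v\circ\alpha\colon X\to\R$, where $r_v\colon K\to\R$, $r_v(\varphi)=\varphi(v)$; a direct computation gives $f_v g=f_{vg}$, so $f_vG=\{\,r_w|_K:w\in vG\,\}\circ\alpha$. The orbit $vG$ is bounded in $E$ (because $h(G)$ is equicontinuous), hence an Asplund subset (Definition \ref{d:AspSet}, using $E\in\NP$); therefore $vG$, viewed as a family of functions on $K\in\eqc(E^*)$, is fragmented. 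By Lemma \ref{l:surjection_of_fragmented_family} applied to the homeomorphism $\alpha$, $f_vG$ is a fragmented family on $X$, i.e.\ $f_v\in\Asp(X)$. As $V$ separates the points of $K=\alpha(X)$, the family $\Asp(X)$ separates the points of $X$, so $(G,X)$ is HNS. For (2): replace ``Asplund subset/fragmented family/Lemma \ref{l:surjection_of_fragmented_family}'' by ``DLP subset/DLP family/Lemma \ref{lemma:double_subsequence_of_bounded_functions_is_DLPable}(4)''; then each $f_v\in\WAP(X)$, $\WAP(X)$ separates the points of $X$, and $(G,X)$ is WAP.

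\textbf{Main obstacle.} The only nontrivial inputs are external: the structure theorems (for possibly non-metrizable $X$) that HNS $\Leftrightarrow$ Asplund-approximable and WAP $\Leftrightarrow$ reflexively-approximable, together with the translation ``$f\in\Asp(X)$ (resp.\ $\WAP(X)$) $\iff$ $fG$ fragmented (resp.\ DLP)''; and the bookkeeping showing that $\alpha(X)$ lands in $\eqc(V^*)$ after forming $V=\bigoplus_iV_i$ and that the coordinatewise action is equicontinuous. The first is documented in \cite{GM1,Me-nz,GM-survey}; the second is exactly the delicate point already handled in the proof of Theorem \ref{t:repres} via Remark \ref{r:dualities}(2), and I expect it to be the part requiring the most care.
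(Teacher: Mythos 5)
Your proposal is correct and follows essentially the same route as the paper, which itself only sketches this theorem by saying it is proved "similarly" to Theorems \ref{t:repres} and \ref{t:repres_is_tame}, using Remark \ref{r:dualities} together with the closure of $\NP$, $\DLP$ (and reflexive lcs) under locally convex direct sums and the known Asplund/reflexive approximability results for HNS/WAP systems. Your write-up simply supplies the details the paper omits "for brevity."
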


 Note that, like $\Tame$, the classes $\NP$, $\DLP$ 
 and reflexive lcs 
also are closed under lc direct sums. So, we can assume in (2) that $E$ is a reflexive lcs. 
For brevity, we omit the details.

\sk  
\bibliographystyle{amsplain}

\end{document}